\documentclass[12pt,a4paper]{amsart}

\usepackage[top = 2.5cm, bottom = 2cm, right = 2cm, left = 2cm]{geometry}

\usepackage[utf8]{inputenc}
\usepackage{amsmath}
\usepackage{amsfonts}
\usepackage{amssymb}
\usepackage{amsthm}
\usepackage[all]{xy}
\usepackage{color}
\usepackage{hyperref}
\usepackage{xypic}
\usepackage{mathrsfs}
\usepackage{centernot}
\usepackage{tikz-cd}
\usepackage{enumitem}

\theoremstyle{plain}
\newtheorem{theorem}{Theorem}[section]

\newtheorem{conjecture}[theorem]{Conjecture}

\newtheorem{corollary}[theorem]{Corollary}
\newtheorem{definition}[theorem]{Definition}

\newtheorem{lemma}[theorem]{Lemma}

\newtheorem{proposition}[theorem]{Proposition}

\newtheorem{remark}[theorem]{Remark}

\DeclareMathOperator{\codim}{codim}

\DeclareMathOperator{\Div}{div}

\DeclareMathOperator{\Gal}{Gal}

\DeclareMathOperator{\height}{ht}

\DeclareMathOperator{\Pic}{Pic}

\DeclareMathOperator{\Stab}{Stab}

\newcommand{\del}{\partial}

\newcommand{\IA}{\mathbb{A}}

\newcommand{\IZ}{\mathbb{Z}}

\newcommand{\IQ}{\mathbb{Q}}

\newcommand{\IK}{\mathbb{K}}

\newcommand{\IKbar}{\overline{\mathbb{K}}}
\newcommand{\IQbar}{\overline{\mathbb{Q}}}
\newcommand{\IR}{\mathbb{R}}
\newcommand{\IC}{\mathbb{C}}

\newcommand{\Gm}{\mathbb{G}_m}
\newcommand{\IP}{\mathbb{P}}

\newcommand{\IF}{\mathbb{F}}

\newcommand{\vol}{\mathrm{vol}}

\newcommand{\dprime}{{\prime\prime}}

\newcommand{\pr}{\mathrm{pr}}

\newcommand{\caO}{\mathcal{O}}

\newcommand{\id}{\mathrm{id}}

\newcommand{\Ltil}{\widetilde{L}}

\newcommand{\Mtil}{\widetilde{M}}

\newcommand{\Ntil}{\widetilde{N}}
\newcommand{\Otil}{\widetilde{\mathcal{O}}}

\newcommand{\Lo}{\overline{L}}
\newcommand{\Mo}{\overline{M}}

\usepackage[T2D,T1]{fontenc}

\newcounter{cnstcnt}
\newcommand{\newconstant}{%
\refstepcounter{cnstcnt}
\ensuremath{c_{\thecnstcnt}}}
\newcommand{\oldconstant}[1]{\ensuremath{   c_{\ref*{#1}}}}

\AtBeginDocument{%
   \def\MR#1{}
}

\usepackage{bm}

\makeatletter
\newcommand\footnoteref[1]{\protected@xdef\@thefnmark{\ref{#1}}\@footnotemark}
\makeatother

\newcommand{\volh}{\widehat{\mathrm{vol}}}

\date{\today}
\begin{document}

    \author[F. Barroero]{Fabrizio Barroero}
\address[F. Barroero]{Universit\`a degli studi Roma Tre, Dipartimento di Matematica e Fisica, Largo San Murialdo 1, 00146 Roma, Italy}
\email{fabrizio.barroero@uniroma3.it}

\author[G. A. Dill]{Gabriel A. Dill}
\address[G. A. Dill]{Institut de Math\'ematiques, Universit\'e de Neuch\^atel, Rue Emile-Argand 11, 2000 Neuch\^atel, Switzerland}
\email{gabriel.dill@unine.ch}

\author[L. Kuehne]{Lars Kuehne}
\address[L. Kuehne]{UCD School of Mathematics and Statistics, University College Dublin, Belfield, Dublin 4, Ireland}
\email{lars.kuehne@ucd.ie}
	
	\subjclass[2020]{11G10, 11G50, 11J95, 14G40.}

	\title[Intersecting a curve with multiples of another curve]{Intersecting a curve in an abelian variety with multiples of another curve}
    
	\begin{abstract}
    Levin asked what can be said about the locus of points lying on a given curve in $\mathbb{G}_m^n$ that have a non-zero integer power on another given curve in $\mathbb{G}_m^n$ for $n \geq 3$. We give a definite answer to the abelian analog of Levin's question, proving what is predicted by the Zilber-Pink conjecture in this case. An important ingredient in our proof is a strengthening of a height inequality by Vojta and R\'emond.
    \end{abstract}
	
	\maketitle

    \tableofcontents

    \section{Introduction}

    Since the pioneering work of Bombieri, Masser, and Zannier \cite{Bombieri1999}, unlikely intersections in both semiabelian varieties and Shimura varieties have been a central topic in arithmetic geometry. For semiabelian varieties, the main expectations are summarized by conjectures proposed independently by Zilber \cite{Zilber2002} and Pink \cite{Pink2005a}, refining ideas already present in \cite{Bombieri1999}.
    
    For illustration, let us state a version of their conjectures in the setting of semiabelian varieties. We write $G^{[r]}$ for the union of all algebraic subgroups having codimension $\geq r$ in a given semiabelian variety $G$. Note that $G^{[r]}$ is in general a countable union of algebraic subvarieties, which is not necessarily Zariski-closed. However, the following conjecture stemming from the work of Zilber and Pink is widely accepted. Throughout this article, we let $K$ denote an algebraically closed field of characteristic $0$.

    \begin{conjecture}
    \label{conjecture::pinkzilber}
	   Let $G$ be a semiabelian variety defined over $K$ and let $X \subseteq G$ be an irreducible subvariety. If $X$ is not contained in any algebraic subgroup $H \subsetneq G$, then the intersection $X \cap G^{[\dim(X)+1]}$ is not Zariski dense in $X$.
    \end{conjecture}

    This seemingly innocuous conjecture has the renowned Mordell-Lang conjecture as a rather immediate consequence \cite[Theorem 5.3]{Pink2005a}. Despite the work of many authors \cite{Bays2015,Bombieri1999,Carrizosa2009,Maurin2008,Remond2005a,Remond2003a,Remond2007,Remond2009}, it remains widely open --- even if $G$ is the multiplicative torus $\Gm^g$ and $\dim(X) \geq 2$. However, the case where $K=\IQbar$ and $\dim(X)=1$ has been settled by Maurin \cite{Maurin2008} as well as Bombieri, Habegger, Masser, and Zannier \cite{Bombieri2010} if $G$ is a multiplicative torus, by Habegger and Pila \cite{Habegger2016} if $G$ is an abelian variety, and by Barroero, Kuehne, and Schmidt \cite{BarroeroKuehneSchmidt2023} in general. In this article, we prove a related consequence of Conjecture \ref{conjecture::pinkzilber}, which has been suggested by Levin in the case of multiplicative tori and can be found in Zannier's monograph on unlikely intersections \cite[Notes to Chapter 1]{Zannier2012}. 

    Recently, OpenAI has released the preprint \cite{OpenAI} claiming a complete proof of Conjecture \ref{conjecture::pinkzilber} for abelian varieties over $\IQbar$. By \cite{BarroeroDill2022}, this would also imply the Zilber-Pink conjecture for abelian varieties over $K$. This release prompted us to publish a first version of this article the day after on arXiv. The work contained here was already presented at an ICM Satellite Conference in August 2026 (Workshop on Arakelov and Diophantine Geometry) as well as in the Manchester Number Theory seminar before the release of  \cite{OpenAI}.

    The following is the main theorem of this article.

    \begin{theorem}
        \label{theorem::main}
		Let	$A$ be an abelian variety of dimension $\geq 3$ defined over $K$ and let $C_1, C_2 \subseteq A$ be irreducible algebraic curves such that there is no proper algebraic subgroup of $A$ containing both of them. Then, every irreducible component of the Zariski closure of the set
        \begin{equation*}
        \Xi =\{(x,y) \in (C_1 \times C_2)(K) \mid  [n](x) = y \text{ for some } n \in \mathbb{Z}
        \}
        \end{equation*}
        has to be one of the following:
        \begin{enumerate}
            \item[(i)] a point,
            \item[(ii)] a graph $(\id_A,[n])(C_1)$ for some $n \in \mathbb{Z}$,
            \item[(iii)] a horizontal curve $C_1 \times \{y_0\}$ for some $y_0 \in C_2(K) \setminus \{0_A \}$, or
            \item[(iv)] a vertical curve $\{x_0\} \times C_2$ for some $x_0 \in C_1(K)$.
        \end{enumerate}
        Furthermore, there are at most three components of type (ii) and, if there are three, then one must be $C_1 \times \{0_A\}$. Finally, components of type (iii) or (iv) can appear only if $C_1$ or $C_2$ respectively is a component of an algebraic subgroup.
    \end{theorem}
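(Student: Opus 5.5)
We work inside the surface $X = C_1 \times C_2 \subseteq A \times A$ and view $\Xi$ as $X \cap \bigcup_{n \in \IZ} \Gamma_n$, where $\Gamma_n = (\id_A,[n])(A) = \{(x,y) : y = [n](x)\}$ is an algebraic subgroup of $A^2$ of codimension $\dim A \geq 3$. Since $\dim X = 2$, these are unlikely intersections, and Conjecture \ref{conjecture::pinkzilber} predicts the conclusion. The plan is to prove it unconditionally, following the Bombieri--Masser--Zannier/Habegger--Pila strategy.

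\textbf{Step 1: reductions and isolated points.} By specialization (as in \cite{BarroeroDill2022}) we first reduce to $K = \IQbar$. Let $Y$ be a positive-dimensional component of $\overline{\Xi}$; it is a curve in $X$. If $Y$ has infinitely many points on a single $\Gamma_n$, then $Y \subseteq \Gamma_n \cap X$, so $Y = (\id,[n])(C_1)$ with $[n](C_1) = C_2$, which is type (ii). If $Y$ projects to a point in one factor, it is of type (iii) or (iv). Otherwise $Y$ meets $\Gamma_n$ for infinitely many distinct $n$. Then either $|n| \to \infty$ along a Zariski dense set of points of $Y$, or one can pass to a subsequence where this happens.

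\textbf{Step 2: height bounds --- the main obstacle.} This step follows the strengthened Vojta--R\'emond inequality announced in the abstract. Fix a symmetric ample line bundle and its N\'eron--Tate height $\hat h$. For $(x,y) \in \Xi$ with $y = [n]x$ we have $\hat h(y) = n^2 \hat h(x)$. The points $(x,[n]x)$ lie on $C_1 \times C_2$, so $x \in C_1 \cap [n]^{-1}C_2$. We would like to show that, outside an exceptional closed subset, $\hat h(x) \ll 1$ and $|n|$ is bounded polynomially in $[\IQ(x):\IQ]$. The idea is to apply the Vojta/R\'emond height inequality to the pair $(x,[n]x)$ on $C_1 \times C_2$. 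The points are very ``aligned'': the angle between $x$ and $y$ in the Mordell--Weil Euclidean space is zero. The strengthening we need must yield $\hat h(y) \leq c\,\hat h(x)$ for such aligned pairs with $|n|$ large, outside a fixed exceptional set. Combined with $\hat h(y) = n^2 \hat h(x)$, this bounds $|n|$ unless $\hat h(x) = 0$. In that case $x$ is torsion and $y$ is torsion, and Manin--Mumford handles it: since $C_i$ is not a torsion translate of an abelian subvariety, there are only finitely many torsion points. Here the hypothesis that no proper subgroup contains $C_1 \cup C_2$ serves to verify the non-degeneracy condition in R\'emond's inequality for $C_1 \times C_2$ (equivalently, for the difference-type map). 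This is where I expect the real difficulty to lie.

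\textbf{Step 3: finiteness and the counting of type (ii).} With $|n|$ bounded, $\Xi$ lies in finitely many $\Gamma_n \cap X$, so its closure consists of points and finitely many graphs. It remains to bound the number of $n$ with $[n](C_1) = C_2$ (together with the possible vertical and horizontal degenerations).

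If $[n]C_1 = C_2 = [m]C_1$ with $n \neq m$ nonzero, consider $[n]x - [m]x$ on $C_1$. Degree comparison gives $n^2 \deg C_1 \sim \deg C_2$ up to the degree of $[n]|_{C_1}$. So $|n|$ is determined up to finitely many choices, with sign ambiguity $\pm n$. Having $[n]C_1 = [-n]C_1$ forces $C_1$ to be symmetric up to torsion. Together with the case $n = 0$, which forces $0_A \in C_2$ and gives $C_1 \times \{0_A\}$, this yields at most three graphs, and three only when one of them is $C_1 \times \{0_A\}$.

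Finally, suppose a horizontal curve $C_1 \times \{y_0\}$ occurs. Then $y_0 \in [n]C_1$ for infinitely many $n$, so $y_0 \in [n]C_1$ for a Zariski dense set of $n$. By the structure of the Zariski closure of $\bigcup_n [n]C_1$, this forces $C_1$ to be a component of an algebraic subgroup. Symmetrically, vertical curves force $C_2$ to be a component of an algebraic subgroup.
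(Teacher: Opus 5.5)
\textbf{There is a genuine gap: Step~2 names the result that has to be proved but does not prove it, and the tool you propose cannot deliver it.} What you need is, in the paper's formulation, $\hat h([n](x)-y)\geq c\,(n^2\hat h(x)+\hat h(y))$ for all $(x,y)\in(C_1\times C_2)(\IQbar)$ outside a finite set and all $|n|\geq c'$ (Proposition~\ref{proposition::21st_century_vojta}). For $y=[n](x)$ the left-hand side vanishes, which forces $\hat h(x)=\hat h(y)=0$.

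The Vojta--R\'emond inequality holds only under a large-height hypothesis. This is true both in R\'emond's form and in the generalized form \cite{D20} applied to $\varphi_n^\ast L$ on $C_1\times C_2$. Applied to $(x,[n](x))$ it gives only $\hat h(x)\leq c$ and $\hat h(y)\leq c\,n^2$ (Proposition~\ref{proposition::known_bounds}). Points with $0<\hat h(x)\leq c$ and $y=[n](x)$ therefore survive, and $|n|$ stays unbounded.

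The Pila--Zannier variant you allude to is not carried out: you give no Galois-orbit bound and no counting argument. It also lacks its basic input. Only $\hat h(x)$ is bounded, while $\hat h(y)=n^2\hat h(x)$ is not. No bounded-height theorem is available on $C_1\times C_2$, since every point lies on an anomalous curve $C_1\times\{y_0\}$; this is why \cite[Theorem~1.3(ii)]{Habegger2016} excludes products of curves.

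Removing the large-height hypothesis is the actual content of the paper, and it uses ingredients absent from your plan:
\begin{itemize}
\item the arithmetic lower bound $h_{\Ltil_n}(C_1\times C_2)\gg n^2$ (Proposition~\ref{proposition::heightbound}), proved by equidistribution on $(C_1\times C_2)^k$ versus its Faltings--Zhang image together with Yuan's bigness theorem; the simple intersection-number argument works only when $C_1=C_2$;
\item an adelic Minkowski construction of a section $\mathbf{s}_n$ of $L_n^{\otimes M}$ that is small at one place;
\item an index analysis at points of $\Div(\mathbf{s}_n)$: large index is handled by the product theorem, the uniform Bogomolov bound of \cite{David2002} and Northcott, and small index by uniform Schwarz lemmas at every place.
\end{itemize}
Once that inequality is available, your idea of reading off the classification from boundedness of $|n|$ does work over $\IQbar$ in the setting of Proposition~\ref{proposition::21st_century_vojta}.

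\textbf{Several other steps are also unjustified.}

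\emph{Reduction to $\IQbar$.} This is not a specialization argument. Zariski density of $\Xi$ is not preserved under specialization without uniform control, and the hypothesis on $C_1\cup C_2$ can fail after specializing $A$. Moreover, \cite{BarroeroDill2022} transfers the \emph{full} Zilber--Pink conjecture from $\IQbar$ to $K$, so it cannot be quoted for this special case. The paper instead derives the classification from non-density via Zilber--Pink for curves over arbitrary $K$ (Lemma~\ref{lemma::main_reduction}). It then proves non-density over $K$ by induction on $\mathrm{trdeg}(K/K_0)$ (Section~\ref{section::proof_general}). That induction combines the structure results of Barroero--Dill/Gao and R\'emond with Zilber--Pink for curves and a codimension estimate for $\langle C_1\times C_2\rangle\cap(\id_A,[n])(A)$ that uses $\dim A\geq 3$.

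\emph{Counting type (ii) in Step~3.} Degree comparison shows only that $[n](C_1)=C_2$ for finitely many $n$, not for at most two nonzero $n$. The identity $n_1^2/|\Stab_A(C_1)\cap A[n_1]|=n_2^2/|\Stab_A(C_1)\cap A[n_2]|$ is compatible with $n_1\neq\pm n_2$ and a finite stabilizer, e.g.\ $n_1=2$, $n_2=3$ with a stabilizer containing $(\IZ/2\IZ)^2\times(\IZ/3\IZ)^2$. Lemma~\ref{lem:genfinite} iterates instead: $[n_1^k](C_1)$ is a translate of $[n_2^k](C_1)$ for all $k\geq 1$. This forces a positive-dimensional stabilizer, which is then excluded.

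\emph{Infinite stabilizers and common cosets.} Curves with infinite stabilizer, or $C_1,C_2$ lying in translates of a common proper subgroup, fall outside Step~2 altogether. Manin--Mumford gives finiteness only for curves that are not torsion cosets. These cases need Mordell--Lang, as in Lemmas~\ref{lemma::nocoset} and~\ref{lemma::nocoset2}.

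\emph{Horizontal components.} For $C_1\times\{y_0\}$, the phrase \emph{a Zariski dense set of $n$} has no meaning. The correct argument is that infinitely many points of $C_1$ are division points of $\langle y_0\rangle$, so $C_1$ is a coset by Mordell--Lang for finite-rank groups (Proposition~\ref{proposition::MordellLang}). Then $y_0\in[n_1](C_1)\cap[n_2](C_1)$ for some $n_1\neq n_2$ makes it a torsion coset.
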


    There can indeed be three components of type (ii) as one can see by taking $C_1$ equal to a hyperelliptic curve of genus at least 3, embedded into its Jacobian $J$ by use of a Weierstrass point as a base point so that $[-1](C_1) = C_1$ and $0_J \in C_1(K)$, and $C_2 = [n](C_1)$ for some arbitrary $n \in \mathbb{Z} \backslash \{0\}$.
    
    The assertion of this theorem is related to Conjecture \ref{conjecture::pinkzilber} for the surface $X=C_1 \times C_2$, which puts it outside of the reach of the previously known results on unlikely intersections. Nevertheless, Bays and Habegger \cite{Bays2015} proved the analogous statement in the setting of multiplicative tori in the case where $C_1 = C_2$, but under a further technical condition on the subcurve $C = C_1 = C_2 \subseteq \Gm^g$. They needed to assume that $K = \IQbar$ and that, for some homomorphism $\sigma: K \hookrightarrow \IC$, the curve $C \times_{K, \sigma} \IC$ intersects the maximal compact subgroup of $\Gm^g(\IC)$ in at most finitely many points. This condition presents a substantial obstruction to any generalization of their approach to general semiabelian varieties. In fact, the maximal compact subgroup of an abelian variety is the abelian variety itself. Thus, there is no technical condition for curves in abelian varieties similar to the one in \cite{Bays2015}. Although we think that the method presented here will most likely work in the setting of \cite{Bays2015} and general semiabelian varieties as well, we restrict ourselves here to the case of abelian varieties.
    
    Other work on Levin's question has been conducted by Boxall \cite{Boxall}, Smith \cite{Smith}, Baldi and Urbanik \cite{Baldi_Urbanik}, and Dogra \cite{Dogra}. See also \cite[Sections 4.2 and 4.3]{Ballini} for some results related to an analog in powers of $Y(1)$. While completing this article, the authors also became aware of independent work of Smith \cite{Smith26} on related questions.

    We already mentioned \cite{Habegger2016}, where Habegger and Pila establish Conjecture \ref{conjecture::pinkzilber} in the case of curves in abelian varieties over $\IQbar$. In the same article, they also prove a result for subvarieties of
arbitrary dimension \cite[Theorem~1.3(ii)]{Habegger2016}: the set $X \cap A^{[\dim(X)+1]}$ is not Zariski dense in $X$ if the image of $X$ under every quotient map $A \to A/B$, with $B$ an abelian subvariety, is equal to $A/B$ or has the same dimension as $X$. As already mentioned in \cite{Habegger2016}, squares (and products) of curves in the square of an abelian variety of dimension $\geq 2$ do not satisfy this hypothesis and therefore this result does not cover the setting of our Theorem \ref{theorem::main}.

    We now give a sketch of the proof of Theorem \ref{theorem::main}. In Lemma \ref{lemma::main_reduction}, we reduce Theorem \ref{theorem::main} to the statement that the set $\Xi$ is not Zariski dense in $C_1 \times C_2$ (Theorem \ref{theorem::main_reduction}). If we additionally assume that $C_1$ and $C_2$ are not both contained in translates of the same proper algebraic subgroup of $A$ (which is a harmless assumption given that the Mordell-Lang conjecture for abelian varieties is known by work of Vojta \cite{Vojta1991}, Faltings \cite{Faltings1994}, and Hindry \cite{Hindry1988}), then, for every $n\ne 0$, the restriction to $C_1 \times C_2$ of the map
\begin{equation*}
	   \varphi_n : A \times A \rightarrow A, \ (x, y) \longmapsto [n] (x) - y
    \end{equation*}
is generically finite onto its image. Together with the theorem of the cube, this gives a geometric degree estimate (Lemma \ref{lemma::degreebound}), which is later matched by the corresponding arithmetic analog.

 Theorem \ref{theorem::mainQbar} is the main result of the article and establishes Theorem \ref{theorem::main_reduction} in the case 
$K=\overline{\mathbb Q}$. The proof is reduced to a height inequality (Proposition \ref{proposition::21st_century_vojta}) of Vojta type: for every $(x,y) \in (C_1 \times C_2)(\overline{\mathbb Q})$, and for $|n|$ sufficiently large, one has
\begin{equation}
    \label{equation::vojta}
        h_{\widetilde{L}}([n](x)-y) \gg n^2 h_{\widetilde{L}}(x)+h_{\widetilde{L}}(y),
\end{equation}
where $L$ is a symmetric, very ample line bundle on $A$ and $C_1$ and $C_2$ are now assumed to have finite stabilizer as well.

This immediately implies the desired non-Zariski density in the arithmetic case (under the mentioned harmless additional assumptions on $C_1$ and $C_2$). Indeed, if $y=[n](x)$, then the left-hand side is zero; hence, the inequality forces $x$ and $y$ to have canonical height zero, and Manin-Mumford gives only finitely many such points on $C_1$ and $C_2$. For the case $C_1=C_2$, a bound similar to \eqref{equation::vojta} appears in the work of R\'emond (\cite[Th\'eor\`eme 1.1]{Remond2000}) as a consequence of his generalized Vojta inequality. However, a large height assumption appears throughout the results in \cite{Remond2000}. The essential improvement here is to remove this assumption. At the same time, we use in our proof the generalized Vojta-R\'emond inequality from \cite{D20} to deal with the large height case.

Our proof of \eqref{equation::vojta} revisits ideas from the original works of Faltings \cite{Faltings1991} and Vojta \cite{Vojta1991} and is based on the controlled construction of an auxiliary section using Arakelov theory. We supply the necessary background in Section \ref{section::arithmetic}. A novelty is an equidistribution argument (Subsection \ref{subsection::equidistribution}) to prove that the height of $C_1\times C_2$ with respect to $\Ltil_n= \varphi_n^\ast \Ltil$ grows at least like a positive multiple of $n^2$ (Proposition \ref{proposition::heightbound}). In the case $C_1 = C_2$, such a lower bound can be proved by a simple argument using properties of arithmetic intersection numbers (Proposition \ref{proposition::heightbound_symmetric}). However, no such argument seems feasible in the general case.

An adelic form of Minkowski's Theorem produces, for each large $n$, a non-zero section $\mathbf{s}_n$ of a high tensor power of $L_n=\varphi_n^\ast L$ which is very small at one chosen place and bounded elsewhere, thanks to the above-mentioned estimate for the height of $C_1 \times C_2$. Since any section can be used to calculate the height of a point, provided it does not vanish there, if $p=(x,y)$ does not belong to the divisor of $\mathbf{s}_n$, these bounds for $\mathbf{s}_n$ give the desired inequality directly. If $p$ lies on the divisor, one considers the index of $\mathbf{s}_n$ at $p$. Points of large index are controlled by the Product Theorem (Proposition \ref{prop:productthm}), which forces them to have bounded degree so that only finitely many points of large index can violate the height inequality (here, we use the above-mentioned generalized Vojta-R\'emond inequality as well as a sufficiently uniform version of the Bogomolov conjecture from \cite{David2002}). Points of small index are treated by differentiating $\mathbf{s}_n$ to its first non-vanishing leading coefficient at $p$ thereby obtaining a rational section which does not vanish at $p$ and can be used in the height formula. The uniform Schwarz estimates of Section \ref{section::schwarz} control the effect of these differentiations at all places. For their proof, we use (finitely many) local affine charts and eventually reduce the proof to the case of open balls in affine space. This requires the quantitatively controlled extension of polynomial functions. For this control, we use models at non-archimedean places and Oka's extension theorem for holomorphic functions at archimedean places.

Finally, Section \ref{section::proof_general} deduces Theorem \ref{theorem::main_reduction} over an arbitrary algebraically closed field of characteristic $0$ from the arithmetic case using ideas from \cite{BarroeroDill2022}.
One may assume that the ground field $K$ has finite transcendence degree over $\overline{\mathbb{Q}}$ and we let $K_0$ be the ``minimal field of definition'' of $A$. We argue by induction on $\mathrm{trdeg}(K/K_0)$ and note that if $K_0= \overline{\mathbb{Q}}$ the base case is nothing but Theorem \ref{theorem::mainQbar}. If $K_0\ne \overline{\mathbb{Q}}$ we rely on \cite[Theorem 3.1]{BarroeroDill2022}, which follows from \cite{Gao2018}, a form of ``structure theorem'' for families of abelian varieties, and the Zilber-Pink conjecture for curves in abelian varieties (\cite[Corollary 1.6]{BarroeroDill2022}) for arbitrary algebraically closed fields of characteristic $0$.
The inductive step relies on a more traditional ``structure theorem'' for abelian varieties due to R\'emond \cite{Remond2009} and again \cite[Corollary 1.6]{BarroeroDill2022}.

\subsection*{Notations}

    If $F$ is any field of characteristic $0$, then an $F$-variety is a separated, reduced scheme of finite type over $F$, but we allow varieties to be reducible. If $F$ is our fixed algebraically closed base field $K$, then we will also write variety instead of $K$-variety.  Varieties of pure dimension $1$ are called curves. For a projective variety $X \subseteq \IP^N$ of pure dimension $d$ defined over $K$, we consider its degree 
    \begin{equation*}
    \deg(X):=\deg(c_1(\caO(1))^d \cap [X])  
    \end{equation*}
    as defined in \cite{Fulton1998}. 
    
    We use some notions from algebraic intersection theory as follows. For line bundles $L_1, L_2, \dots, L_d$ on a proper irreducible algebraic variety $X$ of dimension $d$ over $K$, we write
    \begin{equation*}
        L_1 \cdot L_2 \cdots L_d \in \IZ
    \end{equation*}
    for the algebraic intersection number as defined by Kleiman \cite{Kleiman1966} and Snapper \cite{Snapper1960} (see \cite[Section VI.2]{Kollar1996} for a good introduction). These coincide with the numbers
\begin{equation*}
\deg(c_1(L_1) \cap c_1(L_2) \cap \cdots \cap c_1(L_d) \cap [X]) \in \IZ
\end{equation*}
in the terminology of \cite{Fulton1998}. If $\{ M_1, M_2, \dots, M_r \} = \{ L_1, L_2, \dots, L_d \}$ and each $M_i$ occurs precisely $n_i$ times among $L_1,L_2,\dots,L_d$ with the $M_i$ pairwise distinct, we set
\begin{equation*}
M_1^{n_1} \cdot M_2^{n_2} \cdots M_r^{n_r} := L_1 \cdot L_2 \cdots L_d.
\end{equation*}
A similar notation is used for the arithmetic intersection numbers defined in Subsection \ref{subsection::arithmetic_intersection_numbers}. Furthermore, we write $\deg_L(X)$ for $L^{d}$.

\section{Reductions}\label{section::reduction}

Recall that $K$ is an algebraically closed field of characteristic $0$. The goal of this section is to show that Theorem \ref{theorem::main} for some fixed $K$ follows from the following seemingly weaker theorem. This section also contains some other reductions and consequences of these that will be useful later.

\begin{theorem}\label{theorem::main_reduction}
Let	$A$ be an abelian variety of dimension $\geq 3$ defined over $K$ and let $C_1, C_2 \subseteq A$ be irreducible algebraic curves such that there is no proper algebraic subgroup of $A$ containing both of them. Then the set
\begin{equation*}
        \Xi =\{(x,y) \in (C_1 \times C_2)(K) \mid  [n](x) = y \text{ for some } n \in \mathbb{Z}
        \}
        \end{equation*}
is not Zariski dense in $C_1 \times C_2$.
\end{theorem}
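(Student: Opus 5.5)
The plan is to follow the strategy outlined in the introduction. First reduce to a favourable geometric situation, then prove the statement over $\IQbar$ via a Vojta-type height inequality, and finally descend from $K$ to $\IQbar$ by a specialization argument.

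\textbf{Step 1: reductions.} We may assume that $C_1$ and $C_2$ are not both contained in translates of the same proper algebraic subgroup $B \subsetneq A$. Suppose they were, say $C_i \subseteq a_i + B$, and $\Xi$ were Zariski dense. Then the points $[n](x) - y$ would be forced into the cosets $[n](a_1) - a_2 + B$ of a finitely generated group. Mordell-Lang (Vojta, Faltings, Hindry) applied in $A/B$ or in a suitable product, combined with the hypothesis that no proper algebraic subgroup contains both curves, should give a contradiction.

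We may also assume that $C_1$ and $C_2$ have finite stabilizers. A curve with infinite stabilizer is a translate of an elliptic curve. Such cases can be handled directly by Zilber-Pink for curves \cite[Corollary 1.6]{BarroeroDill2022}, applied to the image curves in $A \times A$.

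Under these assumptions, for every $n \neq 0$ the restriction of $\varphi_n(x,y) = [n](x) - y$ to $C_1 \times C_2$ is generically finite onto its image. Moreover, the theorem of the cube gives $\varphi_n^\ast L \equiv n^2 \pr_1^\ast L + \pr_2^\ast L + (\text{cross term})$. From this we get a degree estimate $\deg_{\varphi_n^\ast L}(C_1 \times C_2) \asymp n^2$.

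\textbf{Step 2: the case $K = \IQbar$.} The key claim is the inequality \eqref{equation::vojta}:
\[
h_{\Ltil}([n](x) - y) \gg n^2 h_{\Ltil}(x) + h_{\Ltil}(y)
\]
for $|n|$ large and for all $(x,y)$ outside a finite set, or outside a fixed proper closed subset.

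Given this inequality, any point of $\Xi$ with $|n|$ large has $\hat h(x) = \hat h(y) = 0$, so $x$ and $y$ are torsion points. Manin-Mumford then leaves only finitely many such points. For each of the finitely many small values of $n$, the locus $\{[n](x) = y\}$ is a proper closed subset of $C_1 \times C_2$. It cannot be all of $C_1 \times C_2$, since $\dim = 2$ while the graph of $[n]$ restricted to $C_1$ is a curve. Hence $\Xi$ is not Zariski dense.

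To prove the inequality, I would argue as follows.
\begin{enumerate}
\item Show the arithmetic height bound $h_{\Ltil_n}(C_1 \times C_2) \gg n^2 \deg_{L_n}(C_1 \times C_2)$, where $\Ltil_n = \varphi_n^\ast \Ltil$. I would obtain this by equidistribution of small points.
\item Use adelic Minkowski to construct small sections $\mathbf{s}_n$ of $L_n^{\otimes k}$.
\item If $p = (x,y)$ lies off the divisor of $\mathbf{s}_n$, bound the height of $p$ directly using $\mathbf{s}_n$.
\item If $p$ has small index with respect to $\mathbf{s}_n$, differentiate $\mathbf{s}_n$ to its first non-vanishing leading coefficient at $p$, using uniform Schwarz lemmas to control the effect at every place.
\item If $p$ has large index, use the Product Theorem to confine such points to bounded degree. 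Then combine the Vojta-Rémond inequality \cite{D20} for the points of large height with a uniform Bogomolov bound \cite{David2002} for the points of small height.
\end{enumerate}

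\textbf{Step 3: general $K$.} By finite generation we may assume $\mathrm{trdeg}(K/\IQbar) < \infty$. We then induct on $\mathrm{trdeg}(K/K_0)$, where $K_0$ is the minimal field of definition of $A$.
\begin{itemize}
\item If $K_0 \neq \IQbar$, use \cite[Theorem 3.1]{BarroeroDill2022} together with Zilber-Pink for curves.
\item For the inductive step, use Rémond's structure theorem \cite{Remond2009} to reduce to a family over a curve. Density of $\Xi$ would then specialize to density over a smaller field, contradicting the induction hypothesis.
\end{itemize}

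\textbf{Main obstacle.} I expect the lower bound on $h_{\Ltil_n}(C_1 \times C_2)$ in Step 2 to be the hardest point when $C_1 \neq C_2$. The cross terms in the theorem-of-the-cube decomposition have no sign control in arithmetic intersection theory. My first attempt would be to apply equidistribution of Galois orbits of small points on $C_1 \times C_2$ towards the canonical measures. This should show that the successive minima of $\Ltil_n$ are $\gg n^2$, and Zhang's inequality would then give the required height bound.
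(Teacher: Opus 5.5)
Steps 1 and 2 follow the paper's architecture, but Step 3 has a genuine gap, and Step 3 is where the theorem over a general $K$ is actually proved.

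\textbf{The inductive step.} You propose to reduce via R\'emond to a family over a curve and then let density of $\Xi$ ``specialize'' to a smaller field. This does not work. Zariski density is not preserved under specialization: a dense set of points of the generic fibre (i.e.\ of multisections) can specialize entirely into a fixed curve, or even a point, of a special fibre. Nothing guarantees that the hypotheses on $C_1,C_2$ survive specialization either.

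The paper never specializes. It invokes the induction hypothesis only when $C_1,C_2$ are base changes of curves over the smaller algebraically closed field $L$. In that case each finite set $C_1\cap[n]^{-1}(C_2)$ with $[n](C_1)\not\subseteq C_2$ is automatically $L$-rational. Otherwise it argues directly:
\begin{enumerate}
\item $C_1\times C_2\subseteq V_K$ for a threefold $V$ over $L$ \cite[Lemma 2.2]{BarroeroDill2022}.
\item A non-$L$-rational $p\in\Xi$ lies in $W_K$ for a curve $W\subseteq V$ over $L$ with $\langle W\rangle=\langle\{p\}\rangle\subseteq Y_n:=X\cap(\id_A,[n])(A)$, where $X=\langle C_1\times C_2\rangle$.
\item One enlarges $W$ to an optimal, hence weakly optimal \cite[Proposition 4.5]{Habegger2016}, subvariety $W'\subseteq V$. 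R\'emond's theorem puts $H=\langle W'\rangle^\circ_{\mathrm{ws}}$ in a finite set independent of $p$.
\item If $\dim W'=1$, then $H\subseteq(\id_A,[n])(A)$ determines $n$. If $\dim W'=2$, one passes to $(A\times A)/H$ and concludes with the fibre dimension theorem or with Zilber--Pink for curves.
\end{enumerate}
So R\'emond's theorem is used to control weakly optimal subvarieties of $V$, not to produce a family to specialize.

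\textbf{The missing defect inequality.} Both this argument and the base case $K=K_0\neq\IQbar$ (which you only name) hinge on a defect inequality you never establish. In the base case it reads $\delta(W)\leq\dim Y_n<\delta(C_1\times C_2)$; in the inductive step, $\delta(W')\leq\dim Y_n-1<\delta(V)$. This is what forces the optimal subvariety through $p$ to be proper. It comes from $\codim_X Y_n>g-1$, which needs two ingredients:
\begin{itemize}
\item A reduction absent from your Step 1: neither $C_i$ lies in a coset of codimension $\geq 2$. This is Lemma \ref{lemma::nocoset2}, via Mordell--Lang for finite-rank groups; finite stabilizers are not enough. It gives $\dim X\geq 2g-2$.
\item The fact that $X$ contains the graph of $[n]$ over $p_1(X)$ for at most one $n$.
\end{itemize}
Only then can \cite[Theorem 3.1]{BarroeroDill2022} and Zilber--Pink for curves finish the base case. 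The base case still splits into two cases. If $W=\{p\}$, either $\dim H>0$ fixes $n$, or $[m](p)$ lies in the square of the proper trace. If $\dim W=1$, one quotients by $H$.

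\textbf{Your ``main obstacle''.} Equidistribution on $C_1\times C_2$ alone gives no contradiction. There $c_1(\overline{L}_n)^{\wedge 2}=n^2\,c_1(\overline{L}_1)^{\wedge 2}$ (Lemma \ref{lemma::homogeneity}), so the normalized limit measure does not depend on $n$. The paper instead compares equidistribution on $S^k$ with equidistribution on the Faltings--Zhang image $\Delta_{B,k}(S^k)$. It uses a test function factoring through the first factor. After pulling back to $C_1^k$, one limit density vanishes at diagonal points and the other does not. This comparison has to be done quantitatively and uniformly in $n$ (Lemma \ref{lemma::equidistribution}, via Yuan's bigness theorem).

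\textbf{Two smaller points.}
\begin{itemize}
\item The reduction to curves not in translates of a common proper subgroup is elementary. If $[n_i](a_1)-a_2\in B$ for $n_1\neq n_2$, both curves lie in $[n_1-n_2]^{-1}(B)$.
\item \cite{David2002} is not used to handle points of small height. It bounds $[\IK(x,y):\IK(y)]$, so that the product theorem's degree bound for $x$ \emph{or} $y$ becomes a bound for $x$.
\end{itemize}
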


We recall the following terminology that will be used throughout this article: a torsion coset of an abelian variety is a translate of an abelian subvariety by a torsion point. A coset is a translate of an abelian subvariety by an arbitrary point.

\subsection{A special case of Theorem \ref{theorem::main}}
\label{subsection::specialcase}

We first resolve some special cases, in which we can just prove Theorem \ref{theorem::main} directly.

\begin{lemma} \label{lemma::nocoset}
    Assume that there exists a proper abelian subvariety $B \subsetneq A$ and points $x_1,x_2 \in A(K)$ such that $C_i \subseteq x_i + B$ for $i \in \{1,2\}$, but that there is no proper algebraic subgroup of $A$ containing both $C_1$ and $C_2$.
    
    Then, $[n](C_1) \cap C_2 \neq \emptyset$ for at most one $n \in \mathbb{Z}$. Hence, the Zariski closure of the set
       \[\{(x,y) \in (C_1 \times C_2)(K) \mid [n](x) = y \text{ for some } n \in \mathbb{Z}\}\]
       is equal to either a finite set or a graph $(\id_A,[n_0])(C_1)$ for some $n_0 \in \mathbb{Z}$.
\end{lemma}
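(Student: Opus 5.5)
Pass to the quotient $\pi: A \to A/B$, an abelian variety of positive dimension. Since $C_i \subseteq x_i + B$, we get $\pi(C_i) = \{\pi(x_i)\}$. Set $p_i = \pi(x_i)$. If $[n](C_1) \cap C_2 \neq \emptyset$, then applying $\pi$ gives $[n](p_1) = p_2$. The plan is to show this relation holds for at most one $n$. Suppose instead $[n](p_1) = p_2 = [m](p_1)$ with $n \neq m$. Then $[n-m](p_1) = 0$, so $p_1$ is torsion, and hence $p_2 = [n](p_1)$ is torsion too.

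Now use the hypothesis. Let $H$ be the subgroup of $A/B$ generated by $p_1$ (and automatically containing $p_2$). Since $p_1$ is torsion, $H$ is a finite subgroup, hence a proper algebraic subgroup of $A/B$ because $\dim(A/B) > 0$. Its preimage $\pi^{-1}(H)$ is a proper algebraic subgroup of $A$ (a finite union of cosets of $B$) containing both $C_1 \subseteq \pi^{-1}(p_1)$ and $C_2 \subseteq \pi^{-1}(p_2)$. This contradicts the assumption. So at most one $n_0$ has $[n_0](C_1) \cap C_2 \neq \emptyset$.

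For the second assertion, the set $\Xi$ is then contained in $\{(x,[n_0](x)) : x \in C_1, [n_0](x) \in C_2\}$, or is empty. This is the intersection of the irreducible curve $(\id_A,[n_0])(C_1)$ with the closed set $C_1 \times C_2$, and in fact equals $(\id_A,[n_0])(C_1) \cap (C_1 \times C_2)$. It is therefore Zariski closed, and we split into two cases:
\begin{enumerate}
\item[(a)] If $[n_0](C_1) \subseteq C_2$, the closure is the whole graph $(\id_A,[n_0])(C_1)$.
\item[(b)] Otherwise, $[n_0](C_1) \cap C_2$ is a proper closed subset of the irreducible curve $[n_0](C_1)$ (for $n_0 = 0$ this is a point, which is already finite). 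It is then finite, and since $[n_0]$ has finite fibres on $A$, the set $\Xi$ is finite as well.
\end{enumerate}

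There is no serious obstacle here. The only point needing care is that $A/B$ has positive dimension, so that a finite subgroup is proper; this is exactly where properness of $B$ is used.
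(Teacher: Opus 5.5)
Your proof is correct and is essentially the paper's argument: the paper also passes to $A/B$, deduces $[n_1-n_2]\pi(x_1)=0$ and $[n_1-n_2]\pi(x_2)=0$, and concludes $C_1,C_2\subseteq [n_1-n_2]^{-1}(B)$, i.e.\ the preimage of the $(n_1-n_2)$-torsion of $A/B$ rather than of $\langle p_1\rangle$, which is an immaterial difference. One small remark on the second part: case (b) never occurs with $n_0=0$, because $[0](C_1)\cap C_2\neq\emptyset$ forces $0_A\in C_2$ and hence case (a), so your finite-fibre argument is only needed for $n_0\neq 0$, where it is valid.
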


\begin{proof}
We first show that $[n](C_1) \cap C_2 \neq \emptyset$ for at most one $n \in \mathbb{Z}$. We therefore assume that $[n_i](C_1) \cap C_2 \neq \emptyset$ ($i = 1,2$) with $n_1, n_2 \in \mathbb{Z}$ distinct. 

Writing $\pi: A \to A/B$ for the quotient homomorphism, we note that $[n_i]\pi(x_1)=\pi([n_i](C_1))=\pi(C_2) = \pi(x_2)$. This means $[n_i](x_1) - x_2 \in B(K)$ ($i = 1,2 $). We deduce that $x_1 \in [n_1-n_2]^{-1}(B)(K)$ and, as $[n_1-n_2]\pi(x_2)=[n_1-n_2][n_1]\pi(x_1)=[n_1][n_1-n_2]\pi(x_1) =0_{A/B}$, the same holds for $x_2$.

But this means that $C_1$ and $C_2$ are both contained in the proper algebraic subgroup $[n_1-n_2]^{-1}(B) \subsetneq A$, which contradicts our assumption. So $[n](C_1) \cap C_2$ can be non-empty for at most one $n$, which we call $n_0$ (if it exists).

The claim about the Zariski closure follows since $[n_0](C_1) \cap C_2$ is either finite or equal to $C_2$.
\end{proof}

\begin{proposition}
\label{proposition::MordellLang}
    Let	$A$ be an abelian variety over an algebraically closed field $K$ of characteristic $0$. Let $C \subseteq A$ be an irreducible algebraic curve that is not a coset.
    
    Then, the intersection $C \cap \{ z \in A(K)\mid [n](z)\in \langle x \rangle \text{ for some $n \ne 0$}  \}$ is finite for every point $x \in A(K)$.
    
    In particular, the sets
    \begin{align*}
        \{ x\in C(K) \mid [n](x)=y_0 \text{ for some $n \neq 0$}\}
    \end{align*}
    and
    \begin{align*}
    \{ y\in C(K) \mid y = [n](x_0) \text{ for some $n \in \mathbb{Z}$}\}&
    \end{align*}
    are finite for all given points $x_0, y_0\in A(K)$.
\end{proposition}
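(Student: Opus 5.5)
The idea is to apply the Mordell--Lang conjecture for abelian varieties, proved by Faltings and Vojta over $\IQbar$ and extended to arbitrary algebraically closed $K$ of characteristic $0$ by Hindry and Faltings (see the introduction), to a suitable finitely generated subgroup. Fix $x \in A(K)$ and let $\Gamma = \langle x \rangle \subseteq A(K)$. This is a finitely generated subgroup. Its division group is
\[
\Gamma' = \{ z \in A(K) \mid [n](z) \in \Gamma \text{ for some } n \neq 0 \},
\]
which is a group of finite rank. The set in the statement is exactly $C \cap \Gamma'$.

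By Mordell--Lang in the finite-rank form, the Zariski closure of $C \cap \Gamma'$ is a finite union of cosets $z_i + B_i$ contained in $C$. Since $C$ is an irreducible curve, each such coset is either a point or has dimension $1$. In the latter case the coset equals $C$, which would make $C$ a coset, contrary to hypothesis. So every coset is a point and $C \cap \Gamma'$ is finite. If one prefers the version of Mordell--Lang for finitely generated groups only, one can instead handle the division hull via the Faltings--Hindry statement. The finite-rank version over arbitrary $K$ is standard, due to McQuillan and to Hindry combined with Faltings, so I would simply cite it.

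For the two particular sets:

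\begin{itemize}
\item \emph{First set.} If $[n](z) = y_0$ with $n \neq 0$, then $z$ lies in the set above for $x = y_0$. Hence $\{ z \in C(K) \mid [n](z) = y_0 \text{ for some } n \neq 0 \}$ is contained in $C \cap \Gamma'$ with $\Gamma = \langle y_0 \rangle$, and is finite.
\item \emph{Second set.} If $y = [n](x_0)$, then $y \in \langle x_0 \rangle$, so $[1](y) \in \langle x_0 \rangle$. Hence the second set is contained in $C \cap \Gamma'$ with $\Gamma = \langle x_0 \rangle$, and is finite.
\end{itemize}

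The only real issue is making sure the cited form of Mordell--Lang covers finite-rank subgroups over an arbitrary algebraically closed field of characteristic $0$; everything else is formal.
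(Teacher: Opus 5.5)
Your proposal is correct and takes the same approach as the paper, which simply calls the statement a straightforward consequence of the Mordell--Lang conjecture for finite-rank subgroups and cites McQuillan. Your argument supplies the details the paper leaves implicit: the division group of $\langle x\rangle$ has finite rank; a one-dimensional coset in the Zariski closure would force $C$ to be a coset; and both particular sets, including the case $n=0$ in the second, lie in that intersection.
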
 

\begin{proof}
    This is a straightforward consequence of the Mordell-Lang conjecture for finite rank subgroups, which is proven in \cite{McQuillan1995}.
\end{proof}

\begin{lemma}\label{lemma::nocoset2}
Assume that either $C_1$ or $C_2$ is contained in a coset of codimension $\geq 2$, but that there is no proper algebraic subgroup of $A$ containing both $C_1$ and $C_2$.

Then, every irreducible component of the Zariski closure of the set
   \[\{(x,y) \in (C_1 \times C_2)(K) \mid [n](x) = y \text{ for some } n \in \mathbb{Z}\}\]
       is either a point, a horizontal curve $C_1 \times \{y_0\}$ for some $y_0 \in C_2(K) \backslash \{0_A\}$, a vertical curve $\{x_0\} \times C_2$ for some $x_0 \in C_1(K)$, or a graph $(\id_A,[n_0])(C_1)$ for some $n_0 \in \mathbb{Z}$.
       
       In particular, this is the case if one of the stabilizers $\mathrm{Stab}_A(C_1)$ or $\mathrm{Stab}_A(C_2)$ is infinite.
\end{lemma}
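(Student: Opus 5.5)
We treat the two cases, $C_1$ or $C_2$ lying in a coset of codimension $\geq 2$, separately. In each case we push everything down to $A/B$, where $B$ is the relevant abelian subvariety, and apply Proposition \ref{proposition::MordellLang} there. This will show that $\Xi$ is contained in a finite union of horizontal curves, or of vertical curves, together with possibly the graph $C_1 \times \{0_A\}$. Write $\pi: A \to A/B$ for the quotient map; note that $\dim(A/B) \geq 2$.

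\emph{Case $C_1 \subseteq x_1 + B$ with $\codim B \geq 2$.} If $(x,y) \in \Xi$ with $[n](x) = y$, then $\pi(y) = [n]\pi(x_1)$, so $\pi(y) \in \pi(C_2) \cap \langle \pi(x_1)\rangle$.
\begin{itemize}
\item If $C_2$ is contained in a translate of $B$, Lemma \ref{lemma::nocoset} applies directly and gives the conclusion (a finite set or a graph).
\item Otherwise $\pi(C_2)$ is an irreducible curve in $A/B$. Suppose first that $\pi(C_2)$ is a coset $z + E$, with $E$ an elliptic curve and hence proper in $A/B$. Then $C_2 \subseteq z' + \pi^{-1}(E)$ and $C_1 \subseteq x_1 + \pi^{-1}(E)$. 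Thus both curves lie in translates of the proper abelian subvariety $\pi^{-1}(E)^{0}$, and Lemma \ref{lemma::nocoset} applies again.
\item If $\pi(C_2)$ is not a coset, the second finiteness statement of Proposition \ref{proposition::MordellLang}, applied in $A/B$ with $x_0 = \pi(x_1)$, shows that $\pi(y)$ takes only finitely many values $c_1,\dots,c_k$. Each fibre $C_2 \cap \pi^{-1}(c_j)$ is finite because $\pi|_{C_2}$ is non-constant.
\end{itemize}
In the last subcase $\Xi \subseteq C_1 \times \{y_1,\dots,y_m\}$. Every component of its closure is therefore a point or a curve $C_1 \times \{y_0\}$. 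When $y_0 = 0_A$ this curve is the graph $(\id_A,[0])(C_1)$, and otherwise it is a horizontal curve as in the statement.

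\emph{Case $C_2 \subseteq x_2 + B$ with $\codim B \geq 2$.} The pairs with $n = 0$ lie in $C_1 \times \{0_A\}$, which is a graph. For $n \neq 0$ we get $[n]\pi(x) = \pi(x_2)$ with $\pi(x) \in \pi(C_1)$.
\begin{itemize}
\item If $C_1$ lies in a translate of $B$, or if $\pi(C_1)$ is a coset, we reduce to Lemma \ref{lemma::nocoset} exactly as before.
\item Otherwise the first finiteness statement of Proposition \ref{proposition::MordellLang}, with $y_0 = \pi(x_2)$, leaves finitely many values of $\pi(x)$. Finiteness of the fibres of $\pi|_{C_1}$ then leaves finitely many $x$.
\end{itemize}
Hence $\Xi \subseteq (\{x_1',\dots,x_m'\} \times C_2) \cup (C_1 \times \{0_A\})$. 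The components of the closure are then points, vertical curves, or the graph for $n_0 = 0$.

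\emph{Infinite stabilizer.} If $\mathrm{Stab}_A(C_i)$ is infinite, its identity component is a positive-dimensional abelian subvariety $E$. For any $c \in C_i(K)$ we have $c + E \subseteq C_i$, and irreducibility of $C_i$ forces $C_i = c + E$ with $E$ an elliptic curve. Since $\dim A \geq 3$, this coset has codimension $\dim A - 1 \geq 2$, so the first part of the lemma applies.

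The only delicate point is checking that the proposition is applicable, that is, that the image curve in $A/B$ is not a coset. The step where that fails is the reduction to Lemma \ref{lemma::nocoset}. It requires noting that $\pi^{-1}(E)$ is a proper algebraic subgroup, possibly disconnected, so one uses its identity component together with adjusted translates.
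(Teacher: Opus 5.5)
Your proof is correct and follows essentially the paper's argument: quotient by $B$, observe that exactly one of $\pi(C_1),\pi(C_2)$ is a point, and apply Proposition \ref{proposition::MordellLang} in $A/B$ to the other image, with the coset case ruled out via Lemma \ref{lemma::nocoset}. The paper does that reduction once at the outset rather than subcase by subcase, and your caveat about disconnectedness is unnecessary since $\pi^{-1}(E)$, an extension of the connected $E$ by the connected $B$, is itself an abelian subvariety; your treatment of the infinite-stabilizer clause, which the paper leaves implicit, is also fine.
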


A statement similar to Lemma \ref{lemma::nocoset2} has been proved for powers of elliptic curves in \cite[Proposition 4.0.7]{Smith}.

\begin{proof}
Let $B\subsetneq A$ be an abelian subvariety  of codimension $\geq 2$ such that some translate of $B$ contains $C_1$ or $C_2$. Let $\pi: A \to A/B$ denote the quotient map. 

Because of Lemma \ref{lemma::nocoset}, we may and will assume that $C_1$ and $C_2$ are not both contained in a translate of the \textit{same} proper algebraic subgroup of $A$. Either $\pi(C_1)$ or $\pi(C_2)$ is a singleton, but not both since otherwise $C_1$ and $C_2$ would both be contained in a translate of $B$.

We first treat the case where $\pi(C_1) = \{x\}$ is a singleton. If $\pi(C_2) \cap \langle x \rangle$ is infinite, then, by Proposition \ref{proposition::MordellLang}, $\pi(C_2)$ is equal to a coset in $A/B$. But then $C_1$ and $C_2$ are both contained in a translate of the same proper algebraic subgroup of $A$, whence a contradiction. So $\pi(C_2) \cap \langle x \rangle$ is finite, which implies that the Zariski closure of the set in the lemma is a finite union of singletons and curves of the form $C_1 \times \{y_0\}$.

We next treat the case where $\pi(C_2) = \{y\}$ is a singleton. We know that $\pi(C_1)$ is not a coset, so, by Proposition \ref{proposition::MordellLang}, the set
\[ \{x \in \pi(C_1)(K) \mid [n](x) = y \text{ for some } n \neq 0\}\]
is finite. It follows that the Zariski closure of the set in the lemma is a finite union of singletons, curves of the form $\{x_0\} \times C_2$, and $C_1 \times \{0_A\} = (\id_A,[0])(C_1)$.
\end{proof}

\subsection{Generic finiteness}

By Lemma \ref{lemma::nocoset}, the proof of Theorem \ref{theorem::main} reduces to the case where $C_1$ and $C_2$ are not both contained in a translate of the same proper algebraic subgroup of $A$. In this subsection, we note some simple consequences of this assumption. 

The main application of the next lemma is for curves $C_1, C_2 \subseteq A$ as in Theorem \ref{theorem::main}. However, a single invocation in the proof of Lemma \ref{lemma::stabilizer_of_difference} below makes it necessary to formulate it for curves in an abelian variety $B$ of dimension $\geq 2$ (and not just $\geq 3$). We define
    \begin{equation} \label{eq::def_phi_n}
	   \varphi_{B,n} : B \times B \rightarrow B, \ (x, y) \longmapsto [n] (x) - y,
    \end{equation}
for every integer $n$. Below, we write $\varphi_{n}$ instead of $\varphi_{A,n}$.

\begin{lemma}\label{lem:genfinite}
Let $C_1$ and $C_2$ be irreducible algebraic curves in an abelian variety $B$ of dimension $\geq 2$. Assume additionally that $C_1$ and $C_2$ are not both contained in translates of the same proper algebraic subgroup of $B$. Then, the morphism $(\varphi_{B,n})|_{C_1 \times C_2}$ is generically finite for all $n \in \mathbb{Z} \backslash \{0\}$ and quasi-finite for all but at most two $n \in \mathbb{Z} \backslash \{0\}$. In contrast, $\dim \varphi_{B,0}(C_1 \times C_2) < \dim (C_1 \times C_2)$. Moreover, for any $n \in \mathbb{Z} \backslash \{0\}$, the image $\varphi_{B,n}(C_1 \times C_2)$ is not an irreducible component of a proper algebraic subgroup of $B$.
\end{lemma}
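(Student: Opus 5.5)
The plan is to translate each assertion into a statement about fibres of $\varphi_{B,n}$ restricted to $C_1\times C_2$, and to use two facts repeatedly: a curve whose stabilizer is infinite is a coset, and a curve contained in a coset is contained in the translate of a proper subgroup. The claim for $n=0$ is immediate. Indeed, $\varphi_{B,0}(x,y)=-y$, so the image of $C_1\times C_2$ is $-C_2$, which is a curve, while $\dim(C_1\times C_2)=2$.

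For generic finiteness with $n\neq 0$, suppose the image $\varphi_{B,n}(C_1\times C_2)$ has dimension $\le 1$. Then it is an irreducible curve $D$, because it contains translates $[n](x)-C_2$ of the curve $-C_2$. Every translate $[n](x)-C_2$ with $x\in C_1$ lies in $D$ and has the same dimension, so $[n](x)-C_2=D$ for all $x\in C_1$. Hence $[n](x)-[n](x')\in\Stab_B(C_2)$ for all $x,x'\in C_1$. Since $[n](C_1)$ is a curve, this stabilizer is infinite. So $C_2$ is a coset $z+E$ with $E$ a one-dimensional abelian subvariety, and $[n](C_1)\subseteq [n](x)+E$. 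It follows that $C_1\subseteq [n]^{-1}([n](x)+E)$, and the irreducible $C_1$ lies in a single translate of the connected component $E'$ of $[n]^{-1}(E)$. But $E'$ is one-dimensional, in fact $E'=E$. So both curves lie in translates of $E$, which is proper since $\dim B\ge 2$, a contradiction. Hence $\varphi_{B,n}$ is generically finite on $C_1\times C_2$.

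For quasi-finiteness, a positive-dimensional fibre over a point $z$ gives a curve $\Gamma\subseteq C_1\times C_2$ with $[n](x)-y=z$ on $\Gamma$. Neither projection of $\Gamma$ is constant: if $x$ were constant then $y$ would be too. So $\Gamma$ projects onto $C_1$ and onto $C_2$, giving $[n](C_1)-z=C_2$. If this holds for two distinct nonzero values $n_1,n_2$ (with shifts $z_1,z_2$), then
\[
[n_1](C_1)-z_1=[n_2](C_1)-z_2 .
\]
I would then argue, as in Lemma \ref{lemma::nocoset}, that $[n_1-n_2]$-type relations force $C_1$ into a proper coset. A cleaner route: the set $\{n : [n](C_1)\text{ is a translate of }C_2\}$ is contained in $\{n: [n](C_1)\text{ is a translate of }[n_1](C_1)\}$. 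If $C_1$ is not a coset, then $[n](C_1)$ and $[m](C_1)$ have degrees growing like $n^2$ and $m^2$ (for an ample symmetric $L$, $\deg_L [n](C_1)\cdot\deg([n]|_{C_1})=n^2\deg_L C_1$). Translates have equal degree, so combining this with the finiteness of the stabilizer should force $|n|=|m|$. This leaves at most the pair $\pm n_1$, which is exactly the bound of two.

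The main obstacle is making this degree comparison rigorous, since $[n]|_{C_1}$ may have degree greater than one. I would try first to handle this via $[n]^{-1}$ and the finite stabilizer of $C_1$; the coset case is excluded because both curves would then lie in translates of the same one-dimensional subgroup.

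For the last claim, suppose the surface $S=\varphi_{B,n}(C_1\times C_2)$ is a component $z+H$ of a proper algebraic subgroup. The translates $[n](x)-C_2$ lie in $z+H$ for all $x\in C_1$. Then $C_2\subseteq [n](x)-z-H$, so $C_2$ lies in a translate of $H$. Also $[n](C_1)$ lies in a translate of $H$, so $C_1$ lies in a translate of the connected component of $[n]^{-1}(H)$, which has the same dimension as $H$. Replacing $H$ by the proper subgroup $[n]^{-1}(H)^0\supseteq H^0$, both curves lie in translates of the same proper subgroup, a contradiction.
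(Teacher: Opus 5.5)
Your arguments for $n=0$, for generic finiteness (you show directly that the image cannot be a curve, whereas the paper counts one-dimensional fibres via $\mathrm{Stab}_B([n](C_1))\cap\mathrm{Stab}_B(C_2)$), and for the last claim are fine. The gap is in the ``at most two'' assertion, exactly at the step you flag as the main obstacle. Suppose $[n](C_1)=[m](C_1)+w$ and $S=\mathrm{Stab}_B(C_1)$ is finite. The projection formula then gives only
\[
\frac{n^2}{|S\cap\ker[n]|}=\frac{m^2}{|S\cap\ker[m]|}.
\]
This bounds $|n|/|m|$ between $|S|^{-1/2}$ and $|S|^{1/2}$, but it does not force $|n|=|m|$. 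For example, if $S\cap\ker[2]$ has order $4$, then $\deg_L[2](C_1)=\deg_L C_1$. So degrees at a single level cannot exclude that $[2](C_1)$ is a translate of $C_1$. As written, you therefore get finitely many exceptional $n$, not at most two. The alternative of arguing as in Lemma~\ref{lemma::nocoset} does not apply either, since that argument starts from both curves lying in cosets of a common proper subgroup.

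The missing idea is to iterate. Since $[n]$ and $[m]$ commute, $[n](C_1)=[m](C_1)+w$ implies by induction that $[n^k](C_1)$ is a translate of $[m^k](C_1)$ for every $k\geq 1$. Hence
\[
\frac{n^{2k}}{|S\cap\ker[n^k]|}=\frac{m^{2k}}{|S\cap\ker[m^k]|}\quad\text{for all }k\geq 1 .
\]
The denominators are bounded by $|S|$, so letting $k\to\infty$ forces $|n|=|m|$; this is the paper's argument.

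Alternatively, pass to $B/S$. There the image of $C_1$ has trivial stabilizer, so $[n]$ is birational onto its image. Its degree with respect to a symmetric ample bundle is then multiplied by exactly $n^2$, which does give $n^2=m^2$.

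Either way, all exceptional $n$ share the same absolute value. The infinite-stabilizer case is then disposed of as you say.
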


\begin{proof}
Clearly, 
    \begin{equation*}
    \dim \varphi_{B,0}(C_1 \times C_2) = \dim [-1](C_2) = \dim C_2 < \dim (C_1 \times C_2).    
    \end{equation*}

The fiber of $(\varphi_{B,n})|_{C_1 \times C_2}$ ($n \in \mathbb{Z}\backslash \{0\}$) over a point $z \in B(K)$ is isomorphic to $C_1 \cap [n]^{-1}(C_2 + z)$. This intersection has dimension $1$ if and only if $C_1 \subseteq [n]^{-1}(C_2 + z)$, which implies $[n](C_1) \subseteq (C_2 + z)$. By irreducibility, this is only the case if $[n](C_1) = C_2 + z$. For any two points $z_1, z_2 \in B(K)$ such that both fibers $\varphi_{B,n}^{-1}(z_i) \cap (C_1 \times C_2)$, $i \in \{1, 2\}$, have dimension $1$, their difference $z_1 - z_2$ is contained in 
\begin{equation*}
\Gamma_n = \mathrm{Stab}_B([n](C_1)) \cap \mathrm{Stab}_B(C_2).    
\end{equation*}
In particular, $\Gamma_n$ is infinite if $(\varphi_{B,n})|_{C_1 \times C_2}$ has infinitely many $1$-dimensional fibers. But $\Gamma_n$ can be infinite only if both $[n](C_1)$ and $C_2$ are translates of the same elliptic curve and therefore also $C_1$ and $C_2$ are translates of the same elliptic curve, contrary to our assumption. Consequently, all but finitely many fibers of $(\varphi_{B,n})|_{C_1 \times C_2}$ are finite and thus the morphism $(\varphi_{B,n})|_{C_1 \times C_2}$ is generically finite since its image is certainly positive-dimensional.

If $(\varphi_{B,n})|_{C_1 \times C_2}$ were not quasi-finite for more than two $n \in \mathbb{Z} \backslash \{0\}$, we could find $n_1, n_2 \in \mathbb{Z} \backslash \{0\}$ and $z_1, z_2 \in B(K)$ such that $n_1 \neq \pm n_2$, $[n_1](C_1) = C_2 + z_1$, and $[n_2](C_1) = C_2 + z_2$.
In particular, we have
\[[n_1](C_1) = C_2 + z_1 = (C_2 + z_2) + (z_1 - z_2) = [n_2](C_1) + (z_1-z_2).\]
We deduce that $[n_1^k](C_1)$ is a translate of $[n_2^k](C_1)$ for every integer $k \geq 1$. Picking an arbitrary ample symmetric line bundle $L$ on $B$, we infer that
\[ \deg_L([n_1^k](C_1)) = \deg_L([n_2^k](C_1)) \]
for every integer $k \geq 1$. By the projection formula \cite[Proposition 2.5 (c)]{Fulton1998}, this implies that
\[ \frac{n_1^{2k}}{|\mathrm{Stab}_B(C_1) \cap \ker [n_1^k]|} = \frac{n_2^{2k}}{|\mathrm{Stab}_B(C_1) \cap \ker [n_2^k]|}\]
for all $k \in \mathbb{N}$. Since $n_1 \neq \pm n_2$, this forces $\mathrm{Stab}_B(C_1)$ to be positive-dimensional. Similarly, one can use that
\[ [n_1](C_2) = [n_1^2](C_1)-[n_1](z_1) = [n_1n_2](C_1) - [n_1](z_2) = [n_2](C_2) + [n_2](z_1) - [n_1](z_2)\]
implies that $[n_1^k](C_2)$ and $[n_2^k](C_2)$ are translates of each other for every integer $k \geq 1$. In this way, one obtains that $\mathrm{Stab}_B(C_2)$ must be positive-dimensional as well. By hypothesis, the stabilizers $\mathrm{Stab}_B(C_1)$ and $\mathrm{Stab}_B(C_2)$ are distinct elliptic subcurves of $B$. But this means that $[n](C_1)$ is not contained in any translate of $C_2$ for all $n \neq 0$. By the previous paragraph, this implies that $(\varphi_{B,n})|_{C_1 \times C_2}$ is quasi-finite for all $n \in \mathbb{Z} \backslash \{0\}$, which contradicts the assumption made at the beginning of this paragraph.

It remains to show that the image $\varphi_{B,n}(C_1 \times C_2)$, $n \in \mathbb{Z} \backslash \{0\}$, is not an irreducible component of a proper algebraic subgroup. In order to derive a contradiction, let us assume this were the case. If $\dim B = 2$, then, by what we have already proved, we would have
\[2 = \dim(C_1 \times C_2) = \dim \varphi_{B,n}(C_1 \times C_2) \le 1,\]
a contradiction. We can then assume $\dim B > 2$. Since 
    \begin{equation*}
    \dim \varphi_{B,n}(C_1 \times C_2) = \dim (C_1 \times C_2) < \dim(B),
    \end{equation*}
    the image $\varphi_{B,n}(C_1 \times C_2)$ would be contained in a proper algebraic subgroup $H \subsetneq B$. But this implies that $C_2$ is contained in a translate of $[-1](H) = H$ and that $C_1$ is contained in a translate of $[n]^{-1}(H)$. The connected components of $H$ and $[n]^{-1}(H)$ are all translates of the same abelian subvariety and so $C_1$ and $C_2$ would both be contained in translates of the same proper algebraic subgroup of $B$, a contradiction.
\end{proof}

\begin{lemma}\label{lem:inters_one_n}
    Let $C_1$ and $C_2$ be as in Theorem \ref{theorem::main} and assume additionally that $C_1$ and $C_2$ are not both contained in translates of the same proper algebraic subgroup of $A$. Then, for all but at most two integers $n$, the intersection
    \begin{equation*}
        [n](C_1 + x_1) \cap (C_2 + x_2) 
    \end{equation*}
    is finite for all points $x_1,x_2 \in A(K)$.

\end{lemma}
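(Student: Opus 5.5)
The plan is to reduce the statement to the quasi-finiteness assertion of Lemma \ref{lem:genfinite}, applied with $B = A$. Fix $n \in \mathbb{Z}$ and points $x_1, x_2 \in A(K)$. Since $C_1$ and $C_2$ are irreducible curves, $[n](C_1 + x_1)$ is either a point (if $n = 0$) or an irreducible curve. Likewise $C_2 + x_2$ is an irreducible curve. Hence the intersection $[n](C_1+x_1) \cap (C_2+x_2)$ is infinite only if $[n](C_1 + x_1) = C_2 + x_2$.

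The case $n = 0$ is immediate. Here $[0](C_1 + x_1) = \{0_A\}$, so the intersection has at most one point. So only nonzero $n$ matter.

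For $n \neq 0$, infiniteness means $[n](C_1) + [n](x_1) = C_2 + x_2$, that is,
\begin{equation*}
[n](C_1) = C_2 + z \quad\text{with } z = x_2 - [n](x_1).
\end{equation*}
This is exactly the condition under which, by the proof of Lemma \ref{lem:genfinite}, the fiber of $(\varphi_{n})|_{C_1 \times C_2}$ over $z$ is one-dimensional. Such a fiber is isomorphic to $C_1 \cap [n]^{-1}(C_2+z)$, which is all of $C_1$. So the restriction fails to be quasi-finite. Lemma \ref{lem:genfinite} says this failure occurs for at most two values of $n \in \mathbb{Z}\setminus\{0\}$.

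Conversely, if $n$ is not one of these exceptional values, there is no $z$ with $[n](C_1) = C_2 + z$. Then the intersection is finite for every choice of $x_1, x_2$. This gives the claim with at most two exceptional integers $n$, all of them nonzero.

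The only point needing care is confirming that Lemma \ref{lem:genfinite} applies, and this is where I expect the main (minor) obstacle. The hypotheses of the present lemma give $\dim A \geq 3 \geq 2$ and the non-coset condition directly, so the lemma does apply. The remaining thing to check is the dictionary between a one-dimensional fiber and the equality $[n](C_1) = C_2 + z$, which both directions of the argument rely on. In the forward direction, irreducibility and equal dimension turn the inclusion $[n](C_1) \subseteq C_2 + z$ into an equality. In the reverse direction, $[n](C_1) = C_2 + z$ forces $C_1 \subseteq [n]^{-1}(C_2 + z)$.
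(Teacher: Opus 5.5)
Your proposal is correct and is essentially the paper's argument. The case $n=0$ is trivial. For $n\neq 0$, an infinite intersection forces $[n](C_1)=C_2+(x_2-[n](x_1))$, which gives a one-dimensional fiber of $\varphi_n|_{C_1\times C_2}$, and Lemma \ref{lem:genfinite} allows this for at most two nonzero $n$. The paper states the same reduction in one line, and you have simply filled in the details.
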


\begin{proof} 
If $n = 0$, the intersection is trivially finite. If $n \neq 0$, the lemma is a direct consequence of Lemma \ref{lem:genfinite}.
\end{proof}

\subsection{Theorem \ref{theorem::main_reduction} implies Theorem \ref{theorem::main}}

\begin{lemma}\label{lemma::main_reduction}
     Theorem \ref{theorem::main_reduction} for a given algebraically closed field $K$ of characteristic zero implies Theorem \ref{theorem::main} for the same field $K$.
\end{lemma}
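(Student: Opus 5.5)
We assume Theorem \ref{theorem::main_reduction} holds over $K$ and deduce Theorem \ref{theorem::main}. If $C_1$ and $C_2$ are both contained in translates of the same proper algebraic subgroup, Lemma \ref{lemma::nocoset} already gives the conclusion: a finite set or a single graph. If one of them lies in a coset of codimension $\geq 2$, or has infinite stabilizer, Lemma \ref{lemma::nocoset2} gives the conclusion. So we may assume neither situation occurs. Then Lemmas \ref{lem:genfinite} and \ref{lem:inters_one_n} apply, and every $C_i$ has finite stabilizer. Write $Z$ for the Zariski closure of $\Xi$. By Theorem \ref{theorem::main_reduction}, $Z \neq C_1 \times C_2$. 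Since $C_1 \times C_2$ is an irreducible surface, every component of $Z$ is a point or an irreducible curve $D$, and the task is to classify these curves.

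Let $D \subseteq C_1\times C_2$ be an irreducible curve component of $Z$. Then $D \cap \Xi$ is infinite and Zariski dense in $D$. Write $D\cap \Xi = \bigcup_n D_n$ with $D_n = D \cap \{(x,[n](x))\}$.

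If some $D_n$ is infinite, then $D$ is contained in the closed graph $(\id_A,[n])(A)\cap (C_1\times C_2)$. If $n\neq 0$, this is the graph of $[n]$ on $C_1 \cap [n]^{-1}(C_2)$, so $D = (\id_A,[n])(C_1)$ with $[n](C_1) = C_2$. If $n = 0$, then $D = C_1\times\{0_A\}$. Either way $D$ is of type (ii).

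Otherwise each $D_n$ is finite, so infinitely many distinct $n$ contribute.

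\textbf{Case (a): $D$ projects to a point in one factor.} Suppose $D=\{x_0\}\times C_2$. Then infinitely many points $[n](x_0)$ lie on $C_2$. Since $C_2$ is not a coset (it has finite stabilizer), Proposition \ref{proposition::MordellLang} gives a contradiction, unless $C_2$ is itself a component of an algebraic subgroup, which is excluded here. The symmetric argument handles $D=C_1\times\{y_0\}$ with $y_0\neq 0$. Thus types (iii) and (iv) occur only in the degenerate situations already treated by Lemma \ref{lemma::nocoset2}, which is exactly what the final sentence of Theorem \ref{theorem::main} asserts.

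\textbf{Case (b): $D$ dominates both factors.} This is the main obstacle. The plan is to apply Theorem \ref{theorem::main_reduction} in an auxiliary situation. Consider the projection $D\to C_1$, which is finite of some degree. Using the points of $D_n$ for infinitely many $n$, I would consider the images $\varphi_n(D)$ and try to show that $D$ is a component of $\varphi_n^{-1}(\text{point})$ for some $n$. This would contradict the quasi-finiteness in Lemma \ref{lem:genfinite}.

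An alternative route is a specialization or height-type argument. Infinitely many $n$ with $[n](x) = y$ for $(x,y)\in D$ force $x$ to satisfy infinitely many relations along $D$. Mordell--Lang (Proposition \ref{proposition::MordellLang}), applied to the curve $\{[m](x)-y \mid (x,y) \in D\}$ for suitable $m$ combined with the finite rank group generated by such points, should show that only finitely many $n$ occur, contradicting density. I expect making this step rigorous to be the delicate point.

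It remains to count the graphs. By Lemma \ref{lem:genfinite}, $[n](C_1) = C_2$ can hold for at most two values $n\neq 0$, and these values must be $\pm n$. Adding $n=0$, which contributes the component $C_1\times\{0_A\}$, gives at most three components of type (ii), and if there are three then one of them is $C_1\times\{0_A\}$.
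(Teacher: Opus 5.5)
Your reductions, the case where a curve component lies in a graph, Case (a) in the non-degenerate situation, and the final count of graphs are all fine. The genuine gap is Case (b), which is the actual content of the lemma. You never show that an irreducible curve $D\subseteq C_1\times C_2$ dominating both factors cannot contain a Zariski-dense set of points $(x,[n](x))$ coming from infinitely many different $n$.

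Neither of your sketches can work:
\begin{itemize}
\item $\varphi_n|_D$ vanishes only on the finite set $D_n$, and nothing forces $\varphi_n(D)$ to be a point for any $n$. So there is no route to a contradiction with Lemma \ref{lem:genfinite}.
\item Mordell--Lang needs the relevant points to lie in one fixed finite-rank subgroup. Here the points $x$, and likewise $[m](x)-y=[m-n](x)$, vary freely along $D$ and generate a group of infinite rank.
\item Theorem \ref{theorem::main_reduction} only asserts non-density in the surface $C_1\times C_2$, so it says nothing about curves inside it.
\end{itemize}
What remains is a genuine unlikely-intersection problem: the curve $D\subseteq A^2$ against the subgroups $(\id_A,[n])(A)$ of codimension $\dim A\geq 3$.

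The paper closes this with the Zilber--Pink conjecture for curves in abelian varieties over arbitrary $K$, \cite[Theorem 1.1]{BarroeroDill2022}. Let $H$ be the smallest algebraic subgroup of $A^2$ containing $D$, and write $\pi_1,\pi_2$ for the two projections $A^2\to A$. Zilber--Pink leaves only finitely many points $(x,[n](x))\in D$ with $\dim(H\cap(\id_A,[n])(A))\leq\dim H-2$, so a dense set of points has $\dim(H\cap(\id_A,[n])(A))\geq\dim H-1$.
\begin{itemize}
\item If only one $n_0\neq0$ occurs in this dense set, then $D=(\id_A,[n_0])(C_1)$. Points with $n=0$ are finitely many because $D$ dominates $C_2$.
\item If two distinct nonzero $n_1,n_2$ occur, then $\dim H\leq2$, since each $H\cap(\id_A,[n_i])(A)$ has codimension $\leq1$ in $H$ while $(\id_A,[n_1])(A)\cap(\id_A,[n_2])(A)$ is finite. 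Density gives $C_2\subseteq\pi_1(H)\cap\pi_2(H)$. Minimality of $H$ then gives $\pi_2(H)\subseteq\pi_1(H)$. So $C_1$ and $C_2$ both lie in $\pi_1(H)$, of dimension $\leq2<\dim A$, a contradiction. This is where $\dim A\geq3$ enters.
\end{itemize}
Over $\IQbar$, with your finite-stabilizer reduction, a height argument would also work. Comparing heights along $D$ gives $n^2h_{\Ltil}(x)=h_{\Ltil}(y)\leq c\,h_{\Ltil}(x)+c'$, so $h_{\Ltil}(x)\to0$, and Bogomolov leaves finitely many $x$. But the lemma is claimed for arbitrary $K$.

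A smaller point: Lemma \ref{lemma::nocoset2} does not yield the last sentence of Theorem \ref{theorem::main}. If $C_1\times\{y_0\}$ with $y_0\neq0_A$ is a component, you still need Proposition \ref{proposition::MordellLang} to make $C_1$ a coset. Then $y_0\in[n_1](C_1)\cap[n_2](C_1)$ for some $n_1\neq n_2$ makes it a torsion coset. The same argument applies to type (iv).
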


\begin{proof}
    We first determine the curves $C \subseteq C_1 \times C_2$ dominating both factors that can appear as components of the Zariski closure of $\Xi$.

We fix one such $C$ and denote by $H$ the smallest algebraic subgroup of $A^2$ containing it. It follows from the Zilber-Pink conjecture for curves in abelian varieties \cite[Theorem 1.1]{BarroeroDill2022} that $C$ contains at most finitely many points $(x,[n](x))$ such that
\[ \dim(H \cap (\id_A,[n])(A)) \leq \dim H - 2.\]
So if $C$ is a component of the Zariski closure of $\Xi$, it must contain a Zariski dense set of points $(x,[n](x))$ with $\dim(H \cap (\id_A,[n])(A)) \geq \dim H - 1$.

If this dimension inequality holds for at most one $n_0 \in \mathbb{Z} \backslash \{0\}$, we deduce that $C = (\id_A,[n_0])(C_1)$.

If the inequality holds for $n_1, n_2 \in \mathbb{Z} \backslash \{0\}$ with $n_1 \neq n_2$, then it follows that
\[ 0 = \dim(H \cap (\id_A,[n_1])(A) \cap (\id_A,[n_2])(A)) \geq \dim H - 2,\]
so $\dim H \leq 2$. We denote by $\pi_i: A^2 \to A$ the projection to the $i$-th factor ($i = 1,2$). Since there is no proper algebraic subgroup of $A$ containing both $C_1$ and $C_2$ and $C$ dominates both factors, we deduce that
\[ \pi_1(H) + \pi_2(H) = A\]
and so
\[ \dim(\pi_1(H) \cap \pi_2(H)) = \dim \pi_1(H) + \dim \pi_2(H) - \dim A \leq 4 - \dim A \leq 1\]
since $\dim A \geq 3$.

Finally, if $(x,[n](x)) \in C(K)$, then $[n](x) \in (\pi_1(H) \cap \pi_2(H))(K)$, so if $C$ contains a Zariski dense set of such points, then $C_2 \subseteq \pi_1(H) \cap \pi_2(H)$ and so $\pi_2(H) \subseteq \pi_1(H)$ by the minimality of $H$. But then $C_1$ and $C_2$ are both contained in $\pi_1(H) \subsetneq A$ ($\dim \pi_1(H) \leq 2 < \dim A$), a contradiction.

Hence, every irreducible component of the Zariski closure of $\Xi$ is of the form described in Theorem \ref{theorem::main}. If a component $C_1 \times \{y_0\}$ of type (iii) appears with $y_0 \neq 0_A$, then, by Proposition \ref{proposition::MordellLang}, $C_1$ is a translate of an algebraic subgroup. But since $\Xi \cap (C_1 \times \{y_0\})$ is Zariski dense in $C_1 \times \{y_0\}$, there are $n_1, n_2 \in \IZ$ such that $n_1 \neq n_2$ and $y_0 \in [n_1](C_1) \cap [n_2](C_1)$. In particular, $[n_1](C_1) \cap [n_2](C_1) \neq \emptyset$ and so $C_1$ has to be a torsion coset and therefore a component of an algebraic subgroup. If a component of type (iv) appears, then $C_2$ is a component of an algebraic subgroup by a similar argument.

Finally, in addition to $C_1 \times \{0_A\}$, which appears in case $C_2$ contains the origin, Lemmas \ref{lemma::nocoset} and \ref{lem:inters_one_n} imply that there can be at most two components of type (ii) with $n\ne 0$.
\end{proof}

	\section{Geometry}
    \label{section::geometry}
	
	This and the next section rely heavily on Faltings's previous work \cite{Faltings1991}, which was inspired by Vojta's proof of the Mordell conjecture \cite{Vojta1991}. A very detailed exposition of the techniques in \cite{Faltings1991} can be found in \cite{Edixhoven1993}. We claim no originality for most of it and have only added this material to fix notations and allow easy references. In addition, the special situation considered here allows us to simplify most of the general arguments in our presentation.

    \subsection{Meromorphic tangent fields}
    \label{subsection::tangent_fields}
    In this subsection, we construct a meromorphic tangent field $\mathbf{D}$ on an arbitrary irreducible algebraic curve $C$. We summarize this construction in the following lemma. 
    
    \begin{lemma}
        \label{lemma::tangent_field}
        Let $C$ be an irreducible algebraic curve and $\pi: C \rightarrow \IP^1$ a non-constant map. Then, every meromorphic tangent field $$\mathbf{D}_0 \in \mathrm{Hom}_{K(\IP^1)}(\Omega^1_{K(\IP^1)/K}, K(\IP^1))$$ on $\IP^1$ lifts along $\pi$ to a meromorphic tangent field $$\mathbf{D} \in \mathrm{Hom}_{K(C)}(\Omega^1_{K(C)/K}, K(C)),$$
        i.e. there exists $\mathbf{D}$ such that
        \[ \pi^\ast \circ \mathbf{D}_0 = \mathbf{D} \circ \pi^\ast.\]
    \end{lemma}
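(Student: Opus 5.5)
The plan is to reduce the statement to linear algebra over function fields. Since $\pi$ is non-constant, it induces a finite field extension $\pi^\ast: K(\IP^1) \hookrightarrow K(C)$, and because $\mathrm{char}(K) = 0$ this extension is separable. A meromorphic tangent field on $\IP^1$ is the same thing as a $K$-derivation $\partial_0: K(\IP^1) \to K(\IP^1)$, by the universal property of $\Omega^1_{K(\IP^1)/K}$. Likewise, tangent fields on $C$ correspond to $K$-derivations of $K(C)$. The compatibility $\pi^\ast \circ \mathbf{D}_0 = \mathbf{D} \circ \pi^\ast$ is meant on differentials, i.e. $\mathbf{D}(\pi^\ast\omega) = \pi^\ast(\mathbf{D}_0(\omega))$ for $\omega \in \Omega^1_{K(\IP^1)/K}$. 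On derivations it says that the derivation $\partial$ of $K(C)$ restricts to $\partial_0$ on $\pi^\ast K(\IP^1)$. So it suffices to extend a derivation along a finite separable field extension.

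The key step is the standard extension lemma, proved via a primitive element. Write $K(C) = K(\IP^1)(\alpha)$ with minimal polynomial $f \in K(\IP^1)[T]$; this is possible because the extension is finite separable. Let $f^{\partial_0}$ denote $f$ with $\partial_0$ applied to its coefficients. Separability gives $f'(\alpha) \neq 0$, so we may set
\[
\partial(\alpha) := -\frac{f^{\partial_0}(\alpha)}{f'(\alpha)}.
\]
Define $\partial(g(\alpha)) := g^{\partial_0}(\alpha) + g'(\alpha)\,\partial(\alpha)$ for polynomials $g$. This is well defined, because the ideal $(f)$ is mapped into itself: for $g = fh$, the formula gives $f(\alpha)(\dots) + h(\alpha)\bigl(f^{\partial_0}(\alpha) + f'(\alpha)\partial(\alpha)\bigr) = 0$. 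It is a $K$-derivation extending $\partial_0$, and the extension is even unique.

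To finish, translate back. The derivation $\partial$ corresponds to a unique $K(C)$-linear $\mathbf{D}: \Omega^1_{K(C)/K} \to K(C)$ with $\mathbf{D}(dg) = \partial(g)$. Separability gives $\Omega^1_{K(C)/K} = K(C) \otimes \pi^\ast\Omega^1_{K(\IP^1)/K}$, which is one-dimensional and spanned by $d(\pi^\ast t)$ for a coordinate $t$ on $\IP^1$. It therefore suffices to check compatibility on $dt$, where it reads $\mathbf{D}(d\pi^\ast t) = \partial(\pi^\ast t) = \pi^\ast \partial_0(t) = \pi^\ast\mathbf{D}_0(dt)$. The main (mild) obstacle is only the bookkeeping of what "lift" means on the two sides of $\pi^\ast$. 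Given the one-dimensionality, one can even bypass derivations entirely: define $\mathbf{D}$ on the basis $\pi^\ast dt$ by $\mathbf{D}(\pi^\ast dt) := \pi^\ast(\mathbf{D}_0(dt))$ and extend $K(C)$-linearly.
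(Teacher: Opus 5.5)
Your proof is correct, but its main line differs from the paper's. The paper stays on the side of differentials. It cites the base-change isomorphism $\Omega^1_{K(C)/K} = \Omega^1_{K(\IP^1)/K} \otimes_{K(\IP^1)} K(C)$ for the finite separable extension $K(C)/K(\IP^1)$ (Matsumura, Theorem 25.3). It then simply defines $\mathbf{D}(\omega \otimes f) = f \cdot \pi^\ast(\mathbf{D}_0(\omega))$, and the lifting identity is immediate.

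Your closing remark, which defines $\mathbf{D}$ on the basis $\pi^\ast dt$ and extends $K(C)$-linearly, is exactly this proof written in a coordinate. Your primary route instead dualizes to derivations and extends $\partial_0$ by hand through a primitive element, which in effect reproves the needed part of that theorem. That route is fully self-contained, records uniqueness, and gives an explicit formula $\partial(\alpha) = -f^{\partial_0}(\alpha)/f'(\alpha)$ for the lift. The cost is choosing $\alpha$ and a bit more bookkeeping, whereas the paper's version is one line and coordinate-free.

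One small point of logic in your last paragraph. The reduction to checking compatibility on $dt$ rests on $\Omega^1_{K(\IP^1)/K} = K(\IP^1)\,dt$, with both sides $K(\IP^1)$-linear via $\pi^\ast$. It does not rest on the one-dimensionality of $\Omega^1_{K(C)/K}$. In the derivation route even that check is superfluous, since once $\partial$ extends $\partial_0$ you get $\mathbf{D}(\pi^\ast(a\,db)) = \pi^\ast(a)\,\partial(\pi^\ast b) = \pi^\ast(a\,\partial_0 b) = \pi^\ast(\mathbf{D}_0(a\,db))$ for all $a,b \in K(\IP^1)$ directly.
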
 	

   The tangent bundle of $\IP^1$ is globally generated and there are plenty of global tangent fields on $\IP^1$ that can be generically lifted. However, the lifting $\mathbf{D}$ of a global tangent field $\mathbf{D}_0$ on $\IP^1$ is in general only a meromorphic tangent field on $C$.

    \begin{proof}[Proof of Lemma \ref{lemma::tangent_field}]

    The map $\pi: C \rightarrow \IP^1$ corresponds to a finite field extension
    \[\pi^\ast: K(\IP^1) \hookrightarrow K(C).\]
    As $K$ has characteristic $0$, this extension is separable. Therefore,
    \[\Omega^1_{K(C)/K} = \Omega^1_{K(\IP^1)/K} \otimes_{K(\IP^1)} K(C)\]
    by \cite[Theorem 25.3]{Matsumura1986}.

        We write $\pi^\ast: \Omega^1_{K(\IP^1)/K} \rightarrow \Omega^1_{K(C)/K}$ for the induced pullback map on differentials. A meromorphic tangent field $\mathbf{D}$ on $C$ lifts a meromorphic tangent field $\mathbf{D}_0$ on $\IP^1$ if and only if $\pi^\ast \circ \mathbf{D}_0 = \mathbf{D} \circ \pi^\ast$. For any given meromorphic tangent field $\mathbf{D}_0: \Omega^1_{K(\IP^1)/K} \rightarrow K(\IP^1)$, a lifting can be explicitly defined by
        \begin{equation*}
            \mathbf{D}: \Omega^1_{K(C)/K} = \Omega^1_{K(\IP^1)/K} \otimes_{K(\IP^1)} K(C) \longrightarrow K(C), \
            \omega \otimes f \longmapsto f \cdot (\pi^\ast \circ \mathbf{D}_0)(\omega). \qedhere
        \end{equation*}
    \end{proof}

  \subsection{Indices and leading coefficients}
    \label{subsection::indices}

    Let $X$ be an irreducible algebraic variety. A meromorphic tangent field $\mathbf{D}$ on $X$, i.e. an element of $\mathrm{Hom}_{K(X)}(\Omega^1_{K(X)/K}, K(X))$, always spreads out to a tangent field on some non-empty Zariski open subset of $X^{\mathrm{sm}}$ by \cite[Tags \href{https://stacks.math.columbia.edu/tag/01CP}{01CP} and \href{https://stacks.math.columbia.edu/tag/01V3}{01V3}]{stacksProjectAuthors2015}. Since $\Omega^1_{X^{\mathrm{sm}}/K}$ is locally free, there is a maximal such Zariski open subset $U$ of $X^{\mathrm{sm}}$ and $\mathbf{D}$ admits a unique extension to $U$. If $\mathbf{D}$ is non-zero, we define the divisor $\mathrm{Div}(\mathbf{D})$ of $\mathbf{D}$ as the union of the complement of $U$ with the set of $p \in U$ such that the extension of $\mathbf{D}$ maps the entire cotangent space at $p$ to $0$; in this way, $\mathrm{Div}(\mathbf{D})$ is a proper Zariski closed subset of $X$. The singular locus of $X$ is always contained in $\mathrm{Div}(\mathbf{D})$; nevertheless, we will often write $X^{\mathrm{sm}} \backslash \mathrm{Div}(\mathbf{D})$ for the complement of $\mathrm{Div}(\mathbf{D})$ below to avoid any ambiguity.
    
    Suppose that $X$ is endowed with commuting non-zero meromorphic tangent fields $\mathbf{D}_1$ and $\mathbf{D}_2$ whose divisors are denoted by $\mathrm{Div}(\mathbf{D}_1)$ and $\mathrm{Div}(\mathbf{D}_2)$. We write    
    \begin{equation*}
        \mathbf{D}^{i_1, i_2} = \frac{1}{i_1! i_2!} \mathbf{D}_1^{i_1} \circ \mathbf{D}_2^{i_2}
    \end{equation*}
    for the meromorphic differential operator of degree $i_1 + i_2$ on $X$ that is the concatenation of the $i_j$-fold iteration of $\mathbf{D}_j$ ($i_j \geq 0$) for $j = 1, 2$, multiplied by $\frac{1}{i_1! i_2!}$.

    Furthermore, we consider a line bundle $L$ on $X$, a non-zero rational section $\mathbf{s}$ of $L$, and a point $p \in X \backslash \bigcup_{i=1}^{2} \mathrm{Div}(\mathbf{D}_i)$ at which $\mathbf{s}$ is regular. Choose a trivialization $\psi: \caO_{U} \rightarrow L|_U$ on a Zariski-open neighborhood $U \subseteq X \backslash \bigcup_{i=1}^{2} \mathrm{Div}(\mathbf{D}_i)$ of $p$. In this way, the section $\mathbf{s}$ corresponds to a rational function $f = \psi^{-1}(\mathbf{s})$ on $U \subseteq X \backslash \bigcup_{i=1}^{2} \mathrm{Div}(\mathbf{D}_i)$, regular at $p$. For any given pair $\boldsymbol{d} = (d_1,d_2)$ of positive integers, we use the trivialization to define the index
	\begin{equation}
            \label{definition::index}
		\mathbf{I}_{\boldsymbol{d}}(\mathbf{s},p) := \min \left\{ \frac{i_1}{d_1} + \frac{i_2}{d_2} \ \middle\vert \ 
        \mathbf{D}^{i_1,i_2} f(p) \neq 0 \right\}.
	\end{equation}

	In this generality, it is possible that the index is $\infty$ if all derivatives under consideration vanish at $p$. As the notation suggests, the index does not depend on the choice of the trivialization $\psi$. Indeed, this is implied by the following lemma whose proof is almost trivial. However, the independence of leading coefficients from the chosen trivializations expressed in \eqref{equation::leading_coefficient} will prove to be crucial later.
    
    \begin{lemma}
    \label{lemma::leading_coefficient} Let $(d_1,d_2)$ be a pair of positive integers, let $\psi_i: \mathcal{O}_U \rightarrow L|_U$, $i \in \{1,2\}$, be two trivializations of $L$ in a Zariski-open neighborhood $U$ of $p$, and set $f_i = \psi_i^{-1}(\mathbf{s})$. Let $i_1, i_2 \in \mathbb{Z}_{\geq 0}$ and consider the following statements: 
    \begin{enumerate}
        \item[(i)\textsubscript{1}] $\mathbf{D}^{i_1,i_2} f_1 \notin  \mathfrak{m}_p$,
        \item[(ii)\textsubscript{1}] $\mathbf{D}^{j_1,j_2} f_1 \in \mathfrak{m}_p$ for all $(j_1,j_2)$ satisfying
        \[ \frac{j_1}{d_1} + \frac{j_2}{d_2} < \frac{i_1}{d_1} + \frac{i_2}{d_2},\]
        \item[(i)\textsubscript{2}]
         $\mathbf{D}^{i_1,i_2} f_2 \notin  \mathfrak{m}_p$, and
         \item[(ii)\textsubscript{2}] 
         $\mathbf{D}^{j_1,j_2} f_2 \in \mathfrak{m}_p$ for all $(j_1,j_2)$ satisfying
         \[ \frac{j_1}{d_1} + \frac{j_2}{d_2} < \frac{i_1}{d_1} + \frac{i_2}{d_2}.\]
    \end{enumerate}
    Then \emph{(ii)\textsubscript{1}} $\implies$ \emph{(ii)\textsubscript{2}} and \emph{(i)\textsubscript{1}} $\land$ \emph{(ii)\textsubscript{1}} $\implies$ \emph{(i)\textsubscript{2}} $\land$ \emph{(ii)\textsubscript{2}}.
    
    In addition, if \emph{(i)\textsubscript{1}} and \emph{(ii)\textsubscript{1}} hold, then
    \begin{equation}
    \label{equation::leading_coefficient}
        \psi_2(\mathbf{D}^{i_1,i_2}f_2)_p - \psi_1(\mathbf{D}^{i_1,i_2}f_1)_p \in  \mathfrak{m}_pL_{p}.
    \end{equation}
\end{lemma}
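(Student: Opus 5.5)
The two trivializations differ by a unit. Since $\psi_1$ and $\psi_2$ are both isomorphisms $\mathcal{O}_U \to L|_U$, there is a unit $u \in \mathcal{O}(U)^\times$ with $\psi_2 = \psi_1 \circ u$, where $u$ acts by multiplication. This gives $f_1 = u f_2$, and equivalently $f_2 = u^{-1} f_1$ with $u^{-1}(p) \neq 0$. By symmetry it therefore suffices to understand how multiplying by a unit $v$ affects the derivatives $\mathbf{D}^{j_1,j_2}$ at $p$. We take $v = u^{-1}$ and $f_2 = v f_1$.

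The key tool is the Leibniz rule for the normalized operators. The tangent fields $\mathbf{D}_1, \mathbf{D}_2$ commute and are regular derivations on $U$, since $U$ avoids their divisors. With the factorial normalization, the Leibniz rule reads
\[ \mathbf{D}^{j_1,j_2}(v f_1) = \sum_{k_1 \le j_1,\, k_2 \le j_2} \mathbf{D}^{j_1-k_1,j_2-k_2}(v)\, \mathbf{D}^{k_1,k_2}(f_1). \]
This follows from the one-variable rule $\frac{1}{n!}D^n(gh) = \sum_{a+b=n} \frac{1}{a!}D^a g\,\frac{1}{b!}D^b h$, applied first for $\mathbf{D}_2$ and then for $\mathbf{D}_1$, using commutativity. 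All terms are regular on $U$.

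First, assume (ii)\textsubscript{1} and take $(j_1,j_2)$ with $j_1/d_1 + j_2/d_2 < i_1/d_1 + i_2/d_2$. Every index $(k_1,k_2)$ in the sum satisfies $k_1/d_1 + k_2/d_2 \le j_1/d_1 + j_2/d_2$, which is still strictly below the threshold. Hence every $\mathbf{D}^{k_1,k_2} f_1$ lies in $\mathfrak{m}_p$, and so does the sum. This proves (ii)\textsubscript{2}.

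Now assume (i)\textsubscript{1} and (ii)\textsubscript{1}, and expand $\mathbf{D}^{i_1,i_2}(v f_1)$. Every term with $(k_1,k_2) \neq (i_1,i_2)$ has $k_1/d_1 + k_2/d_2 < i_1/d_1+i_2/d_2$, because $d_1, d_2 > 0$ and $k_\ell \le i_\ell$ with at least one inequality strict. These terms therefore lie in $\mathfrak{m}_p$ by (ii)\textsubscript{1}. This gives
\[ \mathbf{D}^{i_1,i_2} f_2 \equiv v \cdot \mathbf{D}^{i_1,i_2} f_1 \pmod{\mathfrak{m}_p}. \]
Since $v(p) \neq 0$ and $\mathbf{D}^{i_1,i_2} f_1 \notin \mathfrak{m}_p$, statement (i)\textsubscript{2} follows.

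Finally, apply $\psi_2 = \psi_1 \circ u$ to this congruence. We get $\psi_2(\mathbf{D}^{i_1,i_2} f_2) \equiv \psi_1(u v\, \mathbf{D}^{i_1,i_2} f_1) = \psi_1(\mathbf{D}^{i_1,i_2} f_1)$ modulo $\mathfrak{m}_p L_p$, because $uv = 1$ and $\psi_1$ maps $\mathfrak{m}_p$ into $\mathfrak{m}_p L_p$. This is \eqref{equation::leading_coefficient}.

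The only real point to check is that the factorial-normalized Leibniz rule holds with commuting fields and that everything stays regular on $U$. Both are routine.
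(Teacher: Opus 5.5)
Your proof is correct and is essentially the paper's argument: the two trivializations differ by a unit, and the factorial-normalized Leibniz rule gives (ii)\textsubscript{2} and the congruence $\mathbf{D}^{i_1,i_2} f_2 \equiv v\,\mathbf{D}^{i_1,i_2} f_1 \pmod{\mathfrak{m}_p}$, from which (i)\textsubscript{2} and the leading-coefficient congruence follow. Two harmless remarks: $f_1$ is in general only regular at $p$ (because $\mathbf{s}$ is a rational section), so you should work in $\mathcal{O}_{X,p}$ rather than on all of $U$; and commutativity of $\mathbf{D}_1,\mathbf{D}_2$ is not needed for your Leibniz formula, since $\mathbf{D}^{j_1,j_2}$ is by definition $\mathbf{D}_1^{j_1}\circ\mathbf{D}_2^{j_2}$ up to factorials.
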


\begin{proof}
    There exists $g \in \mathcal{O}^\times(U)$ such that $\psi_2^{-1} = g 
    \cdot \psi_1^{-1}$. Consequently, $f_2 = gf_1$ with $g \notin \mathfrak{m}_{p}$. A repeated use of the product rule shows that (ii)\textsubscript{1} implies (ii)\textsubscript{2}. Using (ii)\textsubscript{2}, we obtain
    \begin{equation*}
        \mathbf{D}^{i_1,i_2}(f_2) - g \cdot \mathbf{D}^{i_1,i_2}(f_1) \in \mathfrak{m}_p
    \end{equation*}
    and so (i)\textsubscript{1} $\land$ (ii)\textsubscript{1} $\implies$ (i)\textsubscript{2} $\land$ (ii)\textsubscript{2}. Applying $\psi_{2,p}$ to both sides, we infer \eqref{equation::leading_coefficient}
    since $\psi_2 = g^{-1} \cdot \psi_1$.
\end{proof}

Thanks to Lemma \ref{lemma::leading_coefficient}, we can define, under the hypotheses there and using some trivialization $\psi$, the value 
\[(\mathbf{D}^{i_1,i_2}\mathbf{s})(p) =  \psi(\mathbf{D}^{i_1,i_2}\psi^{-1}(\mathbf{s}))_p \text{ mod } \mathfrak{m}_pL_p\]
inside $L_p/\mathfrak{m}_pL_p$, which is canonically isomorphic to $K$, and this value does not depend on the choice of $\psi$.

\begin{lemma}
    \label{lemma::leading_coefficient_product}
    Let $L$ and $M$ be two line bundles on $X$, let $\mathbf{d} = (d_1, d_2)\in \mathbb{Z}^2_{> 0}$, let $k_1, k_2 \in \IZ_{\geq 0}$, and let $\mathbf{s}$ and $\mathbf{t}$ be rational sections of $L$ and $M$ respectively. Let $p \in X^{\mathrm{sm}} \setminus (\mathrm{Div}(\mathbf{D}_1) \cup \mathrm{Div}(\mathbf{D}_2))$. Suppose that $\mathbf{s}$ and $\mathbf{t}$ are regular at $p$ and that $\mathbf{t}$ does not vanish at $p$.
    Then
\[ \mathbf{I}_{\mathbf{d}}(\mathbf{s} \otimes \mathbf{t},p) = \mathbf{I}_{\mathbf{d}}(\mathbf{s},p).\]
    If, moreover,
    \[ \mathbf{I}_{\mathbf{d}}(\mathbf{s},p) = \frac{k_1}{d_1} + \frac{k_2}{d_2},\]
    then
\begin{equation*}
    (\mathbf{D}^{k_1,k_2} (\mathbf{s} \otimes \mathbf{t}))(p) = (\mathbf{D}^{k_1,k_2} \mathbf{s})(p) \otimes \mathbf{t}(p) \in (L \otimes M)_p/\mathfrak{m}_p(L \otimes M)_p.
\end{equation*}
\end{lemma}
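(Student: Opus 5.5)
We reduce everything to a local computation with the Leibniz rule. Choose a Zariski-open neighborhood $U \subseteq X^{\mathrm{sm}} \setminus (\mathrm{Div}(\mathbf{D}_1) \cup \mathrm{Div}(\mathbf{D}_2))$ of $p$ on which both $L$ and $M$ are trivial, with trivializations $\psi_L: \caO_U \to L|_U$ and $\psi_M: \caO_U \to M|_U$. Shrinking $U$, we may assume that $\mathbf{s}$ and $\mathbf{t}$ are regular on $U$. Put $f = \psi_L^{-1}(\mathbf{s})$ and $g = \psi_M^{-1}(\mathbf{t})$. Then $\psi_L \otimes \psi_M$ is a trivialization of $L \otimes M$ on $U$, and under it $\mathbf{s}\otimes\mathbf{t}$ corresponds to $fg$. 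Since $\mathbf{t}(p) \neq 0$, we have $g(p) \neq 0$, that is, $g \notin \mathfrak{m}_p$.

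The key step is the generalized Leibniz formula for the normalized operators. Because $\mathbf{D}_1$ and $\mathbf{D}_2$ commute and are derivations on $\caO(U)$, we get
\begin{equation*}
\mathbf{D}^{i_1,i_2}(fg) = \sum_{j_1 \le i_1,\, j_2 \le i_2} \mathbf{D}^{j_1,j_2}(f)\, \mathbf{D}^{i_1-j_1,i_2-j_2}(g).
\end{equation*}
The factorials $1/(i_1!i_2!)$ absorb the binomial coefficients. This is exactly the product-rule computation in the proof of Lemma \ref{lemma::leading_coefficient}, with $g$ now an arbitrary unit at $p$ rather than a unit on $U$. Only $g \notin \mathfrak{m}_p$ is needed, since we localize at $p$.

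Write $\iota = \mathbf{I}_{\mathbf{d}}(\mathbf{s},p)$. We prove $\mathbf{I}_{\mathbf{d}}(\mathbf{s}\otimes\mathbf{t},p) \ge \iota$ and then equality.
\begin{enumerate}
\item[(a)] Take $(i_1,i_2)$ with $i_1/d_1 + i_2/d_2 < \iota$. Every $(j_1,j_2)$ in the Leibniz sum has $j_1/d_1+j_2/d_2 \le i_1/d_1+i_2/d_2 < \iota$, so every term vanishes at $p$. Hence $\mathbf{I}_{\mathbf{d}}(\mathbf{s}\otimes\mathbf{t},p) \ge \iota$.
\item[(b)] If $\iota = \infty$, then (a) applies to every index pair, so both indices are $\infty$.
\item[(c)] Otherwise pick $(k_1,k_2)$ with $k_1/d_1+k_2/d_2 = \iota$ and $\mathbf{D}^{k_1,k_2}f(p) \neq 0$. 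In the Leibniz sum for $\mathbf{D}^{k_1,k_2}(fg)$, every term with $(j_1,j_2) \ne (k_1,k_2)$ has $j_1/d_1+j_2/d_2 < \iota$, because $d_1,d_2>0$ and $j_\ell \le k_\ell$ with at least one strict inequality. So each such term vanishes at $p$, leaving
\begin{equation*}
\mathbf{D}^{k_1,k_2}(fg)(p) = \mathbf{D}^{k_1,k_2}(f)(p)\, g(p) \neq 0.
\end{equation*}
This gives equality of indices.
\end{enumerate}

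For the second claim, assume $\iota = k_1/d_1+k_2/d_2$ for the given $(k_1,k_2)$. Conditions (ii)\textsubscript{1} for $f$ and for $fg$ hold by (a), so $(\mathbf{D}^{k_1,k_2}\mathbf{s})(p)$ and $(\mathbf{D}^{k_1,k_2}(\mathbf{s}\otimes\mathbf{t}))(p)$ are well defined by Lemma \ref{lemma::leading_coefficient}. This holds even if $\mathbf{D}^{k_1,k_2}f(p) = 0$, in which case both sides are $0$. The same Leibniz computation gives
\begin{equation*}
\mathbf{D}^{k_1,k_2}(fg) \equiv \mathbf{D}^{k_1,k_2}(f)\, g \pmod{\mathfrak{m}_p}.
\end{equation*}
Applying $\psi_L\otimes\psi_M$ and reducing modulo $\mathfrak{m}_p(L\otimes M)_p$ yields $(\mathbf{D}^{k_1,k_2}\mathbf{s})(p)\otimes\mathbf{t}(p)$.

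Independence of the trivializations is already guaranteed by Lemma \ref{lemma::leading_coefficient}. The only step needing care is checking that the Leibniz formula holds with the stated normalization for commuting derivations; this is a routine induction. We expect no real obstacle.
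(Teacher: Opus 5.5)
Your proof is correct and is essentially the paper's argument. The paper takes the trivialization of $M$ near $p$ given by $\mathbf{t}$ itself, so that $g = 1$ and $\mathbf{s}\otimes\mathbf{t}$ corresponds to the same function $f$ as $\mathbf{s}$, and then invokes the trivialization-independence from the proof of Lemma \ref{lemma::leading_coefficient}; you instead keep $g$ an arbitrary unit at $p$ and rerun the Leibniz computation, which also handles the case where $\mathbf{D}^{k_1,k_2}f(p)=0$ because the minimum is attained at a different pair (a point the paper leaves implicit).
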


\begin{proof}
This follows from the proof of Lemma \ref{lemma::leading_coefficient} since one can choose a trivialization of $L \otimes M$ around $p$ under which $\mathbf{t}$ corresponds to the constant function $1$.
\end{proof}

\subsection{Norms and line bundles}
\label{subsection::norms}

We summarize here basic facts related to norm maps along a finite, surjective map $\pi: X \rightarrow Y$ between irreducible algebraic varieties. \textit{We assume additionally that $Y$ is normal.} Our exposition is based on \cite[\href{https://stacks.math.columbia.edu/tag/0BCX}{Section 0BCX}]{stacksProjectAuthors2015} and we claim no originality. However, it is important for us to define a norm map as in \eqref{equation::global_sections_norm} below and this map is not explicitly defined in \cite{stacksProjectAuthors2015}.

Write $K(X)$ and $K(Y)$ for the function fields of $X$ and $Y$, respectively, and let $\delta = [K(X):K(Y)]$ denote the degree of $\pi$. There exists a unique multiplicative morphism of sheaves
\begin{equation}
\label{equation::norm_map}
N_{X/Y}: \pi_\ast \caO_X \longrightarrow \caO_Y   
\end{equation}
inducing the field norm $N_{K(X)/K(Y)}: K(X) \rightarrow K(Y)$ over the generic point $\eta_Y$ of $Y$; we define the map \eqref{equation::norm_map} by describing it for sections over affine open subsets of $Y$ in a compatible way. For every such affine open $U=\mathrm{Spec}(A^\prime) \subseteq Y$, there exists an $A^\prime$-finite ring $A$ such that $\pi^{-1}(U)=\mathrm{Spec}(A)$. We describe multiplicative maps
\begin{equation}
    N_{X/Y}(U): A = \mathcal{O}_X(\pi^{-1}(U)) \rightarrow \mathcal{O}_Y(U) = A^\prime
\end{equation}
that induce the field norm on the quotient field $K(X)$ and $K(Y)$. As both $A$ and $A^\prime$ are integral domains, they inject into their quotient fields $K(X)$ and $K(Y)$. Therefore, there is at most one possible choice for $N_{X/Y}(U)$, namely the restriction of the field norm $N_{K(X)/K(Y)}$ to $A$. As uniqueness automatically implies compatibility, it remains to check that $N_{K(X)/K(Y)}(A) \subseteq A^\prime$. For this purpose, take an arbitrary element $\alpha \in A$. There exists a monic polynomial
\begin{equation*}
    P(T) = T^m + b_{m-1} T^{m-1} + \cdots + b_0 \in A^\prime[T]
\end{equation*}
such that $P(\alpha) = 0$. As an element of $K(Y)[T]$, the polynomial $P(T)$ is divisible by the minimal polynomial
\begin{equation*}
    Q(T) = T^n + l_{n-1} T^{n-1} + \cdots + l_0 \in K(Y)[T]
\end{equation*}
of $\alpha$ over $K(Y)$. By \cite[\href{https://stacks.math.columbia.edu/tag/00H6}{Lemma 00H6 (1)}]{stacksProjectAuthors2015}, the coefficients $l_{0},\dots,l_{n-1}$ are integral over the normal domain $A^\prime$. We conclude that $l_{0},\dots,l_{n-1} \in A^\prime$, which implies that
\begin{equation*}
    N_{K(X)/K(Y)}(\alpha) 
    = N_{K(Y)(\alpha)/K(Y)}(\alpha)^{\delta/n} = (-1)^{\delta}l_0^{\delta/n} \in A^\prime,
\end{equation*}
which completes the definition of $N_{X/Y}:\pi_\ast \caO_X \rightarrow \caO_Y$. From its construction, we also see that the composite 
\begin{equation*}
    \caO_Y \xrightarrow{\pi^\#} \pi_\ast \caO_X \xrightarrow{N_{X/Y}} \caO_Y
\end{equation*}
is just the $\delta$-th power map $\caO_Y \rightarrow \caO_Y$. Furthermore, multiplicativity implies that $N_{X/Y} (\pi_\ast \caO_X^\times) \subseteq \caO_Y^\times$.

The norm map $N_{X/Y}$ induces a homomorphism $\Pic(X) \rightarrow \Pic(Y)$. Indeed, let $L$ be a line bundle determined by a covering $\{U_i \}$ and a subordinate cocycle $\alpha_{ij} \in \mathcal{O}_X^\times(U_i \cap U_j)$. Using \cite[\href{https://stacks.math.columbia.edu/tag/0BUT}{Lemma 0BUT}]{stacksProjectAuthors2015}, we can ensure that there exists an open covering $\{V_i\}$ of $Y$ such that $\pi^{-1}(V_i) = U_i$. Then, the norms of $\pi_\ast \alpha_{ij} \in \pi_\ast\caO_X^\times(V_i \cap V_j)$ yield elements $\beta_{ij} = N_{X/Y}(\pi_\ast \alpha_{ij}) \in \mathcal{O}_Y^\times(V_i \cap V_j)$. As the $(\beta_{ij})$ satisfy the same cocycle relations as the $\alpha_{ij}$ by multiplicativity, we obtain a line bundle $N_{X/Y}L$ on $Y$. Finally, it is easy to see that we obtain the same line bundle for an arbitrary refinement of $\{V_i\}$. Hence, the norm $N_{X/Y}L$ of any line bundle $L$ on $X$ is well-defined. In addition, we obtain a homomorphism
\begin{equation}
\label{equation::global_sections_norm}
    H^0(X,L) \longrightarrow H^0(Y,N_{X/Y}L), \ \mathbf{s} \longmapsto N_{X/Y}(\mathbf{s})
\end{equation}
as follows. Any section $\mathbf{s} \in H^0(X,L)$ corresponds to a collection $f_i \in \caO_X(U_i)$, $i \in I$, such that $f_i|_{U_i \cap U_j} = \alpha_{ij} \cdot f_j|_{U_i \cap U_j}$. Taking norms, we obtain a collection $N_{X/Y}(f_i) \in \mathcal{O}_Y(V_i)$, $i \in I$, satisfying the relations $N_{X/Y}(f_i)|_{V_i \cap V_j} = N_{X/Y}(\alpha_{ij}) \cdot N_{X/Y}(f_j)|_{V_i \cap V_j}$, and hence defining a global section $N_{X/Y}(\mathbf{s})$ of the line bundle $N_{X/Y} L$.

We conclude this subsection with a lemma on the relation of leading coefficients to the norm map.

\begin{lemma}
\label{lemma::norm_derivatives}
Let $\pi: X \rightarrow Y$ be a finite, surjective map between irreducible algebraic varieties. Assume additionally that $Y$ is normal. Let further $\mathbf{D}_1,\mathbf{D}_2$ be commuting non-zero meromorphic tangent fields on $X$ lifting commuting non-zero meromorphic tangent fields $\mathbf{D}_1^\prime,\mathbf{D}_2^\prime$ on $Y$. Let $d_1, d_2 \in \mathbb{Z}_{>0}$ and let $\sigma \in (d_1d_2)^{-1} \mathbb{Z}_{\geq 0}$. Let $L$ be a line bundle on $X$ and let $\mathbf{s} \in H^0(X,L)$ be a regular global section of $L$. Furthermore, consider a point
\begin{equation*}
p \in X^{\mathrm{sm}} \setminus \left(\mathrm{Div}(\mathbf{D}_1) \cup \mathrm{Div}(\mathbf{D}_2) \right)    
\end{equation*}
with image
\begin{equation*}
    \pi(p) \in Y^{\mathrm{sm}} \setminus \left( \mathrm{Div}(\mathbf{D}_1^\prime) \cup \mathrm{Div}(\mathbf{D}_2^\prime)\right).
\end{equation*}
Then, 
\begin{equation*}
\left[\mathbf{D}_1^{k_1}\mathbf{D}_2^{k_2} \mathbf{s}\right]\! (p) = 0 \text{\ \ for all \ \ } \frac{k_1}{d_1}+ \frac{k_2}{d_2} \leq \sigma
\end{equation*}
implies that
\begin{equation*}
\left[(\mathbf{D}_1^\prime)^{k_1} (\mathbf{D}_2^\prime)^{k_2}N_{X/Y}(\mathbf{s})\right]\! (\pi(p)) = 0 \text{\ \ for all \ \ } \frac{k_1}{d_1} + \frac{k_2}{d_2} \leq \sigma.
\end{equation*}
\end{lemma}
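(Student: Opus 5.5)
The idea is to write the norm locally as $f \cdot g$, where $f$ represents $\mathbf{s}$ and $g$ is \emph{regular} at $p$. Vanishing at the single point $p$ then passes to the norm through the Leibniz rule, and the compatibility $\mathbf{D}_i \circ \pi^\ast = \pi^\ast \circ \mathbf{D}_i^\prime$ transports it down to $\pi(p)$. So no information about the other points of the fibre $\pi^{-1}(\pi(p))$ is needed.

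\emph{Step 1 (trivializations).} Put $q = \pi(p)$. By \cite[\href{https://stacks.math.columbia.edu/tag/0BUT}{Lemma 0BUT}]{stacksProjectAuthors2015}, which was already used to define $N_{X/Y}L$, I choose an affine open $V = \mathrm{Spec}(A^\prime) \ni q$ such that $L$ is trivial on $\pi^{-1}(V) = \mathrm{Spec}(A)$. Then $\mathbf{s}$ corresponds to some $f \in A$. By construction, $N_{X/Y}L$ is trivialized on $V$ by the norm of the trivializing section, and $N_{X/Y}(\mathbf{s})$ corresponds to $N_{X/Y}(f) \in A^\prime$. By Lemma \ref{lemma::leading_coefficient}, the implication (ii)\textsubscript{1} $\Rightarrow$ (ii)\textsubscript{2}, the vanishing conditions in the hypothesis and in the conclusion do not depend on the trivializations chosen. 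The hypothesis therefore reads
\[ \mathbf{D}_1^{k_1}\mathbf{D}_2^{k_2} f \in \mathfrak{m}_p \quad \text{for all } \tfrac{k_1}{d_1}+\tfrac{k_2}{d_2} \le \sigma. \]
Since $p \notin \mathrm{Div}(\mathbf{D}_1) \cup \mathrm{Div}(\mathbf{D}_2)$, the $\mathbf{D}_i$ are genuine derivations of $\mathcal{O}_{X,p}$. Likewise, since $q \notin \mathrm{Div}(\mathbf{D}_1^\prime) \cup \mathrm{Div}(\mathbf{D}_2^\prime)$, the $\mathbf{D}_i^\prime$ are derivations of $\mathcal{O}_{Y,q}$. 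All computations below take place in these local rings.

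\emph{Step 2 (factorization of the norm).} Let $\chi(T) = T^\delta + c_{\delta-1}T^{\delta-1} + \cdots + c_0$ be the characteristic polynomial of multiplication by $f$ on $K(X)$ over $K(Y)$. It is a power of the minimal polynomial of $f$. As in the construction of $N_{X/Y}$, the normality of $A^\prime$ gives $c_j \in A^\prime$, and we have $c_0 = (-1)^\delta N_{X/Y}(f)$. From $\chi(f) = 0$ we obtain
\[ \pi^\ast N_{X/Y}(f) = f \cdot g, \qquad g := (-1)^{\delta+1}\bigl(f^{\delta-1} + c_{\delta-1}f^{\delta-2} + \cdots + c_1\bigr) \in A. \]
In particular, $g$ is regular at $p$.

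\emph{Step 3 (Leibniz and descent).} Fix $(k_1,k_2)$ with $k_1/d_1 + k_2/d_2 \le \sigma$. Because the lifts commute with pullback, we have
\[ \pi^\ast\bigl((\mathbf{D}_1^\prime)^{k_1}(\mathbf{D}_2^\prime)^{k_2} N_{X/Y}(f)\bigr) = \mathbf{D}_1^{k_1}\mathbf{D}_2^{k_2}(f g). \]
The $\mathbf{D}_i$ commute, so the Leibniz rule expands the right-hand side as a $\mathbb{Z}$-linear combination of the products $\bigl(\mathbf{D}_1^{j_1}\mathbf{D}_2^{j_2} f\bigr)\bigl(\mathbf{D}_1^{k_1-j_1}\mathbf{D}_2^{k_2-j_2} g\bigr)$ with $j_i \le k_i$. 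Each such index satisfies $j_1/d_1 + j_2/d_2 \le \sigma$, so the first factor lies in $\mathfrak{m}_p$. The second factor lies in $\mathcal{O}_{X,p}$. Hence the whole expression lies in $\mathfrak{m}_p$.

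Finally, let $h \in \mathcal{O}_{Y,q}$. Then $\pi^\ast h \in \mathfrak{m}_p$ if and only if $h(q) = (\pi^\ast h)(p) = 0$, that is, if and only if $h \in \mathfrak{m}_q$. Applying this to $h = (\mathbf{D}_1^\prime)^{k_1}(\mathbf{D}_2^\prime)^{k_2} N_{X/Y}(f)$ gives the claimed vanishing at $\pi(p)$.

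\emph{Main obstacle.} The only delicate point is Step 2, namely making sure that the cofactor $g$ is regular at $p$ itself. Taking $g$ to be the product of the other conjugates of $f$ would not suffice, since those are not functions on $X$. The explicit polynomial expression of $g$ in $f$, with coefficients in the normal ring $A^\prime$, settles this. This is exactly where the normality of $Y$ is used, and it is what allows the hypothesis to be imposed at the single point $p$ rather than on the whole fibre.
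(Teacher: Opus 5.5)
Your proof is correct and is essentially the paper's argument. Both use normality of $A^\prime$ to get an integral polynomial relation for $f$, split off a factor $f$ from the constant term, and then push the vanishing at $p$ down to $\pi(p)$ via the Leibniz rule and the lifting identity. The only cosmetic difference is that you use the characteristic polynomial, whose constant term is $\pm N_{X/Y}(f)$ directly, whereas the paper uses the minimal polynomial and needs one more Leibniz step to pass from $l_0$ to $N_{K(X)/K(Y)}(f) = \pm l_0^{[K(X):K(Y)(f)]}$.
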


The hypothesis 
\begin{equation*}
\left[\mathbf{D}_1^{k_1}\mathbf{D}_2^{k_2} \mathbf{s}\right]\! (p) = 0 \text{\ \ for all \ \ } \frac{k_1}{d_1}+ \frac{k_2}{d_2} \leq \sigma
\end{equation*}
is well-defined because of Lemma \ref{lemma::leading_coefficient} and the same is true for the conclusion
\begin{equation*}
\left[(\mathbf{D}_1^\prime)^{k_1} (\mathbf{D}_2^\prime)^{k_2}N_{X/Y}(\mathbf{s})\right]\! (\pi(p)) = 0 \text{\ \ for all \ \ } \frac{k_1}{d_1} + \frac{k_2}{d_2} \leq \sigma.
\end{equation*}

\begin{proof}
   Using \cite[\href{https://stacks.math.columbia.edu/tag/0BUT}{Lemma 0BUT}]{stacksProjectAuthors2015}, we find an affine open neighborhood $V=\mathrm{Spec}(A^\prime)$ of $\pi(p)$ such that $L|_{\pi^{-1}(V)}$ is trivial. Under any fixed trivialization, the section $\mathbf{s}$ corresponds to a regular function $f \in \caO(\pi^{-1}(V))$ and  
    \begin{equation*}
    \left[\mathbf{D}_1^{k_1}  \mathbf{D}_2^{k_2} \mathbf{s}\right]\! (p) = 0 \ \ \text{(resp. } \left[(\mathbf{D}_1^\prime)^{k_1} (\mathbf{D}_2^\prime)^{k_2} N_{X/Y}(\mathbf{s})\right]\! (\pi(p)) = 0) 
    \end{equation*}
    just means
    \begin{equation*}
    \left[\mathbf{D}_1^{k_1} \mathbf{D}_2^{k_2} f \right]\! (p) = 0 \ \
    \text{(resp. } \left[(\mathbf{D}_1^\prime)^{k_1} (\mathbf{D}_2^\prime)^{k_2} N_{K(X)/K(Y)} (f) \right]\! (\pi(p)) = 0) 
    \end{equation*}    
    for any integers $k_1,k_2$. If
    \begin{equation*}
        P(T) = T^n + l_{n-1} T^{n-1} + \cdots + l_0 \in K(Y)[T]
    \end{equation*}
    is the minimal polynomial of $f$ over $K(Y)$, then $N_{K(X)/K(Y)}(f)=(-1)^{[K(X):K(Y)]}l_0^{[K(X):K(Y)(f)]}$. Using \cite[\href{https://stacks.math.columbia.edu/tag/00H6}{Lemma 00H6 (1)}]{stacksProjectAuthors2015} as above, we find that $P(T) \in A^\prime[T]$.

    Assume now that
    \begin{equation*}
        \mathbf{D}_1^{k_1}  \mathbf{D}_2^{k_2} f(p) = 0
    \end{equation*}
    for all $\frac{k_1}{d_1} + \frac{k_2}{d_2} \leq \sigma$. Applying some differential operator $\mathbf{D}_1^{k^\prime_1} \mathbf{D}_2^{k^\prime_2}$, $\frac{k_1^\prime}{d_1} + \frac{k_2^\prime}{d_2} \leq \sigma$, to both sides of the equation
    \begin{equation*}
        f^n + l_{n-1} f^{n-1} + \cdots + l_1 f + l_0 = 0,
    \end{equation*}
    we obtain that 
    \begin{equation*}
    \left[(\mathbf{D}_1^\prime)^{k_1^\prime} (\mathbf{D}_2^\prime)^{k_2^\prime} (l_0) \right] (\pi(p))
    = \left[\mathbf{D}_1^{k_1^\prime} \mathbf{D}_2^{k_2^\prime} (l_0) \right](p) = 0
    \end{equation*}
    by a repeated application of Leibniz rule. Since this holds for all $(k_1^\prime,k_2^\prime)$ with $\frac{k_1^\prime}{d_1} + \frac{k_2^\prime}{d_2} \leq \sigma$, a further application of the Leibniz rule delivers $\left[(\mathbf{D}_1^\prime)^{k^\prime_1} (\mathbf{D}_2^\prime)^{k^\prime_2} N_{K(X)/K(Y)}(f)\right](\pi(p)) = 0$. This completes the proof.
\end{proof}
  
    \subsection{The product theorem}
    \label{subsection::product_theorem}

    We first recall an explicit version of Faltings's product theorem \cite{Faltings1991} established by Evertse \cite{Evertse1995}. We can and do restrict to the special case $\IP^1 \times \IP^1$. We let $\boldsymbol{d} = (d_1, d_2)$ denote a pair of positive integers.
    
    We consider $\IP^1$ as a curve endowed with the line bundle $L = \mathcal{O}(1)$ and the global tangent field $\mathbf{D}_0 = X_0 \cdot (\del/\del X_0)$. In Subsection \ref{subsection::indices}, we have defined the index $\mathbf{I}_{\boldsymbol{d}}(\mathbf{s},p)$ of a non-zero section $\mathbf{s} \in H^0(\IP^1 \times \IP^1, \mathcal{O}(d_1,d_2))$ with respect to the tangent fields $(\mathbf{D}_0,0)$ and $(0,\mathbf{D}_0)$ at any point $p \in (\IP^1 \backslash \mathrm{Div}(\mathbf{D}_0))^2$. For every $\varepsilon > 0$,
    \begin{equation*}
        \mathbf{Z}_{\boldsymbol{d}}(\mathbf{s}, \varepsilon) = \{ p \in (\IP^1 \backslash \mathrm{Div}(\mathbf{D}_0))^2 \mid \mathbf{I}_{\boldsymbol{d}}(\mathbf{s},p) \geq \varepsilon \}
    \end{equation*}
    is an algebraic subvariety of $(\IP^1 \backslash \mathrm{Div}(\mathbf{D}_0))^2$. Additionally, the subvariety $\mathbf{Z}_{\boldsymbol{d}}(\mathbf{s}, \varepsilon)$ is defined over a subfield $K_0 \subseteq K$ if $\mathbf{s}$ is so.
\begin{proposition}\label{prop:productthm}
Assume that
\begin{equation*}
    \frac{d_1}{d_2} \geq \left( \frac{12}{\varepsilon} \right)^2
\end{equation*}
for some $\varepsilon \in (0,1]$. In addition, let $\mathbf{s} \in H^0(\IP^1 \times \IP^1,\caO(d_1,d_2))$ denote a non-zero section that is defined over a subfield $K_0 \subseteq K$. 

Then, every irreducible component $Z$ of $\mathbf{Z}_{\boldsymbol{d}}(\mathbf{s},\varepsilon)$ is contained in a proper irreducible product subvariety
\begin{equation*}
    Z_1 \times Z_2 \subsetneq \IP^{1} \times  \IP^{1}
\end{equation*}
where both $Z_1$ and $Z_2$ are defined over a finite extension $K_1 \supseteq K_0$ and
\begin{equation*}
    [K_1: K_0] \deg_{\caO(1)}(Z_1) \deg_{\caO(1)}(Z_2) \leq \left( \frac{12}{\varepsilon}\right)^2.
\end{equation*}
\end{proposition}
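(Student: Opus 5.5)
This is the special case $m=2$, $\IP^1\times\IP^1$, of Faltings's product theorem in the explicit form of Evertse \cite{Evertse1995}, so the plan is to deduce it from Evertse's statement rather than reprove it.

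\textbf{Step 1: match the index.} Evertse works on $\IP^{n_1}\times\cdots\times\IP^{n_m}$ with the index of a multihomogeneous polynomial $F$ of multidegree $\boldsymbol{d}$ at a point $p$. That index is defined by partial derivatives in the affine coordinates of each factor. Here the index uses the tangent field $\mathbf{D}_0=X_0\,\del/\del X_0$ on each factor. On the affine chart $X_0\neq 0$ with coordinate $t=X_1/X_0$, this field equals $-t\,\del/\del t$ (up to sign). Its divisor consists of the points $t=0$ and $t=\infty$. Away from this divisor, $t$ is a unit. Hence the ideals generated by the iterates $\{(t\,\del_t)^j f : j\le k\}$ and by $\{\del_t^j f: j\le k\}$ coincide at $p$, by triangularity of the change of basis. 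So vanishing of all $\mathbf{D}^{i_1,i_2}$ with $i_1/d_1+i_2/d_2<\sigma$ is equivalent to vanishing of the corresponding ordinary partial derivatives. Thus $\mathbf{I}_{\boldsymbol d}(\mathbf s,p)$ equals Evertse's index. By Lemma~\ref{lemma::leading_coefficient}, neither index depends on the trivialization. In particular, $\mathbf Z_{\boldsymbol d}(\mathbf s,\varepsilon)$ is the restriction of Evertse's set $Z_{\varepsilon}(F)$ to $(\IP^1\setminus\mathrm{Div}(\mathbf D_0))^2$. I also record the facts that this set is Zariski closed and defined over $K_0$: it is cut out by finitely many derivatives of $F$, which have coefficients in $K_0$.

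\textbf{Step 2: apply Evertse's theorem.} His product theorem has two hypotheses: $d_{h}/d_{h+1}\ge\omega$ for the relevant $\omega$, and an index threshold $\varepsilon$. For $m=2$, $n_1=n_2=1$, his constant specializes to $\omega=(12/\varepsilon)^2$, or a quantity bounded by it (I would check his exact constant $\bigl(\tfrac{\,\sum n_h\,}{\varepsilon}\cdot\text{const}\bigr)^{2^{m-1}}$ against $12/\varepsilon$). Its conclusion is that each irreducible component $Z$ of $Z_\varepsilon(F)$ lies in an irreducible proper product $Z_1\times Z_2$. Here each $Z_i$ is defined over $K_0$-conjugate data, and the components of the $\Gal$-orbit have total degree bounded by Evertse's bound, which specializes to $(12/\varepsilon)^2$.

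\textbf{Step 3: pass to a field of definition.} The bound in Step 2 concerns the $\Gal(\overline{K_0}/K_0)$-orbit of the product variety. I take $K_1$ to be the field of definition of $Z_1\times Z_2$, and note that the number of conjugates of the product is $[K_1:K_0]$. The product of degrees is constant along the orbit. The orbit sum in Step 2 then gives $[K_1:K_0]\deg Z_1\deg Z_2\le(12/\varepsilon)^2$.

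\textbf{Main obstacle.} I expect the main difficulty to be bookkeeping in translating Evertse's normalization (degrees and the field degree over $K_0$) into exactly this form. The index comparison in Step 1, which requires working off $\mathrm{Div}(\mathbf D_0)$, is routine.
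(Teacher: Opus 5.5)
Your proposal is correct and matches the paper, whose entire proof is the citation of the Corollary on p.~219 of \cite{Evertse1995}. Your Step~1 (identifying the index built from $\mathbf{D}_0=X_0\,\partial/\partial X_0$, i.e.\ $-t\,\partial/\partial t$, with Evertse's index away from $\mathrm{Div}(\mathbf{D}_0)$) and your Step~3 (Galois-orbit bookkeeping for $[K_1:K_0]$) only spell out compatibility checks that the paper leaves implicit when it says the statement follows directly from that corollary.
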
 	

\begin{proof}
	This follows directly from \cite[Corollary on p.\ 219sq.]{Evertse1995}.
\end{proof}

We actually need to introduce a slightly more general version of the product theorem. For this, we consider two projective irreducible algebraic curves $C_1$ and $C_2$ endowed with non-constant maps $\pi_1: C_1 \rightarrow \IP^1$ and $\pi_2: C_2 \rightarrow \IP^1$ as in Subsection \ref{subsection::tangent_fields}. Using Lemma \ref{lemma::tangent_field}, we construct generic liftings $\mathbf{D}_1$ and $\mathbf{D}_2$ of $\mathbf{D}_0 = X_0 \cdot (\del/\del X_0)$ along $\pi_1$ and $\pi_2$ respectively. As in Subsection \ref{subsection::indices}, we let $L$ denote a line bundle on $C_1 \times C_2$. For any non-zero section $\mathbf{s} \in H^0(C_1 \times C_2, L)$, we can then define an index $\mathbf{I}_{\boldsymbol{d}}(\mathbf{s},p)$ with respect to $(\mathbf{D}_1,0)$ and $(0,\mathbf{D}_2)$ for every point $p \in (C_1^{\mathrm{sm}} \backslash \mathrm{Div}(\mathbf{D}_1)) \times (C_2^{\mathrm{sm}} \backslash \mathrm{Div}(\mathbf{D}_2))$. As above, we also define an algebraic subvariety $\mathbf{Z}_{\boldsymbol{d}}(\mathbf{s},\varepsilon) \subseteq (C_1^{\mathrm{sm}} \backslash \mathrm{Div}(\mathbf{D}_1)) \times (C_2^{\mathrm{sm}} \backslash \mathrm{Div}(\mathbf{D}_2))$.

\begin{corollary}
    \label{corollary::product_theorem}
    Assume that
    \[ \frac{d_1}{d_2} \geq \left(\frac{12\deg(\pi_1)\deg(\pi_2)}{\varepsilon}\right)^2\]
    and that there exists a globally generated line bundle $L^\prime$ on $C_1 \times C_2$ such that 
    \[ L \otimes L^\prime \approx (\pi_1 \times \pi_2)^\ast\caO(d_1,d_2).\]
    Let $\mathbf{s} \in H^0(C_1 \times C_2, L)$ denote a non-zero section and suppose that $C_1$, $C_2$, $\pi_1$, $\pi_2$, $\mathbf{D}_1$, $\mathbf{D}_2$, $L$, $L^\prime$, and $\mathbf{s}$ are all defined over a subfield $K_0 \subseteq K$.

    Then, every irreducible component $Z$ of
    \[\mathbf{Z}_{\boldsymbol{d}}(\mathbf{s},\varepsilon)\subseteq (C_1^{\mathrm{sm}} \backslash \mathrm{Div}(\mathbf{D}_1)) \times (C_2^{\mathrm{sm}} \backslash \mathrm{Div}(\mathbf{D}_2))\]
    is contained in a proper irreducible product subvariety
\begin{equation*}
    Z_1 \times Z_2 \subsetneq C_1 \times C_2
\end{equation*}
where both $Z_1$ and $Z_2$ are defined over a finite extension $K_1 \supseteq K_0$ and
\begin{equation*}
    [K_1: K_0] \deg_{\pi_1^\ast \caO(1)}(Z_1) \deg_{\pi_2^\ast \caO(1)}(Z_2)
    \leq \left( \frac{12}{\varepsilon}\right)^2 \deg(\pi_1)^3\deg(\pi_2)^3.
\end{equation*}
\end{corollary}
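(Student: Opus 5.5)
We reduce Corollary \ref{corollary::product_theorem} to Proposition \ref{prop:productthm} by pushing $\mathbf{s}$ forward along $\pi = \pi_1 \times \pi_2 : C_1 \times C_2 \to \IP^1 \times \IP^1$ with the norm map of Subsection \ref{subsection::norms}. The map $\pi$ is finite and surjective of degree $\delta = \deg(\pi_1)\deg(\pi_2)$, and $\IP^1\times\IP^1$ is normal. By construction, $(\mathbf{D}_1,0)$ and $(0,\mathbf{D}_2)$ are commuting lifts of $(\mathbf{D}_0,0)$ and $(0,\mathbf{D}_0)$.

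\textbf{Step 1: an auxiliary section of $\caO(d_1,d_2)$.} First replace $\mathbf{s}$ by a section $\mathbf{s}\otimes\mathbf{t}$ of $\pi^\ast\caO(d_1,d_2)$, where $\mathbf{t}\in H^0(C_1\times C_2,L^\prime)$. Since $L^\prime$ is globally generated, for any given point $p$ we may choose $\mathbf{t}$ defined over $K_0$ (or over a finite extension) with $\mathbf{t}(p)\neq 0$. Lemma \ref{lemma::leading_coefficient_product} then gives $\mathbf{I}_{\boldsymbol d}(\mathbf{s}\otimes\mathbf{t},p)=\mathbf{I}_{\boldsymbol d}(\mathbf{s},p)$.

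The main care is needed here. We want one $\mathbf{t}$ that works for a whole component $Z$, so we pick $\mathbf{t}$ not vanishing at the generic point of $Z$. Because $L^\prime$ and $K_0$ are fixed, we can take $\mathbf{t}$ from a $K_0$-basis of $H^0(L^\prime)$. Then $Z$ lies in the zero locus of high index of $\mathbf{s}\otimes\mathbf{t}$, after possibly removing the proper closed set where $\mathbf{t}$ vanishes, which does not affect closure statements.

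\textbf{Step 2: taking norms.} Next form $N(\mathbf{s}\otimes\mathbf{t}) \in H^0(\IP^1\times\IP^1, N_{X/Y}\pi^\ast\caO(d_1,d_2))$. Since $N\circ\pi^\ast$ is the $\delta$-th power, this bundle is $\caO(\delta d_1,\delta d_2)$. This section is defined over $K_0$, and it is non-zero because the field norm of a non-zero element is non-zero.

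Set $\boldsymbol{d}^\prime=(\delta d_1,\delta d_2)$. Lemma \ref{lemma::norm_derivatives}, applied with $\sigma$ slightly below $\varepsilon$, shows that if $p$ avoids the divisors and $\pi(p)$ avoids $\mathrm{Div}(\mathbf{D}_0)^2$, then index at least $\varepsilon$ for $\mathbf{s}\otimes\mathbf{t}$ at $p$ with respect to $\boldsymbol d$ gives vanishing of the corresponding derivatives of the norm at $\pi(p)$. Rescaling $(k_1,k_2)$ to $\boldsymbol d^\prime$, we obtain $\mathbf{I}_{\boldsymbol d^\prime}(N(\mathbf{s}\otimes\mathbf{t}),\pi(p))\geq \varepsilon/\delta$. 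Strictly, this gives the vanishing for all $k_1/d_1+k_2/d_2<\varepsilon$, which is enough since the index takes values in a discrete set. The remaining points, those mapping into $\mathrm{Div}(\mathbf{D}_0)\times\IP^1 \cup \IP^1\times\mathrm{Div}(\mathbf{D}_0)$, already lie in proper product subvarieties of bounded degree over $K_0$, as these divisors are defined over $\IQ$.

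\textbf{Step 3: applying the product theorem.} The hypothesis $d_1/d_2\geq (12\delta/\varepsilon)^2$ is exactly the hypothesis of Proposition \ref{prop:productthm} for $\boldsymbol d^\prime$ and $\varepsilon/\delta$. So $\pi(Z)$ lies in a component of $\mathbf{Z}_{\boldsymbol d^\prime}(N(\mathbf{s}\otimes\mathbf{t}),\varepsilon/\delta)$, which lies in a proper product $W_1\times W_2$ over $K_1$ with
\[[K_1:K_0]\deg(W_1)\deg(W_2)\le (12\delta/\varepsilon)^2 .\]
Then $Z$ lies in an irreducible component $Z_1\times Z_2$ of $\pi_1^{-1}(W_1)\times\pi_2^{-1}(W_2)$.

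Two costs remain to be accounted for:
\begin{enumerate}
\item The extension needed to define $Z_1$ and $Z_2$. The Galois orbit of a component has at most $\deg\pi_i\cdot\deg W_i$ members, or $\deg\pi_i$ per point, so enlarging $K_1$ by the stabilizer costs at most a factor $\deg(\pi_1)\deg(\pi_2)$.
\item The degrees. We have $\deg_{\pi_i^\ast\caO(1)}(Z_i)\le\deg_{\pi_i^\ast\caO(1)}\pi_i^{-1}(W_i)=\deg(\pi_i)\deg(W_i)$, or $1$ if $Z_i=C_i$.
\end{enumerate}
Collecting the factors gives $(12/\varepsilon)^2\delta^2\cdot\delta\cdot$, i.e. at most $(12/\varepsilon)^2\deg(\pi_1)^3\deg(\pi_2)^3$. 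This bookkeeping of field degrees and degrees is routine but is where the exponent $3$ must be matched.
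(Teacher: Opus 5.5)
Your route is the paper's. You tensor with a section $\mathbf{t}$ of $L'$, take the norm to $\IP^1\times\IP^1$, use Lemma~\ref{lemma::norm_derivatives} to get index $\geq\varepsilon/\delta$ with respect to $\boldsymbol{d}'=\delta\boldsymbol{d}$, apply Proposition~\ref{prop:productthm} with $\varepsilon/\delta$, and pull back. Your choice of one $\mathbf{t}$ per component $Z$, non-vanishing at its generic point, is actually cleaner than the paper's pointwise choice.

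The gap is in the final bookkeeping, which is exactly where the exponent $3$ comes from. As written, your count does not give $3$. Item (1) bounds the field cost by $[K_2:K_1]\le\delta$. Item (2) bounds $\deg(Z_1)\deg(Z_2)\le\delta\deg(W_1)\deg(W_2)$. These are independent estimates, and combined with $[K_1:K_0]\deg(W_1)\deg(W_2)\le(12\delta/\varepsilon)^2$ they give only $(12/\varepsilon)^2\delta^4$. Your line collecting the factors silently drops one $\delta$. The parenthetical claim that the degree is $1$ when $Z_i=C_i$ is also false: $\deg_{\pi_i^\ast\caO(1)}(C_i)=\deg(\pi_i)$.

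The missing idea is to bound the field-of-definition cost and the degree jointly rather than separately. Let $K_{1,i}\supseteq K_1$ be the minimal field of definition of $Z_i$. Every $\mathrm{Gal}$-conjugate of $Z_i$ over $K_1$ is again an irreducible component of $\pi_i^{-1}(W_i)$. Distinct conjugates are distinct components, there are $[K_{1,i}:K_1]$ of them, and all have the same degree. Together with the projection formula this gives
\[
[K_{1,i}:K_1]\,\deg_{\pi_i^\ast\caO(1)}(Z_i)\le\deg_{\pi_i^\ast\caO(1)}(\pi_i^{-1}(W_i))\le\deg(\pi_i)\deg_{\caO(1)}(W_i).
\]
Taking $K_2$ to be the compositum of $K_{1,1}$ and $K_{1,2}$, you get
\[
[K_2:K_0]\deg(Z_1)\deg(Z_2)\le[K_1:K_0]\,\delta\,\deg(W_1)\deg(W_2)\le\delta\Bigl(\frac{12\delta}{\varepsilon}\Bigr)^2,
\]
which is the stated bound. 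Concretely, if $Z_i$ is a point it may need an extension of degree up to $\deg(\pi_i)$ but has degree $1$. If $Z_i=C_i$ it has degree $\deg(\pi_i)$ but needs no extension. Either way the combined cost of the $i$-th factor is at most $\deg(\pi_i)$, not $\deg(\pi_i)^2$.
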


\begin{proof}

Consider a point $p \in (C_1^{\mathrm{sm}} \backslash \mathrm{Div}(\mathbf{D}_1)) \times (C_2^{\mathrm{sm}} \backslash \mathrm{Div}(\mathbf{D}_2))$. Since $L^\prime$ is defined over $K_0$ and globally generated, we may choose a global section $\mathbf{t} \in H^0(C_1 \times C_2, L^\prime)$ that is defined over $K_0$ and does not vanish at $p$. It follows from Lemma \ref{lemma::leading_coefficient_product} that $\mathbf{I}_{\boldsymbol{d}}(\mathbf{s},p) = \mathbf{I}_{\boldsymbol{d}}(\mathbf{s} \otimes \mathbf{t},p)$. As in Subsection \ref{subsection::norms}, we can define
\begin{equation*}
\mathbf{s}^\prime = N_{(C_1\times C_2)/(\IP^1 \times \IP^1)}(\mathbf{s} \otimes \mathbf{t}),    
\end{equation*}
which is a global section of
\begin{equation*}
N_{(C_1 \times C_2) / (\IP^1 \times \IP^1)}((\pi_1 \times \pi_2)^\ast \caO(d_1,d_2)) \approx\caO(d_1,d_2)^{\otimes \deg(\pi_1)\deg(\pi_2)}
\end{equation*}
(compare \cite[\href{https://stacks.math.columbia.edu/tag/0BCY}{Tag 0BCY}]{stacksProjectAuthors2015} for this isomorphism).
Furthermore, Lemma \ref{lemma::norm_derivatives} implies that $\mathbf{I}_{\boldsymbol{d}}(\mathbf{s},p) = \mathbf{I}_{\boldsymbol{d}}(\mathbf{s} \otimes \mathbf{t},p)\leq \deg(\pi_1)\deg(\pi_2)\mathbf{I}_{\boldsymbol{d}^\prime}(\mathbf{s}^\prime,(\pi_1 \times \pi_2)(p))$, where
\[\mathbf{d}^\prime = (\deg(\pi_1)\deg(\pi_2) d_1,\deg(\pi_1)\deg(\pi_2) d_2).\] We apply Proposition \ref{prop:productthm} to $\mathbf{s}^\prime$ and $\mathbf{d}^\prime$ with $\varepsilon^\prime = \varepsilon \deg(\pi_1)^{-1}\deg(\pi_2)^{-1}$. It shows that every irreducible component of $Z_{\boldsymbol{d}^\prime}(\mathbf{s}^\prime,\varepsilon^\prime)
\supseteq (\pi_1 \times \pi_2)(Z_{\boldsymbol{d}}(\mathbf{s},\varepsilon))$ is contained in a proper irreducible product subvariety $Z_1' \times Z_2' \subsetneq \IP^1 \times \IP^1$, defined over a finite extension $K_1 \supseteq K_0$, with
\begin{equation*}
    [K_1: K_0] \deg_{\caO(1)}(Z_1') \deg_{\caO(1)}(Z_2') \leq \left( \frac{12}{\varepsilon^\prime}\right)^2.
\end{equation*}
We conclude that every irreducible component $Z$ of
 \[\mathbf{Z}_{\boldsymbol{d}}(\mathbf{s},\varepsilon)\subseteq (C_1^{\mathrm{sm}} \backslash \mathrm{Div}(\mathbf{D}_1)) \times (C_2^{\mathrm{sm}} \backslash \mathrm{Div}(\mathbf{D}_2))\]
 is contained in a proper product subvariety $\pi_1^{-1}(Z_1') \times \pi_2^{-1}(Z_2') \subsetneq C_1 \times C_2$.

Let $i \in \{1,2\}$. It follows from the projection formula \cite[Proposition 2.5 (c)]{Fulton1998} that
\[ \deg_{\pi_i^\ast \caO(1)}(\pi_i^{-1}(Z_i')) \leq \deg(\pi_i)\deg_{\caO(1)}(Z_i').\]
If $Z_i$ is any irreducible component of $\pi_i^{-1}(Z_i')$ and $K_{1,i} \supseteq K_1$ is a minimal field of definition of $Z_i$, then 
\[[K_{1,i}:K_1]\deg_{\pi_i^\ast \caO(1)}(Z_i) \leq \deg_{\pi_i^\ast \caO(1)}(\pi_i^{-1}(Z_i'))\]
since every Galois conjugate of $Z_i$ over $K_1$ occurs as an irreducible component of $\pi_i^{-1}(Z_i')$ and there are $[K_{1,i}:K_1]$ of them. We obtain that any product $Z_1 \times Z_2$ of two such irreducible components is defined over a finite field extension $K_2 \supseteq K_1$ such that $[K_2:K_0] \deg_{\pi_1^\ast \caO(1)}(Z_1) \deg_{\pi_2^\ast \caO(1)}(Z_2)$ is bounded from above by
\[[K_1:K_0]\deg(\pi_1)\deg(\pi_2)\deg_{\caO(1)}(Z_1')\deg_{\caO(1)}(Z_2') \leq \deg(\pi_1)\deg(\pi_2)\left( \frac{12}{\varepsilon^\prime}\right)^2\]
and the corollary follows.
\end{proof}

    \subsection{A degree bound}
    \label{subsection::degreebound}

    We conclude this section with asymptotic estimates of the degree. In Section \ref{section::equidistribution}, the bound is complemented with a similar result on arithmetic intersection numbers (Proposition \ref{proposition::heightbound}). 
    
    As in Theorem \ref{theorem::main}, $A$ denotes an abelian variety of dimension $\geq 3$. We let $L$ be a very ample symmetric line bundle on $A$. We let $C_1$ and $C_2$ denote two irreducible algebraic subcurves in $A$ as in Theorem \ref{theorem::main}. The results in this subsection are to be applied directly to the curves from our main theorem.

    We start with a well-known consequence of the theorem of the cube. Recall the definition of $\varphi_n$ in \eqref{eq::def_phi_n}. On $A \times A$, we define the line bundles
    \begin{equation*}
	   L_n := \varphi_n^\ast L
    \end{equation*}
    for every integer $n$ as well as the line bundle
    \begin{equation*}
        P := \varphi_1^\ast L \otimes \pr_1^\ast L^{\otimes -1} \otimes \pr_2^\ast L^{\otimes -1}.
    \end{equation*}
    
    \begin{lemma}
    \label{lemma::formula_Ln}
    For every integer $n$, the line bundles $L_n$ and $P^{\otimes n} \otimes \mathrm{pr}_1^{\ast} L^{\otimes n^2} \otimes \mathrm{pr}_2^{\ast} L$ are isomorphic. Consequently, the line bundles $L_n \otimes L_{-n}$ and $\pr_1^\ast L^{\otimes 2n^2} \otimes \pr_2^\ast L^{\otimes 2}$ are isomorphic.
	\end{lemma}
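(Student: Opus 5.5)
The plan is to apply the theorem of the cube in the form of Mumford's formula for pullbacks of line bundles along sums of homomorphisms. For homomorphisms $f,g,h \colon Y \to A$ from an abelian variety $Y$ and any line bundle $L$ on $A$, the cube theorem gives
\[(f+g+h)^\ast L \otimes (f+g)^\ast L^{-1} \otimes (f+h)^\ast L^{-1} \otimes (g+h)^\ast L^{-1} \otimes f^\ast L \otimes g^\ast L \otimes h^\ast L \approx \caO_Y,\]
since $0^\ast L$ is trivial. We take $Y = A \times A$. Writing $\varphi_n = [n]\circ \pr_1 + [-1]\circ\pr_2$, we regard $\varphi_n$ as a sum of $|n|$ copies of $\pm\pr_1$ and one copy of $[-1]\circ\pr_2$.

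The key step is to derive from the cube formula that the map $(f,g) \mapsto \Lambda(f,g) := (f+g)^\ast L \otimes f^\ast L^{-1}\otimes g^\ast L^{-1}$ is bi-additive in each argument, up to isomorphism. Define $\alpha = \pr_1$ and $\beta = [-1]\circ \pr_2$. By symmetry of $L$ we have $\beta^\ast L \approx \pr_2^\ast L$ and $(-\alpha)^\ast L \approx \alpha^\ast L$. Also $\Lambda(\alpha,\beta) = \varphi_1^\ast L \otimes \pr_1^\ast L^{-1}\otimes\pr_2^\ast L^{-1} = P$.

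We then expand
\[L_n = (n\alpha + \beta)^\ast L \approx \Lambda(n\alpha,\beta)\otimes (n\alpha)^\ast L \otimes \beta^\ast L.\]
Bi-additivity gives $\Lambda(n\alpha,\beta) \approx \Lambda(\alpha,\beta)^{\otimes n} = P^{\otimes n}$; for negative $n$ one uses $\Lambda(-\alpha,\beta)\approx\Lambda(\alpha,\beta)^{-1}$, which follows from additivity together with $\Lambda(0,\beta)$ being trivial. Separately, $(n\alpha)^\ast L = \pr_1^\ast [n]^\ast L \approx \pr_1^\ast L^{\otimes n^2}$ by the standard formula $[n]^\ast L \approx L^{\otimes (n^2+n)/2}\otimes [-1]^\ast L^{\otimes (n^2-n)/2}$ for symmetric $L$, which itself is a consequence of the cube theorem. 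Combining these gives the first claim.

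For the second claim we multiply the formulas for $n$ and $-n$. The $P$ factors cancel, leaving $\pr_1^\ast L^{\otimes 2n^2}\otimes\pr_2^\ast L^{\otimes 2}$.

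The only step needing care is the bi-additivity of $\Lambda$ and its extension to negative multiples. This is routine: it is exactly the cube identity rearranged, with the sign handled by $\Lambda(0,\cdot)\approx\caO$. The case $n=0$ is trivial, since then $L_0 = \beta^\ast L \approx \pr_2^\ast L$.
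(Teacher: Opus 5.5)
Your proof is correct. It differs from the paper's in how the theorem of the cube is used.

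The paper pulls back the cube relation on $A^3$ along the single map $(\varphi_n,\pr_1,-\pr_1)\colon A\times A\to A^3$. This gives the three-term recurrence $L_{n+1}\approx L_n^{\otimes 2}\otimes L_{n-1}^{\otimes(-1)}\otimes\pr_1^\ast L^{\otimes 2}$, and the paper then runs an upward and a downward induction from the base cases $n\in\{0,1\}$. That recurrence produces the linear part $P^{\otimes n}$ and the quadratic part $\pr_1^\ast L^{\otimes n^2}$ at the same time. So the paper never needs the formula $[n]^\ast L\approx L^{\otimes n^2}$ as a separate input; the same recurrence with $\varphi_n$ replaced by $[n]$ is how that formula is usually proved.

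You instead isolate the cross term $\Lambda(n\alpha,\beta)$, prove it is additive in the first slot (your rearrangement of the cube identity is exactly right), and import $[n]^\ast L\approx L^{\otimes n^2}$. This is more conceptual: it shows $L_n$ as a quadratic function of $n$ whose bilinear part is $P$. The cost is one extra standard lemma.

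One practical advantage of the paper's version is that it carries over verbatim to the canonically metrized analogue, Lemma~\ref{lem:formulaforln}. There only the one cube relation and the symmetry of $\Ltil$ have to be checked up to homothety. Your version also transfers, but it would additionally need the metrized statement $[n]^\ast\Ltil\sim\Ltil^{\otimes n^2}$. That statement holds by the same uniqueness argument for canonical metrics, but it is an extra step.

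A minor point: $\Lambda(\alpha,\beta)$ equals $P$ only up to the isomorphism $\beta^\ast L\approx\pr_2^\ast L$, not literally.
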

	
	\begin{proof}
		We prove this by both upward and downward induction on $n$. In fact, our base case is $n \in \{0,1\}$ and we show that the claim for $n$ and $n+1$ implies both the claim for $n+2$ as well as the claim for $n-1$. 
  
        For $n=0$, we note that
		\begin{equation*}
			\varphi_0^\ast L = \pr_2^\ast [-1]^\ast L \approx \pr_2^\ast L
		\end{equation*}
        as $L$ is symmetric. In the case $n=1$, we note that
		\begin{align*}
			\varphi_1^\ast L 
			&= (\varphi_1^\ast L \otimes \pr_1^\ast L^{\otimes -1} \otimes \pr_2^\ast L^{\otimes -1})
			\otimes
			\pr_1^\ast L \otimes \pr_2^\ast L \\
			&= P \otimes \pr_1^{\ast} L \otimes \pr_2^\ast L.
		\end{align*}
		For the induction step(s), we recall that
		\begin{equation*}
			\pr_{123}^\ast L \otimes \pr_{12}^\ast L^{\otimes -1} \otimes \pr_{13}^\ast L^{\otimes -1} \otimes \pr_{23}^\ast L^{\otimes -1} \otimes \pr_1^\ast L \otimes \pr_2^\ast L \otimes \pr_3^\ast L \approx \caO
		\end{equation*}
		on $A^3$ by the theorem of the cube \cite[Corollary 2 on p.\ 58sq.]{Mumford1970}, where $\pr_{123} = \pr_1 + \pr_2 + \pr_3$ etc. Pullback by
		\begin{equation*}
		(\varphi_n,\pr_1,-\pr_1): A \times A \longrightarrow A^3	
		\end{equation*}
		yields
		\begin{equation*}
		L_{n+1}
		\approx
		L_n^{\otimes 2} \otimes L^{\otimes(-1)}_{n-1}	\otimes \pr_1^\ast L^{\otimes 2}
		\end{equation*}
		for every integer $n$. For the induction step $n-1,n \rightarrow n+1$, we deduce that
		\begin{align*}
			L_{n+1}
			&\approx 
			L_n^{\otimes 2} \otimes L^{\otimes(-1)}_{n-1}	\otimes \pr_1^\ast L^{\otimes 2} \\
			&\approx
			(P^{\otimes n} \otimes \pr^\ast_1 L^{\otimes n^2} \otimes \pr^\ast_2 L)^{\otimes 2}
            \otimes
            (P^{\otimes (n-1)} \otimes \pr^\ast_1 L^{\otimes (n-1)^2} \otimes \pr^\ast_2 L)^{\otimes(-1)}
            \otimes
            \pr_1^\ast L^{\otimes 2}
			\\
			&\approx P^{\otimes(n+1)} \otimes \pr^\ast_1 L^{\otimes (n+1)^2} \otimes \pr^\ast_2 L.
		\end{align*}
  For the induction step $n+1,n \rightarrow n-1$, we show similarly that
	\begin{align*}
			L_{n-1}
			&\approx 
			L_n^{\otimes 2} \otimes L^{\otimes(-1)}_{n+1}	\otimes \pr_1^\ast L^{\otimes 2} \\
			&\approx
			(P^{\otimes n} \otimes \pr^\ast_1 L^{\otimes n^2} \otimes \pr^\ast_2 L)^{\otimes 2}
            \otimes
            (P^{\otimes (n+1)} \otimes \pr^\ast_1 L^{\otimes (n+1)^2} \otimes \pr^\ast_2 L)^{\otimes(-1)}
            \otimes
            \pr_1^\ast L^{\otimes 2}
			\\
			&\approx P^{\otimes(n-1)} \otimes \pr^\ast_1 L^{\otimes (n-1)^2} \otimes \pr^\ast_2 L. \qedhere
		\end{align*}    
    \end{proof}
    
	\begin{lemma}\label{lemma::degreebound}
    Assume that $C_1$ and $C_2$ are not both contained in translates of the same proper algebraic subgroup of $A$. Then, there exists a constant $\newconstant{\label{c_degbnd}} = \oldconstant{c_degbnd}(C_1, C_2) \geq 1$ such that
	\begin{equation*}
		\oldconstant{c_degbnd}^{-1} \cdot n^2 \leq L_n^2 \cdot [C_1 \times C_2] \leq \oldconstant{c_degbnd} \cdot n^2
    \end{equation*}
    for all integers $n$.
\end{lemma}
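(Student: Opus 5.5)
The plan is to expand $L_n^2\cdot[C_1\times C_2]$ using Lemma \ref{lemma::formula_Ln}. Set $M_1=\pr_1^\ast L|_{C_1\times C_2}$, $M_2=\pr_2^\ast L|_{C_1\times C_2}$ and $Q=P|_{C_1\times C_2}$. Then $L_n|_{C_1\times C_2}\approx Q^{\otimes n}\otimes M_1^{\otimes n^2}\otimes M_2$, and bilinearity of intersection numbers on the surface $C_1\times C_2$ gives a polynomial in $n$:
\[
L_n^2\cdot[C_1\times C_2] = n^4 M_1^2 + 2n^3 M_1\cdot Q + n^2\bigl(Q^2+2M_1\cdot M_2\bigr) + 2n\, Q\cdot M_2 + M_2^2.
\]
Since $M_i$ is pulled back from the curve $C_i$, we have $M_1^2=M_2^2=0$. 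Next I would compute $M_1\cdot Q$: restrict $Q$ to a fibre $C_1\times\{y\}$. There $P|_{A\times\{y\}}\approx t_y^\ast L\otimes L^{-1}$ is algebraically equivalent to zero, so its degree on $C_1$ vanishes. Hence $M_1\cdot Q=\deg_L(C_2)\cdot 0=0$, and symmetrically $Q\cdot M_2=0$. This leaves
\[
L_n^2\cdot[C_1\times C_2] = n^2\bigl(Q^2+2\deg_L(C_1)\deg_L(C_2)\bigr) =: \kappa\, n^2,
\]
so the whole statement reduces to showing $\kappa>0$. The upper bound is then trivial, and for $n=0$ both sides vanish.

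For positivity I would use Lemma \ref{lem:genfinite}. For $n\neq 0$ the map $\varphi_n|_{C_1\times C_2}$ is generically finite onto a surface $S_n=\varphi_n(C_1\times C_2)\subseteq A$. Since $L$ is very ample, the projection formula gives $L_n^2\cdot[C_1\times C_2]=\deg(\varphi_n|_{C_1\times C_2})\deg_L(S_n)\geq 1$. Taking $n=1$ yields $\kappa\ge 1$, hence $\kappa n^2\ge n^2$ for all $n$, and one can take $\oldconstant{c_degbnd}=\max(1,\kappa)$. Alternatively, Lemma \ref{lemma::formula_Ln} gives $L_n\otimes L_{-n}\approx \pr_1^\ast L^{2n^2}\otimes\pr_2^\ast L^{2}$, and since both $L_{\pm n}$ are nef, one can bound $\kappa$ from above by $\tfrac14(L_n+L_{-n})^2=2n^2\cdot 2\deg_L(C_1)\deg_L(C_2)\cdot 2/n^2$-type expressions; this gives the same upper bound without computing $Q^2$.

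The main obstacle is the vanishing of the cross terms $M_1\cdot Q$ and $Q\cdot M_2$. This needs the fact that $P$ restricted to horizontal and vertical slices is algebraically trivial, which is the standard fact that the Mumford bundle restricts to $t_y^\ast L\otimes L^{-1}\in\Pic^0$. Everything else is bookkeeping.
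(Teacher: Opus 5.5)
Your proof is correct, but it takes a different route from the paper. The paper also expands $L_n^2\cdot[C_1\times C_2]=\sum_{i=0}^4 a_i n^i$ via Lemma \ref{lemma::formula_Ln}, but it never computes the mixed terms. It argues as follows:
\begin{itemize}
\item $a_4=0$;
\item positivity for \emph{every} $n\neq 0$ (Lemma \ref{lem:genfinite} plus the projection formula) forces $a_3=0$ and $a_2\ge 0$;
\item $a_2=0$ is excluded, since it would force $a_1=0$ and $a_0>0$, contradicting $a_0=L_0^2\cdot[C_1\times C_2]=0$ (because $\varphi_0|_{C_1\times C_2}$ is not generically finite).
\end{itemize}
You instead kill the odd-degree terms directly, using that $P$ restricts to elements of $\Pic^0(A)$ on horizontal and vertical slices. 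This yields the exact identity $L_n^2\cdot[C_1\times C_2]=n^2\,(L_1^2\cdot[C_1\times C_2])$, which is the algebraic counterpart of what the paper later proves with Chern forms in Lemma \ref{lemma::homogeneity}. As a result, you need positivity only at $n=1$ and never use the degeneracy at $n=0$. The paper's route is softer, needing only the polynomial shape and sign information. Yours needs the theorem-of-the-square input (and symmetry of $L$), but it gives a sharper result with the explicit constant $L_1^2\cdot[C_1\times C_2]$.

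There are two small slips to clean up. First, $M_1$ is numerically $\deg_L(C_1)$ times a \emph{vertical} fibre $\{x\}\times C_2$, so $M_1\cdot Q=\deg_L(C_1)\cdot\deg(Q|_{\{x\}\times C_2})$. This uses $P|_{\{x\}\times A}\approx t_{-x}^\ast L\otimes L^{-1}$, which is where symmetry of $L$ enters. Your horizontal-slice computation is the one that gives $Q\cdot M_2$, and there $P|_{A\times\{y\}}\approx t_{-y}^\ast L\otimes L^{-1}$. Since both slices give bundles in $\Pic^0(A)$, the conclusion is unaffected. Second, the ``alternatively'' sentence is garbled and superfluous. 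If you want to keep it, the clean version is $L_n^2\cdot[C_1\times C_2]\le (L_n+L_{-n})^2\cdot[C_1\times C_2]=8n^2\deg_L(C_1)\deg_L(C_2)$, by nefness of $L_{\pm n}$.
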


	\begin{proof}
		It follows from Lemma \ref{lemma::formula_Ln} that
			\[ L_n^2 \cdot [C_1 \times C_2]  = \sum_{i=0}^{4}{a_in^i}\]
		with coefficients $a_i \in \mathbb{Z}$ that are independent of $n$ ($i = 0, \hdots, 4$). We compute that
		\[ a_4 = \left(\mathrm{pr}_1^{\ast} L\right)^2 \cdot [C_1 \times C_2] = 0.\]
		
		 If $n \neq 0$, then, by Lemma \ref{lem:genfinite}, the restriction of the morphism $\varphi_n$ to $C_1 \times C_2$ is generically finite onto its image. It follows from the projection formula \cite[Proposition 2.5 (c)]{Fulton1998} and the ampleness of $L$ that $L_n^2 \cdot [C_1 \times C_2] > 0$ for $n \neq 0$. This implies that $a_3 = 0$ and $a_2 \geq 0$. 
         
         We prove that $a_2 \neq 0$ by contradiction. Assume that $a_2 = 0$, then the previous argument shows that $a_1 = 0$ and $a_0 > 0$. But by Lemma \ref{lem:genfinite}, the restriction of $\varphi_0$ to $C_1\times C_2$ is not generically finite onto its image. Hence the contradiction $a_0 = L_0^2 \cdot [C_1 \times C_2] = 0$. So $a_2 > 0$ and the lemma follows.
    \end{proof}

	\section{Arithmetic}
    \label{section::arithmetic}
    
    In this section, we introduce the basic notions from Arakelov geometry, including canonical metrics and heights.  
    We closely follow the presentation in \cite[Section 2]{Kuehne2022}, and we refer there for a detailed exposition of arithmetic intersection theory \cite{Gillet1990}. Essential references for this section are \cite{Chambert-Loir2000,Chambert-Loir2006,Chambert-Loir2011, Gubler1997, Gubler1998, Gubler2003, Gubler2007a, Maillot2000, Moriwaki2014,Yuan2012,Zhang1995a}, but we mostly refer to \cite{Kuehne2022} where the reader can find further details.
    
	We let $\mathbb{K} \subseteq \IQbar$ be a number field with ring of integers $\mathcal{O}_{\mathbb{K}}$. In addition, we write $\Sigma_f(\mathbb{K})$ (resp.\ $\Sigma_\infty(\mathbb{K})$) for the finite (resp.\ archimedean) places of $\mathbb{K}$ and set $\Sigma(\mathbb{K}) = \Sigma_f(\mathbb{K}) \cup \Sigma_\infty(\mathbb{K})$.

    We consider an irreducible, projective, $d$-dimensional ${\mathbb{K}}$-variety $X$. Furthermore, we work with flat, integral, projective schemes $\mathcal{X}$ of relative dimension $d$ over the ring of integers $\mathcal{O}_{\mathbb{K}}$ such that their generic fiber equals $X$. We call such $\mathcal{X}$ models of $X$. In addition, we consider a line bundle $L$ on $X$. For us, a model of $L$ is a line bundle $\mathcal{L}$ on a model $\mathcal{X}$ of $X$ together with a fixed isomorphism $L \approx \mathcal{L}_{\mathbb{K}}$.
  
    \subsection{Metrics}
    \label{subsection::metrics}
    
    For a place $\nu$ of ${\mathbb{K}}$, we denote by ${\mathbb{K}}_\nu$ the associated completion of ${\mathbb{K}}$, by $\overline{{\mathbb{K}}}_\nu$ the algebraic closure of ${\mathbb{K}}_\nu$, and by $\IC_\nu$ the metric completion of the algebraic closure $\overline{{\mathbb{K}}}_\nu$. If $\nu$ is non-archimedean, we endow $\IC_\nu$ with the standard absolute value $|\cdot|_\nu$ normalized such that $|p|_\nu=p^{-[\IK_\nu:\IQ_p]}$ if ${\mathbb{K}}_\nu$ has residue characteristic $p$. For an archimedean place $\nu$ of ${\mathbb{K}}$, we use the standard absolute value $|\cdot|_\nu$ on $\IC_\nu$. In some formulas, this choice leads to an additional factor
    \begin{equation*}
        \delta_\nu =
        \begin{cases}
            2 & \text{if $\nu$ is complex archimedean,} \\
            1 & \text{otherwise}.
        \end{cases}
    \end{equation*}
 
    By the valuative criterion of properness \cite[Proposition 7.3.3]{EGA2}, for $\nu \in \Sigma_f(\IK)$, any point $x \in X(\IC_\nu)$ extends to a unique $\mathcal{O}_{\IC_\nu}$-point of $\mathcal{X}$ and in this way induces a point $\mathrm{red}_\nu(x) \in \mathcal{X}(\overline{\IF}_\nu)$.
    
    A \textit{$\nu$-metric} on $L$ (for $\nu \in \Sigma(\IK)$) is a map $\Vert \cdot \Vert: L(\IC_\nu) \rightarrow \IR_{\geq 0}$ such that for each open subset $U \subseteq X$, each section $\mathbf{s}: U \rightarrow L$, and every $f \in \mathcal{O}_X(U)$, 
\begin{itemize}
\item[(i)] the function $\Vert \mathbf{s} \Vert = \Vert \cdot \Vert \circ \mathbf{s}_{\IC_\nu}: U(\IC_\nu) \rightarrow \IR_{\geq 0}$ is continuous,
\item[(ii)] if $\mathbf{s}$ vanishes nowhere on $U$, then $\Vert \mathbf{s} \Vert: U(\IC_\nu) \rightarrow \IR_{\geq 0}$ has no zeros,
\item[(iii)] for every $f \in \mathcal{O}_X(U)$, we have $\Vert f \mathbf{s} \Vert = |f| \cdot \Vert \mathbf{s} \Vert$ on $U(\IC_\nu)$, and
\item[(iv)] $\Vert \cdot \Vert$ is $\Gal(\IC_\nu/{\mathbb{K}}_\nu)$-invariant.
\end{itemize}

Given a model $\mathcal{L}$ of $L^{\otimes e}$, $e \geq 1$, on some model $\mathcal{X}$ of $X$, we can define a $\nu$-metric on $L$ for every place $\nu \in \Sigma_f({\mathbb{K}})$. Let us recall the construction of these metrics: We first choose a covering $\{ \mathcal{U}_i \}$ of $\mathcal{X}$ by open subschemes such that there are isomorphisms $\varphi_{i}: \mathcal{L}|_{\mathcal{U}_i} \rightarrow \IA^1_{\mathcal{U}_i}$.

For every place $\nu \in \Sigma_f({\mathbb{K}})$ and every $x \in X(\IC_\nu)$ such that $\mathrm{red}_\nu(x) \in \mathcal{U}_i(\overline{\IF}_\nu)$, the isomorphism $\varphi_i$ induces a $\IC_\nu$-linear homomorphism
\begin{equation*}
\varphi_{i,x}: (L^{\otimes e}|_{x})(\IC_\nu) \longrightarrow \IC_\nu
\end{equation*}
and we use this to set $$\Vert v \Vert_{\nu} =|\varphi_{i,x}(v)|_\nu$$ for all $v \in (L^{\otimes e}|_x)(\IC_\nu)$. As the notation suggests, $\Vert \cdot \Vert_\nu$ does not depend on the choice of $\mathcal{U}_i$. In fact, if $\mathrm{red}_\nu(x)$ is contained in an overlap $\mathcal{U}_{ij}=\mathcal{U}_{i} \cap \mathcal{U}_{j}$, the composite $\varphi_j|_{\mathcal{U}_{ij}} \circ \varphi_i^{-1}|_{\mathcal{U}_{ij}}$ corresponds to multiplication by a regular function $f_{ij} \in \caO(\mathcal{U}_{i} \cap \mathcal{U}_{j})^\times$. As $f_{ij}$ restricts to a unit in $\mathcal{O}_{\mathcal{X},\mathrm{red}_\nu(x)}$, we have $|f_{ij}(x)|_\nu =1$. 
In this way, we obtain a $\nu$-metric on $L^{\otimes e}$ and, after taking the $e$-th root, also a $\nu$-metric $\Vert \cdot \Vert_{\mathcal{L},\nu}$ on $L$. (We suppress the $e$ in the notation $\Vert \cdot \Vert_{\mathcal{L},\nu}$ as it is always clear from context.)

\subsection{Metrized line bundles}
 
A collection of $\nu$-metrics $\{ \Vert \cdot \Vert_\nu \}_{\nu \in \Sigma({\mathbb{K}})}$ on $L$ is called an \textit{adelic metric} if there exists a model $\mathcal{L}$ of $L^{\otimes e}$ for some $e \geq 1$ such that
\begin{equation}\label{eq::differencetomodel}
\log \left(\frac{\Vert \cdot \Vert_\nu}{\Vert \cdot \Vert_{\mathcal{L},\nu}}\right)    
\end{equation}
is continuous and bounded for all $\nu \in \Sigma_f({\mathbb{K}})$ and identically zero for all but finitely many $\nu \in \Sigma_f({\mathbb{K}})$.
The pair $\Ltil=(L, \{ \Vert \cdot \Vert_\nu \}_{\nu \in \Sigma({\mathbb{K}})})$ is then called a \textit{metrized line bundle}. We call $\Ltil$ \emph{algebraically metrized} if there exists a model $\mathcal{L}$ such that \eqref{eq::differencetomodel} is identically zero for all $\nu \in \Sigma_f({\mathbb{K}})$.
An algebraically metrized line bundle $(L, \{ \Vert \cdot \Vert_\nu \}_{\nu \in \Sigma({\mathbb{K}})})$ is called \emph{algebraically semipositive} if it is induced by a model $\mathcal{L}$ that is relatively nef and such that, for every $\nu \in \Sigma_\infty({\mathbb{K}})$, the Chern current $c_1(\Lo)$ of the hermitian line bundle $\Lo = (L, \Vert \cdot \Vert_{\nu})$ is semipositive.

If $\widetilde{L}$ and $\widetilde{M}$ are metrized line bundles with underlying line bundles $L$ and $M$, there is an obvious way to endow $L \otimes M$ and $L^{-1}$ with the structure of metrized line bundles. We write $\widetilde{L} \otimes \widetilde{M}$ and $\widetilde{L}^{-1}$, respectively, for these metrized line bundles. If $f: Y \rightarrow X$ is an algebraic map between irreducible, projective ${\mathbb{K}}$-varieties and $\widetilde{L}$ is a metrized line bundle, we can endow the pullback $f^\ast L$ with a canonical adelic metric, obtaining a metrized line bundle $f^\ast \widetilde{L}$. For a closed immersion $f: Y \hookrightarrow X$ of ${\mathbb{K}}$-varieties, we write $\Ltil|_Y$ instead of $f^\ast \Ltil$. 

We say that a sequence of adelic metrics $\{ \Vert \cdot \Vert_{i,\nu} \}_{\nu \in \Sigma({\mathbb{K}})}$ ($i \geq 1$) on $L$ converges to an adelic metric $\{ \Vert \cdot \Vert_{\nu} \}_{\nu \in \Sigma({\mathbb{K}})}$ on $L$ if the functions
\[ \log \left( \frac{\Vert \cdot \Vert_{i,\nu}}{\Vert \cdot \Vert_\nu} \right)\]
converge to $0$ uniformly on $X(\IC_\nu)$ for every $\nu \in \Sigma({\mathbb{K}})$ and are constantly $0$ for all but finitely many $\nu \in \Sigma_f({\mathbb{K}})$ and every $i \geq 1$.

A metrized line bundle $\Ltil = (L,\{ \Vert \cdot \Vert_{\nu} \})$ is called \emph{semipositive} if there exists a sequence of algebraically semipositive metrized line bundles $(L,\{ \Vert \cdot \Vert_{i,\nu} \})$ such that their adelic metrics $\{ \Vert \cdot \Vert_{i,\nu} \}$ converge to the adelic metric $\{ \Vert \cdot \Vert_{\nu} \}$ of $\Ltil$.

There is a natural notion of \textit{isometry} between metrized line bundles and the isometry classes of metrized line bundles form a Picard group $\widehat{\Pic}(X)$. An \emph{integrable} metrized line bundle is one that is isometric to $\widetilde{L} \otimes \widetilde{M}^{-1}$ for two semipositive metrized line bundles $\widetilde{L}, \widetilde{M}$. Note that both semipositivity and integrability are preserved under tensor products and pullback.
 
\subsection{Arithmetic intersection numbers}
\label{subsection::arithmetic_intersection_numbers}

Given integrable 
\begin{equation*}
\Ltil_i = (L_i, \{\Vert \cdot \Vert_{i,\nu} \}_{\nu \in \Sigma({\mathbb{K}})}) \in \widehat{\Pic}(X), \ 1 \leq i \leq d+1,
\end{equation*}
one can define an \textit{arithmetic intersection number}
\begin{equation*}
\Ltil_1 \cdot \Ltil_2 \cdots \Ltil_{d+1} \in \IR.
\end{equation*}
This can be done as in \cite[(1.4) and (1.5)]{Zhang1995a} or by relying on Gubler's theory of local heights \cite{Gubler1997,Gubler1998,Gubler2003}. The arithmetic intersection numbers obtained are the same (see \cite[Remark 11.24]{Gubler2003}). For an irreducible subvariety $Y \subseteq X$ of dimension $d^\prime$, we also set
\begin{equation*}
    \Ltil_1 \cdot \Ltil_2 \cdots \Ltil_{d^\prime} \cdot [Y] = \Ltil_1|_{Y} \cdot \Ltil_2|_{Y}\cdots \Ltil_{d^\prime}|_Y.
\end{equation*}

Let us list some well-known properties of arithmetic intersection numbers.
We say that a metrized line bundle $\Ltil$ is \emph{algebraically nef} if it is algebraically semipositive and, for every irreducible subvariety $Y \subseteq X$, the arithmetic intersection number $\Ltil|_{Y}^{\dim(Y)+1}$ is non-negative. We call a metrized line bundle $\Ltil$ \emph{nef} if there exists a sequence of algebraically nef metrized line bundles $(L,\{ \Vert \cdot \Vert_{i,\nu} \})$ such that their adelic metrics $\{ \Vert \cdot \Vert_{i,\nu} \}$ converge to the adelic metric $\{ \Vert \cdot \Vert_{\nu} \}$ of $\Ltil$.

    \begin{lemma}
		\label{lemma::intersectionnumber}
		Let $\Ltil_1,\dots,\Ltil_{d+1},\Ltil_1^\prime \in \widehat{\Pic}(X)$ be integrable. 
		\begin{enumerate}[label=(\roman*)]
			\item (Multilinearity) We have
			\begin{equation*}
				(\Ltil_1 + \Ltil_1^\prime) \cdot \Ltil_2 \cdots \Ltil_{d+1} =
				\Ltil_1 \cdot \Ltil_2 \cdots \Ltil_{d+1} +\Ltil_1^\prime \cdot \Ltil_2 \cdots \Ltil_{d+1}.
			\end{equation*}
			\item (Commutativity) For any permutation $\sigma: \{ 1, \dots, d + 1 \} \rightarrow \{1, \dots, d + 1 \}$, we have 
			\begin{equation*}
				\Ltil_{\sigma(1)} \cdot \Ltil_{\sigma(2)} \cdots \Ltil_{\sigma(d+1)} = \Ltil_1 \cdot \Ltil_2 \cdots \Ltil_{d+1}.
			\end{equation*}
			\item (Projection Formula) Let $Y$ be an irreducible, projective ${\mathbb{K}}$-variety of dimension $d^\prime$, and let further ${f: Y \rightarrow X}$ be a proper map. Then, the intersection number
            \begin{equation*}
                f^\ast \Ltil_1 \cdot f^\ast \Ltil_2 \cdots f^\ast \Ltil_{d^\prime+1}   
            \end{equation*}
            equals
			\begin{equation*}
				[{\mathbb{K}}(Y):{\mathbb{K}}(f(Y))] \cdot (\Ltil_1|_{f(Y)} \cdot \Ltil_2|_{f(Y)} \cdots \Ltil_{d^\prime+1}|_{f(Y)})
			\end{equation*}
            if $\dim(f(Y)) = d^\prime$ and is equal to $0$ else.
			\item (Scaling invariance) If $\Ltil_1 = (L_1, \{ \Vert \cdot \Vert_\nu \}_{\nu \in \Sigma({\mathbb{K}})})$ and $\Ltil_1^\prime = (L_1, \{ c_\nu \cdot \Vert \cdot \Vert_\nu \}_{\nu \in \Sigma({\mathbb{K}})})$ for a collection of positive real constants $c_\nu$ ($\nu \in \Sigma(\mathbb{K})$), all but finitely many of which are $1$, satisfying $\prod_{\nu \in \Sigma(\IK)}{c_\nu^{\delta_\nu}} = 1$, we have
			\begin{equation*}
				\Ltil_1^\prime \cdot \Ltil_2 \cdots \Ltil_{d+1} = \Ltil_1 \cdot \Ltil_2 \cdots \Ltil_{d+1}.
			\end{equation*}
            In particular, this holds if $c_\nu = |c|_\nu$ for some $c \in {\mathbb{K}}^\times$.
			\item (Base change) If ${\mathbb{K}}^\prime/{\mathbb{K}}$ is a finite extension, then
			\begin{equation*}
				\Ltil_{1,{\mathbb{K}}^\prime} \cdot \Ltil_{2,{\mathbb{K}}^\prime} \cdots \Ltil_{d+1,{\mathbb{K}}^\prime} = [{\mathbb{K}}^\prime:{\mathbb{K}}] \cdot (\Ltil_1 \cdot \Ltil_2 \cdots \Ltil_{d+1}).
			\end{equation*}
		\end{enumerate}
	\end{lemma}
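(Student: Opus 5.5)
These properties are standard, and the plan is to reduce each to the algebraically metrized case and then pass to limits. The approach uses Zhang's construction \cite[(1.4) and (1.5)]{Zhang1995a}, or equivalently Gubler's local heights \cite{Gubler2003}. For algebraically semipositive metrized line bundles induced by models $\mathcal{L}_i$ on a common model $\mathcal{X}$ (after passing to a common refinement, using that models can be dominated), the number $\Ltil_1 \cdots \Ltil_{d+1}$ is the Gillet--Soulé arithmetic intersection number $\widehat{c}_1(\overline{\mathcal{L}}_1)\cdots\widehat{c}_1(\overline{\mathcal{L}}_{d+1})\cdot[\mathcal{X}]$ (divided by $e_1\cdots e_{d+1}$). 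It is extended to semipositive bundles by continuity, which is Zhang's key estimate, and then to integrable ones by multilinearity. The proof therefore proceeds in three steps: verify each property for algebraically semipositive data, observe that each side is continuous in the adelic metrics, and observe that each identity is compatible with writing integrable bundles as differences of semipositive ones.

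\textbf{Step 1: the model case.} Multilinearity (i) and commutativity (ii) are formal properties of the arithmetic Chow ring \cite{Gillet1990}, or can be read off directly from the inductive local-height formula $h(\mathbf{s}_1,\dots,\mathbf{s}_{d+1}) = h(\cdots|_{\Div(\mathbf{s}_{d+1})}) + \sum_\nu \int \log\|\mathbf{s}_{d+1}\|_\nu^{-1}\, c_1(\Lo_1)\wedge\cdots\wedge c_1(\Lo_d)$. For the projection formula (iii), extend $f$ to a morphism of models $\mathcal{Y}\to\mathcal{X}$ and apply the arithmetic projection formula for proper pushforward of arithmetic cycles. Then $f_\ast[\mathcal{Y}] = [\IK(Y):\IK(f(Y))]\,[\overline{f(\mathcal{Y})}]$ if the dimension is preserved, and $f_\ast[\mathcal{Y}] = 0$ otherwise. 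Scaling invariance (iv) follows from the induction formula: changing $\|\cdot\|_\nu$ by $c_\nu$ adds $-\sum_\nu \delta_\nu \log c_\nu \cdot \deg(L_2\cdots L_{d+1})$, which vanishes by the product condition. For $c\in\IK^\times$ this is the product formula. Base change (v) follows because models pull back to models over $\mathcal{O}_{\IK'}$, and the sum over places of $\IK'$ above each $\nu$ reproduces the local term of $\nu$ weighted by the local degrees, which total $[\IK':\IK]$.

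\textbf{Step 2: limits.} The main point, and the only nontrivial analytic one, is that the arithmetic intersection number of semipositive metrized line bundles depends continuously on the metrics in the uniform topology. This is exactly Zhang's estimate $|\Ltil_1\cdots\Ltil_{d+1} - \Ltil_1'\cdots\Ltil_{d+1}| \le \sum_\nu \delta_\nu \sup|\log(\|\cdot\|_\nu/\|\cdot\|'_\nu)| \cdot \deg(L_2\cdots L_{d+1})$, proved by telescoping and using the positivity of the measures $c_1(\Lo_2)\wedge\cdots\wedge c_1(\Lo_{d+1})$. Both sides of each identity in (i)--(v) are continuous for this convergence, since pullback and base change preserve uniform convergence. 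The identities therefore pass from approximating algebraically semipositive sequences to semipositive bundles. In (iv) one must be careful that scaling by $c_\nu$ preserves semipositivity, which it does up to rescaling the model by a fractional ideal together with constants at archimedean places.

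\textbf{Step 3: integrable bundles.} The number is defined by expanding $\Ltil_i = \Ltil_i^+ \otimes (\Ltil_i^-)^{-1}$ multilinearly. Well-definedness and (i)--(v) for integrable bundles then follow from the semipositive case by additivity. The main obstacle is Step 2, together with independence of the chosen models in Step 1, but both are standard. In the paper I would cite \cite[Section 2]{Kuehne2022}, \cite{Zhang1995a}, and \cite[Remark 11.24]{Gubler2003} rather than reprove them.
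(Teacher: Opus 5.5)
Your proposal is correct, but it takes Zhang's global route (intersection numbers on models, then limits, then differences). The paper's proof is purely by citation to Gubler's theory of local heights:
\begin{itemize}
\item (i)--(iii) are the corresponding properties of local heights \cite[Theorem 10.6]{Gubler2003}, extended from smooth to merely continuous semipositive archimedean metrics by \cite[Appendix A]{Kuehne2022};
\item (iv) follows from how a local height changes when a metric is multiplied by a constant \cite[Propositions 3.7 and 9.4]{Gubler2003}, together with the product formula;
\item (v) follows from the induction formula and $[\IK':\IK]=\sum_{\nu'\mid\nu}[\IK'_{\nu'}:\IK_\nu]$.
\end{itemize}

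The local route makes (iv) and (v) transparent place by place. More importantly, it matches the paper's definition of algebraic semipositivity, which only asks the archimedean metrics to be continuous with semipositive Chern current. Your Step 1 identifies the number with a Gillet--Soul\'e (or Bost--Gillet--Soul\'e) intersection number, and that requires smooth archimedean metrics. To cover the paper's definition you must therefore do one of two things. Either approximate continuous semipositive metrics on the possibly singular space $X(\IC)$ uniformly by smooth semipositive ones, which is not automatic, or import exactly the extension in \cite[Appendix A]{Kuehne2022} that the paper cites.

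In exchange, your route isolates the single analytic input, Zhang's continuity estimate. It also makes explicit why the number is independent of the decomposition into semipositive pieces.

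One small correction: commutativity cannot simply be ``read off'' from the inductive local-height formula. Symmetry of local heights is a genuine theorem in its own right, although citing \cite{Gillet1990} or \cite{Gubler2003} for it is of course fine.
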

	
	\begin{proof} Each of the first three statements follows from the respective property of Gubler's local heights (\cite[Theorem 10.6]{Gubler2003}), which extends to continuous archimedean metrics by \cite[Appendix A]{Kuehne2022}. Statement (iv) is a consequence of \cite[Propositions 3.7 and 9.4]{Gubler2003} and the product formula. Statement (v) is a consequence of our normalizations, as can be seen from the induction formula (\cite[Proposition 3.5 or Remark 9.5]{Gubler2003}) by using that $[{\mathbb{K}}^\prime:{\mathbb{K}}] = \sum_{\nu^\prime | \nu} [{\mathbb{K}}^\prime_{\nu^\prime}: {\mathbb{K}}_\nu]$.
	\end{proof}

    Because of Lemma \ref{lemma::intersectionnumber} (i), we can extend the definition of the arithmetic intersection numbers to elements of $\widehat{\Pic}(X)_{\IQ} = \widehat{\Pic}(X) \otimes_{\IZ} \IQ$ that have some positive integer multiple in $\widehat{\Pic}(X)$ that is integrable. We call these \emph{integrable} and we call an element of $\widehat{\Pic}(X)_{\IQ}$ \emph{nef} if it has some positive integer multiple in $\widehat{\Pic}(X)$ that is nef. All assertions in Lemma \ref{lemma::intersectionnumber} evidently carry over to the setting of $\IQ$-line bundles.    
    \begin{lemma}\label{lemma::intersectionnumber_nef}
        Let $\Ltil_1,\dots,\Ltil_{d+1} \in \widehat{\Pic}(X)_{\IQ}$ be integrable and nef, then
        \begin{equation*}
            \Ltil_1 \cdot \Ltil_2 \cdots \Ltil_{d+1} \geq 0.
        \end{equation*}
        Consequently, we have
        \begin{equation*}
            \Mtil_1 \cdot \Mtil_2 \cdots \Mtil_{d+1} \geq \Ltil_1 \cdot \Ltil_2 \cdots \Ltil_{d+1}
        \end{equation*}
        for all integrable $\Mtil_1,\dots,\Mtil_{d+1} \in \widehat{\Pic}(X)_{\IQ}$ such that all differences $\Mtil_i - \Ltil_i$, $1 \leq i \leq d+1$, are nef.
    \end{lemma}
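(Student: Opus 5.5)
The plan is to reduce first to algebraically nef metrized line bundles, then to prove non-negativity for those by induction on $d=\dim X$, and finally to deduce the monotonicity statement from multilinearity.

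\emph{Reductions.} By multilinearity (Lemma \ref{lemma::intersectionnumber} (i)), scaling each $\Ltil_i$ by a positive integer scales the intersection number by a positive factor. So we may assume every $\Ltil_i$ lies in $\widehat{\Pic}(X)$ and is nef. By definition, each $\Ltil_i$ is then a limit of algebraically nef metrized line bundles $\Ltil_i^{(k)}$ with the same underlying line bundle. Arithmetic intersection numbers of integrable bundles are continuous under such convergence of adelic metrics. This follows from Gubler's local height theory \cite[Theorem 10.6]{Gubler2003} and its archimedean extension in \cite[Appendix A]{Kuehne2022}: changing a metric by a bounded continuous function changes the intersection number by an integral of that function against a Monge--Amp\`ere-type measure of bounded total mass. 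Hence it suffices to prove non-negativity when all $\Ltil_i$ are algebraically nef. After passing to a common model we may further assume they are induced by relatively nef models $\mathcal{L}_i$ on a single model $\mathcal{X}$, with semipositive archimedean metrics.

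\emph{Induction on $d$.} For $d=0$ the intersection number is the height of points and equals $\Ltil_1|_X^{1}\geq 0$, which holds by the definition of algebraically nef (taking $Y=X$). For $d\geq 1$, replace $\Ltil_{d+1}$ by a power and pick a non-zero global section $\mathbf{s}$ of $L_{d+1}^{\otimes m}$. Such a section exists after a small metric perturbation that adds ampleness, and the perturbation is then removed by the continuity argument above. Then use the recursive formula for local heights
\[
\Ltil_1\cdots\Ltil_{d+1} = \Ltil_1\cdots\Ltil_d\cdot[\mathrm{div}(\mathbf{s})] + \sum_{\nu}\delta_\nu\int_{X(\IC_\nu)} -\log\Vert \mathbf{s}\Vert_\nu \, c_1(\Ltil_{1})\wedge\cdots\wedge c_1(\Ltil_{d}).
\]
At non-archimedean places the integral is a Chambert-Loir measure, which is positive because $\mathcal{L}_1,\dots,\mathcal{L}_d$ are relatively nef. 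At archimedean places the measure is positive by semipositivity. The first term is non-negative by induction applied to the components of $\mathrm{div}(\mathbf{s})$, since restrictions of algebraically nef bundles remain algebraically nef. The remaining task is to choose $\mathbf{s}$ with $\Vert\mathbf{s}\Vert_\nu\leq 1$ everywhere, making each $-\log\Vert\mathbf{s}\Vert_\nu\geq 0$.

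\emph{Main obstacle.} This choice of a small section is the hard step, and it is essentially Zhang's arithmetic Nakai--Moishezon/successive-minima theory \cite{Zhang1995a}. I would try to cite that theory directly rather than reprove it. The positivity of $\Ltil|_Y^{\dim Y+1}$ on all subvarieties is exactly the input needed to show, via arithmetic Hilbert--Samuel and Minkowski, that after adding an arbitrarily small positive constant to the metric, some power has a strictly small section. The argument first handles the mixed case via polarization, or via the generalized Hodge index/mixed inequality, reducing to the case $\Ltil_1=\dots=\Ltil_{d+1}$ of a single algebraically nef bundle (replacing it by $\sum_i\Ltil_i$ and using multilinearity). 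Alternatively, one can appeal directly to \cite[Theorem 10.6]{Gubler2003} together with \cite[Appendix A]{Kuehne2022}, which state non-negativity of local heights for nef data.

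\emph{Second statement.} Write $\Mtil_i=\Ltil_i+\Ntil_i$ with $\Ntil_i$ nef, and telescope:
\[
\Mtil_1\cdots\Mtil_{d+1}-\Ltil_1\cdots\Ltil_{d+1}=\sum_{j=1}^{d+1}\Ltil_1\cdots\Ltil_{j-1}\cdot\Ntil_j\cdot\Mtil_{j+1}\cdots\Mtil_{d+1}.
\]
Each $\Mtil_i=\Ltil_i+\Ntil_i$ is nef, because a sum of limits of algebraically nef bundles is a limit of algebraically nef bundles. Each summand is therefore an intersection of nef bundles, and so is non-negative by the first part.
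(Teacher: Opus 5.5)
\textbf{Gap in the first assertion.} The gap is at exactly the step you flag yourself. Since $L_{d+1}$ is only nef, one can have $H^0(X,L_{d+1}^{\otimes m})=0$ for all $m$, so a perturbation of the metric alone cannot produce sections. You have to perturb the bundle, say to $\Mtil_\epsilon=\Ltil_{d+1}+\epsilon\widetilde{A}$ with $\widetilde{A}$ ample, and remove $\epsilon$ at the end by multilinearity.

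But to get strictly small sections of a power of $\Mtil_\epsilon$, via arithmetic Nakai--Moishezon or via Hilbert--Samuel plus Minkowski, you need $\Mtil_\epsilon|_Y^{\dim Y+1}>0$ for every $Y$, including $Y=X$. Expanding this produces mixed numbers $\Ltil_{d+1}^{j}\cdot\widetilde{A}^{d+1-j}$ on $X$ itself. These are instances of the claim you are proving, in the same dimension, so your induction on $d$ does not supply them.

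The reduction you suggest does not repair this. Polarization writes $(d+1)!\,\Ltil_1\cdots\Ltil_{d+1}$ as an alternating-sign combination of the numbers $(\sum_{i\in S}\Ltil_i)^{d+1}$. Hence non-negativity of pure powers gives no sign information on mixed ones; already $2ab=(a+b)^2-a^2-b^2$.

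The fallback citation does not help either. \cite[Theorem 10.6]{Gubler2003} and \cite[Appendix A]{Kuehne2022} provide the formal properties of local heights, which is how the paper uses them in Lemma \ref{lemma::intersectionnumber}. Positivity of local heights is only available for effective metrized divisors, i.e.\ sections with $\Vert\mathbf{s}\Vert_\nu\le 1$, and that is precisely the missing input.

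\textbf{How to close it.} What breaks the circularity is Zhang's successive-minima inequality \cite{Zhang1995a}. For $M$ ample and $\Mtil$ semipositive it gives $h_{\Mtil}(Y)\ge\inf_{x\in Y}h_{\Mtil}(x)$, which involves only heights of points. Algebraic nefness includes $h_{\Ltil_{d+1}}(x)\ge 0$, and $h_{\widetilde{A}}$ can be chosen bounded below by a positive constant. So $\Mtil_\epsilon$ is positive on every $Y$, and arithmetic Nakai--Moishezon then gives strictly small sections. Your recursion with the positive measures $c_1(\Ltil_1)\wedge\cdots\wedge c_1(\Ltil_d)$ (living on the Berkovich analytification at finite places) finishes the argument after $\epsilon\to 0$. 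The paper avoids all of this by citing \cite[Proposition 5.6]{Yuan2017}, which states the non-negativity directly.

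\textbf{Second assertion.} This part is essentially the paper's argument. Your telescoping uses the $\Mtil_i$ as nef factors, and the fact that a sum of algebraically nef bundles is algebraically nef is itself the first assertion applied on subvarieties $Y$; you should say so. The paper's full expansion of $\prod_i(\Ltil_i+(\Mtil_i-\Ltil_i))$ avoids this detour, since it only involves the nef classes $\Ltil_i$ and $\Mtil_i-\Ltil_i$.
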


    \begin{proof}
        After multiplying by some common denominator, we may and will assume that the $\Ltil_i$ and the $\Mtil_i$ are in $\widehat{\Pic}(X)$. The first assertion is \cite[Proposition 5.6]{Yuan2017}.
        For the second assertion, we use Lemma \ref{lemma::intersectionnumber} (i) to expand 
        \[ (\Ltil_1 + (\Mtil_1 - \Ltil_1)) \cdot (\Ltil_2 + (\Mtil_2 - \Ltil_2)) \cdots (\Ltil_{d+1} + (\Mtil_{d+1} - \Ltil_{d+1}))\]
        and obtain a sum consisting only of non-negative terms.
    \end{proof}

The following lemma will be useful to conclude that certain intersection numbers are zero for dimension reasons.

	\begin{lemma}\label{lemma::projformulaposdim}
		Let $f: X \rightarrow Y$ be a morphism of projective irreducible varieties over $\IK$. For every $d \in \IZ$ with $\dim (Y) < d \leq \dim(X)$ and for all $\Ltil_1,\hdots, \Ltil_{\dim(X)-d} \in \widehat{\Pic}(X)_{\IQ}$ and $\Mtil_1, \hdots, \Mtil_{d+1} \in \widehat{\Pic}(Y)_{\IQ}$, we have
		\begin{equation*}
			\Ltil_1 \cdots \Ltil_{\dim(X)-d} \cdot f^{\ast}\Mtil_1 \cdots f^{\ast}\Mtil_{d+1} = 0.
	    \end{equation*}
	\end{lemma}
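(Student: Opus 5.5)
We will reduce the statement to the vanishing of an arithmetic intersection number on $Y$ itself, by pushing the problem down along $f$ and using the dimension count $d > \dim(Y)$. By multilinearity (Lemma \ref{lemma::intersectionnumber} (i)) and after clearing denominators, we may assume that all $\Ltil_i$ and $\Mtil_j$ lie in $\widehat{\Pic}(X)$ and $\widehat{\Pic}(Y)$ respectively. Writing each integrable bundle as a difference of semipositive ones and expanding, we may further assume that all of them are semipositive. Finally, by an approximation argument using the continuity of intersection numbers under uniform convergence of metrics, we may assume that all bundles are algebraically semipositive.

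With algebraic metrics in hand, we pass to models. Choose a model $\mathcal{Y}$ carrying models $\mathcal{M}_j$ of (powers of) the $M_j$. Choose a model $\mathcal{X}$ together with a morphism $\mathcal{F}:\mathcal{X}\to\mathcal{Y}$ extending $f$; this can be done by taking the closure of the graph of $f$ in $\mathcal{X}'\times_{\mathcal{O}_{\IK}}\mathcal{Y}$. On $\mathcal{X}$ we take models $\mathcal{L}_i$, and the archimedean contributions are handled in the same framework via Gillet--Soulé intersection theory with hermitian metrics. The arithmetic intersection number is then the arithmetic degree of the arithmetic intersection product
\[\widehat{c}_1(\mathcal{L}_1)\cdots\widehat{c}_1(\mathcal{L}_{\dim X-d})\cdot\mathcal{F}^\ast\bigl(\widehat{c}_1(\mathcal{M}_1)\cdots\widehat{c}_1(\mathcal{M}_{d+1})\bigr)\cap[\mathcal{X}].\]
The class $\widehat{c}_1(\mathcal{M}_1)\cdots\widehat{c}_1(\mathcal{M}_{d+1})$ lies in the arithmetic Chow group $\widehat{\mathrm{CH}}^{d+1}(\mathcal{Y})$. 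Since $\dim\mathcal{Y}=\dim Y+1<d+1$, this group vanishes (codimension larger than the dimension). Its archimedean component also vanishes, because $(d+1,d+1)$-forms and currents are zero on the complex space $Y(\IC)$ of dimension $\dim Y<d+1$. Consequently the pulled-back product is zero, and so is the whole intersection number.

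An alternative route, closer to the paper's Gubler framework, is an induction on $\dim(X)-d$ using local heights and a rational section $\mathbf{s}$ of $L_1$. The induction formula expresses the intersection number as an intersection over $\mathrm{div}(\mathbf{s})$ plus an integral of $\log\Vert\mathbf{s}\Vert$ against the measure $c_1(\Ltil_2)\cdots f^\ast c_1(\Mtil_{d+1})$. This measure vanishes for dimension reasons, since it involves $d+1>\dim Y$ factors pulled back from $Y$. Applying the induction to each component $D$ of $\mathrm{div}(\mathbf{s})$, with $f|_D$ satisfying $\dim f(D)\le\dim Y<d$, finishes the argument. The base case $\dim(X)=d$ is exactly the projection formula, Lemma \ref{lemma::intersectionnumber} (iii), since then $\dim f(X)<\dim X$.

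The main obstacle is the vanishing of the local measures, i.e.\ the statement that $f^\ast c_1(\Mtil_1)\wedge\cdots\wedge f^\ast c_1(\Mtil_{d+1})=0$ (Chambert-Loir measures in the non-archimedean case). We would justify this through the functoriality of Chambert-Loir measures and Monge--Ampère products, or through the model-theoretic vanishing of Chow groups described above.
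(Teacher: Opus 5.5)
Your second route is a correct proof, and it genuinely differs from the paper's.

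\textbf{How the paper argues.} The paper clears denominators and quotes \cite[Lemma 2.4]{Kuehne2022}. This is a fibration formula for $\dim(X)-\dim(Y)$ bundles on $X$ and $\dim(Y)+1$ bundles pulled back from $Y$, in which the geometric intersection number on the generic fibre $X_{\eta_Y}$ appears as a factor. The paper takes $\Ntil_i=\Mtil_i$ for $i\le\dim(Y)+1$ and regroups $f^\ast\Mtil_{\dim(Y)+2},\dots,f^\ast\Mtil_{d+1}$ with the $\Ltil_i$. It then concludes because $f^\ast M_{d+1}$ is trivial on $X_{\eta_Y}$. So the paper sees the vanishing as a purely geometric fact on the generic fibre and outsources all local analysis.

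\textbf{How your induction differs.} Your induction on $\dim(X)-d$ stays within tools the paper already uses. The base case is Lemma \ref{lemma::intersectionnumber} (iii), and the inductive step uses the induction formula. Your bookkeeping $\dim f(D)\le\dim(Y)<d\le\dim(D)$ for the components $D$ of $\mathrm{div}(\mathbf{s})$ is correct. The cost is that all the weight falls on the vanishing of the mixed measures $c_1(\Ltil_2)\wedge\cdots\wedge c_1(f^\ast\Mtil_{d+1})$, which you only sketch. That vanishing does hold:
\begin{itemize}
\item At archimedean places, $df$ has complex rank at most $\dim f(X)<d+1$ on $X^{\mathrm{sm}}$, so pulled-back smooth $(d+1,d+1)$-forms vanish. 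Continuity of Monge--Amp\`ere measures then handles continuous semipositive metrics.
\item At finite places, take model metrics on $\mathcal{F}:\mathcal{X}\to\mathcal{Y}$. The measure is a combination of Dirac masses at the components $V$ of the special fibre, with weights $\mathcal{L}_2\cdots\mathcal{F}^\ast\mathcal{M}_{d+1}\cdot[V]$. These weights vanish by \cite[Proposition 2.5 (c)]{Fulton1998}, because $\mathcal{F}(V)$ lies in the special fibre of $\mathcal{Y}$, which has dimension $\dim(Y)<d+1$.
\item Weak continuity of the measures then gives the general case.
\end{itemize}

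\textbf{Your first route.} It does not apply as written. Gillet--Soul\'e products, pull-backs $\mathcal{F}^\ast$ and the groups $\widehat{\mathrm{CH}}^{d+1}(\mathcal{Y})$ require regular models and smooth metrics. Models of $X$ and $Y$ are in general singular, and algebraically semipositive metrics are only continuous at archimedean places.

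It can be repaired using dimension-graded arithmetic Chow groups and Chern class operators as in \cite{Bost1994}. You still need to approximate the archimedean metrics by smooth ones first. Then commute the pulled-back factors to the outside and use $\mathcal{F}_\ast(\widehat{c}_1(\mathcal{F}^\ast\overline{\mathcal{M}}_j)\cap\alpha)=\widehat{c}_1(\overline{\mathcal{M}}_j)\cap\mathcal{F}_\ast\alpha$ for the $(d+1)$-dimensional class $\alpha=\widehat{c}_1(\overline{\mathcal{L}}_1)\cdots\widehat{c}_1(\overline{\mathcal{L}}_{\dim(X)-d})\cap[\mathcal{X}]$. Its push-forward vanishes, since $\mathcal{Y}$ carries no $(d+1)$-dimensional cycles and currents of bidimension $(d+1,d+1)$ on $Y(\IC)$ are zero.
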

    
    \begin{proof}
    After multiplying by some common denominator, we may and will again assume that the $\Ltil_i$ are in $\widehat{\Pic}(X)$ and that the $\Mtil_i$ are in $\widehat{\Pic}(Y)$. The lemma follows from \cite[Lemma 2.4]{Kuehne2022} with $\Ntil_i = \Mtil_i$ ($1 \leq i \leq \dim(Y) +1$) and $\Ltil_{\dim(X)-d+i} = f^\ast \Mtil_{\dim(Y)+1+i}$ ($i = 1, \hdots, d-\dim(Y)$) since, for $\eta_Y$ the generic point of $Y$, the line bundle $(f^\ast M_{d+1})_{\eta_Y}$ is trivial, being the pullback of a line bundle on a point.
	\end{proof}

Let $\mu \in \Sigma_\infty(\IK)$ be a complex archimedean place of $\IK$ and let $f \in \mathcal{C}^\infty(X(\IK_\mu))$ be a function, where a function on a potentially singular reduced complex analytic space is called smooth if it is locally equal to the restriction of a smooth function on some ambient manifold. Then, we write $\Otil_X(f)$ for the metrized line bundle $(\caO_X,\{\Vert \cdot \Vert_\nu \})$ where
\begin{equation*}
    \Vert 1 \Vert_\nu(x) = 
    \begin{cases}
    e^{-f(x)} &\text{if $\nu = \mu$,} \\
    1 &\text{otherwise.}
    \end{cases}
\end{equation*}
A simple calculation using the recursive formula for arithmetic intersection numbers (compare \cite[Eq.\ (2.3)]{Kuehne2022}) yields that
\begin{equation}
\label{equation::intersection_integral}
\widetilde{L}_1 \cdot \widetilde{L}_2 \cdots \widetilde{L}_{d+1-i} \cdot \widetilde{\caO}_X(f)^i
= \delta_\mu \int_{X(\IC_\mu)} f c_1(\Lo_{1}) \wedge c_1(\Lo_{2}) \wedge \dots \wedge c_1(\Lo_{d+1-i}) \wedge c_1(\overline{\caO}_{X}(f))^{i-1},
\end{equation}
where $i \geq 1$ and, for a metrized line bundle $\Mtil$, we denote the associated hermitian line bundle at the place $\mu$ by $\overline{M}$.
If we consider $\kappa \in \IR$ as a constant function on $X({\IC_\mu})$, we hence have
\begin{equation*}
(\Ltil+ \widetilde{\caO}_{X}(\kappa))^{d+1} = \Ltil^{d+1} + \delta_\mu \kappa (d+1) L^d.
\end{equation*}
If $L$ is nef, this means
\begin{equation}
\label{equation::intersection_Lc}
(\Ltil+ \widetilde{\caO}_{X}(\kappa))^{d+1} = \Ltil^{d+1} + \delta_\mu \kappa (d+1) \vol(L)
\end{equation}
by the algebraic Riemann-Roch Theorem (\cite[Corollary 1.4.41]{Lazarsfeld2004}); as usual, we define the volume $\vol(L)$ of a line bundle $L$ as the limes superior
\begin{equation*}
\limsup_{N \rightarrow \infty} \frac{\dim H^0(X,L^{\otimes N})}{N^d/d!}.
\end{equation*}

    \subsection{Heights}
        
	Using the intersection numbers defined above, we can define the Arakelov height $h_{\Ltil}(Y)$ of an irreducible subvariety $Y \subseteq X$ with respect to an integrable metrized line bundle $\Ltil$ whose underlying line bundle $L$ is big and nef when restricted to $Y$. In fact, we set
	\begin{equation}
		\label{equation::definition_height}
		h_{\Ltil}(Y)= \frac{(\Ltil|_Y)^{d^\prime+1}}{[{\mathbb{K}}:\IQ](d^\prime+1)(L|_{Y})^{d^\prime}}
	\end{equation}
	for an irreducible subvariety $Y \subseteq X$ of dimension $d^\prime$ such that $L|_Y$ is big and nef.
	
	We can make the above definition more explicit if $Y$ is a closed point $x \in X$. As $(L|_x)^{0} = [{\mathbb{K}}(x):{\mathbb{K}}]$ (compare e.g.\ \cite[Proposition VI.2.7]{Kollar1996}), the definition in \eqref{equation::definition_height} simplifies to 
    \begin{equation*}
        h_{\Ltil}(x) = \frac{\Ltil|_x}{[{\mathbb{K}}(x):\IQ]}.
    \end{equation*}
    We write $\mathbf{O}_\nu(x)$ for the $\IC_\nu$-valued points of the base change $x \otimes_{\mathbb{K}} \IC_\nu$. Using the induction formula \cite[Proposition 3.5 and Remark 9.5]{Gubler2003}, we obtain
	\begin{equation}
		\label{equation::height_section}
		h_{\Ltil}(x) = - \frac{1}{[{\mathbb{K}}(x):\IQ]} \sum_{\nu \in \Sigma({\mathbb{K}})} \sum_{y \in \mathbf{O}_\nu(x)}
		\delta_\nu \log \Vert \mathbf{s}(y) \Vert_{\nu}
	\end{equation}
	for any non-zero rational section $\mathbf{s}$ of $L$ such that $x \notin\left\vert \Div(\mathbf{s}) \right\vert$. For an algebraic point $x \in X(\overline{{\mathbb{K}}})$ of $X$, we write $h_{\Ltil}(x)$ for the height of its associated closed point.

The essential minimum of the Arakelov height associated with a metrized line bundle $\Ltil$ on $X$ is defined as
\begin{equation*}
    \mu_{\mathrm{ess}}(X,\Ltil) = 
    \sup_{Z \subsetneq X} \inf_{x \in X \setminus Z} h_{\Ltil}(x)
\end{equation*}
where the supremum ranges over all Zariski-closed subsets $Z \subsetneq X$ and the infimum ranges over all closed points $x \in X \setminus Z$. In other words, the essential minimum $\mu_{\mathrm{ess}}(X,\Ltil)$ is the smallest real number $\vartheta$ such that the set of closed points $x \in X$ with height $h_{\Ltil}(x)\leq \vartheta + \varepsilon$ is Zariski-dense for every $\varepsilon > 0$.

\subsection{Minkowski's theorem}

The following lemma is standard in applications of Arakelov theory to diophantine geometry and is a straightforward consequence of an adelic version of Minkowski's second theorem \cite[Theorem C.2.11]{Bombieri2006}. We define the \textit{arithmetic volume} $\volh_\chi(\Ltil)$ of a metrized line bundle $\Ltil\in \widehat{\Pic}(X)$ as in \cite[Section 2.9]{Kuehne2022}. For a section $\mathbf{s} \in H^0(X, L)$ of a metrized line bundle $\Ltil = (L, \{\Vert \cdot \Vert_\nu\})$, we consider its supremum norm
\begin{equation*}
    \Vert \mathbf{s} \Vert_\nu = \sup_{x \in X(\IC_\nu)} \Vert \mathbf{s}(x) \Vert_\nu
\end{equation*}
for every place $\nu \in \Sigma({\mathbb{K}})$.

\begin{lemma} 
\label{lemma::minkowski}
Let $\mu \in \Sigma({\mathbb{K}})$ and a real $\varepsilon>0$ be given. For each $\Ltil = (L, \{ \Vert \cdot \Vert_\nu \}) \in \widehat{\Pic}(X)$ whose underlying line bundle $L$ is big and nef, there exists an integer $M \geq 1$ and a non-zero section $\mathbf{s} \in H^0(X, L^{\otimes M})$ such that
\begin{equation*}
\delta_\mu \log \Vert \mathbf{s} \Vert_{\mu} \leq \left( - \frac{\volh_\chi(\Ltil)}{(d+1)L^d \cdot [X]} + \varepsilon \right) M
\end{equation*}
and
\begin{equation*}
\log \Vert \mathbf{s} \Vert_\nu  \leq 0
\end{equation*}
for all other places $\nu \neq \mu$ of ${\mathbb{K}}$.
\end{lemma}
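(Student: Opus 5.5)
The plan is to apply the adelic Minkowski theorem \cite[Theorem C.2.11]{Bombieri2006} to the lattices $H^0(X,L^{\otimes N})$ with suitably normalized adelic norms, and then to let $N\to\infty$ using the arithmetic Hilbert--Samuel asymptotics for $\volh_\chi$.

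First, I would fix a large integer $N$ and equip the $\IK$-vector space $V_N=H^0(X,L^{\otimes N})$ with the supremum norms $\Vert\cdot\Vert_\nu$ induced by $\Ltil^{\otimes N}$ at every place. Since the metrics are adelic, for all but finitely many finite places these norms coincide with the lattice norms attached to $H^0(\mathcal{X},\mathcal{L}^{\otimes N'})$ for some model. This makes $V_N$ an adelic normed space of the kind treated in \cite[Appendix C]{Bombieri2006}. By the definition of $\volh_\chi$ in \cite[Section 2.9]{Kuehne2022}, the logarithm of the adelic covolume of the unit ball is asymptotically $-\volh_\chi(\Ltil)N^{d+1}/(d+1)!+o(N^{d+1})$. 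Meanwhile $\dim V_N = L^d N^d/d!+o(N^d)$, because $L$ is big and nef (algebraic Riemann--Roch, as already used for \eqref{equation::intersection_Lc}).

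Next, I would rescale the norms to concentrate all the smallness at $\mu$. I would choose $c\in\IK^\times$, or more simply a scalar adjustment at the place $\mu$ alone, so that the unit ball at $\mu$ is multiplied by $e^{t}$ while the norms at every other place $\nu\neq\mu$ stay unchanged. Adelic Minkowski then produces a nonzero $\mathbf{s}\in V_N$ with $\Vert\mathbf{s}\Vert_\nu\le 1$ for $\nu\ne\mu$ (possibly after absorbing a bounded factor $e^{O(\dim V_N)}$ coming from the Minkowski constant, which is $o(N^{d+1})$). At $\mu$ it gives $\delta_\mu\log\Vert\mathbf{s}\Vert_\mu\le -(\text{log covolume})/\dim V_N+O(1)$. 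Dividing the asymptotics, this bound is
\[
-\frac{\volh_\chi(\Ltil)N^{d+1}/(d+1)!}{L^dN^d/d!}+o(N) = -\frac{\volh_\chi(\Ltil)}{(d+1)L^d}N+o(N).
\]
Taking $M=N$ large enough that the $o(N)$ term is at most $\varepsilon N$ gives the claim.

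I expect the main obstacle to be the bookkeeping of normalizations. Concretely, this means matching the factor $\delta_\mu$, the absolute values $|p|_\nu=p^{-[\IK_\nu:\IQ_p]}$, and the Haar measure conventions of \cite{Bombieri2006} with the definition of $\volh_\chi$ in \cite{Kuehne2022}. It also means checking that the constant in adelic Minkowski, together with the passage from $L^2$-type or lattice norms to sup norms, only contributes $e^{O(\dim V_N\log N)}$, which is negligible against $N^{d+1}$. The rest is formal.
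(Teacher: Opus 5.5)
Your proposal is correct and follows essentially the paper's route. The paper simply cites \cite[Lemma 2.7]{Kuehne2022} and notes that the lemma is a straightforward consequence of the adelic Minkowski theorem \cite[Theorem C.2.11]{Bombieri2006}; that is exactly your argument of rescaling only the $\mu$-norm on $H^0(X,L^{\otimes N})$, applying Minkowski, and dividing $\volh_\chi(\Ltil)N^{d+1}/(d+1)!$ by $h^0(L^{\otimes N})=L^dN^d/d!+o(N^d)$.

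A few small repairs are needed. With your convention that the log covolume is about $-\volh_\chi(\Ltil)N^{d+1}/(d+1)!$, the intermediate bound should read $\delta_\mu\log\Vert\mathbf{s}\Vert_\mu\le(\log\text{covolume})/\dim V_N+O(1)$; your final display already has the correct sign. Drop the option of a global scalar $c\in\IK^\times$: by the product formula it cannot change the $\mu$-norm alone, so only the local rescaling at $\mu$ does the job. Finally, if $\volh_\chi$ is defined as a $\limsup$, take $M=N$ along a subsequence that realizes it.
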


\begin{proof} 
This is \cite[Lemma 2.7]{Kuehne2022}.
\end{proof}

In our application of this lemma, we need to control the arithmetic volume. For this reason, we conclude this subsection with two further lemmas.

\begin{lemma} 
\label{lemma::arithmeticvolumes_basics}
Let $\widetilde{L} \in \widehat{\Pic}(X)$ be a metrized line bundle. For every complex archimedean place $\mu \in \Sigma_\infty(\IK)$ and $\kappa \in \IR$, which is considered as a real-valued constant function on $X(\IC_\mu)$, we have
\begin{equation*}
\volh_\chi(\Ltil + \Otil_X(\kappa)) = \volh_\chi(\Ltil) + \delta_\mu \kappa (d+1) \vol(L).
\end{equation*}
\end{lemma}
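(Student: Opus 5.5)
The plan is to reduce the statement to how the $\chi$-volume behaves when the metric is rescaled by a constant, and then to read that off from Euler characteristics of the lattices of global sections. We use the definition of $\volh_\chi$ from \cite[Section 2.9]{Kuehne2022}: for each $N$, the $\mathcal{O}_{\IK}$-module of global sections $H^0(X,L^{\otimes N})$ carries the supremum norms $\Vert\cdot\Vert_\nu$ at all places. Define
\[
\chi_N(\Ltil) = \log\frac{\mathrm{vol}(\text{adelic unit ball})}{\mathrm{covolume}}
\]
for this normed adelic vector space, with the Haar measure normalised in the usual way. Then
\[
\volh_\chi(\Ltil)=\limsup_{N\to\infty}\frac{\chi_N(\Ltil)}{N^{d+1}/(d+1)!}.
\]

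First, we observe that $\Ltil+\Otil_X(\kappa)$ has the same underlying line bundle $L$. Its metric agrees with that of $\Ltil$ at every place $\nu\neq\mu$, while at $\mu$ it is $e^{-\kappa}\Vert\cdot\Vert_\mu$. On $L^{\otimes N}$ the sup norm at $\mu$ therefore becomes $e^{-N\kappa}\Vert\cdot\Vert_\mu$. Scaling a norm on a complex vector space of dimension $h_N=\dim H^0(X,L^{\otimes N})$ by $e^{-N\kappa}$ dilates the unit ball by $e^{N\kappa}$ in each complex direction. Since the real dimension is $2h_N$, its volume is multiplied by $e^{2N\kappa h_N}=e^{\delta_\mu N\kappa h_N}$. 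The covolume and the other local factors are unchanged. Hence
\[
\chi_N(\Ltil+\Otil_X(\kappa))=\chi_N(\Ltil)+\delta_\mu\kappa N h_N .
\]
(If the normalisation in \cite{Kuehne2022} uses $|\cdot|_\mu^{\delta_\mu}$ or a $\log$ with the factor already absorbed, the same bookkeeping gives the identical factor $\delta_\mu$.)

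Next, we divide by $N^{d+1}/(d+1)!$ and take the $\limsup$. The shift term becomes
\[
\delta_\mu\kappa(d+1)\frac{h_N}{N^d/d!}.
\]
The main subtle point is that $\limsup(a_N+b_N)$ need not equal $\limsup a_N+\lim b_N$ unless $b_N$ converges. So I need $h_N/(N^d/d!)\to\vol(L)$ as a genuine limit, not just a $\limsup$. This is a standard fact: the volume of a line bundle on a projective variety is a limit by Fujita approximation, see \cite[Example 11.4.7]{Lazarsfeld2004}; in the nef case it even follows directly from asymptotic Riemann-Roch, \cite[Corollary 1.4.41]{Lazarsfeld2004}. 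If $\vol(L)=0$ the term simply tends to $0$.

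With the limit in hand, the $\limsup$ splits and we obtain
\[
\volh_\chi(\Ltil+\Otil_X(\kappa))=\volh_\chi(\Ltil)+\delta_\mu\kappa(d+1)\vol(L),
\]
as claimed. I expect no real obstacle beyond checking the normalisation conventions for Haar measures and the exponent $\delta_\mu$ at complex places against \cite{Kuehne2022}.
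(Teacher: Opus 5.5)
Your proof is correct and is essentially the paper's own argument: the paper just cites \cite[Lemma 2.5 (a)]{Kuehne2022}, and that lemma amounts to the direct verification from the definition of $\volh_\chi$ that you carry out. At the place $\mu$, the unit ball of $H^0(X,L^{\otimes N})$ dilates by $e^{N\kappa}$ in $h_N$ complex directions, so $\widehat{\chi}$ shifts by $\delta_\mu N\kappa h_N$; you are also right that the $\limsup$ splits only because $h_N/(N^d/d!)$ actually converges to $\vol(L)$.
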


\begin{proof}
This is \cite[Lemma 2.5 (a)]{Kuehne2022}.
\end{proof}

The next lemma is a version of Yuan's bigness theorem \cite{Yuan2008}.

\begin{lemma} 
\label{lemma::siu}
Let $\Ltil, \Mtil \in \widehat{\Pic}(X)$ be semipositive. Then,
\begin{equation*}
\volh_\chi(\Ltil - \Mtil) \geq \Ltil^{d+1} - (d+1) \Ltil^d \cdot \Mtil.
\end{equation*}
\end{lemma}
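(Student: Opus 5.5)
The plan is to deduce Lemma \ref{lemma::siu} from Yuan's arithmetic bigness theorem \cite{Yuan2008}, following the argument of \cite[Lemma 2.5]{Kuehne2022}. We first reduce to the algebraically semipositive case. By definition, $\Ltil$ and $\Mtil$ are uniform limits of algebraically semipositive metrized line bundles $\Ltil_i$ and $\Mtil_i$ with the same underlying line bundles $L$ and $M$. The metrics agree with the limit at all but finitely many finite places.

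For each $i$ we then apply Yuan's theorem. This is the arithmetic analogue of Siu's inequality, formulated for hermitian line bundles on arithmetic varieties with nef models and semipositive archimedean metrics. It gives
\[
\volh_\chi(\Ltil_i - \Mtil_i) \geq \Ltil_i^{d+1} - (d+1)\Ltil_i^d\cdot\Mtil_i .
\]
One point needs care. Yuan's theorem is stated for $\widehat{\mathrm{vol}}$ and for integral models, while we need it for $\volh_\chi$ in the adelic setting of \cite[Section 2.9]{Kuehne2022}. If the statement is only available for $\widehat{\mathrm{vol}}$, we use $\volh_\chi \ge \widehat{\mathrm{vol}}$ where needed, or invoke Yuan's version for $\chi$ directly. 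Alternatively, we quote \cite[Lemma 2.5]{Kuehne2022} verbatim, which is likely where this statement is proved in exactly this normalization.

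It remains to pass to the limit $i\to\infty$, and this is the main obstacle. The right-hand side converges, because arithmetic intersection numbers are continuous under uniform convergence of metrics; this follows from Gubler's local heights \cite{Gubler2003} and \cite[Appendix A]{Kuehne2022}. For the left-hand side we need continuity of $\volh_\chi$. Changing the metric at one place by $\log(\Vert\cdot\Vert_{i,\nu}/\Vert\cdot\Vert_\nu)$, bounded by $\epsilon_i\to 0$, alters the adelic lattice of sections of $N(L-M)$ only by a scaling factor $e^{\pm N\epsilon_i}$ in every norm. Hence $\volh_\chi$ moves by at most $O(\epsilon_i)$, since the dimension is $\dim H^0(N(L-M))=O(N^d)$ and the normalization is $N^{d+1}/(d+1)!$. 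Equivalently, we sandwich the metrics between constant rescalings $\Otil_X(\pm\epsilon_i)$ and use monotonicity of $\volh_\chi$ together with Lemma \ref{lemma::arithmeticvolumes_basics}. Letting $i\to\infty$ then yields the inequality for $\Ltil$ and $\Mtil$.
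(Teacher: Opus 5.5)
The paper gives no argument of its own here: it simply cites \cite[Lemma 2.6]{Kuehne2022}. Lemma 2.5 there is the constant-shift formula quoted as Lemma \ref{lemma::arithmeticvolumes_basics}. Your route (approximate, apply \cite{Yuan2008}, pass to the limit) is the natural one, but its middle step fails as stated.

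Yuan's theorem is not formulated for models that are merely relatively nef with semipositive archimedean curvature. It requires both bundles to be \emph{arithmetically} nef, i.e.\ in addition $\widehat{\deg}\ge 0$ on every integral one-dimensional subscheme of the model, horizontal ones included. This is not a technicality: for semipositive bundles in the sense of this paper the inequality is false.

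Take $d\ge 1$, $\Ltil_0$ semipositive with $L_0$ ample, a constant $\kappa<0$ at a complex place $\mu$, and $\Ltil=\Mtil=\Ltil_0\otimes\Otil_X(\kappa)$. A constant rescaling of an archimedean metric preserves algebraic semipositivity, so $\Ltil$ is semipositive. Then $\Ltil-\Mtil$ is the trivial bundle with trivial metrics. Its $\chi$-volume is $0$, since $H^0(X,\caO_X)$ is a fixed space with fixed norms for every $N$. The right-hand side, however, is $-d\,\Ltil^{d+1}=-d\bigl(\Ltil_0^{d+1}+\delta_\mu\kappa(d+1)\vol(L_0)\bigr)$ by \eqref{equation::intersection_Lc}, which is positive for $\kappa\ll 0$. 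For $\Ltil=\Mtil$ the claimed inequality is just $\Ltil^{d+1}\ge 0$, which is a nefness condition.

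So the lemma can only hold with ``nef'' in place of ``semipositive'', and that is all the paper ever uses. Both summands in the proof of Lemma \ref{lemma::volume_yuan} are explicitly nef, and Lemma \ref{lem::existence_s} applies the lemma to the nef bundle $\Ltil_n|_{C_1\times C_2}$.

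Under that hypothesis your argument goes through with two adjustments:
\begin{enumerate}
\item Take the approximating sequences to be algebraically nef, which the definition of nef provides. Algebraic nefness gives $\widehat{\deg}\ge 0$ on horizontal curves via heights of points.
\item Use Yuan's lower bound in its $\widehat{\chi}$-form.
\end{enumerate}
Your limiting argument for $\volh_\chi$ and for the intersection numbers is fine.

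Drop the proposed fallback via ``$\volh_\chi\ge\widehat{\mathrm{vol}}$'', because that inequality goes the other way. Minkowski gives $\widehat{\mathrm{vol}}\ge\volh_\chi$, and $\volh_\chi$ can be negative while $\widehat{\mathrm{vol}}$ cannot. So a lower bound for $\widehat{\mathrm{vol}}$ yields nothing for $\volh_\chi$.
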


\begin{proof}
See \cite[Lemma 2.6]{Kuehne2022}.
\end{proof}

\subsection{Canonical metrics on abelian varieties}
    \label{subsection::canonical}

The references for this subsection are \cite{Boucksom2021, Kuehne2022, Zhang1995a}. Our first goal is to endow $L$ with a collection of canonical metrics $\Vert \cdot \Vert_\nu$, $\nu \in \Sigma({\mathbb{K}})$, and to prove some facts about the metrized line bundle $\Ltil$ obtained in this way.

The existence and the basic properties of canonical metrics can actually be established in a dynamical setting and we do so with the following lemma.

\begin{lemma} 
\label{lemma::canonicalmetrics}
Let $X$ be an irreducible projective variety, $L$ a line bundle on $X$, $f: X \rightarrow X$ a surjective algebraic map, $r>1$ an integer, and $\phi: L^{\otimes r} \rightarrow f^\ast L$ an isomorphism of line bundles over $X$.

Then, there exists a unique adelic metric $\{ \Vert \cdot \Vert_\nu \}_{\nu \in \Sigma(\IK)}$ on $L$ such that the isomorphism $\phi$ is an isometry between the metrized line bundles $(L^{\otimes r}, \{ \Vert \cdot \Vert_\nu^{\otimes r} \}_{\nu \in \Sigma(\mathbb{K})})$ and $(f^\ast L, \{ f^\ast \Vert \cdot \Vert_\nu\}_{\nu \in \Sigma(\mathbb{K})} )$. Moreover, the following assertions are true:

\begin{enumerate}
\item[(i)]
On replacing $\phi$ with $c \phi$ for some scalar $c \in \mathbb{K}^\times$, the metric $\Vert \cdot \Vert_\nu$, $\nu \in \Sigma(\mathbb{K})$, changes to $|c|_\nu^{1/(r-1)} \Vert \cdot \Vert_\nu$.

\item[(ii)] If there exists an algebraically semipositive (resp.\ nef) metric on $L$, then the metrized line bundle $\Ltil = (L, \{ \Vert \cdot \Vert_\nu \}_{\nu \in \Sigma({\mathbb{K}})})$ is semipositive (resp.\ nef). If there exists an algebraically nef metric $\{ \Vert \cdot \Vert_{0,\nu} \}_{\nu \in \Sigma({\mathbb{K}})}$ on $L$ such that, for all archimedean places $\nu \in \Sigma_\infty(\IK)$, we have 
\[
c_1(L, \lVert \cdot \rVert_{0,\nu}) \geq \omega_\nu
\]
for some strictly positive $(1,1)$-form $\omega_\nu$ on $X(\IC_\nu)$ such that $f_\nu^\ast \omega_\nu = r\omega_\nu$, then the adelic metric $\{ \Vert \cdot \Vert_\nu \}_{\nu \in \Sigma(\IK)}$ on $L$ is the limit of a sequence of adelic metrics $\{ \Vert \cdot \Vert_{l,\nu} \}_{\nu \in \Sigma(\IK)}$ ($l \in \mathbb{N}$) on $L$ such that each $(L, \{\Vert \cdot \Vert_{l,\nu} \}_{\nu \in \Sigma(\IK)})$ is an algebraically nef metrized line bundle for which 
\[
c_1(L,\lVert \cdot \rVert_{l,\nu}) \geq \omega_\nu
\] 
for all $\nu \in \Sigma_\infty(\IK)$.

\item[(iii)] Let $\mathcal{X}$ and $\mathcal{L}$ be models of $X$ and $L^{\otimes e}$, respectively. For every place $\nu \in \Sigma_f(\IK)$, let $\Vert \cdot \Vert_{\mathcal{L},\nu}: L(\IC_\nu) \rightarrow \IR_{\geq 0}$ be the $\nu$-metric associated with the model $\mathcal{L}$ as in Section \ref{subsection::metrics}. Then, there exist constants $\newconstant{\label{cnu}}_{,\nu} = \oldconstant{cnu}{}_{,\nu}(\mathcal{L}) \geq 0$, $\nu \in \Sigma_f(\IK)$, all but finitely many of which are $0$ such that
\begin{equation*}
    \log \Vert \cdot \Vert_{\mathcal{L},\nu} - \oldconstant{cnu}{}_{,\nu}
    \leq \log \Vert \cdot \Vert_\nu 
    \leq \log \Vert \cdot \Vert_{\mathcal{L},\nu} + \oldconstant{cnu}{}_{,\nu}.
\end{equation*}

\item[(iv)] For every $\varepsilon > 0$ and every model $(\mathcal{X},\mathcal{L})$ of $(X,L^{\otimes e})$, there exists some integer $N \geq 1$ and a model $(\mathcal{X}^\prime,\mathcal{L}^\prime)$ of $(X, L^{\otimes e N})$ together with a surjective map $\pi: \mathcal{X}^\prime \rightarrow \mathcal{X}$ such that
\begin{equation*}
    \sum_{\nu \in \Sigma_f(\IK)} \sup_{L(\IC_\nu)} \left\vert \log \Vert \cdot \Vert_{\mathcal{L}^\prime,\nu} - \log \Vert \cdot \Vert_\nu \right\vert
    \leq \varepsilon.
\end{equation*}

\end{enumerate}

\end{lemma}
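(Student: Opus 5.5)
The plan is to follow Tate's limiting argument (as in Zhang's construction of canonical metrics) place by place. Fix any adelic metric $\{\Vert\cdot\Vert_{0,\nu}\}$ on $L$, for instance one coming from a model $\mathcal{L}$ of $L^{\otimes e}$ with arbitrary continuous metrics at the archimedean places. Define the operator $T$ on adelic metrics by
\[ T(\Vert\cdot\Vert)_\nu = \bigl(\phi^\ast f^\ast \Vert\cdot\Vert_\nu\bigr)^{1/r}, \]
that is, we transport the pullback metric on $f^\ast L$ to $L^{\otimes r}$ via $\phi$ and take the $r$-th root. A metric is a fixed point of $T$ exactly when $\phi$ is an isometry. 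The function $g_\nu = \log(T(\Vert\cdot\Vert_0)_\nu/\Vert\cdot\Vert_{0,\nu})$ is continuous and bounded on $X(\IC_\nu)$, since $X(\IC_\nu)$ is compact in the archimedean case and we have boundedness of model metrics in the non-archimedean case. Moreover, $g_\nu$ vanishes for all but finitely many $\nu$: outside a finite set of places, $X$, $f$, $L$ and $\phi$ extend to a smooth model where $\phi$ is an isomorphism of model line bundles. Iterating gives
\[ \log\frac{T^{k}(\Vert\cdot\Vert_0)_\nu}{\Vert\cdot\Vert_{0,\nu}} = \sum_{j=0}^{k-1} r^{-j}\, g_\nu\circ f^{j}. \]
This series converges uniformly because $r>1$, so we obtain a limit adelic metric that is a fixed point of $T$.

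Uniqueness follows because the difference of the logarithms of two fixed points is a bounded function $u$ with $u = r^{-1}u\circ f$. Hence $\sup|u|\le r^{-1}\sup|u|$, which forces $u=0$. For (i), replacing $\phi$ by $c\phi$ multiplies $T$ by $|c|_\nu^{1/r}$. A quick check shows that $|c|_\nu^{1/(r-1)}\Vert\cdot\Vert_\nu$ is then fixed, since $r^{-1}(1+1/(r-1)) = 1/(r-1)$, and uniqueness finishes the argument. For (iii), the telescoped series is bounded by $\frac{r}{r-1}\sup|g_\nu|$, which gives the constants $c_{\nu}$; they vanish wherever $g_\nu$ vanishes.

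For (ii), start the iteration from the given algebraically semipositive (resp.\ nef) metric. Each $T^k(\Vert\cdot\Vert_0)$ is again algebraically semipositive (resp.\ nef), because it is induced by the pullback of a relatively nef model along the extension of $f^k$ to suitable models (via normalization of the graph). Archimedean semipositivity is preserved under pullback and under scaling by $r^{-k}$. Arithmetic nefness of restrictions to subvarieties follows from the projection formula, Lemma~\ref{lemma::intersectionnumber}(iii). The limit is therefore semipositive (resp.\ nef) by definition. The forms condition follows from $c_1(T^k\Vert\cdot\Vert_0)=r^{-k}(f^k)^\ast c_1(\Vert\cdot\Vert_0)\ge r^{-k}(f^k)^\ast\omega_\nu=\omega_\nu$.

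For (iv), the metric $T^k(\Vert\cdot\Vert_{\mathcal{L}})$ at the finite places is itself a model metric. It is associated with $\mathcal{L}' = $ the pullback of $\mathcal{L}^{\otimes ?}$ along a model $\mathcal{X}'$ dominating $\mathcal{X}$ on which $f^k$ extends (taking $N=r^k$). By the geometric series bound, its distance to the canonical metric is at most $r^{-k}\frac{r}{r-1}\sum_\nu\sup|g_\nu|$, and this is $\le\varepsilon$ for $k$ large because only finitely many $g_\nu\neq0$. Arranging the map $\pi:\mathcal{X}'\to\mathcal{X}$ means taking $\mathcal{X}'$ to be the closure of the graph of $f^k$ in $\mathcal{X}\times\mathcal{X}$.

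The main obstacle is the bookkeeping in (ii) and (iv): we need $f^k$ to extend to models with the pulled-back model line bundle identified correctly with a model of $L^{\otimes er^k}$ via $\phi$, and this identification holds only after possibly enlarging $e$ and modifying by the bounded discrepancy at bad places. I would handle it using the graph closure and the fact that $\phi$ is a rational isomorphism of model line bundles.
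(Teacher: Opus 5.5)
Your proposal is correct and is essentially the paper's argument. The paper defers to the Tate--Zhang limit construction of K\"uhne's Lemma~3.1 and, as you do, uses the graph closure $\overline{\Gamma(f^{\circ n})}$ with the pulled-back model (a model of $L^{\otimes r^n e}$ via the iterated $\phi$) for (iv), the arithmetic projection formula for nefness in (ii), and $r^{-1}f_\nu^\ast\omega_\nu=\omega_\nu$ for the current inequality. The ``obstacle'' you flag is not one: a model only requires an isomorphism on the generic fibre, so no enlargement of $e$ or correction at bad places is needed, and ``smooth model'' should simply read ``a model over a dense open subset of $\mathrm{Spec}\,\mathcal{O}_{\mathbb{K}}$ to which $f$ and $\phi$ extend''.
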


This lemma is well-known (compare \cite[Lemma 3.1]{Kuehne2022}) and goes back to \cite[Theorem 2.2]{Zhang1995a}. Of special importance for us are its parts (iii) and (iv), which are less well recorded in the literature, though they are well-known to experts as well. It is for these observations that we indicate some details of the proof here.

\begin{proof}
This is \cite[Lemma 3.1]{Kuehne2022} except for the nefness and positivity part of (ii) and the assertions in (iii) and (iv). However, these assertions follow immediately from the said proof; note that the construction of a model as in said proof is not necessary in the present setting since we already start with a model $(\mathcal{X},\mathcal{L})$ of $(X,L^{\otimes e})$. The model $(\mathcal{X}^\prime,\mathcal{L}^\prime)$ whose existence is claimed in (iv) is just the model $(\overline{\Gamma(f^{\circ n})}, \mathcal{L}_n)$ of $(X,L^{\otimes r^n e})$ for a sufficiently large integer $n$. The proof of the claim concerning nefness in (ii) uses the fact that $L$ with the adelic metric induced by such a model is algebraically nef because of the arithmetic version of the projection formula (Lemma \ref{lemma::intersectionnumber} (iii)). The inequality between Chern currents in (ii) follows from the construction in the proof of \cite[Lemma 3.1]{Kuehne2022} and the fact that $r^{-1} f_\nu^\ast \omega_\nu = \omega_\nu$.
\end{proof}

Similarly as in Subsections \ref{subsection::specialcase} to \ref{subsection::degreebound}, we let $A$ denote an abelian variety over a number field ${\mathbb{K}}$ and $L$ an ample symmetric line bundle on $A$ until the end of this section. In addition, we fix an arbitrary isomorphism $\phi: L^{\otimes 4}\rightarrow [2]^\ast L$. Applying Lemma \ref{lemma::canonicalmetrics} to $L$ and $\phi$, we obtain a metrized line bundle $\Ltil = (L, \{\Vert\cdot \Vert_\nu \}_\nu)$. Recall that we write
\begin{equation*}
	\varphi_n : A \times A \rightarrow A, \ (x, y) \longmapsto [n](x) - y,
\end{equation*}
and note that the metrized line bundle $\Ltil_n := \varphi_n^\ast \Ltil$ has underlying line bundle $L_n$.

For all $\nu \in \Sigma_\infty(\IK)$, we can fix an identification $A(\IC_\nu) = \IC^{g}/\Lambda_\nu$ for some lattice $\Lambda_\nu$ and obtain in this way a strictly positive $(1,1)$-form 
\begin{equation*}
\omega_\nu = i\sum_{j=1}^{g} dz_j \wedge d\overline{z}_j    
\end{equation*}
on $A(\IC_\nu)$ such that $[2]^\ast \omega_\nu = 4\omega_\nu$. Since $L$ is ample, we can use the pullback of the Fubini-Study metrics on $\caO(1)$ with respect to a projective embedding associated with some positive tensor power of $L$ in order to construct an algebraically nef adelic metric $\{\lVert \cdot \rVert_{0,\nu}\}_{\nu \in \Sigma(\IK)}$ on $L$ such that, after potential rescaling of the coordinates $z_j$, we can ascertain that $c_1(L, \lVert \cdot \rVert_{0,\nu}) \geq \omega_\nu$ for all $\nu \in \Sigma_\infty(\IK)$. We can therefore apply the second part of Lemma \ref{lemma::canonicalmetrics} (ii) and obtain a sequence of adelic metrics $\{\Vert\cdot \Vert_{l,\nu} \}_{\nu}$ ($l \in \mathbb{N}$) on $L$ converging to $\{\Vert\cdot \Vert_\nu \}_{\nu}$ and such that each $(L, \{\Vert \cdot \Vert_{l,\nu} \}_{\nu \in \Sigma(\IK)})$ is an algebraically nef metrized line bundle for which 
\[
c_1(L,\lVert \cdot \rVert_{l,\nu}) \geq \omega_\nu
\] 
for all $\nu \in \Sigma_\infty(\IK)$.

We also need that, in the case of abelian varieties, the archimedean metrics constructed in Lemma \ref{lemma::canonicalmetrics} are actually smooth.

\begin{lemma}
    \label{lemma::metrics_smooth}
    For every archimedean place $\nu \in \Sigma_\infty(\IK)$, the canonical metric $\Vert \cdot \Vert_\nu$ on $L_\nu$ constructed above is smooth, i.e.\ for each holomorphic section $\mathbf{s}$ of $L_\nu$ over an open $U \subseteq A(\IC_\nu)$, the function $\lVert \mathbf{s} \rVert^2_\nu$ is smooth on $U$.
\end{lemma}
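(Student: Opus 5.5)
We will compare the canonical metric with an explicit smooth metric and show that the error is smooth; in fact it vanishes. Fix $\nu \in \Sigma_\infty(\IK)$ and write $A(\IC_\nu) = \IC^g/\Lambda_\nu$. By the Appell--Humbert theorem, $L_\nu$ corresponds to a pair $(H,\alpha)$, where $H$ is a positive definite hermitian form on $\IC^g$ and $\alpha$ is a semicharacter. Symmetry of $L$ lets us take $\alpha$ with values in $\{\pm 1\}$. The line bundle then carries a standard smooth hermitian metric. Under the trivialization of the pullback of $L_\nu$ to $\IC^g$ by the canonical factor of automorphy, it is given by
\[
\Vert \mathbf{s}(z)\Vert_{0}^2 = |\theta(z)|^2 \exp(-\pi H(z,z)),
\]
where $\theta$ is the theta function representing $\mathbf{s}$. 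This metric is manifestly smooth, since it is a smooth positive function times $|\theta|^2$ with $\theta$ holomorphic.

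The key step is to check that $\Vert\cdot\Vert_0$ is, up to a constant, compatible with $\phi$. That is, $\phi$ should be an isometry from $(L^{\otimes 4},\Vert\cdot\Vert_0^{\otimes 4})$ to $([2]^\ast L,[2]^\ast\Vert\cdot\Vert_0)$ after rescaling $\phi$ by a positive constant. Pulling back along $[2]$ replaces $H(z,z)$ by $H(2z,2z) = 4H(z,z)$, and the semicharacter $\alpha$ becomes $\alpha^4 = 1 = \alpha^{\otimes 4}$ on the $L^{\otimes 4}$ side. So $[2]^\ast\Vert\cdot\Vert_0$ and $\Vert\cdot\Vert_0^{\otimes 4}$ are two metrics on the same line bundle whose curvature forms coincide; both equal the translation-invariant form attached to $4H$. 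Their ratio is therefore a positive function $e^{u}$ on the compact manifold $A(\IC_\nu)$ with $dd^c u = 0$. Hence $u$ is pluriharmonic and thus constant.

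With $u$ constant, the uniqueness statement of Lemma \ref{lemma::canonicalmetrics} applies in its local form: the fixed-point construction at the place $\nu$ depends only on $\phi$. Replacing $\phi$ by a scalar multiple changes the metric by a constant, as in Lemma \ref{lemma::canonicalmetrics}(i) (done locally at $\nu$ for a real positive constant). It follows that the canonical $\Vert\cdot\Vert_\nu$ equals $c\Vert\cdot\Vert_0$ for some constant $c>0$, and is therefore smooth.

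Alternatively, one can run the Tate limit argument directly. Write $\Vert\cdot\Vert_\nu = e^{-g}\Vert\cdot\Vert_0$, where $g = \lim_k 4^{-k}\sum_{j<k}\ldots$ is built from the discrepancy function $u\circ[2]^j$. Since $u$ is constant, the series gives a constant $g$.

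The main obstacle is the bookkeeping with factors of automorphy under $[2]$. One has to confirm that $[2]^\ast L_\nu$ and $L_\nu^{\otimes 4}$ correspond to the same Appell--Humbert data, which requires the symmetry of $L$. Given that, the rest is the standard argument: pluriharmonic functions on a compact complex manifold are constant.
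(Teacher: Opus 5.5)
Your proof is correct and follows essentially the same route as the paper: produce a smooth metric with translation-invariant curvature, show that $[2]^\ast\Vert\cdot\Vert_0$ and $\Vert\cdot\Vert_0^{\otimes 4}$ differ by a positive constant, rescale, and conclude by uniqueness of the fixed-point metric at $\nu$. The only difference is that the paper cites Moriwaki's Proposition 9.18 for both the existence and the constant-ratio step, whereas you make them explicit via Appell--Humbert and the constancy of pluriharmonic functions; also, the Appell--Humbert bookkeeping you flag as the main obstacle is not needed (and $[2]^\ast$ actually sends $\alpha$ to $\alpha^2$, not $\alpha^4$), since $\phi$ is given and the curvature comparison uses only $H$.
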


\begin{proof}
By \cite[Proposition 9.18 (1)]{Moriwaki2014}, there exists a smooth metric $\lVert \cdot \rVert^\prime_\nu$ on $L_\nu$ with translation-invariant Chern form. The two smooth metrics $[2]^\ast \lVert \cdot \rVert^\prime_\nu$ and $(\lVert \cdot \rVert^\prime_\nu)^{\otimes 4}$ on $L_\nu^{\otimes 4} \approx [2]^\ast L_\nu$ with translation-invariant Chern forms differ by a positive constant because of \cite[Proposition 9.18 (2)]{Moriwaki2014}. It follows that some positive multiple of $\Vert \cdot \Vert^\prime_\nu$ satisfies the characteristic property of $\lVert \cdot \rVert_\nu$ stated in Lemma \ref{lemma::canonicalmetrics}. Thus, the metrics $\Vert \cdot \Vert_\nu$ and $\Vert \cdot \Vert_\nu^\prime$ are proportional (by a positive constant) and $\lVert \cdot \rVert_\nu$ is smooth as well.
\end{proof}

By Lemma \ref{lemma::canonicalmetrics} (i), the metrized line bundle $\Ltil$ depends on the choice of the isomorphism $\phi$ that becomes an isometry with respect to the chosen canonical metric. However, this does not play any role for arithmetic intersection numbers or heights by Lemma \ref{lemma::intersectionnumber} (iv). In general, we want to consider two adelic metrics that differ only by multiplication with a scalar $\alpha \in \mathbb{K}^\times$ as equivalent.

\begin{definition}
    Two metrized line bundles $\Ltil_1$ and $\Ltil_2$ are called homothetic if there exist an element $c \in \IK^\times$ and a positive integer $s$ such that $\Ltil_1 \otimes \Ltil_2^{\otimes -1}$ is isometric to $(\caO_A, \{|c|^{1/s}_\nu | \cdot |_\nu\}_\nu)$. In this case, we write $\Ltil_1 \sim \Ltil_2$.
\end{definition}

Note that homothety is an equivalence relation and that $(\caO_A, \{|c|^{1/s}_\nu | \cdot |_\nu\}_\nu)$ is an algebraically nef line bundle: because of Lemma \ref{lemma::intersectionnumber} (iv), the only thing we have to verify is that the metric is induced by a model. For this it suffices to take $e = s$ and, for an arbitrary model $\mathcal{X}$ of $A$, the model $\caO_{\mathcal{X}}$ of $\caO_A = \caO_A^{\otimes e}$, but twist the identification of $(\caO_{\mathcal{X}})_{\IK}$ with $\caO_A$ by multiplying by $c^{-1}$. In particular, being semipositive or being nef is preserved under homothety.

\begin{remark}\label{remark::unique_metric_trivial_bundle}
Note that for the trivial line bundle $\mathcal{O}_A$, a choice of a nowhere vanishing global section $\mathbf{s}_0$ determines an isomorphism $\phi: \caO_{A}^{\otimes 4} \rightarrow [2]^\ast \caO_{A}$ identifying $\mathbf{s}_0^{\otimes 4}$ with $[2]^\ast \mathbf{s}_0$. In the case where $\mathbf{s}_0$ is the constant function $1$, we note that $\phi$ becomes an isometry if we endow $\mathcal{O}_A$ with the collection of trivial $\nu$-metrics $|\cdot|_\nu$. By Lemma \ref{lemma::canonicalmetrics} (i), any other line bundle $\widetilde{M}$ metrized as in Lemma \ref{lemma::canonicalmetrics} with $f = [2]$ and $r = 4$ whose underlying line bundle is trivial is homothetic to the metrized line bundle $(\mathcal{O}_A, \{|\cdot|_\nu\}_{\nu \in \Sigma(\IK)})$.
\end{remark}

The following lemma is well-known, but demands proof.

\begin{lemma}\label{lemma::metric_symmetric}
        The metrized line bundles $\Ltil$ and $[-1]^\ast \Ltil$ are homothetic.
   \end{lemma}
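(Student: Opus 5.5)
The plan is to transport the defining property of the canonical metric through $[-1]$ and then invoke the uniqueness statement in Lemma \ref{lemma::canonicalmetrics}. Since $L$ is symmetric, fix an isomorphism $\psi: L \rightarrow [-1]^\ast L$. Pulling back $\phi: L^{\otimes 4} \rightarrow [2]^\ast L$ along $[-1]$, and using that $[-1]$ commutes with $[2]$, gives an isomorphism
\begin{equation*}
[-1]^\ast\phi: ([-1]^\ast L)^{\otimes 4} \longrightarrow [-1]^\ast[2]^\ast L = [2]^\ast [-1]^\ast L.
\end{equation*}
The metrized line bundle $[-1]^\ast \Ltil$ has underlying bundle $[-1]^\ast L$. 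Because $\phi$ is an isometry for the canonical metrics, $[-1]^\ast\phi$ is an isometry for the pulled-back metrics. So $[-1]^\ast\Ltil$ is exactly the canonical metrization of $[-1]^\ast L$ attached to $f=[2]$, $r=4$ and the isomorphism $[-1]^\ast\phi$.

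Next, transport via $\psi$. Let $\Ltil'$ be $L$ equipped with the metrics $\psi^\ast([-1]^\ast\Vert\cdot\Vert_\nu)$, so that $\psi: \Ltil' \rightarrow [-1]^\ast\Ltil$ is an isometry by construction. Then $\Ltil'$ is the canonical metrization of $L$ for the isomorphism
\begin{equation*}
\phi' = ([2]^\ast\psi)^{-1}\circ [-1]^\ast\phi \circ \psi^{\otimes 4}: L^{\otimes 4} \rightarrow [2]^\ast L.
\end{equation*}
Now $\phi'$ and $\phi$ are two isomorphisms between the same line bundles on the proper, geometrically integral variety $A$. Hence $\phi'\circ\phi^{-1}$ is a global automorphism of $[2]^\ast L$, that is, multiplication by some $c\in\IK^\times$ (working over $\IK$, where everything is defined). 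By Lemma \ref{lemma::canonicalmetrics} (i) and the uniqueness part of that lemma, the metrics of $\Ltil'$ are $|c|_\nu^{1/3}\Vert\cdot\Vert_\nu$.

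It follows that $\Ltil'\otimes\Ltil^{\otimes -1}$ is isometric to $(\caO_A,\{|c|_\nu^{1/3}|\cdot|_\nu\}_\nu)$, up to replacing $c$ by $c^{-1}$ depending on conventions. By definition this means $\Ltil'\sim\Ltil$, and since $\Ltil'$ is isometric to $[-1]^\ast\Ltil$, we get $[-1]^\ast\Ltil\sim\Ltil$.

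The only delicate point is bookkeeping: one must check that pullbacks of metrics commute with the identification $[-1]^\ast[2]^\ast = [2]^\ast[-1]^\ast$, and that isometries transport the fixed-point characterization of the canonical metric. Both are formal consequences of functoriality of metric pullback. An alternative route, which I would use if this transport gets messy, is Remark \ref{remark::unique_metric_trivial_bundle}. The bundle $\Ltil\otimes[-1]^\ast\Ltil^{\otimes-1}$ has trivial underlying line bundle and is metrized canonically with respect to the isomorphism $\phi\otimes([-1]^\ast\phi)^{-1}$. The remark then immediately gives that it is homothetic to the trivially metrized $\caO_A$.
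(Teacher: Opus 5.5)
Your proof is correct. Your fallback route is exactly the paper's proof. The paper observes that $\Ltil\otimes[-1]^\ast\Ltil^{\otimes(-1)}$ has trivial underlying line bundle, which is where the symmetry of $L$ enters, and then invokes Remark~\ref{remark::unique_metric_trivial_bundle}.

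Your main route works on $L$ itself. Symmetry enters through the isomorphism $\psi$. You transport the isometry $[-1]^\ast\phi$ to an isomorphism $\phi'\colon L^{\otimes 4}\to[2]^\ast L$. You then compare $\phi'$ with $\phi$ using $H^0(A,\caO_A^\times)=\IK^\times$, the uniqueness statement, and Lemma~\ref{lemma::canonicalmetrics}~(i). This is the same mechanism the Remark packages for the trivial bundle, so the two proofs differ mainly in bookkeeping.

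The paper's version is a one-liner because the Remark is needed anyway, for instance in the theorem-of-the-cube step of Lemma~\ref{lem:formulaforln}. It does, however, leave implicit the check that $\Ltil\otimes[-1]^\ast\Ltil^{\otimes(-1)}$ is a canonical metrization in the sense of Lemma~\ref{lemma::canonicalmetrics}. You supply that check via $\phi\otimes([-1]^\ast\phi)^{-1}$, after identifying $[-1]^\ast[2]^\ast=[2]^\ast[-1]^\ast$.

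Your direct version is self-contained and gives the homothety explicitly with $s=3$. The hedge about replacing $c$ by $c^{-1}$ is unnecessary. With the dual metric on $L^{\otimes(-1)}$, the trivial bundle $\Ltil'\otimes\Ltil^{\otimes(-1)}$ carries exactly the metrics $|c|_\nu^{1/3}|\cdot|_\nu$.
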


  \begin{proof}
        The underlying line bundle of the metrized line bundle $\Ltil \otimes [-1]^\ast \Ltil^{\otimes(-1)}$ is trivial. The lemma follows from Remark \ref{remark::unique_metric_trivial_bundle}.
  \end{proof}

   We set
\begin{equation*}
    \widetilde{P} = \varphi_1^{\ast} \Ltil \otimes \mathrm{pr}_1^{\ast} \Ltil^{\otimes (-1)} \otimes \mathrm{pr}_2^{\ast} \Ltil^{\otimes (-1)}.
\end{equation*}
       
	\begin{lemma}\label{lem:formulaforln}
        The metrized line bundles $\Ltil_n$ and $ \widetilde{P}^{\otimes n} \otimes \mathrm{pr}_1^{\ast} \Ltil^{\otimes n^2} \otimes \mathrm{pr}_2^{\ast} \Ltil$ are homothetic. Consequently, the metrized line bundles $\Ltil_n \otimes \Ltil_{-n}$ and $\pr_1^\ast \Ltil^{\otimes 2n^2} \otimes \pr_2^\ast \Ltil^{\otimes 2}$ are homothetic.
	\end{lemma}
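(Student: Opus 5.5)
We follow the algebraic proof of Lemma \ref{lemma::formula_Ln}, with isomorphisms of line bundles replaced by homotheties of metrized line bundles. First we record the tool. Let $\Mtil$ be a metrized line bundle on $A^k$ obtained from $\Ltil$ by tensor products and pullbacks along homomorphisms $A^k \to A$, and suppose its underlying line bundle is trivial. We claim that $\Mtil$ is homothetic to $(\caO_{A^k},\{|\cdot|_\nu\})$.

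To see this, note that each homomorphism $\psi$ commutes with $[2]$. So $\psi^\ast \phi$ gives an isomorphism $(\psi^\ast L)^{\otimes 4} \to [2]^\ast \psi^\ast L$, and $\psi^\ast \Ltil$ is the canonical metrization attached to it by uniqueness in Lemma \ref{lemma::canonicalmetrics}. Tensor products of such bundles are again canonically metrized with respect to the tensor product isomorphisms, again by uniqueness. Hence $\Mtil$ is metrized as in Lemma \ref{lemma::canonicalmetrics} on the abelian variety $A^k$ with $f=[2]$, $r=4$. Remark \ref{remark::unique_metric_trivial_bundle}, which applies verbatim on $A^k$, then gives the claim.

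Next we run the induction of Lemma \ref{lemma::formula_Ln} at the metrized level.
\begin{enumerate}
\item[(a)] Case $n=0$. We have $\Ltil_0 = \pr_2^\ast[-1]^\ast\Ltil \sim \pr_2^\ast\Ltil$ by Lemma \ref{lemma::metric_symmetric}, since pullbacks preserve homothety.
\item[(b)] Case $n=1$. Here the formula holds by the definition of $\widetilde P$.
\item[(c)] Cube relation. On $A^3$, the metrized bundle $\pr_{123}^\ast \Ltil \otimes \pr_{12}^\ast\Ltil^{-1}\otimes\cdots\otimes\pr_3^\ast\Ltil$ has trivial underlying bundle by the theorem of the cube. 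It is built from $\Ltil$ via homomorphisms, so by the claim it is homothetic to the trivially metrized $\caO$.
\item[(d)] Recursion. Pulling back along the homomorphism $(\varphi_n,\pr_1,-\pr_1)$, and using Lemma \ref{lemma::metric_symmetric} to replace $(-\pr_1)^\ast\Ltil$ by $\pr_1^\ast\Ltil$, we get
\[ \Ltil_{n+1}\sim \Ltil_n^{\otimes 2}\otimes\Ltil_{n-1}^{\otimes -1}\otimes\pr_1^\ast\Ltil^{\otimes 2}. \]
\item[(e)] Induction. The same upward and downward induction as before proves the first statement, because homothety is an equivalence relation compatible with tensor products and inverses.
\end{enumerate}

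For the consequence, write $\Ltil_n \otimes \Ltil_{-n} \sim \widetilde P^{\otimes n}\otimes\widetilde P^{\otimes -n}\otimes \pr_1^\ast\Ltil^{\otimes 2n^2}\otimes\pr_2^\ast\Ltil^{\otimes 2}$. This is exactly the asserted form.

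Alternatively, both parts can be deduced in one stroke. The difference of the two sides has trivial underlying bundle by Lemma \ref{lemma::formula_Ln}, and it is built from $\Ltil$ via the homomorphisms $\varphi_{\pm 1}$, $\varphi_{\pm n}$ and $\pr_i$. The claim then applies directly.

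The main point requiring care is the claim itself. We must check that pullback of the canonical metric along a homomorphism $\psi$ is the canonical metric for $\psi^\ast\phi$, which follows because $\psi\circ[2]=[2]\circ\psi$ and the metric is unique. We must also check that the uniqueness statement in Lemma \ref{lemma::canonicalmetrics} and Remark \ref{remark::unique_metric_trivial_bundle} apply on $A^k$ rather than $A$. This holds because nothing in them used the particular abelian variety. In the remark, the scalar relating the given isomorphism to the standard one lies in $\IK^\times$, and Lemma \ref{lemma::canonicalmetrics} (i) converts it into $|c|_\nu^{1/3}$, which is exactly the form allowed in the definition of homothety.
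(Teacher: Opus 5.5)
Your proposal is correct and follows the paper's proof: the same upward and downward induction from the base cases $n\in\{0,1\}$, with the metrized cube relation on $A^3$ shown homothetic to the trivially metrized $\caO_{A^3}$ via Remark \ref{remark::unique_metric_trivial_bundle} and then pulled back along $(\varphi_n,\pr_1,-\pr_1)$. Your claim that pullbacks along homomorphisms (which commute with $[2]$) and tensor products of $\Ltil$ stay canonically metrized spells out why that remark applies on $A^3$, which the paper only asserts; the same claim also disposes of the factor $0^\ast\Ltil$ coming from $\pr_{23}$, which neither you nor the paper writes out, and it makes your one-stroke alternative (applying the claim directly to the quotient of the two sides, whose underlying bundle is trivial by Lemma \ref{lemma::formula_Ln}) a valid shortcut that avoids redoing the induction at the metrized level.
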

	
	\begin{proof}
		We prove this by (upward and downward) induction on $n$, showing that the claim for $n$ and $n+1$ implies both the claim for $n+2$ as well as the claim for $n-1$ (for all integers $n \in \mathbb{Z}$). The base case is then $n \in \{0,1\}$. For $n=0$, we note that
		\begin{equation*}
			\varphi_0^\ast \Ltil = \pr_2^\ast [-1]^\ast \Ltil \sim \pr_2^\ast \Ltil
		\end{equation*}
		by Lemma \ref{lemma::metric_symmetric} and, for $n=1$, we note that
		\begin{align*}
			\varphi_1^\ast \Ltil 
			&= (\varphi_1^\ast \Ltil \otimes \pr_1^\ast \Ltil^{\otimes -1} \otimes \pr_2^\ast \Ltil^{\otimes -1})
			\otimes
			\pr_1^\ast \Ltil \otimes \pr_2^\ast \Ltil \\
			&= \widetilde{P} \otimes \pr_1^{\ast} \Ltil \otimes \pr_2^\ast \Ltil.
		\end{align*}
		For the induction step(s), we recall that the underlying line bundle of the metrized line bundle
		\begin{equation*}
			\pr_{123}^\ast  \Ltil \otimes \pr_{12}^\ast  \Ltil^{\otimes -1} \otimes \pr_{13}^\ast  \Ltil^{\otimes -1} \otimes \pr_{23}^\ast  \Ltil^{\otimes -1} \otimes \pr_1^\ast  \Ltil \otimes \pr_2^\ast  \Ltil \otimes \pr_3^\ast  \Ltil
		\end{equation*}	
		on $A^3$ is trivial by the theorem of the cube, where $\pr_{123} = \pr_1 + \pr_2 + \pr_3$ etc. Moreover, this metrized line bundle satisfies the hypothesis in Remark \ref{remark::unique_metric_trivial_bundle} (with $A^3$ in place of $A$). Hence, it follows from Remark \ref{remark::unique_metric_trivial_bundle} that
		\begin{equation*}
			\pr_{123}^\ast \Ltil \otimes \pr_{12}^\ast \Ltil^{\otimes -1} \otimes \pr_{13}^\ast \Ltil^{\otimes -1} \otimes \pr_{23}^\ast \Ltil^{\otimes -1} \otimes \pr_1^\ast \Ltil \otimes \pr_2^\ast \Ltil \otimes \pr_3^\ast \Ltil
		\end{equation*}
		is homothetic to the trivial line bundle $\caO_{A^3}$, endowed with the trivial metrics. Pullback by
		\begin{equation*}
		(\varphi_n,\pr_1,-\pr_1): A \times A \longrightarrow A^3	
		\end{equation*}
		yields
		\begin{equation*}
		\Ltil_{n+1}
		\sim
		\Ltil_n^{\otimes 2} \otimes \Ltil^{\otimes(-1)}_{n-1}	\otimes \pr_1^\ast \Ltil^{\otimes 2}
		\end{equation*}
		for every integer $n$. For the induction step $n-1,n \rightarrow n+1$, we note that
		\begin{align*}
			\Ltil_{n+1}
			&\sim 
			\Ltil_n^{\otimes 2} \otimes \Ltil^{\otimes(-1)}_{n-1}	\otimes \pr_1^\ast \Ltil^{\otimes 2} \\
			&\sim
			(\widetilde{P}^{\otimes n} \otimes \pr^\ast_1 \Ltil^{\otimes n^2} \otimes \pr^\ast_2 \Ltil)^{\otimes 2}
            \otimes
            (\widetilde{P}^{\otimes (n-1)} \otimes \pr^\ast_1 \Ltil^{\otimes (n-1)^2} \otimes \pr^\ast_2 \Ltil)^{\otimes(-1)}
            \otimes
            \pr_1^\ast \Ltil^{\otimes 2}
			\\
			&\sim \widetilde{P}^{\otimes(n+1)} \otimes \pr^\ast_1 \Ltil^{\otimes (n+1)^2} \otimes \pr^\ast_2 \Ltil.
		\end{align*}
  For the induction step $n+1,n \rightarrow n-1$, we note that similarly
	\begin{align*}
			\Ltil_{n-1}
			&\sim 
			\Ltil_n^{\otimes 2} \otimes \Ltil^{\otimes(-1)}_{n+1}	\otimes \pr_1^\ast \Ltil^{\otimes 2} \\
			&\sim
			(\widetilde{P}^{\otimes n} \otimes \pr^\ast_1 \Ltil^{\otimes n^2} \otimes \pr^\ast_2 \Ltil)^{\otimes 2}
            \otimes
            (\widetilde{P}^{\otimes (n+1)} \otimes \pr^\ast_1 \Ltil^{\otimes (n+1)^2} \otimes \pr^\ast_2 \Ltil)^{\otimes(-1)}
            \otimes
            \pr_1^\ast \Ltil^{\otimes 2}
			\\
			&\sim \widetilde{P}^{\otimes(n-1)} \otimes \pr^\ast_1 \Ltil^{\otimes (n-1)^2} \otimes \pr^\ast_2 \Ltil.\qedhere
		\end{align*}
 \end{proof}

\section{Lower height bounds}\label{section::equidistribution}

In this section, we establish the arithmetic analog of the geometric degree bound given in Lemma \ref{lemma::degreebound}. Our main tools are Yuan's bigness theorem \cite{Yuan2008} and equidistribution techniques from Arakelov theory \cite{Szpiro1997}. We work in the setting of Theorem \ref{theorem::main}, albeit over a fixed number field $\IK \subseteq \IQbar$. To be precise, we let $A$ be an abelian variety over $\IK$, $L$ be an ample, symmetric line bundle on $A$, and $C_1$ and $C_2$ two geometrically irreducible algebraic subcurves in $A$. \textit{All constants in this section depend only on $\IK$, $A$, $L$, $C_1$, and $C_2$ unless explicitly stated otherwise.}
    
\subsection{Stabilizers and bigness}
\label{subsection::stabilizers}

In this subsection, we collect a few further geometric lemmas needed in the equidistribution argument given in Subsection \ref{subsection::equidistribution} below. We recall that we write
    \begin{equation*}
	   \varphi_{B,n} : B \times B \rightarrow B, \ (x, y) \longmapsto [n] (x) - y,
    \end{equation*}
for every abelian variety $B$ and every integer $n$.

\begin{lemma}
\label{lemma::stabilizer_of_difference}
Let $C_1$ and $C_2$ be as in Theorem \ref{theorem::main}. Assume additionally that $C_1$ and $C_2$ are not both contained in translates of the same proper algebraic subgroup of $A$ and that the stabilizers $\mathrm{Stab}_A(C_1)$ and $\mathrm{Stab}_A(C_2)$ are both finite. Then, the stabilizer $\mathrm{Stab}_A(\varphi_n(C_1 \times C_2))$ is finite for every integer $n \in \IZ$. 
\end{lemma}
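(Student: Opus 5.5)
The plan is to argue by contradiction: assume the stabilizer is infinite, quotient by an elliptic curve it contains, and reach a contradiction with Lemma \ref{lem:genfinite} applied in the quotient (this is the single invocation in dimension $\geq 2$ anticipated before that lemma). Write $S = \varphi_n(C_1 \times C_2) = [n](C_1) - C_2$; this is irreducible, being the image of an irreducible variety.

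First, dispose of $n = 0$. Here $S = [-1](C_2)$, so $\mathrm{Stab}_A(S) = [-1]\mathrm{Stab}_A(C_2) = \mathrm{Stab}_A(C_2)$, which is finite by hypothesis.

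Now let $n \neq 0$. By Lemma \ref{lem:genfinite}, $\varphi_n|_{C_1 \times C_2}$ is generically finite, so $S$ is an irreducible surface. Suppose $\mathrm{Stab}_A(S)$ is infinite, and let $B$ be its identity component, a positive-dimensional abelian subvariety. Then $S$ is a union of translates of $B$, so $\dim B \leq 2$.

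If $\dim B = 2$, then $S = z + B$ for some $z$. Hence $C_2 \subseteq -S + [n](c_1) = -z + [n](c_1) + B$ for any $c_1 \in C_1(K)$. Likewise $[n](C_1) \subseteq S + c_2 = z + c_2 + B$ for any $c_2 \in C_2(K)$, so $C_1$ lies in a translate of the algebraic subgroup $[n]^{-1}(B)$. Since $B \subseteq [n]^{-1}(B)$, both curves lie in translates of $[n]^{-1}(B)$, which is proper because $\dim A \geq 3$. This contradicts the standing assumption; equivalently, we may invoke the last assertion of Lemma \ref{lem:genfinite}.

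So $B = E$ is an elliptic curve. Let $\pi: A \to A' = A/E$, with $\dim A' \geq 2$. The key steps are:
\begin{enumerate}
\item Each $\pi(C_i)$ is a curve. Otherwise $C_i$ lies in a translate of $E$, hence equals it, and then $\mathrm{Stab}_A(C_i) \supseteq E$ is infinite, contrary to hypothesis.
\item The curves $\pi(C_1)$ and $\pi(C_2)$ are not both contained in translates of one proper algebraic subgroup $H' \subsetneq A'$. Otherwise $C_1$ and $C_2$ would lie in translates of the proper subgroup $\pi^{-1}(H')$ of $A$.
\item Since $\pi$ is a homomorphism, $\varphi_{A',n} \circ (\pi \times \pi) = \pi \circ \varphi_n$. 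Hence $\varphi_{A',n}(\pi(C_1) \times \pi(C_2)) = \pi(S)$.
\item Because $S$ is a union of $E$-cosets and has dimension $2$, the image $\pi(S)$ has dimension $1$.
\end{enumerate}
By steps 1 and 2, Lemma \ref{lem:genfinite} applies to $\pi(C_1), \pi(C_2) \subseteq A'$. It shows that $\varphi_{A',n}$ restricted to $\pi(C_1) \times \pi(C_2)$ is generically finite, so its image is $2$-dimensional. This contradicts steps 3 and 4.

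The main obstacle is only bookkeeping: checking that each hypothesis descends correctly to the quotient, chiefly step 2 and the non-degeneracy in step 1. The dimension count itself is immediate once these are in place.
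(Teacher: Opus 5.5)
Your proof is correct and is essentially the paper's argument. For $n\neq 0$ you rule out a two-dimensional stabilizer by putting both curves in translates of a proper subgroup, and a one-dimensional one by passing to the quotient by that elliptic curve and applying Lemma~\ref{lem:genfinite} there (the paper quotients by the full stabilizer rather than its identity component, which makes no difference).

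One small slip: your aside that the two-dimensional case follows ``equivalently'' from the last assertion of Lemma~\ref{lem:genfinite} is not accurate. That assertion concerns irreducible components of algebraic subgroups, i.e.\ torsion cosets, and $z+B$ need not be one. Your direct argument already covers this case, so nothing is lost.
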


\begin{proof}
The case $n=0$ is trivial as $\varphi_0(C_1 \times C_2) = - C_2$ has stabilizer $\mathrm{Stab}_A(C_2)$ in this case. In the following, we assume $n \in \IZ \setminus \{0 \}$. For readability, we write $X_n = \varphi_n(C_1 \times C_2)$. From Lemma \ref{lem:genfinite}, we know that $\dim X_n = 2$.

We first exclude the case that $\dim(\mathrm{Stab}_A(X_n)) = 1$. 
For this purpose, let us assume that $\dim(\mathrm{Stab}_A(X_n)) = 1$ and consider the associated quotient map $\pi: A \to A/\mathrm{Stab}_A(X_n) =: B$. By hypothesis, neither $\pi(C_1)$ nor $\pi(C_2)$ is a point, so they are both curves. Moreover,
\begin{equation*}
\pi(X_n) = \varphi_{B,n}(\pi(C_1) \times \pi(C_2)) 
\end{equation*}
would be a curve. Furthermore, the curves $\pi(C_1)$ and $\pi(C_2)$ are not both contained in translates of the same proper algebraic subgroup of $B$; for else $C_1$ and $C_2$ would both be contained in translates of the same proper algebraic subgroup of $A$. Since moreover $\dim B = \dim A - 1 \geq 2$, we can apply Lemma \ref{lem:genfinite} to the curves $\pi(C_1), \pi(C_2) \subseteq B$ and infer that the morphism $(\varphi_{B,n})|_{\pi(C_1) \times \pi(C_2)}$ is generically finite for all $n \in \mathbb{Z} \backslash \{0\}$. But this implies the contradiction $\dim(\pi(X_n))=2$.

Let us finally exclude the case $\dim(\mathrm{Stab}_A(X_n)) = 2$. Assume this were the case and write again $\pi: A \rightarrow A/\mathrm{Stab}_A(X_n)$. Then $\pi(X_n)$ would be a point and so would be $\pi(C_1)$ and $\pi(C_2)$. But this would imply that $C_1$ and $C_2$ are both contained in a translate of $\mathrm{Stab}_A(X_n) \subsetneq A$, which contradicts our assumption.
\end{proof}

We state a further lemma that will be used to reduce from a finite to a trivial stabilizer in the proof of Proposition \ref{proposition::heightbound} below.

\begin{lemma}
    \label{lemma::stabilizers_quotient}
    Let $X \subseteq A$ be a geometrically irreducible algebraic subvariety. For any finite subgroup $G \subseteq A$ with associated quotient homomorphism $\pi: A \rightarrow A/G$, we have
    \begin{equation*}
        \pi(\mathrm{Stab}_A(X)) = \mathrm{Stab}_{A/G}(\pi(X)).
    \end{equation*}
\end{lemma}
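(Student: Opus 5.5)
We prove the two inclusions separately, using only that $\pi$ is a surjective homomorphism with finite kernel $G$ and that $\pi^{-1}(\pi(X)) = X + G = \bigcup_{g \in G}(X+g)$.

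The inclusion $\pi(\mathrm{Stab}_A(X)) \subseteq \mathrm{Stab}_{A/G}(\pi(X))$ is immediate. If $a + X = X$, then
\[ \pi(a) + \pi(X) = \pi(a+X) = \pi(X). \]

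For the reverse inclusion, let $b \in \mathrm{Stab}_{A/G}(\pi(X))$ and choose a lift $a \in A(K)$ with $\pi(a) = b$. Then $\pi(a+X) = \pi(X)$, so
\[ a + X \subseteq \pi^{-1}(\pi(X)) = \bigcup_{g \in G} (X+g). \]
The left-hand side is irreducible and the right-hand side is a finite union of closed subvarieties. Hence $a + X \subseteq X + g$ for some $g \in G$. Both sides are irreducible of the same dimension, so $a + X = X + g$, that is, $a - g \in \mathrm{Stab}_A(X)$. Since $g \in G = \ker \pi$, we get $b = \pi(a) = \pi(a-g) \in \pi(\mathrm{Stab}_A(X))$.

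There is no real obstacle in this argument. The only points needing care are the description of the preimage $\pi^{-1}(\pi(X)) = X + G$, which holds because $\ker \pi = G$ as a set of $K$-points and $K$ is algebraically closed, and the use of irreducibility to pass from containment in a finite union to containment in a single translate. If stabilizers are read as group schemes rather than as groups of $K$-points, no extra work is needed: in characteristic $0$ they are reduced, so it suffices to compare $K$-points as above.
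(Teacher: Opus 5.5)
Your proposal is correct and matches the paper's proof: lift $p$ to $\widetilde{p}$, observe that $X + \widetilde{p}$ lies in $\pi^{-1}(\pi(X)) = \bigcup_{\gamma \in G}(X+\gamma)$, so by irreducibility and equality of dimensions $X + \widetilde{p} = X + \gamma$ for some $\gamma \in G$. Hence $\widetilde{p} - \gamma \in \mathrm{Stab}_A(X)$, which gives $p \in \pi(\mathrm{Stab}_A(X))$.
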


\begin{proof}
    The inclusion ``$\subseteq$'' is clear. Conversely, consider $p \in \mathrm{Stab}_{A/G}(\pi(X))$ and choose a lifting $\widetilde{p} \in A(\IQbar)$ of $p$. By definition, we have
    \begin{equation*}
        \pi(X) + p = \pi(X),
    \end{equation*}
    which implies
    \begin{equation*}
        X + \widetilde{p} = X + \gamma
    \end{equation*}
    for some $\gamma \in G(\IQbar)$. (Indeed, the geometrically irreducible components of the preimage $\pi^{-1}(\pi(X))$ are of the form $X + \gamma$ for some $\gamma \in G(\IQbar)$.) But this implies $\widetilde{p} - \gamma \in \mathrm{Stab}_A(X)$ and hence $p \in \pi(\mathrm{Stab}_A(X))$.
\end{proof}

The following lemma is standard for equidistribution arguments. We briefly introduce the notation we use here and further below. For any abelian variety $B$, we consider the Faltings-Zhang map
\begin{align*}
    \Delta_{B,k}: B^k &\longrightarrow B^{k-1}, \\
    (z_1,z_2,\dots,z_k) &\longmapsto (z_1-z_2,\dots,z_{k-1}-z_k).
\end{align*}

\begin{lemma}
\label{lemma::faltings_zhang_map}
Let $B$ be an abelian variety over $\IK$ and $Y\subseteq B$ a geometrically irreducible subvariety whose stabilizer $\mathrm{Stab}_B(Y)$ is trivial. Then, the restriction $\Delta_{B,k}|_{Y^k}$ is birational onto its image for all sufficiently large integers $k$.
\end{lemma}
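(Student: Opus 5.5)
We will prove that $\Delta_{B,k}|_{Y^k}$ is generically injective for large $k$. In characteristic $0$, a generically injective dominant morphism of irreducible varieties onto its image is birational, so this suffices. The approach is the standard Noetherian descending-chain argument of Zhang. We may work over $\IKbar$, since $Y$ is geometrically irreducible and birationality can be checked after base change.

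For a point $z=(z_1,\dots,z_k)\in Y^k(\IKbar)$, the fiber of $\Delta_{B,k}|_{Y^k}$ through $z$ consists of the tuples $(z_1+t,\dots,z_k+t)$ with $t\in B(\IKbar)$ and $z_i+t\in Y$ for all $i$. It is therefore in bijection with
\[ S_k(z)=\bigcap_{i=1}^k (Y-z_i). \]
For each $k$ we consider the closed subset
\[ W_k=\{(z,t)\in Y^k\times B : t\in S_k(z)\}\subseteq Y^k\times B. \]
This subset contains the diagonal-type component $\{t=0\}$. Generic injectivity of $\Delta_{B,k}|_{Y^k}$ is equivalent to the statement that, for $z$ in a dense open subset of $Y^k$, the set $S_k(z)$ equals $\{0\}$.

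The key step is to show that, for generic $z$, the sets $S_k(z)$ eventually reduce to $\{0\}$. Let $z_1,z_2,\dots$ be a sequence of independent generic points of $Y$; for instance, take points generic over a large field, or choose the $z_i$ successively outside proper closed subsets. The sets
\[ S_k=\bigcap_{i\le k}(Y-z_i) \]
form a descending chain of closed subsets of $B$, each containing $0$, so by Noetherianity the chain stabilizes at some closed subset $S$. Suppose $t\in S$. Then $z_i+t\in Y$ for all $i$, and because the $z_i$ are chosen generically this forces $Y+t\subseteq Y$. The precise argument is the following. Let $S_{k_0}=S$. The set $\{y\in Y : y+S\subseteq Y\}$ is closed in $Y$, and a generic $z_{k_0+1}$ lies in it by the stabilization $S_{k_0+1}=S_{k_0}$. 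Genericity of $z_{k_0+1}$ then implies that this closed set is all of $Y$. Hence $S\subseteq \mathrm{Stab}_B(Y)=\{0\}$, so $S=\{0\}$.

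It remains to transfer the statement from a specific generic sequence to the scheme-theoretic situation. The condition ``$S_k(z)=\{0\}$'' is constructible in $z$; more precisely, the fiber dimension and the fiber degree of the projection $W_k\to Y^k$ are upper semicontinuous or constructible. Hence if the condition holds at the generic point of $Y^k$, it holds on a dense open subset. The main technical obstacle is to make the genericity argument rigorous, namely to show that stabilization at generic points yields $Y+S\subseteq Y$ and not merely containment at countably many points. We plan to handle this by working at the generic point $\eta$ of $Y^k$ over a field of definition. Then $z_{k+1}$, viewed as the generic point of a new factor of $Y$, satisfies $z_{k+1}+S_k(\eta)\subseteq Y$. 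This forces the constant family $Y\times S_k$ to be contained in the addition-preimage, that is, $Y+S_k(\eta)\subseteq Y$, whenever $S_{k+1}(\eta)=S_k(\eta)$. Finally, the dimension and degree of $S_k(\eta)$ cannot strictly decrease forever, which gives the desired $k$.
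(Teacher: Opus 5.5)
Your argument is correct. It is the standard Noetherian descending-chain argument of Zhang, which is exactly what the paper relies on when it cites \cite[Lemma 4.1]{Abbes1997} instead of giving a proof. You also correctly isolate the one delicate point: $z_{k+1}$ must be generic over the field of definition $\IK(z_1,\dots,z_k)$ of $S_k$, so that the geometric generic fiber, and not merely some special fiber, is a single point, which in characteristic $0$ gives degree $1$. The only cosmetic fix is that termination of the strictly decreasing chain $S_1\supsetneq S_2\supsetneq\cdots\supsetneq\{0\}$ of closed subsets of $B$ should be justified by Noetherianity, as you do earlier, rather than by ``dimension and degree''.
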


\begin{proof}
This follows from \cite[Lemma 4.1]{Abbes1997}.
\end{proof}

Furthermore, we define the line bundle
\begin{equation*}
    M_{n,k} = \pi_1^\ast L_n \otimes \cdots \otimes \pi_k^\ast L_n
\end{equation*}
on $(A \times A)^k$ where $\pi_i: (A \times A)^k \rightarrow A \times A$ is the projection on the $i$-th factor. 

\begin{lemma}
\label{lemma::bigness_Mprime}
Let $C_1$ and $C_2$ be as in Theorem \ref{theorem::main} and let $n \in \IZ \setminus \{ 0 \}$. Assume additionally that $C_1$ and $C_2$ are not both contained in translates of the same proper algebraic subgroup of $A$ and that both stabilizers $\mathrm{Stab}_A(C_1)$ and $\mathrm{Stab}_A(C_2)$ are trivial. Write $B = A \times A$ and $S = C_1 \times C_2$. For all sufficiently large integers $k$, the restriction of the line bundle $M_{n,k-1}$ to $\Delta_{B,k}(S^k) \subseteq B^{k-1}$ is big.
\end{lemma}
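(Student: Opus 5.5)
Since $L_n = \varphi_n^\ast L$, the line bundle $M_{n,k-1}$ on $B^{k-1}$ is the pullback of the ample line bundle $L^{\boxtimes (k-1)}$ on $A^{k-1}$ along the product map $\Phi_{n} = \varphi_n \times \cdots \times \varphi_n: B^{k-1} \to A^{k-1}$. A pullback of an ample line bundle restricted to an irreducible subvariety $W$ is nef, and it is big on $W$ if and only if $\Phi_n|_W$ is generically finite onto its image. By the projection formula, the top self-intersection of $M_{n,k-1}|_W$ is then $\deg(\Phi_n|_W)$ times the degree of $\Phi_n(W)$, which is positive. So the plan is to show that $\Phi_n$ restricted to $W = \Delta_{B,k}(S^k)$ is generically finite for $k$ large.

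Because $\varphi_n$ is a homomorphism, $\Phi_n \circ \Delta_{B,k} = \Delta_{A,k} \circ (\varphi_n)^{\times k}$. Hence
\[
\Phi_n(W) = \Delta_{A,k}(X_n^k), \qquad X_n = \varphi_n(S).
\]
I will compare dimensions along this identity.

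First, take $k$ large enough that Lemma \ref{lemma::faltings_zhang_map} applies to $Y = S \subseteq B$. For this, $\mathrm{Stab}_B(S) = \mathrm{Stab}_A(C_1) \times \mathrm{Stab}_A(C_2)$ must be trivial, which is the hypothesis. Then $\dim W = \dim S^k = 2k$.

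Second, Lemma \ref{lem:genfinite} gives $\dim X_n = 2$, and Lemma \ref{lemma::stabilizer_of_difference} gives that $\mathrm{Stab}_A(X_n)$ is finite. If it is trivial, Lemma \ref{lemma::faltings_zhang_map} applied to $X_n$ (possibly after enlarging $k$) shows $\dim \Delta_{A,k}(X_n^k) = 2k$. If it is only finite, say $G$, I pass to $\pi: A \to A/G$. By Lemma \ref{lemma::stabilizers_quotient}, $\pi(X_n)$ has trivial stabilizer, so $\dim \Delta_{A/G,k}(\pi(X_n)^k) = 2k$. Since $\pi^{k-1}$ is finite and commutes with the Faltings–Zhang maps, we again get $\dim \Delta_{A,k}(X_n^k) = 2k$.

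Combining the two steps, $\dim \Phi_n(W) = 2k = \dim W$, so $\Phi_n|_W$ is generically finite. This makes $M_{n,k-1}|_W$ nef with positive top self-intersection, hence big.

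The main obstacle is the reduction from finite to trivial stabilizer. The key point there is that dimension is preserved under the finite isogeny $\pi$, so no birationality is needed at the level of $X_n$. One also needs the elementary fact that bigness of a nef pullback follows from generic finiteness, for instance via $\mathrm{vol} = (M|_W)^{\dim W} > 0$ (Lazarsfeld, Theorem 2.2.16).
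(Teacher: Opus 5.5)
Your proof is correct and follows essentially the same route as the paper: the identity $\varphi_n^{k-1}\circ\Delta_{B,k} = \Delta_{A,k}\circ\varphi_n^{k}$, passage to $A/G$ with $G = \mathrm{Stab}_A(\varphi_n(S))$ via Lemmas \ref{lemma::stabilizer_of_difference} and \ref{lemma::stabilizers_quotient}, and bigness of the pullback of an ample bundle under a generically finite map. The only difference is that you compute $\dim \Delta_{B,k}(S^k) = 2k$ separately by applying Lemma \ref{lemma::faltings_zhang_map} to $S$. That step is harmless but unnecessary, since $\dim \Phi_n(W) \le \dim W \le \dim S^k = 2k$ already forces equality once $\dim \Phi_n(W) = 2k$.
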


\begin{proof}
By Lemma \ref{lem:genfinite}, the restriction $\varphi_{n}|_S$ is generically finite onto its image $\varphi_{n}(S)$. Consequently, the map $\varphi_{n}^k|_{S^k}=(\varphi_{n} \times \cdots \times \varphi_{n})|_{S^k}$ is generically finite onto its image $\varphi_{n}^k(S^k) = \varphi_n(S)^k$. By Lemma \ref{lemma::stabilizer_of_difference}, the image $\varphi_n(S)$ has finite stabilizer $G$. Let $\pi_G: A \to A/G$ denote the projection and let $\pi_G^k: A^k \to (A/G)^k$ denote its $k$-fold power. By Lemma \ref{lemma::stabilizers_quotient}, the stabilizer of $\pi_G(\varphi_n(S))$ is trivial.

By Lemma \ref{lemma::faltings_zhang_map}, the restriction of $\Delta_{A/G,k}$ to $\pi_G(\varphi_n(S))^k$ is birational onto its image for all sufficiently large $k$. Because of the equality $\Delta_{A/G,k} \circ \pi_G^k = \pi_G^{k-1} \circ \Delta_{A,k}$, this implies that $\Delta_{A,k}|_{\varphi^k_n(S^k)}$ is generically finite onto its image
\begin{equation*}
(\Delta_{A,k} \circ \varphi_{n}^k)(S^k)= (\varphi_{n}^{k-1} \circ \Delta_{B,k})(S^k).
\end{equation*}
Thus, also the restriction of $\varphi_{n}^{k-1}:B^{k-1}\rightarrow A^{k-1}$ to $\Delta_{B,k}(S^k)$ is generically finite. As $M_{n,k-1}$ is the pullback of the ample line bundle 
\begin{equation*}
    \pi^\ast_1 L \otimes \cdots \otimes \pi_{k-1}^\ast L
\end{equation*}
along the morphism $\varphi_{n}^{k-1}: B^{k-1} \rightarrow A^{k-1}$, this implies that its restriction to $\Delta_{B,k}(S^k)$ is big.
\end{proof}

\subsection{A lower height bound (special case $C_1 = C_2$)} In this subsection, we restrict ourselves to the special case where $C_1 = C_2$ and give a simple proof of a height bound that is essential for the main proof. Although there seems to be no way to generalize the proof we state in this special case, it seems still worthwhile to provide it separately as the assumption $C_1 = C_2$ was also present in \cite{Bays2015}. The arguments are furthermore simple and close to the ones for the geometric analog (Lemma \ref{lemma::degreebound}). The general version of the height bound is proven in Subsection \ref{subsection::equidistribution} below.

\begin{proposition}\label{proposition::heightbound_symmetric}
    Assume that $C = C_1 = C_2$ is not contained in a translate of a proper algebraic subgroup of $A$. Then, there exists a constant $\newconstant\label{c3.2} \geq 1$, depending only on $C$, such that
	\begin{equation*}
		\oldconstant{c3.2}^{-1} \cdot n^2 < h_{\Ltil_n}(C \times C) < \oldconstant{c3.2} \cdot n^2
	\end{equation*}
	for all $n \neq 0$.
\end{proposition}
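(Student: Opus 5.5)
The plan is to write the height as a quotient of intersection numbers,
\[
h_{\Ltil_n}(C\times C)=\frac{p(n)}{3[\IK:\IQ]\,q(n)},\qquad p(n)=\Ltil_n^3\cdot[C\times C],\quad q(n)=L_n^2\cdot[C\times C].
\]
Lemma \ref{lemma::degreebound} already gives $\oldconstant{c_degbnd}^{-1}n^2\le q(n)\le \oldconstant{c_degbnd}n^2$. So it suffices to show that $p(n)$ lies between two positive multiples of $n^4$ for $n\neq 0$.

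\textbf{Polynomial expansion.} By Lemma \ref{lem:formulaforln} and scaling invariance (Lemma \ref{lemma::intersectionnumber} (iv)), we may replace $\Ltil_n$ by $n\widetilde P+n^2\pr_1^\ast\Ltil+\pr_2^\ast\Ltil$ additively. Multilinearity then shows that $p(n)=\sum_{i}b_in^i$ is a polynomial whose coefficients are arithmetic intersection numbers of $\widetilde P$, $\pr_1^\ast\Ltil$ and $\pr_2^\ast\Ltil$ on $C\times C$.

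Since $\pr_j(C\times C)=C$ is a curve, Lemma \ref{lemma::projformulaposdim} kills every monomial containing $(\pr_j^\ast\Ltil)^3$. The terms $(\pr_1^\ast\Ltil)^2\cdot\widetilde P$ and $(\pr_1^\ast\Ltil)^2\cdot\pr_2^\ast\Ltil$ survive, so a priori $\deg p\le 5$.

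Each $\Ltil_n$ is nef, being a pullback of the nef $\Ltil$ (Lemma \ref{lemma::canonicalmetrics} (ii)). Hence $p(n)\ge 0$ for all $n$ by Lemma \ref{lemma::intersectionnumber_nef}. Applying this to $\pm n$ with $n\to\infty$ forces $b_5=0$, and the upper bound $p(n)\le c\,n^4$ follows at once.

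\textbf{Lower bound: positivity of $b_4$.} The real work is to show $b_4>0$. Here I use the hypothesis $C_1=C_2$ together with the relation $\Ltil_n+\Ltil_{-n}\sim 2n^2\pr_1^\ast\Ltil+2\pr_2^\ast\Ltil$ from Lemma \ref{lem:formulaforln}. Cubing this relation on $C\times C$ gives
\[
(\Ltil_n+\Ltil_{-n})^3\cdot[C\times C]=24n^4(\pr_1^\ast\Ltil)^2\cdot\pr_2^\ast\Ltil\cdot[C\times C]+O(n^2).
\]
The leading coefficient here is, up to a positive factor, $\deg_L(C)\cdot\Ltil^2\cdot[C]$, i.e.\ a positive multiple of $\deg_L(C)^2\,h_{\Ltil}(C)$. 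It is strictly positive because $C$ is not a torsion coset, by Zhang's theorem (Bogomolov), which I will quote.

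I then expand the left side as $p(n)+p(-n)+3\Ltil_n^2\Ltil_{-n}+3\Ltil_n\Ltil_{-n}^2$ (on $C\times C$). The mixed terms have to be compared with $p(\pm n)$.

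For this I intend to use the swap involution $\sigma(x,y)=(y,x)$ of $C\times C$, which exists precisely because $C_1=C_2$. Combined with $[-1]$ and symmetry of $\Ltil$ (Lemma \ref{lemma::metric_symmetric}), it should relate $\sigma^\ast\Ltil_n$ to $\Ltil_{\pm n}$ up to the $\pr_j^\ast\Ltil$ terms. The aim is to express the mixed numbers through $p(\pm n)$ plus terms of order $O(n^3)$.

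Alternatively, the $n^4$ coefficient can be computed directly as $b_4=3(\pr_1^\ast\Ltil)^2\cdot\pr_2^\ast\Ltil+3\widetilde P^2\cdot\pr_1^\ast\Ltil$. The term $\widetilde P^2\cdot\pr_1^\ast\Ltil$ reduces by the projection formula to a fiberwise self-intersection of the degree-zero bundle $t_x^\ast\Ltil-\Ltil$ restricted to $C$, integrated over $x\in C$. Its sign then has to be controlled against the first term using $C_1=C_2$.

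Either way, once $b_4>0$ we get $p(n)\ge b_4n^4/2$ for $|n|$ large. The finitely many remaining $n\ne0$ are handled by $p(n)>0$: for these $\varphi_n|_{C\times C}$ is generically finite (Lemma \ref{lem:genfinite}), and $h_{\Ltil}(\varphi_n(C\times C))>0$ since the image is not a torsion coset. This gives the constant $\oldconstant{c3.2}$.

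I expect the main obstacle to be this sign control of the $\widetilde P$-contribution to $b_4$, since it is the only place where $C_1=C_2$ enters and where cancellation could occur.
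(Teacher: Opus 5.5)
Your setup, the vanishing $b_6=b_0=b_5=0$, the upper bound, and the strict positivity $p(n)>0$ for $n\neq 0$ are all fine and match the paper. The gap is the step you flag yourself: you never prove $b_4>0$, and neither of your two routes gets there.

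The first route is circular. Write $\Ltil_{\pm n}\sim \pm n\widetilde P+Q$ with $Q=n^2\pr_1^\ast\Ltil+\pr_2^\ast\Ltil$, all intersection numbers taken on $C\times C$. Then $p(n)+p(-n)=2Q^3+6n^2\widetilde P^2 Q$ and $3\Ltil_n^2\Ltil_{-n}+3\Ltil_n\Ltil_{-n}^2=6Q^3-6n^2\widetilde P^2 Q$. So your expansion of $(\Ltil_n+\Ltil_{-n})^3=8Q^3$ is an identity that carries no information. The $n^4$-part of the mixed terms is $18(\pr_1^\ast\Ltil)^2\cdot\pr_2^\ast\Ltil-6\,\widetilde P^2\cdot\pr_1^\ast\Ltil$, which contains the same unknown quantity again.

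The second route needs $\widetilde P^2\cdot\pr_1^\ast\Ltil>-(\pr_1^\ast\Ltil)^2\cdot\pr_2^\ast\Ltil$, and you give no argument for it. There is no favourable sign to exploit either: the fibrewise pieces are self-intersections of degree-zero bundles on $C$, and these are $\le 0$ by the arithmetic Hodge index theorem. This comparison is exactly what the paper avoids.

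The missing idea is to use the swap $\sigma$ to obtain a symmetry of the polynomial, not a sign estimate. Since $\varphi_1\circ\sigma=[-1]\circ\varphi_1$, Lemma~\ref{lemma::metric_symmetric} gives $\sigma^\ast\widetilde P\sim\widetilde P$, while $\sigma^\ast\pr_1^\ast\Ltil=\pr_2^\ast\Ltil$. As $\sigma$ is an automorphism of $C\times C$, the projection formula yields
\[
\tfrac{b_4}{3}=(\pr_1^\ast\Ltil)^2\cdot\pr_2^\ast\Ltil+\pr_1^\ast\Ltil\cdot\widetilde P^2=(\pr_2^\ast\Ltil)^2\cdot\pr_1^\ast\Ltil+\pr_2^\ast\Ltil\cdot\widetilde P^2=\tfrac{b_2}{3},
\]
and more generally $b_i=b_{6-i}$, since $(\sigma^\ast\Ltil_n)^3=n^6p(1/n)$. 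You already have $b_4\ge 0$ from $p\ge 0$ and $b_5=0$. Suppose $b_4=0$. Then $b_2=0$, and together with $b_0=b_5=b_6=0$ the polynomial $p(n)=b_3n^3+b_1n$ is odd. That contradicts $p(n)>0$ for all $n\neq 0$, which you prove at the end. Hence $b_4>0$, without ever determining the sign of the $\widetilde P$-contribution; this is the paper's proof.
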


\begin{proof}
    By Lemma \ref{lemma::degreebound} and the definition of the height, it suffices to prove that
	\begin{equation*}
		\newconstant\label{c3.3}^{-1} \cdot n^4 \leq (\Ltil_n|_{C \times C})^3 \leq \oldconstant{c3.3}  \cdot n^4
	\end{equation*}
    for some constant $\oldconstant{c3.3} \geq 1$, depending only on $C$.
    
    By Lemma \ref{lem:formulaforln}, we can write
	\[ (\Ltil_n|_{C \times C})^3 = \sum_{i=0}^{6}{b_i n^i}\]
	for certain coefficients $b_i \in \mathbb{R}$ that are independent of $n$ ($i = 0, \hdots, 6$). It follows from Lemma \ref{lemma::intersectionnumber} (iii) that
	\[ b_6 = ((\mathrm{pr}_1^{\ast} \Ltil)|_{C \times C})^3 = 0.\]
	Furthermore, Lemma \ref{lem:genfinite} and the projection formula (Lemma \ref{lemma::intersectionnumber} (iii)) imply that 
	\[ (\Ltil_n|_{C \times C})^3 = \deg(\varphi_n|_{C \times C}) \cdot (\Ltil|_{\varphi_n(C \times C)})^3 > 0\]
	if $n \neq 0$ by \cite{Ullmo1998, Zhang1998} (and Zhang's inequalities \cite[Th\'eor\`eme 1.5]{Chambert-Loir2000}), and $(\Ltil_n|_{C \times C})^3 = 0$ if $n=0$. We infer that $b_0=0$. As $b_6 = 0$, we deduce that $b_5 = 0$ and $b_4 \geq 0$.
	
	Consider the morphism 
	\begin{equation*}
		\sigma: A \times A \longrightarrow A \times A, \ (x,y) \longmapsto (y,x)
	\end{equation*}
	switching the two factors. Note that $\sigma$ induces an automorphism of $C \times C$ and that
	\begin{align*}
		\sigma^\ast \widetilde{P} 
		&= 
		\sigma^\ast (\varphi_1^{\ast} \Ltil \otimes \mathrm{pr}_1^{\ast} \Ltil^{\otimes (-1)} \otimes \mathrm{pr}_2^{\ast} \Ltil^{\otimes (-1)}) \\
		&=
		\varphi^{\ast}_1 [-1]^\ast \Ltil \otimes \pr_2^\ast \Ltil^{ \otimes (-1)} \otimes \pr_1^\ast \Ltil^{\otimes (-1)} \\
		&\sim \widetilde{P}
	\end{align*}
	by Lemma \ref{lemma::metric_symmetric}. We can therefore deduce from the projection formula that
	\begin{align*}
	\frac{b_4}{3} 
	&= ((\mathrm{pr}_1^{\ast} \Ltil)|_{C \times C})^2(\mathrm{pr}_2^{\ast} \Ltil)|_{C \times C} + ((\mathrm{pr}_1^{\ast} \Ltil)|_{C \times C}) (\widetilde{P}|_{C \times C})^2 \\
	&= (((\mathrm{pr}_1 \circ \sigma)^{\ast} \Ltil)|_{C \times C})^2((\mathrm{pr}_2 \circ \sigma)^{\ast} \Ltil)|_{C \times C} + (((\mathrm{pr}_1 \circ \sigma)^{\ast} \Ltil)|_{C\times C}) (\sigma^{\ast}\widetilde{P})|_{C \times C}^2 \\
	&= ((\mathrm{pr}_2^{\ast} \Ltil)|_{C \times C})^2(\mathrm{pr}_1^{\ast} \Ltil)|_{C \times C} + ((\mathrm{pr}_2^{\ast} \Ltil)|_{C \times C}) (\widetilde{P}|_{C \times C})^2\\
	&= \frac{b_2}{3}.
	\end{align*}
	In summary, if $b_4 = 0$ then also $b_2 = 0$. But then it follows that also $b_3 = b_1 = 0$ because of the positivity above and we get a contradiction with said positivity. Hence, $b_4 \neq 0$ and so $b_4 > 0$ and the claim from the proposition follows.
	\end{proof}
    
	\subsection{A lower height bound (general case)}
    \label{subsection::equidistribution}

    As usual for equidistribution arguments, we pick an arbitrary archimedean place $\mu \in \Sigma(\IK)$. \textit{Enlarging $\IK$ if necessary, we may assume that $\mu$ is complex and we simply write $\IC$ for the completion $\IK_\mu$. For any metrized line bundle $\Ntil$ with underlying $\nu$-metric $\Vert \cdot \Vert_\nu$ ($\nu \in \Sigma_\infty(\IK)$), we write $\overline{N}_\nu$ for the hermitian line bundle $(N, \Vert \cdot \Vert_{\nu})$. If $\nu = \mu$, we also write $\overline{N}$ instead of $\overline{N}_\mu$ for simplicity.}
    \begin{proposition}\label{proposition::heightbound}
    Assume that
    \begin{enumerate}
    \item[(i)] there exists no proper algebraic subgroup $H \subsetneq A$ such that both $C_1$ and $C_2$ are contained in translates of $H$ and that
    \item[(ii)] the stabilizers $\Stab_A(C_1)$ and $\Stab_A(C_2)$ are finite.
    \end{enumerate}
    Then, there exist constants $\newconstant\label{3.4}, \newconstant\label{3.5} \geq 1$ such that
	\begin{equation}
        \label{equation::lower_height_bound}
		\oldconstant{3.4}^{-1} \cdot n^2 \leq h_{\Ltil_n}(C_1 \times C_2) \leq \oldconstant{3.4} \cdot n^2
	\end{equation}
    for all integers $|n| \geq \oldconstant{3.5}$.
    \end{proposition}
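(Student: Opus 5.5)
The upper bound should follow from a formal expansion; the lower bound is the real content and I would get it from an equidistribution argument on $S^k$ with $S = C_1\times C_2$.

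\textbf{Reduction to a polynomial.} By \eqref{equation::definition_height},
\[
h_{\Ltil_n}(S) = \frac{(\Ltil_n|_S)^3}{3[\IK:\IQ]\,L_n^2\cdot[S]} .
\]
By Lemma \ref{lemma::degreebound} the denominator is of exact order $n^2$, so it suffices to show $\newconstant^{-1} n^4 \le (\Ltil_n|_S)^3 \le \newconstant n^4$ for $|n|$ large. Expanding with Lemma \ref{lem:formulaforln} and multilinearity (homotheties do not matter by Lemma \ref{lemma::intersectionnumber} (iv)) gives
\[
(\Ltil_n|_S)^3=\sum_{i=0}^6 b_i n^i .
\]
Every monomial containing $\pr_1^\ast\Ltil$ at least twice vanishes by Lemma \ref{lemma::projformulaposdim}, because $\pr_1(S)=C_1$ is a curve. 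Hence $b_6=b_5=0$, and the $n^4$ coefficient is
\[
b_4 = 3\,(\pr_1^\ast\Ltil)(\widetilde P)^2\cdot[S] + 3\,(\pr_1^\ast\Ltil)^2(\pr_2^\ast\Ltil)\cdot[S] = 3\,(\pr_1^\ast\Ltil)(\widetilde P)^2\cdot[S],
\]
the second summand vanishing by the same lemma. This gives the upper bound. Since $\Ltil_n$ is nef, Lemma \ref{lemma::intersectionnumber_nef} gives $(\Ltil_n|_S)^3\ge 0$ for all $n$, so $b_4\ge 0$. It remains to exclude $b_4=0$. We argue by contradiction: if $b_4 = 0$, then $h_{\Ltil_n}(S)=O(|n|)$, so after normalizing by $n^{-2}$ the heights $h_{n^{-2}\Ltil_n}(S)$ tend to $0$.

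\textbf{Archimedean limits.} At $\mu$, $c_1(\overline{L}_n)$ is a translation-invariant form equal to
\[
n^2\,\pr_1^\ast c_1(\overline{L}) + n\, c_1(\overline{P}) + \pr_2^\ast c_1(\overline{L}) .
\]
So the normalized Monge--Amp\`ere measures $c_1(\overline L_n)^2|_S/(L_n^2\cdot[S])$ converge, as $|n|\to\infty$, to an explicit smooth probability measure $\mu_\infty$ on $S(\IC)$. First I would reduce, via Lemma \ref{lemma::stabilizers_quotient} and an isogeny $A\to A/G$ with $G$ the finite stabilizers (this only changes constants), to the case of trivial stabilizers. Then Lemma \ref{lemma::bigness_Mprime} applies: for large $k$, $M_{n,k-1}$ restricted to $Y=\Delta_{B,k}(S^k)$ is big.

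\textbf{Yuan bigness and the contradiction.} Fix a smooth test function $f$ on $Y(\IC)$ and $\kappa>0$. Apply Lemma \ref{lemma::siu} to
\[
\Delta_{B,k}^\ast\bigl(n^{-2}\widetilde M_{n,k-1}\bigr)|_{S^k}+\Otil(\kappa f)
\]
on $S^k$ (positivity of the twist is arranged by adding a constant, using Lemma \ref{lemma::arithmeticvolumes_basics}). The small height $h_{n^{-2}\Ltil_n}(S)\to 0$ together with Lemma \ref{lemma::minkowski} and \eqref{equation::intersection_integral} then forces, in the limit $n\to\infty$, the pushforward $(\Delta_{B,k})_\ast\mu_\infty^{\otimes k}$ to coincide with the limiting normalized Monge--Amp\`ere measure of $M_{n,k-1}|_Y$. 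This is the Zhang/Ullmo--Szpiro--Ullmo--Zhang type argument. The pullback of the latter measure to $S^k$ is absolutely continuous, smooth and non-degenerate on the whole of $S^k$; comparing it with the product measure along the generically finite map $\Delta_{B,k}|_{S^k}$ (from Lemmas \ref{lemma::faltings_zhang_map} and \ref{lemma::bigness_Mprime}) then produces the usual contradiction, as in the proof of the Bogomolov conjecture. The comparison is made near the diagonal, where differences become small while the product measure does not see it.

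\textbf{Expected obstacle.} The main difficulty is making all of this uniform in $n$. Both the line bundle and the variety $Y$ with its bigness depend on $n$, and one has to pass to the limit in the intersection numbers after normalizing by $n^{-2}$. I would handle this by expanding every quantity, again via Lemma \ref{lem:formulaforln}, as a polynomial in $n$ with $n$-independent coefficients: intersection numbers of $\pr_1^\ast\Ltil$, $\pr_2^\ast\Ltil$, $\widetilde P$ and $\Otil(f)$ on $S^k$. I would then work with the leading coefficients, which define an $n$-independent limiting nef but degenerate class, namely $\pr_1^\ast\Ltil$ perturbed by $\widetilde P$. One must check that bigness survives in this limit, and this is where the trivial-stabilizer hypothesis, i.e. assumption (ii), enters.
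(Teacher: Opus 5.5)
\paragraph{Gap: uniformity in $n$.}
Your overall architecture matches the paper: expansion in $n$, reduction to trivial stabilizers, bigness on $Y=\Delta_{B,k}(S^k)$ via Lemma~\ref{lemma::bigness_Mprime}, and a Yuan--Minkowski comparison of the product measure on $S^k$ with the Monge--Amp\`ere measure of $Y$. But the step you postpone as the ``main difficulty'' is where the proof actually lies, and your plan for it does not work.

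With a general smooth test function $f$ on $S^k$ (or on $Y$), Yuan's inequality is not uniform in $n$. Count the powers of $n$ in $c_1(n^{-2}\Mo_{n,k})^{2k-1}$: the second-order error term $\kappa^2\int f\,dd^cf\wedge c_1(n^{-2}\Mo_{n,k})^{2k-1}$, normalized by $(n^{-2}M_{n,k})^{2k}\cdot[S^k]\asymp n^{-2k}$, is of order $\kappa^2n^2$ as soon as $dd^cf$ has components along the $C_2$-factors. You would therefore need $\kappa\lesssim n^{-2}$, and hence small points of $\Ltil_n$-height $o(1)$. But $b_4=0$ only yields points of $\Ltil_n$-height $O(1)$.

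Passing to a limiting class cannot help. $n^{-2}\Mtil_{n,k}$ tends to $(\pr_1^k)^\ast\Ntil_k$, which is pulled back from the $k$-dimensional $C_1^k$ and is never big on the $2k$-dimensional $S^k$, whatever the stabilizers. Nor is $\pr_1^\ast L+tP$ big on $S$ for any $t$, since it has degree $0$ on the covering family of curves $\{x\}\times C_2$. So ``checking that bigness survives in the limit'' is not an available step. In the paper, (ii) enters only through the bigness of $M_{n,k-1}|_Y$ and the birationality of $\Delta_{A,k}|_{C_1^k}$.

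\paragraph{The missing idea.}
Use only test functions that factor through the first projection: $g=g_0\circ\pr_1^{k-1}$ with $g_0$ on $\Delta_{A,k}(C_1^k)(\IC)$.
\begin{itemize}
\item Fiber integration and the mixed type of $c_1(\overline P)$ give exact homogeneity of the pushed-forward measures, $\omega_n=n^2\omega_1$ and $\eta_{n,k}=n^{2k}\eta_{1,k}$ (Lemmas~\ref{lemma::fiberwise} and~\ref{lemma::homogeneity}).
\item For the twist $\Otil(\varepsilon n^2\,h\circ\pr_1^k)$, the second-order error involves only first-factor directions. It is therefore $O(\varepsilon^2n^{2(d'+1)})$ with $d'=\dim\pr_1^k(X)$ (Lemmas~\ref{lemma::volume_yuan} and~\ref{lemma::expansion}). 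This matches $M_{n,k}^d\cdot[X]=n^{2d_1}M_{1,k}^d\cdot[X]$ (Lemma~\ref{lemma::homogeneity2}).
\end{itemize}
Together these give approximate equidistribution with $n$-independent constants for all points of $\Mtil_{n,k}$-height $<c\,n^2$ off a proper closed set (Lemma~\ref{lemma::equidistribution}). A single separating $g_0$ (Lemma~\ref{lemma::test_function}) then produces the contradiction.

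\paragraph{Two further errors.}
You have the diagonal degeneracy reversed. It is the pullback of the Monge--Amp\`ere form of $Y$ (resp.\ $\Delta_{A,k}^\ast\eta_{1,k}$) that vanishes on the diagonal, because the differential of the Faltings--Zhang map kills diagonal directions. The product form is positive at suitable diagonal points.

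Your $b_4$ is wrong. Lemma~\ref{lemma::projformulaposdim} only kills monomials with \emph{three} factors $\pr_1^\ast\Ltil$, since arithmetic intersection numbers on $S$ have three slots. In fact $(\pr_1^\ast\Ltil)^2\cdot\pr_2^\ast\Ltil\cdot[S]=\deg_L(C_2)\,(\Ltil|_{C_1})^2>0$, since $C_1$ is not a torsion coset. Thus $b_4=3(\pr_1^\ast\Ltil)^2\cdot\pr_2^\ast\Ltil\cdot[S]+3\,\pr_1^\ast\Ltil\cdot\widetilde P^2\cdot[S]$: a positive term plus one of no definite sign. This is precisely why $b_4>0$ is not formal. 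Also, $b_5=0$ must instead be deduced from $(\Ltil_n|_S)^3\ge0$ for $n$ of both signs; with that, your upper bound stands.
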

\begin{lemma}
    \label{lemma::trivial_stabilizer_reduction}
    It suffices to prove Proposition \ref{proposition::heightbound} where (ii) is replaced with the stronger assumption
    \begin{enumerate}
        \item[(ii')] The stabilizers $\Stab_A(C_1)$ and $\Stab_A(C_2)$ are trivial.
    \end{enumerate}
\end{lemma}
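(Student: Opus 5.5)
The plan is to pass to the quotient by the finite stabilizers. Let $G = \Stab_A(C_1) + \Stab_A(C_2)$, a finite subgroup of $A$. After enlarging $\IK$, we may assume $G$ is defined over $\IK$. Let $\pi: A \to A' := A/G$ be the quotient isogeny and put $C_i' = \pi(C_i)$, which are geometrically irreducible curves. First I will check that the pair $(C_1', C_2')$ satisfies the hypotheses of Proposition \ref{proposition::heightbound} with (ii'). Hypothesis (i) is inherited: if $C_1', C_2'$ lay in translates of a proper algebraic subgroup $H' \subsetneq A'$, then $C_1, C_2$ would lie in translates of $\pi^{-1}(H') \subsetneq A$.

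For (ii'), I would not use $G$ directly. Instead I would iterate: by Lemma \ref{lemma::stabilizers_quotient} applied with the finite group $\Stab_A(C_1)$, the image of $C_1$ in $A/\Stab_A(C_1)$ has trivial stabilizer. To treat both curves at once, I would take $G$ to be the sum of the two stabilizers. Then Lemma \ref{lemma::stabilizers_quotient} gives that $\Stab_{A'}(C_i')$ is $\pi(\Stab_A(\pi^{-1}C_i'))$, which is finite. If it is not yet trivial, I repeat the procedure. The degrees of the isogenies are bounded below, and the process must terminate. This is because all the groups involved lie in the finite group of torsion points whose order is bounded in terms of the degrees of the curves: a finite stabilizer of a curve has order at most $\deg_L(C)^2$ up to a constant, so the total degree stays bounded. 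I expect this termination bookkeeping to be the main (mild) obstacle. Alternatively, I would simply argue that $\Stab_{A'}(C_i')$ finite is preserved and choose a single isogeny killing a large enough torsion subgroup, but the iterative argument is cleaner.

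Next I relate the heights. Choose an ample symmetric line bundle $L'$ on $A'$, and compare $\pi^\ast L'$ with $L$. Both are ample and symmetric, so there are integers $a, b \geq 1$ with $L^{\otimes a} \otimes \pi^\ast L'^{\otimes -1}$ and $\pi^\ast L'^{\otimes b} \otimes L^{\otimes -1}$ ample, hence with nonzero sections, up to algebraic equivalence. Since $\varphi_n \circ (\pi\times\pi)$-compatibility holds ($\pi \circ \varphi_{A,n} = \varphi_{A',n}\circ(\pi\times\pi)$), the projection formula (Lemma \ref{lemma::intersectionnumber} (iii)) gives
\begin{equation*}
(\varphi_{A,n}^\ast \pi^\ast \Ltil')^3\cdot[C_1\times C_2] = \deg\bigl((\pi\times\pi)|_{C_1\times C_2}\bigr)\,(\Ltil'_n)^3\cdot[C_1'\times C_2'],
\end{equation*}
and similarly for the geometric degrees. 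Heights with respect to $\pi^\ast\Ltil'$ and the canonically metrized $\Ltil$ are therefore comparable up to the factors $a, b$. This comparison holds because canonical heights for $\IQ$-multiples compare via nef differences and Lemma \ref{lemma::intersectionnumber_nef}. To make this precise, I would work with $\IQ$-line bundles and use that, for symmetric line bundles, $\pi^\ast \Ltil' - a^{-1}\Ltil$ is (homothetic to) a nef canonically metrized $\IQ$-line bundle when it is ample.

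The $n^2$-scaling of the heights $h_{\Ltil_n}(C_1 \times C_2)$ is then transferred, with constants depending only on the data, from the bound for $(C_1',C_2')$ in $A'$. The upper bound already follows directly from Lemma \ref{lem:formulaforln}, and only the lower bound actually needs the reduction.
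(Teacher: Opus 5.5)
Your strategy is the paper's. You pass to $A' = A/G$ with $G$ generated by the two stabilizers, and you check that (i) descends; you do this correctly, and the paper leaves it implicit. You then compare $h_{\Ltil_n}(C_1\times C_2)$ with $h_{\varphi_{A',n}^\ast\Ltil'}(C_1'\times C_2')$ by sandwiching $\Ltil$ and $\pi^\ast\Ltil'$ between multiples of each other in the nef order, pulling back along $\varphi_n$, and applying Lemma \ref{lemma::intersectionnumber_nef} and the projection formula to both $\Ltil_n^3\cdot[C_1\times C_2]$ and $L_n^2\cdot[C_1\times C_2]$. For that comparison the relevant input is ampleness of the two difference bundles: some power is then very ample, hence carries an algebraically nef metric, so the canonical metric is nef by Lemma \ref{lemma::canonicalmetrics}~(ii). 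Your remark about non-zero sections ``up to algebraic equivalence'' is beside the point, since effectivity alone would not give nefness.

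The treatment of (ii') is where the proposal goes wrong, although it is easily repaired. Lemma \ref{lemma::stabilizers_quotient} should be applied to the irreducible curve $C_i$ itself. It gives $\Stab_{A'}(C_i') = \pi(\Stab_A(C_i))$, which is trivial because $\Stab_A(C_i)\subseteq G=\ker\pi$. So the stabilizers are already trivial after one quotient, and no iteration is needed.

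The identity you write instead, $\Stab_{A'}(C_i') = \pi(\Stab_A(\pi^{-1}C_i'))$, is a tautology: for any $Y\subseteq A'$ one has $\Stab_A(\pi^{-1}Y)=\pi^{-1}\Stab_{A'}(Y)$. Moreover, the lemma is not stated for the reducible variety $\pi^{-1}(C_i')$. Your termination argument is also unjustified as written. Lower bounds on isogeny degrees do not force termination. The claimed uniform upper bound on the total degree is not established, because both the ambient abelian variety and the polarization change at every step, so a degree bound on stabilizer orders at one stage says nothing uniform about the next. Replace that paragraph by the one-line application of the lemma above and the argument is complete.
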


\begin{proof}
Write $G$ for the (finite) group generated by $\mathrm{Stab}_A(C_1)$ and $\mathrm{Stab}_A(C_2)$ and let $\pi: A \rightarrow A/G$ be the associated quotient homomorphism. By Lemma \ref{lemma::stabilizers_quotient}, the curves $C_1^\prime = \pi(C_1)$ and $C_2^\prime = \pi(C_2)$ have trivial stabilizer.

We fix an ample symmetric line bundle $L_G$ on $A/G$ and let $\Ltil_G$ denote $L_G$ with the adelic metric given by Lemma \ref{lemma::canonicalmetrics}. There exists some $m \in \mathbb{Z}_{>0}$ such that the line bundles $(\pi^\ast L_G)^{\otimes m} \otimes L^{\otimes(-1)}$ and $L^{\otimes m} \otimes (\pi^\ast L_G)^{\otimes(-1)}$ are both very ample. Any projective embedding associated to a very ample line bundle yields an algebraically nef metric on it; this follows from \cite[Proposition 3.2.4]{Bost1994} as $\caO(1)$ on $\IP^N$ endowed with the Fubini-Study metric is generated globally by sections $X_0,X_1,\dots,X_N$ of supremum norm $\leq 1$. Therefore, by Lemma \ref{lemma::canonicalmetrics} (ii), the canonically metrized line bundles $(\pi^\ast \Ltil_G)^{\otimes m} \otimes \Ltil^{\otimes(-1)}$ and $\Ltil^{\otimes m} \otimes (\pi^\ast \Ltil_G)^{\otimes(-1)}$ are both nef.

Since $L$, $L_G$, and
\[
\varphi_n^\ast (\pi^\ast L_G^{\otimes m}\otimes L^{\otimes -1}) 
= (\pi \circ \varphi_n)^\ast L_G^{\otimes m} \otimes (\varphi_n^\ast L)^{\otimes -1}
= (\varphi_{A/G,n} \circ \pi)^\ast L_G^{\otimes m} \otimes L_n^{\otimes -1}
\]
are nef, we estimate that
\[ L_n^2 \cdot [C_1 \times C_2] 
\leq m^2 ((\varphi_{A/G,n} \circ \pi)^\ast L_G)^2 \cdot [C_1 \times C_2] 
\leq m^2|G|^2 (\varphi_{A/G,n}^\ast L_G)^2 \cdot [C_1^\prime \times C_2^\prime],\]
using the projection formula for the second inequality. Similarly, we obtain that
\begin{equation*}
    (\varphi^\ast_{A/G,n}L_G)^2 \cdot [C_1^\prime \times C_2^\prime]
    \leq
    ((\varphi_{A/G,n} \circ \pi)^\ast L_G)^2 \cdot [C_1 \times C_2] 
    \leq m^2 L_n^2 \cdot [C_1 \times C_2].
\end{equation*}
To obtain analogous bounds for the arithmetic intersection numbers, we start from the related fact that the metrized line bundles $\Ltil$, $\Ltil_G$, and
\begin{equation*}
    \varphi_n^\ast (\pi^\ast \Ltil_G^{\otimes m}\otimes \Ltil^{\otimes -1}) 
    = (\pi \circ \varphi_n)^\ast \Ltil_G^{\otimes m} \otimes (\varphi_n^\ast \Ltil)^{\otimes -1}
    =
    (\varphi_{A/G,n} \circ \pi)^\ast \Ltil_G^{\otimes m} \otimes \Ltil_n^{\otimes -1}
\end{equation*}
are nef; we remark that the pullback of a nef metrized line bundle can be easily seen to be nef---compare \cite[Subsection 2.5.5]{Yuan2026}; our nef corresponds to strongly nef in \cite{Yuan2026}. Using Lemma \ref{lemma::intersectionnumber} (iii) and Lemma \ref{lemma::intersectionnumber_nef}, we can estimate
\[ 
\Ltil_n^3 \cdot [C_1 \times C_2] \leq m^3 ((\varphi_{A/G,n} \circ \pi)^\ast \Ltil_G)^3 \cdot [C_1 \times C_2] \leq m^3|G|^2 (\varphi_{A/G,n}^\ast \Ltil_G)^3 \cdot [C_1^\prime \times C_2^\prime].
\]
Similarly, we obtain
\begin{equation*}
    (\varphi^\ast_{A/G,n}\Ltil_G)^3 \cdot [C_1^\prime \times C_2^\prime]
    \leq
    ((\varphi_{A/G,n} \circ \pi)^\ast \Ltil_G)^3 \cdot [C_1 \times C_2] \leq m^3 \Ltil_n^3 \cdot [C_1 \times C_2].
\end{equation*}
As
\begin{equation*}
    h_{\Ltil_n}(C_1 \times C_2) = \frac{\Ltil_n^3 \cdot [C_1 \times C_2]}{3[\IK:\IQ] \cdot (L_n^2 \cdot [C_1\times C_2])}
    \end{equation*}
 and
    \begin{equation*}
    h_{\varphi_{A/G,n}^\ast \Ltil_G}(C_1^\prime \times C_2^\prime) = \frac{(\varphi_{A/G,n}^\ast \Ltil_G)^3 \cdot [C_1^\prime \times C_2^\prime]}{3[\IK:\IQ] \cdot ((\varphi_{A/G,n}^\ast L_G)^2 \cdot [C_1^\prime \times C_2^\prime])},
\end{equation*}
we can use the above bounds to infer \eqref{equation::lower_height_bound} from the analogous height bound for the geometrically irreducible curves $C_1^\prime$ and $C_2^\prime$ in $A/G$.
\end{proof}

In order to prove Proposition \ref{proposition::heightbound} via equidistribution, we need to construct an auxiliary function. For this purpose, we recall some standard facts on fiberwise integration in the following lemma. For us, a smooth differential form on a (reduced) complex analytic space $X$ is a differential form on its regular locus that, locally, is the restriction of a smooth differential form on an ambient complex manifold. Furthermore, we understand an integral of such a differential form over a (reduced) complex analytic space to be the integral over its smooth locus.

\begin{lemma}
\label{lemma::fiberwise}
Consider geometrically irreducible algebraic subvarieties $X_1$ and $X_2$ of $A$ of dimension $d_1$ and $d_2$, respectively. Write $\pi:X_1\times X_2\to X_1$ for the projection to the first factor. For any smooth semipositive \((d_1+d_2,d_1+d_2)\)-form $\omega$ on $(A \times A)(\IC)$, there exists a unique smooth semipositive \((d_1,d_1)\)-form \(\pi_*\omega\) on $X_1(\IC)$ such that
\begin{equation}
\label{equation::integral_fiberwise}
  \int_{X_1(\IC)} g\,\pi_{*}\omega
  =
  \int_{(X_1 \times X_2)(\IC)}(g\circ\pi)\,\omega
\end{equation}
for every \(g \in \mathcal{C}^0(X_1(\IC))\).
\end{lemma}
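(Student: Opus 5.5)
We construct $\pi_*\omega$ by integrating along the fibres of $\pi$ and then check smoothness, semipositivity and uniqueness. Since $X_1\times X_2$ is a product, the fibre of $\pi$ over $x\in X_1(\IC)$ is $\{x\}\times X_2(\IC)$. The only forms that can be integrated over these fibres are of bidegree $(d_2,d_2)$, so we decompose $\omega$ accordingly.

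\emph{Local definition.} On the smooth locus $X_1^{\mathrm{sm}}(\IC)$, choose local holomorphic coordinates $w$ on $X_1$ and local coordinates $u$ on $X_2^{\mathrm{sm}}$. On the product, restrict $\omega$ to $(X_1\times X_2)(\IC)$. It has top bidegree $(d_1+d_2,d_1+d_2)$ there, so it can be written as $F(w,u)\,dV(w)\wedge dV(u)$. Semipositivity means exactly $F\geq 0$. Define
\begin{equation*}
\pi_*\omega (w):=\Big(\int_{X_2^{\mathrm{sm}}(\IC)} F(w,u)\,dV(u)\Big)\, dV(w).
\end{equation*}
Equivalently, this is the usual fibre integral of the part of $\omega$ of bidegree $(d_2,d_2)$ in the fibre directions. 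Equation \eqref{equation::integral_fiberwise} then follows from Fubini's theorem applied to the product of the smooth loci. The complements of the smooth loci are analytic subsets of measure zero, so they do not affect either side. Positivity of $\pi_*\omega$ is immediate because $F\ge 0$. Uniqueness follows because a top-degree form (a measure with density) on $X_1^{\mathrm{sm}}(\IC)$ is determined by its integrals against all continuous functions $g$.

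\emph{Smoothness.} This is the main obstacle: we must show that $\pi_*\omega$ is locally the restriction of a smooth form on an ambient manifold, including near singular points of $X_1$, and that integrating over the possibly singular, compact $X_2(\IC)$ preserves smoothness. We use the ambient structure. On $(A\times A)(\IC)=A(\IC)\times A(\IC)$, write $\omega$ in terms of the translation-invariant coframe $dz_i,d\bar z_i$ on the first factor and $dz'_j,d\bar z'_j$ on the second factor. Its component of type $(d_1,d_1)$ in the first factor and $(d_2,d_2)$ in the second is a finite sum
\begin{equation*}
\sum_{I,J} a_{I,J}(z,z')\,\alpha_I(z)\wedge\beta_J(z').
\end{equation*}
Here $\alpha_I$ are constant-coefficient $(d_1,d_1)$-forms on $A$, $\beta_J$ are constant-coefficient $(d_2,d_2)$-forms, and the $a_{I,J}$ are smooth on $A\times A$. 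Only this component survives restriction to $X_1\times X_2$, since all other components have too many differentials in one factor. We therefore set
\begin{equation*}
\pi_*\omega=\sum_{I,J} \Big(\int_{X_2(\IC)} a_{I,J}(z,z')\,\beta_J(z')\Big)\,\alpha_I(z)\Big|_{X_1}.
\end{equation*}
The coefficient functions $b_I(z)=\int_{X_2(\IC)}a_{I,J}(z,\cdot)\beta_J$ are integrals over a fixed compact set. The set $X_2^{\mathrm{sm}}(\IC)$ has finite volume with respect to any constant form, by Lelong's theorem or Wirtinger's formula. Differentiation under the integral sign therefore shows that each $b_I$ is smooth on all of $A(\IC)$. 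Hence $\pi_*\omega$ is the restriction to $X_1(\IC)$ of a smooth form on the ambient manifold $A(\IC)$. This formula agrees with the local definition on $X_1^{\mathrm{sm}}\times X_2^{\mathrm{sm}}$, which gives both \eqref{equation::integral_fiberwise} and semipositivity.
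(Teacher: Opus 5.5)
Your proposal is correct and takes essentially the same route as the paper. You keep only the component of $\omega$ of type $(d_1,d_1)$ in the first factor and $(d_2,d_2)$ in the second (with respect to the invariant coframe), integrate over $X_2(\IC)$ to get smooth coefficient functions on all of $A(\IC)$, restrict the resulting ambient form to $X_1(\IC)$, and obtain the integral identity by Fubini and uniqueness and semipositivity by testing against (non-negative) continuous functions. The difference is only in packaging: the paper writes Lelong's integration current with distribution coefficients, using H\"ormander's theorem for smoothness and tensor products of distributions for the Fubini step, whereas you use the finite mass of $X_2^{\mathrm{sm}}(\IC)$, differentiation under the integral sign, and ordinary Fubini--Tonelli.
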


For the proof of the lemma, we introduce a few further notations. A multi-index
$$
\underline p=(p_1,\ldots,p_r), \
1\leq p_1<\cdots<p_r\leq g,
$$
is always assumed to be strictly increasing, and we write $|\underline p|=r$ for its length. We denote by $\underline p^c=(p_1^c,\ldots,p_{g-r}^c)$ the multi-index complementary to $\underline p$; to wit,
\begin{equation*}
    \underline{p}^c = (p^c_1,\dots,p_{g-r}^c), \ 1 \leq p_{1}^c < \cdots < p_{g-r}^c \leq g,
\end{equation*}
and 
\begin{equation*}
\{p_1,\dots,p_r\} \cap \{p_1^c,\dots,p_{g-r}^c\} = \emptyset.
\end{equation*}
In particular, $|\underline p^c|=g-|\underline p|$.

Choose a basis $z_1,\dots,z_g$ of standard coordinates on a uniformization of $A(\IC)$. We use multi-indices to define the forms
\begin{equation*}
dz_{\underline p}
=
dz_{p_1}\wedge\cdots\wedge dz_{p_r}
\ \ \text{and} \ \
d\overline z_{\underline p}
=
d\overline z_{p_1}\wedge\cdots\wedge d\overline z_{p_r}
\end{equation*}
on $A(\IC)$.

For two multi-indices $\underline p,\underline q$ of the same length, we define $\gamma_{\underline p,\underline q}\in\{-1,1\}$ by
\begin{equation*}
dz_{\underline p}\wedge d\overline z_{\underline q} \wedge dz_{\underline p^c}\wedge d\overline z_{\underline q^c} = \gamma_{\underline p,\underline q} \cdot 
dz_1\wedge\cdots\wedge dz_g \wedge d\overline z_1\wedge\cdots\wedge d\overline z_g.
\end{equation*}

\begin{proof}
By Lelong's Theorem \cite{Lelong1957}, integration over the smooth locus $X_i^{\mathrm{sm}}$, $i \in \{1,2\}$, defines a positive current $[X_i(\IC)]$ of bidegree $(g-d_i,g-d_i)$ on $A(\IC)$. Using \cite[Proposition I.2.9]{Demailly2012}, we can write
\begin{equation*}
    [X_1(\IC)] = \sum_{\underline{j},\underline{k}}u_{\underline{j},\underline{k}} \cdot (dz_{\underline{j}} \wedge d\overline{z}_{\underline{k}}) \ \ \text{and} \ \ [X_2(\IC)] = \sum_{\underline{j},\underline{k}}v_{\underline{j},\underline{k}} \cdot (dw_{\underline{j}} \wedge d\overline{w}_{\underline{k}}) 
\end{equation*}
where $u_{\underline{j},\underline{k}}$, $|\underline{j}| = |\underline{k}| = g - d_1$, and $v_{\underline{j},\underline{k}}$, $|\underline{j}| = |\underline{k}| = g - d_2$, are distributions on $A(\IC)$, considered as currents of bidegree $(0,0)$ via identifying $dz_1 \wedge \cdots \wedge dz_g \wedge d\overline{z}_1 \wedge \cdots \wedge d\overline{z}_g$ with the constant function $1$. By Fubini's Theorem, the integration current on $(A \times A)(\IC)$ that is associated with the product $(X_1 \times X_2)(\IC)$ equals
\begin{equation*}
    [(X_1 \times X_2)(\IC)] = \sum_{\underline{j},\underline{j}^\prime,\underline{k},\underline{k}^\prime} (u_{\underline{j},\underline{k}} \otimes v_{\underline{j}^\prime,\underline{k}^\prime}) \cdot (dz_{\underline{j}}\wedge d\overline{z}_{\underline{k}} \wedge d{w}_{\underline{j}^\prime} \wedge d\overline{w}_{\underline{k}^\prime})
\end{equation*}
where we define the tensor product of distributions as in \cite[Chapter V]{Hormander2003}. Without loss of generality, we may assume that
\begin{equation*}
    \omega
    =
    \sum_{\substack{
    |\underline j|=|\underline k|=d_1\\
    |\underline j'|=|\underline k'|=d_2}}
    \alpha_{\underline j,
    \underline k,\underline j',\underline k'}
    \cdot
    (dz_{\underline j}\wedge d\overline z_{\underline k}
    \wedge dw_{\underline j'}\wedge d\overline w_{\underline k'}),
\end{equation*}
where each $\alpha_{\underline j, \underline k,\underline j', \underline k'}$ is a smooth function on $(A\times A)(\IC)$. For fixed multi-indices $\underline j,\underline k$ and an arbitrary point $z\in A(\IC)$, set
\begin{equation*}
\omega_{\underline j,\underline k,z}
=
\sum_{\substack{
|\underline j'|=|\underline k'|=d_2}}
\alpha_{\underline j,
\underline k,\underline j',\underline k'}(z,\cdot)
\cdot
(dw_{\underline j'}\wedge d\overline w_{\underline k'}).
\end{equation*}
Pairing this $(d_2,d_2)$-form with the integration current $[X_2]$
gives
\begin{align*}
    \beta_{\underline j,\underline k}(z)
= \int_{X_2(\IC)} \omega_{\underline j,\underline k,z}
    &=
   \sum_{\underline{j}', \underline{k}'} \gamma_{\underline j',\underline k'}\, v_{\underline{j}',\underline{k}'}(\alpha_{\underline j,
\underline k,(\underline j')^c,(\underline k')^c}(z,\cdot)).
\end{align*}
By \cite[Theorem 2.1.3]{Hormander2003}, the function $z \mapsto \beta_{\underline{j},\underline{k}}(z)$ is smooth on $A(\IC)$. We can hence define the smooth $(d_1,d_1)$-form
\begin{equation*}
    \pi_\ast \omega = \sum_{\underline{j}, \underline{k}} \beta_{\underline{j},\underline{k}}(z) dz_{\underline{j}} \wedge d\overline{z}_{\underline{k}}
\end{equation*}
on $X_1(\IC)$.

It suffices to show that this form satisfies the assertions of the lemma. For this purpose, consider an arbitrary function $g \in \mathcal{C}^0(X_1(\IC))$. Unraveling definitions, we obtain
\begin{align*}
\int_{X_1(\IC)} g \pi_\ast \omega
&=
\sum_{\underline j,\underline k}
\gamma_{\underline j,\underline k}\,
u_{\underline j,\underline k}
(g\cdot\beta_{\underline j^c,\underline k^c}) \\
&=
\sum_{\underline j,\underline k}
\sum_{\underline j^\prime,\underline k^\prime} \gamma_{\underline{j},\underline{k}} \gamma_{\underline{j}^\prime,\underline{k}^\prime} u_{\underline{j},\underline{k}}(z \mapsto g(z) \cdot v_{\underline j',\underline k'} (\alpha_{\underline j^c,\underline k^c,(\underline j')^c, (\underline k')^c}(z,\cdot))) \\
&=\sum_{\underline j,\underline k}
\sum_{\underline j^\prime,\underline k^\prime} \gamma_{\underline{j},\underline{k}} \gamma_{\underline{j}^\prime,\underline{k}^\prime} u_{\underline{j},\underline{k}}(z \mapsto v_{\underline j',\underline k'} (g(z) \cdot \alpha_{\underline j^c,\underline k^c,(\underline j')^c, (\underline k')^c}(z,\cdot))) \\
&=\sum_{\underline j,\underline k}
\sum_{\underline j^\prime,\underline k^\prime} \gamma_{\underline{j},\underline{k}} \gamma_{\underline{j}^\prime,\underline{k}^\prime} (u_{\underline{j},\underline{k}} \otimes v_{\underline{j}^\prime, \underline{k}^\prime})((g \circ \pi) \cdot \alpha_{\underline j^c,\underline k^c,(\underline j')^c, (\underline k')^c}) \\
&= \int_{(X_1\times X_2)(\IC)} (g \circ \pi) \omega,
\end{align*}
whence \eqref{equation::integral_fiberwise}.

Testing with continuous functions, it is easy to see that the $(d_1,d_1)$-form $\pi_\ast \omega$ is uniquely characterized by \eqref{equation::integral_fiberwise}. Similarly, testing with non-negative functions we can infer that $\pi_\ast \omega$ is semipositive.
\end{proof}

In the remainder of this subsection, we set $B = A \times A$ and $S = C_1 \times C_2$. The following lemmas are in preparation of the proof of Proposition \ref{proposition::heightbound}. \textit{By Lemma \ref{lemma::trivial_stabilizer_reduction}, we may and will assume that the stabilizers of $C_1$ and $C_2$ are trivial. Moreover, we fix a positive integer $k$ so that $\Delta_{A,k}: C_1^k \to \Delta_{A,k}(C_1^k)$ is birational, which is possible by Lemma \ref{lemma::faltings_zhang_map}.}

Write $\pr_1: B \rightarrow A$ for the projection to the first coordinate and set $\pr_1^k = \pr_1 \times \cdots \times \pr_1: B^k \rightarrow A^k$ for its $k$-fold product. In addition, we define the metrized line bundle
\begin{equation*}
    \widetilde{M}_{n,k} = \pi_1^\ast \widetilde{L}_n + \cdots + \pi_k^\ast \widetilde{L}_n
\end{equation*}
on $B^k$ and the metrized line bundle
\begin{equation*}
    \Ntil_{k} = \pi_1^\ast \widetilde{L} + \cdots + \pi_k^\ast \widetilde{L}    
\end{equation*}
on $A^k$. (Here, we abuse notation slightly to let $\pi_i$ denote both the $i$-th projection $B^k \rightarrow B$ as well as the $i$-th projection $A^k \rightarrow A$.)

Recall that, by Lemmas \ref{lemma::canonicalmetrics} (ii) and \ref{lemma::metrics_smooth}, the $(1,1)$-form $c_1(\Lo)$ is smooth and semipositive. Hence, also the forms $c_1(\Lo_n)^{\wedge 2}$ and $c_1(\Mo_{n,k-1})^{\wedge 2k}$ are smooth and semipositive. We apply Lemma \ref{lemma::fiberwise} twice. For $X_1 = C_1(\IC)$, $X_2 = C_2(\IC)$, and the $(2,2)$-form $c_1(\Lo_n)^{\wedge 2}$, we find a smooth semipositive $(1,1)$-form $\omega_n$ on $C_1(\IC)$ such that
\begin{equation*}
    \int_{C_1(\IC)} f \omega_n = \int_{S(\IC)} (f \circ \pr_1) c_1(\Lo_{n})^{\wedge 2} 
\end{equation*}
for all $f \in \mathcal{C}^0(C_1(\IC))$. We note that this implies
\begin{equation}\label{equation::boxtimes}
    \int_{C_1^k(\IC)}{f \omega_n^{\boxtimes k}} = \frac{2^{k}}{(2k)!}\int_{S^k(\IC)}{(f \circ \mathrm{pr}_1^k)c_1(\overline{M}_{n,k})^{\wedge 2k}}
\end{equation}
for all $f \in \mathcal{C}^0(C_1^k(\IC))$, where
\begin{equation*}
    \omega_n^{\boxtimes k} = (\pi_1|_{C_1^k})^\ast \omega_n \wedge \cdots \wedge (\pi_k|_{C_1^k})^\ast \omega_n
\end{equation*}
is a $(k,k)$-form on $C_1^k(\IC)$. Applying the lemma for $X_1 = \Delta_{A,k}(C_1^k)(\IC)$, $X_2 = \Delta_{A,k}(C_2^k)(\IC)$, and the $(2k,2k)$-form $c_1(\Mo_{n,k-1})^{\wedge 2k}$, we obtain a smooth semipositive $(k,k)$-form $\eta_{n,k}$ on $\Delta_{A,k}(C_1^k)(\IC)$ such that
\begin{equation}\label{equation::eta_delta}
    \int_{\Delta_{A,k}(C_1^k)(\IC)} g \eta_{n,k} = \int_{\Delta_{B,k}(S^k)(\IC)} (g \circ \pr_1^{k-1}) c_1(\Mo_{n,k-1})^{\wedge 2k}
\end{equation}
for all $g \in \mathcal{C}^0(\Delta_{A,k}(C_1^k)(\IC))$.

We note the homogeneity governing $\omega_n$ and $\eta_{n,k}$.

\begin{lemma}
\label{lemma::homogeneity}
For every integer $n$, we have $\omega_n = n^2 \omega_1$ and $\eta_{n,k} = n^{2k} \eta_{1,k}$. Consequently, 
\begin{equation*}
    L_n^2 \cdot [S] = n^2 \cdot (L_1^2 \cdot [S])
\end{equation*}    
and
\begin{equation*}    
    M_{n,k-1}^{2k} \cdot [\Delta_{B,k}(S^k)] = n^{2k} \cdot (M^{2k}_{1,k-1} \cdot [\Delta_{B,k}(S^k)])
\end{equation*}
for every integer $n$.
\end{lemma}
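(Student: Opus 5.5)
The approach is to compute everything at the level of differential forms on the complex uniformization and read off the homogeneity from bidegree counting. The equalities $\omega_n = n^2\omega_1$ and $\eta_{n,k} = n^{2k}\eta_{1,k}$ then follow from the uniqueness in Lemma \ref{lemma::fiberwise}.

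\emph{Step 1: the Chern form of $\Ltil_n$.} By Lemma \ref{lemma::metrics_smooth} and its proof, $\Vert\cdot\Vert_\mu$ is proportional to a smooth metric with translation-invariant Chern form. So in standard coordinates $z_1,\dots,z_g$ on $A(\IC)=\IC^g/\Lambda_\mu$ we can write $c_1(\Lo)= i\sum_{j,l} h_{jl}\, dz_j\wedge d\overline z_l$ with constant coefficients $h_{jl}$. Use coordinates $(z,w)$ on $B(\IC)$. Since $\varphi_n^\ast dz_j = n\,dz_j - dw_j$, we get
\begin{equation*}
c_1(\Lo_n)=\varphi_n^\ast c_1(\Lo)= n^2\alpha_{z\overline z} - n(\alpha_{z\overline w}+\alpha_{w\overline z}) + \alpha_{w\overline w}.
\end{equation*}
Here $\alpha_{z\overline z}=i\sum h_{jl}dz_j\wedge d\overline z_l$, $\alpha_{z\overline w}=i\sum h_{jl}dz_j\wedge d\overline w_l$, and similarly for the other two. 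None of these forms depends on $n$, and the formula is valid for every $n\in\IZ$, including $n=0$.

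\emph{Step 2: bidegree counting on products.} Let $X_1,X_2\subseteq A$ have dimensions $d_1,d_2$. In the proof of Lemma \ref{lemma::fiberwise}, the current $[(X_1\times X_2)(\IC)]$ pairs non-trivially only with the components of a form that have bidegree $(d_1,d_1)$ in the $z$-variables and $(d_2,d_2)$ in the $w$-variables. Also, $\pi_\ast\omega$ is built only from these components.

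Take $X_1=C_1$, $X_2=C_2$ and expand $c_1(\Lo_n)^{\wedge2}$. Suppose a wedge monomial uses $a$ factors of type $z\overline z$, $b$ of type $z\overline w$, $c$ of type $w\overline z$ and $e$ of type $w\overline w$. Lying in $z$-bidegree $(1,1)$ and $w$-bidegree $(1,1)$ forces $a+b=a+c=1$ and $a+b+c+e=2$. Hence $b=c$ and $e=a$, so the weight is $n^{2a+b+c}=n^2$. Thus the relevant component of $c_1(\Lo_n)^{\wedge2}$ equals $n^2$ times that of $c_1(\Lo_1)^{\wedge 2}$, and $\omega_n=n^2\omega_1$ by uniqueness.

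For $\eta_{n,k}$, reorder the coordinates of $B^{k-1}$ as $(z^{(1)},\dots,z^{(k-1)},w^{(1)},\dots,w^{(k-1)})$. Then $\Delta_{B,k}(S^k)=\Delta_{A,k}(C_1^k)\times\Delta_{A,k}(C_2^k)$, a product of two $k$-dimensional varieties. Now $c_1(\Mo_{n,k-1})=\sum_i \pi_i^\ast c_1(\Lo_n)$ decomposes in the same way, with weights $n^2,n,n,1$. In a $2k$-fold wedge product of $z$-bidegree $(k,k)$ and $w$-bidegree $(k,k)$, the same count gives $a+b=a+c=k$ and $a+b+c+e=2k$, so $b=c$, $e=a$, and the weight is $n^{2a+2b}=n^{2k}$. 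Uniqueness in Lemma \ref{lemma::fiberwise} then gives $\eta_{n,k}=n^{2k}\eta_{1,k}$.

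\emph{Step 3: the intersection numbers.} The algebraic intersection number $L_n^2\cdot[S]$ equals $\int_{S(\IC)}c_1(\Lo_n)^{\wedge2}$. Applying \eqref{equation::integral_fiberwise} with $f=1$ gives $\int_{C_1(\IC)}\omega_n=n^2\int_{C_1(\IC)}\omega_1$. The same argument with $g=1$ in \eqref{equation::eta_delta} gives the formula for $M_{n,k-1}^{2k}\cdot[\Delta_{B,k}(S^k)]$.

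The main point requiring care is Step 2. One has to justify that components of the wrong bidegree, such as $(2,0)$ in $z$ or $(2,2)$ purely in $z$, integrate to zero against the product current. This follows from the coordinate description in the proof of Lemma \ref{lemma::fiberwise}. Invariance of $c_1(\Lo)$ under translation, giving constant coefficients, is what makes the weights exact.
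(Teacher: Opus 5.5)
Your proposal is correct and is essentially the paper's argument. The paper likewise splits $c_1(\Lo_n)$ into pieces of weight $n^2$, $n$, $1$ (via Lemma \ref{lem:formulaforln}), kills every term of $c_1(\Lo_n)^{\wedge 2}$ (resp.\ $c_1(\Mo_{n,k-1})^{\wedge 2k}$) except the $n^2$ (resp.\ $n^{2k}$) one by counting cotangent directions along the two factors, and then concludes via the fiberwise integral and Wirtinger's theorem; your constant-coefficient computation simply makes explicit the mixed type of $c_1(P)$, which the paper takes from \cite{Edixhoven1993}.
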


\begin{proof}
    At every smooth point $p = (x,y) \in S(\IC)$ (resp.\ $p = (x,y) \in \Delta_{B,k}(S^k)(\IC)$), we have
    \begin{equation*}
        T_{\IR,p}^\ast(S(\IC))^{\wedge 4} = T_{\IR,x}^\ast(C_1(\IC))^{\wedge 2} \wedge T_{\IR,y}^\ast(C_2(\IC))^{\wedge 2}
    \end{equation*}
    (resp.\
    \begin{equation*}
        T_{\IR,p}^\ast(\Delta_{B,k}(S^k)(\IC))^{\wedge 4k} = T_{\IR,x}^\ast(\Delta_{A,k}(C_1^{k})(\IC))^{\wedge 2k} \wedge T_{\IR,y}^\ast(\Delta_{A,k}(C_2^{k})(\IC))^{\wedge 2k}),
    \end{equation*}
    where we write $T_{\IR,\cdot}^\ast(\cdot)$ for the real cotangent space. Recall from Lemma \ref{lem:formulaforln} that
    \begin{equation*}
        c_1(L_n) = n \cdot c_1(P) + n^2 \cdot  c_1(\pr_1^\ast L) + c_1(\pr_2^\ast L),    \end{equation*}
    where the Chern currents are computed with respect to the metrics induced by the canonical metric of $L$ at $\mu$ and we have dropped the metrics from the notation for simplicity. Moreover, it is clear that $c_1(\pr_1^\ast L)  \in T_{\IR,x}^\ast(C_1(\IC))^{\wedge 2}$ and $c_1(\pr_2^\ast L) \in T_{\IR,y}^\ast(C_2(\IC))^{\wedge 2}$ while an explicit calculation shows that $c_1(P) \in T_{\IR,x}^\ast(C_1(\IC))^{\wedge 1} \wedge T_{\IR,y}^\ast(C_2(\IC))^{\wedge 1}$ (see \cite[Proof of Lemma IX.2]{Edixhoven1993}). By cancellation, we hence get
    \begin{equation*}
        c_1(L_n)^{\wedge 2} = n^2 \cdot \left( 2c_1( \pr_1^\ast L) \wedge c_1(\pr_2^\ast L) + c_1(P)^{\wedge 2}\right),
    \end{equation*}
    from which $c_1(L_n)^{\wedge 2} = n^2 \cdot c_1(L_1)^{\wedge 2}$ and hence $\omega_n = n^2 \cdot \omega_1$ follows. A similar argument works for $c_1(M_{n,k-1})$. We infer the second assertion by observing that
\begin{equation*}
\int_{C_1(\IC)} \omega_n =
\int_{S(\IC)} c_1(\Lo_{n})^{\wedge 2} = L_n^2 \cdot [S] 
\end{equation*}
and
\begin{equation*}
    \int_{\Delta_{A,k}(C_1^k)(\IC)} \eta_{n,k} 
    = \int_{\Delta_{B,k}(S^k)(\IC)} c_1(\Mo_{n,k-1})^{\wedge 2k}
    = M_{n,k-1}^{2k} \cdot [\Delta_{B,k}(S^k)]
\end{equation*}
by Wirtinger's theorem.
\end{proof}

\begin{lemma}\label{lemma::homogeneity2}
    Let $X_1, X_2 \subseteq A^k$ be two geometrically irreducible algebraic subvarieties of dimension $d_1$ and $d_2$, respectively. Setting $d= d_1 + d_2$, we have
    \begin{equation*}
        M_{n,k}^d \cdot [X_1 \times X_2] = n^{2d_1}M_{1,k}^d \cdot [X_1 \times X_2]
    \end{equation*}
    for every integer $n$.
\end{lemma}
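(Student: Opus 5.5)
The plan is to argue with Chern forms at the complex place $\mu$, as in the proof of Lemma \ref{lemma::homogeneity}. The intersection number is purely geometric, so we may compute it as $\int_{(X_1\times X_2)(\IC)} c_1(\Mo_{n,k})^{\wedge d}$ by Wirtinger's theorem.

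First, fix the identification. We regard $B^k = (A\times A)^k$ as $A^k \times A^k$ by sending $((x_1,y_1),\dots,(x_k,y_k))$ to $((x_1,\dots,x_k),(y_1,\dots,y_k))$. Under this identification, $X_1\times X_2$ is a subvariety of $B^k$ whose first factor collects the $x$-coordinates. By Lemma \ref{lemma::formula_Ln} (or Lemma \ref{lem:formulaforln} at the level of metrics, the homotheties not affecting Chern forms), we have
\begin{equation*}
c_1(\Mo_{n,k}) = n\,\theta_P + n^2\,\theta_1 + \theta_2 .
\end{equation*}
Here $\theta_1 = \sum_i c_1(\pi_i^\ast\pr_1^\ast \Lo)$ is pulled back from the first factor $A^k$, and $\theta_2 = \sum_i c_1(\pi_i^\ast \pr_2^\ast \Lo)$ is pulled back from the second factor. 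Finally, $\theta_P = \sum_i \pi_i^\ast c_1(\overline{P})$.

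Second, we use the bidegree decomposition. At a smooth point $p=(x,y)$ of $(X_1\times X_2)(\IC)$, the real cotangent space splits as $T^\ast_{\IR,x}(X_1(\IC)) \oplus T^\ast_{\IR,y}(X_2(\IC))$. The top exterior power is then
\begin{equation*}
T^\ast_{\IR,x}(X_1(\IC))^{\wedge 2d_1}\wedge T^\ast_{\IR,y}(X_2(\IC))^{\wedge 2d_2} .
\end{equation*}
With respect to this splitting, $\theta_1$ has bidegree $(2,0)$ and $\theta_2$ has bidegree $(0,2)$. By the explicit computation cited in Lemma \ref{lemma::homogeneity} (see \cite[Proof of Lemma IX.2]{Edixhoven1993}), each $c_1(\overline P)$ lies in $T^\ast_{\IR,x_i}\wedge T^\ast_{\IR,y_i}$, so $\theta_P$ has bidegree $(1,1)$.

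Now expand $c_1(\Mo_{n,k})^{\wedge d}$ multinomially into terms $\theta_1^{a}\wedge\theta_P^{b}\wedge\theta_2^{c}$ with $a+b+c=d$. Such a term has bidegree $(2a+b,\,b+2c)$, so its restriction to $X_1\times X_2$ vanishes unless $2a+b = 2d_1$ (and then automatically $b+2c = 2d_2$). The coefficient of a surviving term is $n^{b}(n^2)^{a} = n^{2a+b} = n^{2d_1}$. Hence
\begin{equation*}
c_1(\Mo_{n,k})^{\wedge d}\big|_{X_1\times X_2} = n^{2d_1}\, c_1(\Mo_{1,k})^{\wedge d}\big|_{X_1\times X_2}
\end{equation*}
holds pointwise on the smooth locus. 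Integrating over $(X_1\times X_2)(\IC)$ gives the claim for all $n$, including $n=0$, where both sides vanish since $d_1 \geq 0$ forces the top form to be zero unless $d_1=0$.

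The main obstacle is the bidegree claim for $c_1(\overline P)$. One has to check that in linear coordinates $c_1(\overline P)$ is a constant form $\sum a_{jl}\,dz_j\wedge d\overline w_l + \overline{(\cdot)}$ with no pure $dz\wedge d\overline z$ or $dw\wedge d\overline w$ parts. This follows by writing $c_1(\varphi_1^\ast\Lo)$ as the pullback of a translation-invariant form along $(z,w)\mapsto z-w$ and subtracting the two pure parts. A secondary point is that the singular loci of the $X_i$ are negligible for the integrals, which is handled exactly as in Lemma \ref{lemma::homogeneity}.
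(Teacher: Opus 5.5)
Your argument is correct and is essentially the proof the paper relies on. The paper cites \cite[Lemma IX.2]{Edixhoven1993} with $m=2$, $s_1=n$, $s_2=1$ and $\epsilon=0$; that lemma's proof is exactly this bidegree count resting on $c_1(\overline{P})$ being of mixed type, which is also the mechanism of Lemma \ref{lemma::homogeneity}, so you have simply unpacked the citation (and your pointwise identity already covers $n=0$ with the convention $0^0=1$, so no separate treatment is needed).
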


\begin{proof}
This is \cite[Lemma IX.2]{Edixhoven1993} for the ambient abelian variety $A^k$, $\mathcal{L} = N_k$, $m = 2$, $s_1 = n$, $s_2 = 1$, $Y_i = X_i$ ($i = 1,2$), and $\epsilon = 0$ (while $\epsilon$ is assumed to be a positive rational in \cite{Edixhoven1993}, the same proof works for $\epsilon = 0$).
\end{proof}

\begin{lemma}\label{lemma::test_function}
    There exist a constant $\newconstant\label{c3.6} > 0$ and a function $g_0 \in \mathcal{C}^\infty(\Delta_{A,k}(C_1^k)(\IC))$ such that
    \begin{equation*}
        \left\vert
        \frac{1}{(L_n^2\cdot [S])^k} \int_{C_1^k(\IC)} (\Delta_{A,k}^\ast g_0) \omega_n^{\boxtimes k} 
        -
        \frac{1}{M_{n,k-1}^{2k} \cdot [\Delta_{B,k}(S^k)]} \int_{\Delta_{A,k}(C_1^k)(\IC)} g_0 \eta_{n,k}
        \right\vert > \oldconstant{c3.6}
    \end{equation*}
    for every integer $n \neq 0$.
\end{lemma}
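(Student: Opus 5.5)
The plan is to use the homogeneity of Lemma \ref{lemma::homogeneity} to remove all dependence on $n$, and then show that the two resulting normalized measures are different. The second step is where the work lies.

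\textbf{Reduction to $n=1$.} By Lemma \ref{lemma::homogeneity}, $\omega_n = n^2\omega_1$ and $L_n^2\cdot[S] = n^2 (L_1^2\cdot[S])$. Hence
\begin{equation*}
\frac{\omega_n^{\boxtimes k}}{(L_n^2\cdot[S])^k} = \frac{\omega_1^{\boxtimes k}}{(L_1^2\cdot[S])^k}.
\end{equation*}
Likewise $\eta_{n,k}/(M_{n,k-1}^{2k}\cdot[\Delta_{B,k}(S^k)]) = \eta_{1,k}/(M_{1,k-1}^{2k}\cdot[\Delta_{B,k}(S^k)])$. So the expression inside the absolute value does not depend on $n\neq 0$. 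It therefore suffices to find a single $g_0 \in \mathcal{C}^\infty(\Delta_{A,k}(C_1^k)(\IC))$ for which the two integrals differ at $n=1$, and to take $\oldconstant{c3.6}$ to be half the absolute difference.

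Write $\lambda_1 = (L_1^2\cdot[S])^{-k}(\Delta_{A,k})_\ast\omega_1^{\boxtimes k}$ and $\lambda_2 = (M_{1,k-1}^{2k}\cdot[\Delta_{B,k}(S^k)])^{-1}\eta_{1,k}$. Both are positive measures of total mass $1$ on the compact space $\Delta_{A,k}(C_1^k)(\IC)$; for $\lambda_1$ this uses Wirtinger as in the proof of Lemma \ref{lemma::homogeneity}. Smooth functions (restrictions of smooth functions on $A^{k-1}(\IC)$) are uniformly dense in the continuous functions. So if $\lambda_1\neq\lambda_2$, some smooth $g_0$ separates them, and it remains to show $\lambda_1 \neq \lambda_2$.

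\textbf{Showing $\lambda_1\neq\lambda_2$.} Suppose for contradiction that $\lambda_1 = \lambda_2$.
\begin{enumerate}
\item[(a)] \emph{$\eta_{1,k}$ has constant coefficients.} The metric on $\Ltil$ at $\mu$ is the canonical one, so $c_1(\Mo_{1,k-1})$ is a translation-invariant form on $B^{k-1}(\IC)$. In the proof of Lemma \ref{lemma::fiberwise}, the coefficient functions $\alpha$ are then constant, so the $\beta_{\underline j,\underline k}$ are constant. Thus $\eta_{1,k}$ is the restriction to $\Delta_{A,k}(C_1^k)(\IC)$ of a constant-coefficient $(k,k)$-form $\Theta$ on $A^{k-1}(\IC)$.
\item[(b)] \emph{Transport to $C_1^k$.} By the choice of $k$, $\Delta_{A,k}|_{C_1^k}$ is birational onto its image. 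Hence there is a dense Zariski open $U\subseteq (C_1^{\mathrm{sm}})^k$ on which it is an isomorphism onto a smooth open part of the image. On $U$, equality of the measures gives an equality of smooth forms
\begin{equation*}
(L_1^2\cdot[S])^{-k}\,\omega_1^{\boxtimes k} = (M_{1,k-1}^{2k}\cdot[\Delta_{B,k}(S^k)])^{-1}\,\Delta_{A,k}^\ast\Theta .
\end{equation*}
Both sides are smooth on $(C_1^{\mathrm{sm}})^k(\IC)$, the left by Lemma \ref{lemma::fiberwise}, the right as a pullback of a smooth form. By continuity the identity extends to all of $(C_1^{\mathrm{sm}})^k(\IC)$.
\item[(c)] \emph{Evaluation on the diagonal.} Take a diagonal point $(x,\dots,x)$ with $x\in C_1^{\mathrm{sm}}(\IC)$. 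The differential of $\Delta_{A,k}$ there has nonzero kernel, containing $(v,\dots,v)$ for $v\in T_xC_1$. A $(k,k)$-form is top degree on the $k$-dimensional $C_1^k$, so $\Delta_{A,k}^\ast\Theta$ vanishes at every diagonal point.
\item[(d)] \emph{Positivity of $\omega_1$.} The form $\omega_1$ is smooth and semipositive with $\int_{C_1(\IC)}\omega_1 = L_1^2\cdot[S]>0$, by Lemma \ref{lemma::degreebound}. So $\omega_1$ is strictly positive at some smooth point $x$, and then $\omega_1^{\boxtimes k}$ is strictly positive at $(x,\dots,x)$.
\end{enumerate}
Steps (c) and (d) contradict the identity in (b), so $\lambda_1\neq\lambda_2$.

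The main obstacle is step (b). Passing from equality of pushforward measures to a pointwise identity of forms on $C_1^k$ that survives up to the diagonal needs the birationality of $\Delta_{A,k}|_{C_1^k}$ and the smoothness of both forms. One must also check that the diagonal of $(C_1^{\mathrm{sm}})^k$ is not contained in the bad locus. This is harmless, since the identity is obtained on $U$ and then extended by continuity to all of $(C_1^{\mathrm{sm}})^k(\IC)$, without using $\Delta_{A,k}$ being an isomorphism near the diagonal.
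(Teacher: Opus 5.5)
Your proposal is correct and follows essentially the same route as the paper. You reduce to $n=1$ by Lemma~\ref{lemma::homogeneity}. You then compare the two normalized forms at a diagonal point $(x,\dots,x)$, where $\omega_1^{\boxtimes k}>0$ but $\Delta_{A,k}^\ast\eta_{1,k}$ vanishes because $d\Delta_{A,k}$ kills the diagonal direction. Your measure-theoretic contradiction argument and the constant-coefficient observation in step (a) only spell out the density and smoothness steps that the paper compresses into one sentence.
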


\begin{proof}
Let $W \subsetneq C_1^k$ be Zariski closed such that $\Delta_{A,k}$ restricts to an isomorphism $C_1^k \backslash W \to \Delta_{A,k}(C_1^k \backslash W)$.
By Lemma \ref{lemma::homogeneity}, it suffices to consider the case $n=1$. As both $\omega_1^{\boxtimes k}$ and $\eta_{1,k}$ are smooth differential forms and $W$ is nowhere dense in $C_1^k(\IC)$, it suffices to show that 
\begin{equation}
\label{equation::differential_forms_neq}
    \frac{ \omega_1^{\boxtimes k}}{(L_1^2 \cdot [S])^k}(p) \neq \frac{(\Delta_{A,k})^\ast \eta_{1,k}}{M_{1,k-1}^{2k} \cdot [\Delta_{B,k}(S^k)]}(p)
\end{equation}
at some point $p \in C_1^k(\IC)$. Since $L_1^2 \cdot [S] \geq 1$, we can pick a point $(x,y) \in S(\IC)$ such that $c_1(\Lo_1)^{\wedge 2}$ is $>0$ at $(x,y) \in (C_1 \times C_2)(\IC)$. From its definition, we find that $\omega_1$ is $>0$ at $x \in C_1(\IC)$. So is $\omega_1^{\boxtimes k}$ at $(x,x,\dots,x) \in C_1^k(\IC)$. However, $\Delta_{A,k}^\ast \eta_{1,k} = 0$ at this point since the differential $d\Delta_{A,k}$ is zero along diagonal directions. This establishes \eqref{equation::differential_forms_neq} at $(x,x,\dots,x)$.
\end{proof}

Set
\begin{equation*}
    g := g_0 \circ \pr_1^{k-1} \in \mathcal{C}^\infty(\Delta_{B,k}(S^k)(\IC)).
\end{equation*}
We obtain the following inequality that is needed later,
\begin{equation}
\label{equation::two_integrals_difference}
    \left\vert \frac{1}{M_{n,k}^{2k} \cdot [S^k]}\int_{S^k(\IC)} \Delta_{B,k}^\ast g \cdot c_1(\Mo_{n,k})^{\wedge 2k}
    - \frac{1}{M_{n,k-1}^{2k} \cdot [\Delta_{B,k}(S^k)]}\int_{\Delta_{B,k}(S^k)(\IC)} g \cdot c_1(\Mo_{n,k-1})^{\wedge 2k}\right\vert > \oldconstant{c3.6},
\end{equation}
where we have used \eqref{equation::boxtimes}, \eqref{equation::eta_delta}, and the fact that
\[ M_{n,k}^{2k} \cdot [S^k] = \frac{(2k)!}{2^k}(L_n^2 \cdot [S])^k.\]

\begin{lemma}
\label{lemma::smallness}
    Assume that there exist $\varepsilon^\prime > 0$ and $n \in \IZ \backslash \{0\}$ such that
    \begin{equation*}
        \mu_{\mathrm{ess}}(S,\Ltil_n) <\varepsilon^\prime \cdot n^2.
    \end{equation*}
    Then, there exists a Zariski-dense subset $\mathcal{S} = \mathcal{S}(k) \subseteq S^k(\IQbar)$ such that
    \begin{equation*}
        h_{\Mtil_{n,k}}(\underline{z}) <2k \varepsilon^\prime \cdot n^2 
    \end{equation*}
    and
    \begin{equation*}
        h_{\Mtil_{n,k-1}}(\Delta_{B,k}(\underline{z})) < 8k \varepsilon^\prime \cdot n^2 
    \end{equation*}
    for all points $\underline{z} \in \mathcal{S}$.
\end{lemma}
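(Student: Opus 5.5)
The plan is to choose points of small height on $S$ generically and independently in each factor, and then bound the two heights by the additivity of canonical heights together with the quadratic behaviour of canonical heights under differences.

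\emph{Construction of $\mathcal{S}$.} By definition of the essential minimum, for every proper Zariski closed $Z \subsetneq S$ the set
\[ S_{\varepsilon'} = \{ z \in S(\IQbar) \mid h_{\Ltil_n}(z) < \varepsilon' n^2\} \]
meets $S \setminus Z$. Hence $S_{\varepsilon'}$ is Zariski dense in $S$. A product of Zariski dense subsets is Zariski dense in the product, so $\mathcal{S} = S_{\varepsilon'}^k$ is Zariski dense in $S^k$. Since $\Mtil_{n,k} = \sum_i \pi_i^\ast \Ltil_n$, heights are additive:
\[ h_{\Mtil_{n,k}}(\underline z) = \sum_{i=1}^k h_{\Ltil_n}(z_i) < k\varepsilon' n^2 \le 2k\varepsilon' n^2. \]
This gives the first inequality.

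\emph{Second inequality.} Since $\Mtil_{n,k-1}\circ\Delta_{B,k}$ has components $\varphi_n(z_i - z_{i+1})$, we have
\[ h_{\Mtil_{n,k-1}}(\Delta_{B,k}(\underline z)) = \sum_{i=1}^{k-1} h_{\Ltil}\bigl(\varphi_n(z_i) - \varphi_n(z_{i+1})\bigr), \]
because $\varphi_n$ is a homomorphism and $\Ltil_n = \varphi_n^\ast \Ltil$. The height $h_{\Ltil}$ is the Néron--Tate height associated with the symmetric ample $L$; the canonical metric makes this exact, since the homothety constants disappear by the product formula (Lemma \ref{lemma::intersectionnumber} (iv)). It is therefore a nonnegative quadratic form on $A(\IQbar)$, so the parallelogram law gives
\[ h_{\Ltil}(a-b) \le h_{\Ltil}(a-b) + h_{\Ltil}(a+b) = 2h_{\Ltil}(a) + 2h_{\Ltil}(b). \]
Applying this with $a = \varphi_n(z_i)$ and $b = \varphi_n(z_{i+1})$, and using $h_{\Ltil}(\varphi_n(z)) = h_{\Ltil_n}(z)$, each summand is $< 4\varepsilon' n^2$. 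Summing the $k-1$ terms gives a total below $4(k-1)\varepsilon' n^2 < 8k\varepsilon' n^2$.

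\emph{Main obstacle.} The only delicate point is justifying that $h_{\Ltil}$ on points coincides with the Néron--Tate height, or at least that it is a nonnegative quadratic form. Its nonnegativity follows from nefness of $\Ltil$ (Lemma \ref{lemma::canonicalmetrics} (ii)). Quadraticity follows from Lemma \ref{lem:formulaforln} together with the theorem of the cube for metrized bundles up to homothety, as in that proof, since homothety does not change point heights. Alternatively, one can get the bound directly from Lemma \ref{lem:formulaforln} applied to $\varphi_1$ and $\varphi_{-1}$: the relation $\Ltil_1 \otimes \Ltil_{-1} \sim \pr_1^\ast \Ltil^{\otimes 2} \otimes \pr_2^\ast \Ltil^{\otimes 2}$, together with $h_{\Ltil_{-1}} \ge 0$, yields $h(a-b) \le 2h(a) + 2h(b)$. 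I would use this second route, since it uses only results already stated in the paper.
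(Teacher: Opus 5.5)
Your proposal is correct and follows essentially the same route as the paper: take $\mathcal{S}=\mathcal{S}_0^k$ for a Zariski-dense set $\mathcal{S}_0$ of small points of $S$, use additivity of heights for $\Mtil_{n,k}$, and bound each term $h_{\Ltil}(\varphi_n(z_i)-\varphi_n(z_{i+1}))$ via quadraticity of the N\'eron--Tate height. The differences are cosmetic. You use the threshold $\varepsilon' n^2$ instead of $2\varepsilon' n^2$, which is legitimate because $\mu_{\mathrm{ess}}(S,\Ltil_n)<\varepsilon' n^2$ is a strict inequality. You also justify $h_{\Ltil}(a-b)\le 2h_{\Ltil}(a)+2h_{\Ltil}(b)$ directly from Lemma~\ref{lem:formulaforln} with $n=\pm 1$ together with nonnegativity of heights, where the paper simply cites quadraticity.
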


\begin{proof}

By the definition of the essential minimum, we can pick a Zariski-dense subset $\mathcal{S}_0 \subseteq S$ of $\IQbar$-points such that $$h_{\Ltil_n}(z) < 2 \varepsilon^\prime \cdot n^2$$ for all $z \in \mathcal{S}_0$. Setting $\mathcal{S} = \mathcal{S}_0^k$, we have
\begin{equation*}
    h_{\Mtil_{n,k}}(\underline{z}) = h_{\Ltil_n}(z_1) + h_{\Ltil_n}(z_2) + \cdots + h_{\Ltil_n}(z_k) < 2k \varepsilon^\prime \cdot n^2
\end{equation*}
for all $\underline{z}=(z_1,\dots,z_k) \in \mathcal{S}$.

Similarly, we note that $\Delta_{B,k}(\mathcal{S})$ is Zariski-dense in $\Delta_{B,k}(S^k)$. For every point $\underline{z}=(z_1,\dots,z_k) \in \mathcal{S}$, we have
\begin{align*}
    h_{\Mtil_{n,k-1}}(\Delta_{B,k}(\underline{z})) 
    &= 
    h_{\Mtil_{n,k-1}}(z_1 - z_2, \dots, z_{k-1} - z_{k}) \\
    &=
    \sum_{i=1}^{k-1} h_{\Ltil_n}(z_i - z_{i+1}) \\
    &=
    \sum_{i=1}^{k-1} h_{\Ltil}\bigl([n](x_i-x_{i+1})-(y_i-y_{i+1})\bigr) \\
    &=
    \sum_{i=1}^{k-1} h_{\Ltil}\bigl(([n](x_i)-y_i)-([n](x_{i+1})-y_{i+1})\bigr),
\end{align*}
where we write $z_i = (x_i, y_i) \in (C_1 \times C_2)(\IQbar)$. By the quadraticity of the Néron-Tate height, this is at most
\begin{equation*}
    2 h_{\Ltil}([n](x_1)-y_1)
    +4\sum_{i=2}^{k-1}h_{\Ltil}([n](x_i)-y_i)
    +2 h_{\Ltil}([n](x_k)-y_k),
\end{equation*}
which equals
\begin{equation*}
    2 h_{\Ltil_n}(z_1)
    +4\sum_{i=2}^{k-1}h_{\Ltil_n}(z_i)
    +2 h_{\Ltil_n}(z_k).
\end{equation*}
It follows that
\begin{equation*}
   h_{\Mtil_{n,k-1}}(\Delta_{B,k}(\underline{z})) < 8k \varepsilon^\prime \cdot n^2. \qedhere
\end{equation*}
\end{proof}

We continue with establishing a technical lemma that will allow us to bound certain intersection numbers using Lemma \ref{lemma::intersectionnumber_nef}.

\begin{lemma}
	\label{lemma::nef_equidistribution} 
Consider a geometrically irreducible subvariety $X \subseteq B^k$ having dimension $d$ whose projection $X_1 = \pr_1^k(X) \subseteq A^k$ has dimension $d^\prime$. Furthermore, let $h: X_1(\IC) \rightarrow \IR$ be a non-negative smooth function. There exists $q = q(h) \in \mathbb{N}$ such that, for all $\lambda, \lambda^\prime \in \IQ_{> 0}$ with $\lambda^\prime \geq \lambda$ and all $n \in \IZ \backslash \{0\}$, the metrized $\IQ$-line bundle
\begin{equation}\label{equation::nef_line_bundle}
\Mtil_{n,k}|_{X} \otimes \Otil_X(\lambda n^2 \cdot (h \circ \pr_1^k)) \otimes (\pr_1^k|_X)^\ast \Ntil_{k}^{\otimes \lambda^\prime n^2q} \in \widehat{\Pic}(X)_{\IQ}
\end{equation}
is nef.
\end{lemma}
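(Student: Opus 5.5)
We will reduce the nefness of \eqref{equation::nef_line_bundle} to two facts. First, $\Mtil_{n,k}\otimes(\pr_1^k)^\ast\Ntil_k^{\otimes \lambda^\prime n^2 q}$ is a canonically metrized ample line bundle plus a positive curvature margin. Second, the smooth perturbation $\Otil_X(\lambda n^2 (h\circ\pr_1^k))$ is absorbed by that margin. Throughout, the archimedean metric is modified only at $\mu$; at all other places nothing changes.

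\emph{Step 1 (algebraic nefness before perturbation).} By Lemma \ref{lem:formulaforln} and the identity $\Ltil_n\otimes\Ltil_{-n}\sim \pr_1^\ast\Ltil^{\otimes 2n^2}\otimes\pr_2^\ast\Ltil^{\otimes 2}$, the bundle $\Mtil_{n,k}\otimes(\pr_1^k)^\ast\Ntil_k^{\otimes n^2 q}$ is, up to homothety, a sum of pullbacks of canonically metrized ample bundles along homomorphisms of $B^k$ when $q\ge 1$. Concretely, $\pi_i^\ast\Ltil_n\otimes\pi_i^\ast\pr_1^\ast\Ltil^{\otimes n^2}$ contains $\pi_i^\ast\Ltil_n$, which is nef, and the remaining terms are pullbacks of $\Ltil$. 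Each summand is therefore nef by Lemma \ref{lemma::canonicalmetrics} (ii), since pullbacks of nef bundles are nef. Passing to the approximating sequence of algebraically nef metrics from Lemma \ref{lemma::canonicalmetrics} (ii), we obtain algebraically nef approximations of $\Mtil_{n,k}|_X\otimes(\pr_1^k|_X)^\ast\Ntil_k^{\otimes(\lambda^\prime-\lambda)n^2q}$ together with a reserved part $(\pr_1^k|_X)^\ast\Ntil_k^{\otimes \lambda n^2 q}$. The Chern forms of the approximations of the reserved part satisfy $c_1\ge \lambda n^2 q\,(\pr_1^k)^\ast\omega^{(k)}$ at $\mu$, where $\omega^{(k)}=\sum_i\pi_i^\ast\omega_\mu$ is strictly positive on $A^k(\IC)$.

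\emph{Step 2 (absorbing $h$).} Write the perturbed reserved part as $\lambda n^2\bigl(q(\pr_1^k)^\ast\Ntil_k+\Otil(h\circ\pr_1^k)\bigr)$. It is pulled back from $X_1$, so it suffices to choose $q$ such that $q\Ntil_k|_{X_1}+\Otil_{X_1}(h)$ is nef on $X_1$, uniformly in all approximations. Since $h$ is smooth on $X_1(\IC)$, it extends locally to a smooth function on $A^k(\IC)$. By compactness and a partition of unity, we can extend it to a smooth function $\tilde h\ge 0$ on a neighbourhood of $X_1(\IC)$, and then globally with $dd^c\tilde h\ge -q_0\omega^{(k)}$ on $A^k(\IC)$. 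Taking $q\ge q_0$ ensures that the curvature $c_1 + dd^c\tilde h$ remains semipositive, so the archimedean data stay semipositive. Relative nefness at the finite places is unchanged.

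\emph{Step 3 (positivity of heights on subvarieties).} For algebraic nefness we also need $\Ltil'|_Y^{\dim Y+1}\ge 0$ for every subvariety $Y$. Adding $\Otil(\lambda n^2 h)$ with $h\ge 0$ multiplies the metric at $\mu$ by $e^{-\lambda n^2 h}\le 1$, so heights of points only increase. The top self-intersection changes by $\delta_\mu\int$ of non-negative terms, by \eqref{equation::intersection_integral} and the semipositivity from Step 2, so it stays $\ge 0$. Summing the nef pieces with $\lambda^\prime\ge\lambda$ then gives nefness of \eqref{equation::nef_line_bundle}. The remaining factor is a $\IQ$-multiple of an integral bundle.

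The main obstacle is Step 2. It requires a uniform choice of $q$ depending only on $h$, independent of $n$, $\lambda$, $\lambda^\prime$ and of the approximating sequence, together with the extension of $h$ from the possibly singular $X_1(\IC)$ with controlled $dd^c$. The homogeneity in $n^2$ matters here: both the perturbation and the margin scale as $n^2$, so $n$ drops out.
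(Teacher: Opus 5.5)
Your proposal is correct and follows the paper's proof. Both arguments approximate $\Ltil$ by the algebraically nef metrics of Lemma \ref{lemma::canonicalmetrics} (ii) with $c_1 \ge \omega_\nu$, choose $q$ so that $dd^c h + q\,\omega_{k,\mu}|_{X_1} \ge 0$, use $\lambda^\prime \ge \lambda$ to keep the curvature semipositive, and obtain non-negativity of top self-intersections from \eqref{equation::intersection_integral} and $h \ge 0$. The difference is only organizational: the paper expands the full top self-intersection of the combined approximating bundle on each $Y \subseteq X$ directly, rather than splitting off $q\Ntil_k|_{X_1} + \Otil_{X_1}(h)$ and using that nefness is stable under pullbacks and sums. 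Also, your global extension $\tilde h \ge 0$ is unnecessary, since only $h \ge 0$ on $X_1$ and the local bound $dd^c h \ge -q\,\omega_{k,\mu}|_{X_1}$ are used.
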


\begin{proof}
By Lemma \ref{lemma::canonicalmetrics} (ii) and the paragraphs following it, there is a sequence of adelic metrics $\{\Vert\cdot \Vert_{l,\nu} \}_{\nu}$ ($l \in \mathbb{N}$) on $L$ converging to the adelic metric of $\Ltil$ and such that each $(L, \{\Vert \cdot \Vert_{l,\nu} \}_{\nu \in \Sigma(\IK)})$ is an algebraically nef metrized line bundle satisfying
\[
c_1(L,\lVert \cdot \rVert_{l,\nu}) \geq \omega_\nu
\] 
for all $\nu \in \Sigma_\infty(\IK)$ and some strictly positive $(1,1)$-form $\omega_\nu$ on $A(\IC_\nu)$ such that $[2]^\ast \omega_\nu = 4\omega_\nu$.

Writing $\Ltil^{(l)} = (L, \{\Vert \cdot \Vert_{l,\nu} \}_\nu )$, we obtain algebraically nef line bundles
\begin{equation*}
    \Mtil^{(l)}_{n,k} = \pi_1^\ast \varphi_n^\ast \Ltil^{(l)} + \cdots + \pi_k^\ast \varphi_n^\ast \Ltil^{(l)}
\end{equation*}
and
\begin{equation*}
    \Ntil^{(l)}_{k} = \pi_1^\ast \Ltil^{(l)} + \cdots + \pi_k^\ast \Ltil^{(l)}
\end{equation*}
with underlying line bundles equal to $M_{n,k}$ and $N_{k}$, respectively. Moreover, we have that
\[c_1(\overline{N}^{(l)}_{k,\nu}) \geq \omega_{k,\nu}\]
for all $l \in \mathbb{N}$ and all $\nu \in \Sigma_\infty(\IK)$, where
\[\omega_{k,\nu} = \sum_{j=1}^{k} \pi_j^\ast \omega_\nu\]
is a strictly positive $(1,1)$-form on $A(\IC_\nu)^k$.

We choose $q \in \mathbb{N}$ such that 
\[
dd^c h + q \cdot (\omega_{k,\mu})|_{X_1}
\] 
is semipositive. After multiplying
\begin{equation*}
\Mtil_{n,k}^{(l)}|_{X} \otimes \Otil_X(\lambda n^2 \cdot (h \circ \pr_1^k)) \otimes (\pr_1^k|_X)^\ast (\Ntil_{k}^{(l)})^{\otimes \lambda^\prime n^2q}
\end{equation*}
by a common denominator $\Lambda$ of $\lambda$ and $\lambda'$, we obtain an algebraically semipositive line bundle because of our choice of $q$ and the fact that $\Mtil_{n,k}^{(l)}$ and $\Ntil^{(l)}_k$ are algebraically semipositive. It follows that, after multiplying \eqref{equation::nef_line_bundle} by $\Lambda$, we obtain a semipositive metrized line bundle.

To show the nefness of \eqref{equation::nef_line_bundle}, it remains to prove that, for all $l$, the arithmetic top self-intersection number of the restriction of
\[ \Mtil^{(l)}_{n,k}|_{X} \otimes \Otil_X(\lambda n^2 \cdot (h \circ \pr_1^k)) \otimes (\pr_1^k|_X)^\ast (\Ntil^{(l)}_{k})^{\otimes \lambda^\prime n^2q}\]
to any irreducible closed subvariety $Y \subseteq X$ is non-negative. For this purpose, assume $Y \subseteq X$ is an irreducible closed subvariety of dimension $d_Y$. Then,
\begin{equation*}
(\Mtil^{(l)}_{n,k}|_X \otimes \Otil_X(\lambda n^2 \cdot (h \circ \pr_1^k)) \otimes (\pr_1^k|_X)^\ast (\Ntil^{(l)}_{k})^{\otimes \lambda^\prime n^2q})^{d_Y+1} \cdot [Y]
\end{equation*}
equals
\begin{align*}
    &\sum_{i=0}^{d_Y +1} \binom{d_Y +1}{i} (\Mtil^{(l)}_{n,k}|_X)^i \cdot (\Otil_X(\lambda n^2 \cdot (h \circ \pr_1^k)) + (\pr_1^k|_X)^\ast (\Ntil^{(l)}_{k})^{\otimes \lambda^\prime n^2q})^{d_Y + 1 - i} \cdot [Y].
\end{align*}
Because of the algebraic identity
\[ (a+b)^{d_Y+1-i} = b^{d_Y+1-i} + \sum_{j=1}^{d_Y+1-i}{ab^{j-1}(a+b)^{d_Y+1-i-j}},\]
this intersection number is a sum of terms of the form
\[(\Mtil^{(l)}_{n,k}|_X)^i \cdot ((\pr^k_1|_X)^\ast (\Ntil^{(l)}_{k})^{\otimes \lambda^\prime n^2q})^{d_Y+1-i} \cdot [Y]\]
and terms of the form
\begin{multline*}
(\Mtil^{(l)}_{n,k}|_X)^i \cdot \Otil_X(\lambda n^2 \cdot (h \circ \pr_1^k)) \cdot ((\pr^k_1|_X)^\ast (\Ntil^{(l)}_{k})^{\otimes \lambda^\prime n^2q})^{j-1} \\ \cdot (\Otil_X(\lambda n^2 \cdot (h \circ \pr_1^k)) + (\pr_1^k|_X)^\ast (\Ntil^{(l)}_{k})^{\otimes \lambda^\prime n^2q})^{d_Y + 1 - i-j} \cdot [Y],
\end{multline*}
which equals $\delta_\mu\lambda n^{2(d_Y + 1 -i)} (\lambda^\prime q)^{j-1}$ times
\[\int_{Y(\IC)} (h \circ \pr_1^k)~c_1(\Mo^{(l)}_{n,k}|_X)^i \wedge c_1((\pr^k_1|_X)^\ast \overline{N}^{(l)}_{k})^{j-1} \wedge (\lambda dd^c(h \circ \pr_1^k) + \lambda^\prime q c_1((\pr_1^k|_X)^\ast \overline{N}^{(l)}_k))^{d_Y + 1 - i - j}\]
by \eqref{equation::intersection_integral}. All these terms are non-negative because of Lemma \ref{lemma::intersectionnumber_nef}, the fact that $\Mtil^{(l)}_{n,k}$ and $\Ntil^{(l)}_k$ are algebraically nef, that $dd^c h + q \cdot c_1(\overline{N}^{(l)}_{k}|_{X_1}) \geq dd^c h + q \cdot (\omega_{k,\mu})|_{X_1} \geq 0$ on $X_1(\IC)$, and that $h \geq 0$.
\end{proof}

The following lemma gives an essential lower volume bound for certain arithmetic line bundles. 

\begin{lemma}
	\label{lemma::volume_yuan} 
Consider a geometrically irreducible subvariety $X \subseteq B^k$ having dimension $d$ whose projection $X_1 = \pr_1^k(X) \subseteq A^k$ has dimension $d^\prime$. Furthermore, let $h: X_1(\IC) \rightarrow \IR$ be a smooth function. Then, for all $\varepsilon \in (0,1] \cap \mathbb{Q}$ and all $n \in \mathbb{Z} \backslash \{0\}$, we have
\begin{equation*}
    \volh_\chi(\Mtil_{n,k}(\varepsilon, h)) - \Mtil_{n,k}(\varepsilon, h)^{d+1} \geq - \newconstant{\label{c3.14}}(X,h) \cdot \varepsilon^2 n^{2(d^\prime+1)}
\end{equation*}
where
\begin{equation*}
    \Mtil_{n,k}(\varepsilon, h) = \Mtil_{n,k}|_{X} \otimes \Otil_X(\varepsilon n^2 \cdot (h \circ \pr_1^k)). 
\end{equation*}
\end{lemma}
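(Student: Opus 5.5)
We plan to write $\Mtil_{n,k}(\varepsilon,h)$ as a difference of two semipositive metrized $\IQ$-line bundles and then apply Yuan's bigness theorem in the form of Lemma \ref{lemma::siu}. Since $h$ is smooth on the compact space $X_1(\IC)$, we may replace $h$ by $h + c$ with $c = \max|h|$ and assume $h \geq 0$. Indeed, by Lemma \ref{lemma::arithmeticvolumes_basics} and \eqref{equation::intersection_Lc}, adding the constant $\varepsilon n^2 c$ to the twist changes $\volh_\chi$ and the top self-intersection by the same quantity $\delta_\mu \varepsilon n^2 c (d+1)\vol(M_{n,k}|_X)$. Both statements are homogeneous of degree $1$ in the line bundle, so we may also clear denominators. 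Let $q = q(h)$ be as in Lemma \ref{lemma::nef_equidistribution} and set
\[
\Ltil' = \Mtil_{n,k}(\varepsilon,h) \otimes (\pr_1^k|_X)^\ast \Ntil_k^{\otimes \varepsilon n^2 q}, \qquad \Mtil' = (\pr_1^k|_X)^\ast \Ntil_k^{\otimes \varepsilon n^2 q}.
\]
Then $\Ltil'$ is nef, hence semipositive, by Lemma \ref{lemma::nef_equidistribution} with $\lambda=\lambda'=\varepsilon$. The bundle $\Mtil'$ is a pullback of the canonically metrized $\Ntil_k$, so it is nef and semipositive. Moreover $\Ltil' - \Mtil' = \Mtil_{n,k}(\varepsilon,h)$.

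Lemma \ref{lemma::siu} then gives
\[
\volh_\chi(\Mtil_{n,k}(\varepsilon,h)) \geq \Ltil'^{d+1} - (d+1)\Ltil'^{d}\cdot\Mtil'.
\]
Write $\Mtil_{n,k}(\varepsilon,h) = \Ltil' - \Mtil'$ and expand $(\Ltil'-\Mtil')^{d+1}$ multilinearly via Lemma \ref{lemma::intersectionnumber} (i). The right-hand side minus $\Mtil_{n,k}(\varepsilon,h)^{d+1}$ equals
\[
-\sum_{j\geq 2}\binom{d+1}{j}(-1)^j \Ltil'^{d+1-j}\cdot\Mtil'^{j} \;+\; \text{(correction from the } j=1 \text{ term, which cancels)}.
\]
So we must bound each term $\Ltil'^{d+1-j}\cdot \Mtil'^{j}$ with $j \ge 2$ from above by $O(\varepsilon^2 n^{2(d'+1)})$.

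For this bound, note that $\Mtil'^{j}$ carries a factor $(\varepsilon n^2 q)^j$, which already provides at least $\varepsilon^2$. Expand $\Ltil' = \Mtil_{n,k}|_X + \Otil_X(\varepsilon n^2 h) + \Mtil'$. The surviving terms are intersections of powers of $\Mtil_{n,k}|_X$, of $\Otil_X(\varepsilon n^2 h\circ\pr_1^k)$ and of $\Mtil'$. By Lemma \ref{lemma::projformulaposdim}, applied to $\pr_1^k|_X : X \to X_1$, only terms with at most $d'+1$ factors pulled back from $X_1$ can be nonzero; here the $\Otil$ factors also count as pulled back, since $h$ factors through $X_1$. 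The terms with $\Otil$ factors are integrals of the form \eqref{equation::intersection_integral}. These are bounded by $\max|h|$ times mixed Monge–Ampère masses in which $dd^c h$ appears, so each $\Otil$ factor also costs $\varepsilon n^2$ times a constant depending on $(X,h)$.

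The main obstacle is the $n$-exponent. We need the remaining $\Mtil_{n,k}$ factors, together with the pulled-back ones, to contribute at most $n^{2(d'+1)}$ in total. The plan is to use the homogeneity of Lemma \ref{lemma::homogeneity2} and Lemma \ref{lemma::homogeneity}. By Lemma \ref{lem:formulaforln}, $c_1(\Mo_{n,k}) = n^2 \pr_1^\ast(\cdot) + n(\cdot) + (\cdot)$. In a product of $d+1$ or $d$ forms on $X$, the number of $n^2$-weight directions that can appear is limited by $d' = \dim X_1$, exactly as in Edixhoven's bidegree count. Each pulled-back factor, whether from $\Mtil'$ or from $\Otil$, already carries $n^2$ and uses up one of the $d'+1$ horizontal slots. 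The $n$-weight contributions of $P$ pair horizontal with vertical directions, so the total weight is at most $2(d'+1)$. We would carry this out for the algebraically nef approximations $\Mtil^{(l)}$ and pass to the limit. The arithmetic (non-archimedean and height) parts are handled by the same Chern-weight bookkeeping applied through the recursive definition of intersection numbers, with $\widetilde P$ homothetic, which gives scaling $n^1$. This yields the bound $-c(X,h)\,\varepsilon^2 n^{2(d'+1)}$.
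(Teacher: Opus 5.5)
Your set-up matches the paper's proof. You shift $h$ to get $h\ge 0$, write $\Mtil_{n,k}(\varepsilon,h)=\Ltil'-\Mtil'$ with $\Ltil'$ nef by Lemma~\ref{lemma::nef_equidistribution}, apply Lemma~\ref{lemma::siu}, and reduce to the terms $(\Ltil')^{d+1-j}\cdot(\Mtil')^{j}$ with $j\ge 2$. The gap is the final estimate of these terms by $O(\varepsilon^2 n^{2(d'+1)})$, which is the real content of the lemma.

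The bidegree count you invoke is a pointwise statement about curvature forms. It handles the archimedean integrals \eqref{equation::intersection_integral} coming from $\Otil$-factors. The terms without such factors, e.g.\ $\Mtil_{n,k}^{\,d+1-j}\cdot((\pr_1^k|_X)^\ast\Ntil_k)^{j}\cdot[X]$, are genuine arithmetic intersection numbers with height contributions.

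\begin{itemize}
\item Once you expand $\Mtil_{n,k}$ via Lemma~\ref{lem:formulaforln}, the mixed numbers involving pullbacks of $\widetilde P$ have no sign, since $\widetilde P$ is not nef. So the terms of too high $n$-degree must vanish exactly, not merely be small.
\item The recursive definition does not propagate your count. Cutting by a rational section of $\widetilde P$, or of a pullback from the second factor, generally gives a divisor that still dominates $X_1$, so the number of ``horizontal slots'' does not drop.
\item For instance, one is led to numbers such as $\widetilde P^{\,3}\cdot[\{x_0\}\times Y]$ for a surface $Y$. Here $\widetilde P$ restricts to a flat class in $\Pic^0$, so every curvature form containing it vanishes and the form count is vacuous. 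Yet these heights must vanish, which is a global positivity fact.
\item Passing to the approximations $\Ltil^{(l)}$ does not help, since they do not satisfy the homothety relation of Lemma~\ref{lem:formulaforln}.
\end{itemize}

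The missing idea is to never expand $\Mtil_{n,k}$, but to dominate it using nefness and monotonicity.

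\emph{First domination.} Since $\Ltil'$ and $(\pr_1^k|_X)^\ast\Ntil_k$ are nef, Lemma~\ref{lemma::intersectionnumber_nef} gives
\[
0\le (\Ltil')^{d+1-j}\cdot\bigl(n^2q\,(\pr_1^k|_X)^\ast\Ntil_k\bigr)^{j}\le \bigl(\Mtil_{n,k}(\varepsilon,h)+2n^2q\,(\pr_1^k|_X)^\ast\Ntil_k\bigr)^{d+1}.
\]
The lower bound also supplies the sign you need for odd $j$.

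\emph{Second domination.} By Lemmas~\ref{lemma::canonicalmetrics}~(ii) and~\ref{lem:formulaforln}, $2n^2(\pr_1^k)^\ast\Ntil_k+2(\pr_2^k)^\ast\Ntil_k-\Mtil_{n,k}\sim\Mtil_{-n,k}$ is nef. A second application of monotonicity bounds the right-hand side by $\bigl(n^2(\pr_1^k|_X)^\ast(2(q+1)\Ntil_k|_{X_1}+\varepsilon\Otil_{X_1}(h))+2(\pr_2^k|_X)^\ast\Ntil_k\bigr)^{d+1}$.

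\emph{Dimension count.} Now only pullbacks from the two factors occur. Lemma~\ref{lemma::projformulaposdim} kills every term with more than $d'+1$ factors pulled back from $X_1$, each of which carries $n^2$. The remaining terms are $O(n^{2(d'+1)})$ uniformly in $\varepsilon\in(0,1]$. Together with $\varepsilon^j\le\varepsilon^2$, this proves the lemma without any weight bookkeeping.
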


The proof of this lemma uses Yuan’s bigness theorem \cite{Yuan2008}.

\begin{proof}
For arbitrary $\sigma \in \IR$, we note that 
\begin{equation*}
\widehat{\vol}_\chi(\Mtil_{n,k}(\varepsilon, h + \sigma)) - \Mtil_{n,k}(\varepsilon, h + \sigma)^{d+1} = \widehat{\vol}_\chi(\Mtil_{n,k}(\varepsilon,h)) - \Mtil_{n,k}(\varepsilon,h)^{d+1}\end{equation*}
by Lemma \ref{lemma::arithmeticvolumes_basics} and \eqref{equation::intersection_Lc}. We may therefore replace $h$ with $h + \sigma$ for a suitable choice of $\sigma$ so that we may assume $h \geq 0$. Lemma \ref{lemma::nef_equidistribution} yields some constant $q = q(h) \in \mathbb{N}$ such that the line bundle 
\[\Mtil_{n,k}(\varepsilon, h) + \varepsilon n^2q \cdot (\pr_1^k|_X)^\ast \Ntil_{k}\]
is nef.

As also
$(\pr_1^k|_X)^\ast \Ntil_{k}$ is nef, we can apply Lemma \ref{lemma::siu} to the decomposition
\begin{equation*}
    (\Mtil_{n,k}(\varepsilon, h) + \varepsilon n^2 q \cdot (\pr_1^k|_X)^\ast \Ntil_{k}) - \varepsilon n^2q \cdot (\pr_1^k|_X)^\ast \Ntil_{k}.
\end{equation*}
We obtain that 
\begin{equation*}
    \widehat{\vol}_\chi(\Mtil_{n,k}(\varepsilon, h) )
\end{equation*}
is bounded from below by
\begin{multline}
\label{equation::arithvol_estimate1}
    (\Mtil_{n,k}(\varepsilon, h) + \varepsilon n^2q \cdot (\pr_1^k|_X)^\ast \Ntil_{k})^{d+1} 
    \\
    - (d+1) \cdot (\Mtil_{n,k}(\varepsilon, h) + \varepsilon n^2q \cdot (\pr_1^k|_X)^\ast \Ntil_{k})^{d} \cdot (\varepsilon n^2 q \cdot (\pr_1^k|_X)^\ast \Ntil_{k}).
\end{multline}
Subtracting $\Mtil_{n,k}(\varepsilon, h) ^{d+1}$ from the above expression results in
\begin{equation}
\label{equation::difference}
- \sum_{i=2}^{d+1} (-1)^i\binom{d+1}{i} (\Mtil_{n,k}(\varepsilon, h) + \varepsilon n^2q \cdot (\pr_1^k|_X)^\ast \Ntil_{k})^{d+1-i} \cdot ( \varepsilon n^2q \cdot(\pr_1^k|_X)^\ast \Ntil_{k})^i;
\end{equation}
this is just the identity
\begin{equation*}
(a+b)^{d+1} - (d+1) (a + b)^{d} b - a^{d+1} =  
-\sum_{i=2}^{d+1} \binom{d+1}{i} (a+b)^{d+1-i} (-b)^i.    
\end{equation*}
To prove the lemma, it hence suffices to prove that, for each $i \in \{ 2,\dots,d+1\}$, the intersection number
\begin{equation}
\label{equation::to_be_majorized_1}
(\Mtil_{n,k}(\varepsilon, h) + \varepsilon n^2q \cdot (\pr_1^k|_X)^\ast \Ntil_{k})^{d+1-i} \cdot ( n^2q \cdot(\pr_1^k|_X)^\ast \Ntil_{k})^i
\end{equation}
is bounded from above in absolute value by $\newconstant{\label{c3.7}} n^{2(d^\prime+1)}$ for some constant $\oldconstant{c3.7} = \oldconstant{c3.7}(X,h,q) > 0$.

As both $\Mtil_{n,k}(\varepsilon, h) + \varepsilon n^2q \cdot (\pr_1^k|_X)^\ast \Ntil_{k}$ and
$(\pr_1^k|_X)^\ast \Ntil_{k}$ are nef, Lemma \ref{lemma::intersectionnumber_nef} implies that the intersection number \eqref{equation::to_be_majorized_1} is non-negative and bounded from above by 
\begin{equation}
\label{equation::to_be_majorized_2}
(\Mtil_{n,k}(\varepsilon, h) + 2 n^2q \cdot (\pr_1^k|_X)^\ast \Ntil_{k})^{d+1} = (\Mtil_{n,k} + 2n^2q \cdot (\pr_1^k|_X)^\ast \Ntil_{k} + \varepsilon n^2 \Otil_X(h \circ \pr_1^k))^{d+1}.
\end{equation}
By Lemmas \ref{lemma::canonicalmetrics} (ii) and \ref{lem:formulaforln}, the metrized line bundle 
\begin{equation*}
    (\pr_1^k)^\ast \Ntil_k^{\otimes 2n^2} \otimes (\pr_2^k)^\ast \Ntil_k^{\otimes 2} \otimes \Mtil_{n,k}^{\otimes -1} \sim \Mtil_{-n,k}
\end{equation*}
is nef (nefness is preserved under homothety). We can hence invoke Lemma \ref{lemma::intersectionnumber_nef} once more to bound the intersection number \eqref{equation::to_be_majorized_2} from above by
\begin{equation*}
   (n^2 \cdot (\pr_1^k|_X)^\ast (2(q+1)\Ntil_k|_{X_1} + \varepsilon \Otil_{X_1}(h)) + 2(\pr_2^k|_{X})^\ast \Ntil_k)^{d+1},
\end{equation*}
which is bounded from above by $\oldconstant{c3.7}n^{2(d^\prime+1)}$ for reasons of dimension. Indeed, it follows from Lemma \ref{lemma::projformulaposdim} that
\begin{equation*}
     (\pr_1^k|_{X})^\ast (2(q+1)\Ntil_k|_{X_1} +  \varepsilon \Otil_{X_1}(h))^{i} \cdot (\pr_2^k|_{X})^\ast (\Ntil_k)^{d+1-i} = 0
\end{equation*}
if $i > d^\prime + 1$.
\end{proof}

\begin{lemma}
	\label{lemma::expansion}
    Let $X$ be a geometrically irreducible subvariety $X \subseteq B^k$ having dimension $d$ whose projection $\pr_1^k(X) \subseteq A^k$ has dimension $d^\prime$. Furthermore, let $h: \pr_1^k(X)(\IC) \rightarrow \IR$ be a smooth function. Then, there exists a constant $\newconstant{\label{c3.9}} = \oldconstant{c3.9}(X,h)$ such that
    \begin{equation*}
         \Mtil_{n,k}(\varepsilon, h)^{d+1} \cdot [X]- \Mtil_{n,k}^{d+1} \cdot [X] - \delta_\mu \varepsilon n^2(d+1) \int_{X(\IC)} (h \circ \pr_1^k) c_1(\Mo_{n,k})^{\wedge d} > -\varepsilon^2 n^{2(d^\prime+1)}\cdot \oldconstant{c3.9} 
    \end{equation*}
    for all $\varepsilon \in (0,1] \cap \IQ$ and all $n \in \IZ \backslash \{0\}$, where $\Mtil_{n,k}(\varepsilon, h)$ is defined as in Lemma \ref{lemma::volume_yuan}.
\end{lemma}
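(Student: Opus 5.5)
We expand $\Mtil_{n,k}(\varepsilon,h)^{d+1} \cdot [X]$ by multilinearity (Lemma \ref{lemma::intersectionnumber} (i)) in powers of $\Otil_X(\varepsilon n^2 (h\circ\pr_1^k))$. The term of degree $0$ is $\Mtil_{n,k}^{d+1}\cdot[X]$. By \eqref{equation::intersection_integral} with $i=1$, the term of degree $1$ is
\begin{equation*}
(d+1)\,\Mtil_{n,k}^d\cdot \Otil_X(\varepsilon n^2 (h\circ \pr_1^k))\cdot[X] = \delta_\mu \varepsilon n^2 (d+1)\int_{X(\IC)} (h\circ\pr_1^k)\, c_1(\Mo_{n,k})^{\wedge d}.
\end{equation*}
So the left-hand side of the claimed inequality is $\sum_{i\ge 2}\binom{d+1}{i}\varepsilon^i n^{2i}\, T_i$, where
\begin{equation*}
T_i = \delta_\mu\int_{X(\IC)} (h\circ\pr_1^k)\, c_1(\Mo_{n,k})^{\wedge (d+1-i)}\wedge (dd^c (h\circ\pr_1^k))^{\wedge (i-1)},
\end{equation*}
again by \eqref{equation::intersection_integral}. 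Since $\varepsilon\le 1$, we have $\varepsilon^i\le\varepsilon^2$ for $i\ge 2$. It therefore suffices to show $|T_i|\le c\, n^{2(d^\prime+1-i)}$ for each $2\le i\le d+1$, with $c$ depending only on $X$ and $h$.

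First, $dd^c(h\circ\pr_1^k)$ is pulled back from $X_1=\pr_1^k(X)$, so its $(i-1)$-th wedge power vanishes once $i-1>d^\prime$. Hence only $2\le i\le d^\prime+1$ contribute. Next, we decompose $c_1(\Mo_{n,k}) = n\,\alpha + n^2 \beta_1 + \beta_2$ as in the proof of Lemma \ref{lemma::homogeneity}, using Lemma \ref{lem:formulaforln}. Here $\beta_1$ is pulled back from the first factors $A^k$, $\beta_2$ from the second factors, and $\alpha$ is the mixed form coming from $c_1(\Pb)$. All three are smooth forms independent of $n$.

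We expand $c_1(\Mo_{n,k})^{\wedge(d+1-i)}\wedge (dd^c(h\circ\pr_1^k))^{\wedge(i-1)}$ into monomials. A monomial containing $a$ factors $\alpha$, $b_1$ factors $\beta_1$ and $b_2$ factors $\beta_2$ carries the power $n^{a+2b_1}$. Pointwise on the smooth locus, the tangent space of $X$ has at most $d^\prime$ holomorphic directions mapping nontrivially to $X_1$. The forms $\beta_1$ and $dd^c(h\circ\pr_1^k)$ each consume two such real directions ($1$ holomorphic plus $1$ antiholomorphic), while $\alpha$ consumes one. We therefore need $2b_1+a+2(i-1)\le 2d^\prime$, so that $a+2b_1\le 2(d^\prime+1-i)$. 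This is the counting behind Lemma \ref{lemma::homogeneity2} and \cite[Lemma IX.2]{Edixhoven1993}.

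Each monomial integral is a fixed finite number independent of $n$: all forms are smooth and bounded, and $X(\IC)$ has finite volume with respect to a Kähler form. The bound $|T_i|\le c\,n^{2(d^\prime+1-i)}$ follows, and hence the total error is $\ge -c_{\ref*{c3.9}}\varepsilon^2 n^{2(d^\prime+1)}$ after summing over $i$. A cleaner alternative for the exponent count is to bound each $|T_i|$ via $\pm$ positive forms dominated by $(n^2\omega_1+\omega_2)$ and apply Lemma \ref{lemma::projformulaposdim}-style dimension vanishing. I would use the direct monomial count.

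The main obstacle is justifying the pointwise degree count for $\alpha$ when $X$ is an arbitrary subvariety rather than a product. I would handle it by splitting the real cotangent space of $T_pX$ into the pullback of $T^\ast X_1$ and a complement, for $p$ generic in $X^{\mathrm{sm}}$ where $\pr_1^k|_X$ is a submersion onto its image. Then $\alpha$ has bidegree at most $1$ in the $X_1$-directions, and any wedge product of top degree with more than $2d^\prime$ $X_1$-direction factors vanishes.
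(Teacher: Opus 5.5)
Your proof is correct, but the way you bound the terms with $i \ge 2$ differs from the paper's.

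After the same multilinear expansion and the same reduction via \eqref{equation::intersection_integral}, the paper argues by positivity. It writes $dd^c h$ as a difference of two semipositive $(1,1)$-forms, both majorized by a single semipositive form $\omega$ on $\pr_1^k(X)$, and replaces $h$ by $|h|$. It then uses that $\Mtil_{n,k}$ and $\Mtil_{-n,k}$ are both semipositive, with $\Mtil_{n,k}\otimes\Mtil_{-n,k}$ homothetic to $(\pr_1^k)^\ast\Ntil_k^{\otimes 2n^2}\otimes(\pr_2^k)^\ast\Ntil_k^{\otimes 2}$ (Lemma \ref{lem:formulaforln}). This lets it dominate $c_1(\Mo_{n,k})$ by $2n^2(\pr_1^k)^\ast c_1(\overline{N}_k)+2(\pr_2^k)^\ast c_1(\overline{N}_k)$. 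After that, only forms pulled back from one of the two factors remain. The exponent count then reduces to the trivial observation that a form pulled back from $\pr_1^k(X)$ vanishes in degree $>2d^\prime$. This is essentially the alternative you mention and set aside.

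Your direct route keeps the exact decomposition $c_1(\Mo_{n,k}) = n\alpha+n^2\beta_1+\beta_2$. It therefore needs two extra inputs: the finer fact that the mixed form $\alpha$ has exactly one leg in each factor (which the paper uses only in Lemma \ref{lemma::homogeneity}), and the pointwise leg count. In exchange, it needs no positivity or majorization at all. It also shows that each $T_i$ is a polynomial in $n$ of degree at most $2(d^\prime+1-i)$ with $n$-independent coefficients, which is the same mechanism as Lemma \ref{lemma::homogeneity2}.

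Your real-direction count is sound. At a smooth point, let $S$ be the real span of the restrictions to $T_pX$ of covectors pulled back from the first factor $A^k$. Then $\dim_{\IR} S \le 2d^\prime$. The forms $\beta_1$ and $dd^c(h\circ\pr_1^k)$ lie in $\Lambda^2 S$, and $\alpha$ lies in $S\wedge T^\ast_{\IR,p}X$. Hence any monomial with $2b_1+2(i-1)+a>2d^\prime$ vanishes. In fact the rank of $d(\pr_1^k|_X)$ is at most $d^\prime$ at every point of $X^{\mathrm{sm}}$, by semicontinuity of the rank, so you do not even need to restrict to generic points.
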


\begin{proof}
    Lemma \ref{lemma::intersectionnumber} (i) and (ii) yield an expansion
    \begin{equation*}
        \Mtil_{n,k}(\varepsilon,h)^{d+1} \cdot [X] = \sum_{i=0}^{d+1} \binom{d+1}{i} \cdot \Mtil_{n,k}^{d+1-i} \cdot \Otil(\varepsilon n^2 \cdot (h \circ \pr_1^k))^{i} \cdot [X].
    \end{equation*}
    As
    \begin{equation*}
        \Mtil_{n,k}^{d} \cdot \Otil(\varepsilon n^2 \cdot (h \circ \pr_1^k)) \cdot [X] = \delta_\mu \varepsilon n^2 \int_{X(\IC)} (h \circ \pr_1^k) c_1(\Mo_{n,k})^{\wedge d}
    \end{equation*}
    by \eqref{equation::intersection_integral}, it suffices to show, again by \eqref{equation::intersection_integral}, that there exists a constant $\newconstant{\label{c3.10}} = \oldconstant{c3.10}(X,h) > 0$ such that
    \begin{equation*}
    \int_{X(\IC)} (h \circ \pr_1^k) (dd^c (h \circ \pr_1^k))^{\wedge (i-1)} \wedge c_1(\Mo_{n,k})^{\wedge (d+1-i)} > -\oldconstant{c3.10} \cdot n^{2(d^\prime - i + 1)}
    \end{equation*}
    for every integer $i \in \{ 2, \dots, d+1\}$. Writing $dd^c h$ as a difference of two semipositive $(1,1)$-forms, both of which are majorized by a semipositive $(1,1)$-form $\omega = \omega(h)$ on $\pr_1^k(X)$, it suffices to show
    \begin{equation*}
        \int_{X(\IC)} (|h| \circ \pr_1^k) (\pr_1^k)^\ast \omega^{\wedge (i - 1)} \wedge c_1(\Mo_{n,k})^{\wedge (d+1-i)}    < \newconstant{\label{c3.11}} \cdot n^{2(d^\prime - i + 1)}
    \end{equation*}
    for some constant $\oldconstant{c3.11} = \oldconstant{c3.11}(X,h,\omega) > 0$.
    
    Using Lemma \ref{lem:formulaforln} and the fact that $\Mtil_{n,k}$ and $\Mtil_{-n,k}$ are semipositive by Lemma \ref{lemma::canonicalmetrics}, we reduce to showing that
    \begin{equation*}
        \int_{X(\IC)} (|h| \circ \pr_1^k) (\pr_1^k)^\ast \omega^{\wedge (i - 1)} \wedge (n^2 \cdot (\pr_1^k)^\ast c_1(\overline{N}_k) + (\pr_2^k)^\ast c_1(\overline{N}_k))^{\wedge (d+1-i)} < \newconstant{\label{c3.12}} \cdot n^{2(d^\prime - i + 1)}
    \end{equation*}
    for some constant $\oldconstant{c3.12} = \oldconstant{c3.12}(X,h,\omega) > 0$. The latter inequality is evident by reasons of dimension.
\end{proof}

The next lemma encapsulates the main equidistribution argument.

\begin{lemma}
\label{lemma::equidistribution}
  Let $X_1, X_2 \subseteq A^k$ be two geometrically irreducible algebraic subvarieties of dimension $d_1$ and $d_2$, respectively. Set $d= d_1 + d_2$ and $X = X_1 \times X_2 \subseteq A^k \times A^k = B^k$ and suppose that the restriction $M_{n,k}|_X$ is big for some $n \in \mathbb{Z} \backslash \{0\}$. Let $h \in \mathcal{C}^\infty(X_1(\IC))$. Then, there exists a constant $\newconstant{\label{c3.13}} = \oldconstant{c3.13}(X_1,X_2,h) > 0$ and a Zariski-closed subset $Z = Z(h,n) \subsetneq X$ such that
\begin{equation*}
        \frac{1}{[\IK(\underline{z}):\IK]}\sum_{w \in \mathbf{O}_\mu(\underline{z})} (h \circ \pr_1^k)(w) - \frac{1}{M_{n,k}^{d} \cdot [X]}\int_{X(\IC)} (h \circ \pr_1^k)c_1(\Mo_{n,k})^{\wedge d} \geq -\frac{\oldconstant{c3.6}}{2}
\end{equation*}
for all $\underline{z} \in (X \setminus Z)(\IQbar)$ with $h_{\Mtil_{n,k}}(\underline{z}) < \oldconstant{c3.13} \cdot n^2$.
\end{lemma}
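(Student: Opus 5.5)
The plan is the standard variational (Szpiro--Ullmo--Zhang style) argument, run with the perturbation $\Mtil_{n,k}(\varepsilon,h) = \Mtil_{n,k}|_X \otimes \Otil_X(\varepsilon n^2 (h\circ \pr_1^k))$ from Lemma \ref{lemma::volume_yuan}.

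First, apply Lemma \ref{lemma::minkowski} to $\Mtil_{n,k}(\varepsilon,h)$ at the place $\mu$. Its underlying line bundle is $M_{n,k}|_X$, which is big by hypothesis and nef as a pullback of an ample bundle. We obtain a non-zero section $\mathbf{s}$ of some power $M_{n,k}^{\otimes M}$ that is small at $\mu$ and has sup-norm $\le 1$ elsewhere. Let $Z$ be the support of its divisor. For $\underline{z} \notin Z$, formula \eqref{equation::height_section} applied to $\mathbf{s}$ gives
\begin{equation*}
h_{\Mtil_{n,k}}(\underline{z}) + \frac{\delta_\mu \varepsilon n^2}{[\IK(\underline z):\IQ]}\sum_{w\in\mathbf{O}_\mu(\underline z)}(h\circ\pr_1^k)(w) \geq \frac{\volh_\chi(\Mtil_{n,k}(\varepsilon,h))}{[\IK:\IQ](d+1)M_{n,k}^d\cdot[X]} - \varepsilon'.
\end{equation*}
Here the height with respect to the perturbed bundle is the unperturbed height plus the averaged value of $\varepsilon n^2 h$ over the conjugates, and $\varepsilon'>0$ is arbitrary.

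Second, bound the volume from below. Lemma \ref{lemma::volume_yuan} gives $\volh_\chi(\Mtil_{n,k}(\varepsilon,h)) \ge \Mtil_{n,k}(\varepsilon,h)^{d+1} - c\,\varepsilon^2 n^{2(d'+1)}$. Lemma \ref{lemma::expansion} then expands the perturbed self-intersection as
\begin{equation*}
\Mtil_{n,k}^{d+1}\cdot[X] + \delta_\mu \varepsilon n^2(d+1)\int_{X(\IC)} (h\circ\pr_1^k)\,c_1(\Mo_{n,k})^{\wedge d} - O(\varepsilon^2 n^{2(d'+1)}).
\end{equation*}
The leading term $\Mtil_{n,k}^{d+1}\cdot[X]$ is non-negative by Lemma \ref{lemma::intersectionnumber_nef}, so it can be discarded.

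Third, normalize by $M_{n,k}^d\cdot[X]$. Lemma \ref{lemma::homogeneity2} gives $M_{n,k}^d\cdot[X] = n^{2d_1} M_{1,k}^d\cdot[X]$, and here $d' = d_1$. Hence each error term $\varepsilon^2 n^{2(d_1+1)}/(M_{n,k}^d\cdot[X])$ equals $\varepsilon^2 n^2$ times a constant depending only on $X$ and $h$. Dividing the resulting inequality by $\delta_\mu\varepsilon n^2/[\IK:\IQ]$ yields, for $\underline z\notin Z$,
\begin{equation*}
\frac{1}{[\IK(\underline z):\IK]}\sum_{w} (h\circ\pr_1^k)(w) - \frac{1}{M_{n,k}^d\cdot[X]}\int_{X(\IC)} (h\circ\pr_1^k)\,c_1(\Mo_{n,k})^{\wedge d} \ge -\frac{C_1\, h_{\Mtil_{n,k}}(\underline z)}{\varepsilon n^2} - C_2\,\varepsilon - \frac{C_3\,\varepsilon'}{\varepsilon n^2},
\end{equation*}
where the $C_i$ depend only on $X$ and $h$. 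We then fix the parameters in order. Choose rational $\varepsilon\in(0,1]$ with $C_2\varepsilon < \oldconstant{c3.6}/6$. Take $\oldconstant{c3.13} = \varepsilon\,\oldconstant{c3.6}/(6C_1)$, which handles points with $h_{\Mtil_{n,k}}(\underline z) < \oldconstant{c3.13}\, n^2$. Finally choose $\varepsilon'$, depending on $n$, small enough that the last term is below $\oldconstant{c3.6}/6$. This allowed dependence on $n$ is why $Z = Z(h,n)$ may depend on $n$.

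The step I expect to be the main obstacle is the uniformity in $n$ of the error terms. The constants in Lemmas \ref{lemma::volume_yuan} and \ref{lemma::expansion} must scale exactly like $n^{2(d'+1)}$, and this has to match the degree normalization $n^{2d_1}$ from Lemma \ref{lemma::homogeneity2} so that the errors become $O(\varepsilon^2 n^2)$. The earlier lemmas are designed to provide exactly this scaling, so I would only need to check carefully that $d' = d_1$ and that the powers of $n$ line up.
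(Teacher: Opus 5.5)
Your proposal is correct and follows the paper's proof essentially step for step: Lemma \ref{lemma::minkowski} applied to $\Mtil_{n,k}(\varepsilon,h)$, then the volume bound of Lemma \ref{lemma::volume_yuan} and the expansion of Lemma \ref{lemma::expansion}, then discarding $\Mtil_{n,k}^{d+1}\cdot[X]\geq 0$, and finally Lemma \ref{lemma::homogeneity2} (with $d'=d_1$) to turn the $\varepsilon^2 n^{2(d_1+1)}$ errors into $O(\varepsilon^2 n^2)$. The only cosmetic difference is the Minkowski slack: the paper fixes it as $\varepsilon^2$, independent of $n$, and uses $n^2\geq 1$, whereas you let it depend on $n$; both work, since only $Z$ is allowed to depend on $n$ and the constant $\oldconstant{c3.13}$ is unaffected.
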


\begin{proof}
Let us first note that bigness implies that $M_{n,k}^d \cdot [X] > 0$. As in Lemma \ref{lemma::volume_yuan}, we use 
\begin{equation*}
    \Mtil_{n,k}(\varepsilon, h) = \Mtil_{n,k}|_{X} + \Otil_X(\varepsilon n^2 \cdot (h \circ \pr_1^k))
\end{equation*}
for some $\varepsilon \in (0,1] \cap \IQ$. Let $\varepsilon'' > 0$. By Lemma \ref{lemma::minkowski}, there exists some positive integer $m$ and a non-zero section $\mathbf{s} \in H^0(X,M_{n,k}^{\otimes m})$ such that
\begin{equation*}
\frac{\delta_\mu \log \Vert \mathbf{s}(x) \Vert_{\mu}}{m} \leq  - \frac{\volh_\chi(\Mtil_{n,k}(\varepsilon, h))}{(d+1)M_{n,k}^{d} \cdot [X]} + \varepsilon^\dprime 
\end{equation*}
for every point $x \in (X \setminus \Div(\mathbf{s}))(\IC)$ and 
\begin{equation*}
\log \Vert \mathbf{s}(x) \Vert_{\nu} \leq 0    
\end{equation*}
for every place $\nu \in \Sigma(\IK) \setminus \{ \mu \}$ and every point $x \in (X \setminus \Div(\mathbf{s}))(\IC_\nu)$.

By Lemma \ref{lemma::volume_yuan} applied to $\widetilde{M}_{n,k}(\varepsilon,h)$, we have
\begin{equation*}
    \volh_\chi(\Mtil_{n,k}(\varepsilon, h)) - \Mtil_{n,k}(\varepsilon, h)^{d+1} \cdot [X] \geq - \oldconstant{c3.14}(X,h) \cdot \varepsilon^2 n^{2(d_1+1)}
\end{equation*}
and thus
\begin{equation*}
\frac{\delta_\mu \log \Vert \mathbf{s}(x) \Vert_{\mu}}{m} \leq - \frac{ \Mtil_{n,k}(\varepsilon, h)^{d+1} \cdot [X]}{(d+1)M_{n,k}^{d} \cdot [X]} + \oldconstant{c3.14}(X,h)\frac{\varepsilon^2 n^{2(d_1+1)}}{(d+1)M^{d}_{n,k} \cdot [X]} + \varepsilon^\dprime
\end{equation*}
for every point $x \in (X \setminus \Div(\mathbf{s}))(\IC)$. Using Lemma \ref{lemma::expansion}, this inequality becomes
\begin{align*}
\frac{\delta_\mu \log \Vert \mathbf{s}(x) \Vert_{\mu}}{m} \leq &- \frac{ \Mtil_{n,k}^{d+1} \cdot [X]}{(d+1)M_{n,k}^{d} \cdot [X]} - \frac{\delta_\mu \varepsilon n^2}{M^{d}_{n,k} \cdot [X]} \int_{X(\IC)} (h \circ \pr_1^k) c_1(\Mo_{n,k})^{\wedge d}\\ &+ \oldconstant{c3.15}\frac{\varepsilon^2 n^{2(d_1+1)}}{(d+1)M^{d}_{n,k} \cdot [X]} + \varepsilon^\dprime
\end{align*}
for every point $x \in (X \setminus \Div(\mathbf{s}))(\IC)$ and some constant $\newconstant{\label{c3.15}} = \oldconstant{c3.15}(X,h)$. Since $\Mtil_{n,k}^{d+1} \cdot [X] \geq 0$, we can simplify this to
\begin{equation*}
\frac{\delta_\mu \log \Vert \mathbf{s}(x) \Vert_{\mu}}{m} \leq -\frac{\delta_\mu \varepsilon n^2}{M^{d}_{n,k} \cdot [X]} \int_{X(\IC)} (h \circ \pr_1^k) c_1(\Mo_{n,k})^{\wedge d}+ \oldconstant{c3.15}\frac{\varepsilon^2 n^{2(d_1+1)}}{(d+1)M^{d}_{n,k} \cdot [X]} + \varepsilon^\dprime.
\end{equation*}
Using Lemma \ref{lemma::homogeneity2}, we deduce that
\[ \frac{n^{2(d_1+1)}}{M^{d}_{n,k} \cdot [X]} = \frac{n^2}{M^d_{1,k} \cdot [X]}.\]

It follows that there exists a constant $\newconstant\label{c3.16} = \oldconstant{c3.16}(X,h)$ such that the height
\begin{equation*}
    h_{\Mtil_{n,k}(\varepsilon, h)}(\underline{z}) = h_{\Mtil_{n,k}}(\underline{z}) + \frac{\delta_\mu \varepsilon n^2}{[\IK(\underline{z}):\IQ]}\sum_{w \in \mathbf{O}_\mu(\underline{z})} (h \circ \pr_1^k)(w)
\end{equation*}
of any point $\underline{z} \in (X \setminus \Div(\mathbf{s}))(\IQbar)$ is bounded from below by
\begin{equation*}
     \frac{\delta_\mu \varepsilon n^2}{[\IK: \IQ](M^{d}_{n,k} \cdot [X])} \int_{X(\IC)} (h \circ \pr_1^k) c_1(\Mo_{n,k})^{\wedge d} - \oldconstant{c3.16} \cdot \varepsilon^2 n^2 - \varepsilon^\dprime.
\end{equation*}
In summary and choosing $\varepsilon^\dprime = \varepsilon^2$, we get
\begin{multline}
\nonumber
\frac{1}{[\IK(\underline{z}):\IK]}\sum_{w \in \mathbf{O}_\mu(\underline{z})} (h \circ \pr_1^k)(w) - \frac{1}{M^{d}_{n,k} \cdot [X]}\int_{X(\IC)} (h \circ \pr_1^k)c_1(\Mo_{n,k})^{\wedge d} \\
\geq
- \frac{[\IK : \IQ] \cdot h_{\Mtil_{n,k}}(\underline{z})}{\delta_\mu \varepsilon n^2}
- \oldconstant{c317} \cdot \varepsilon
\end{multline}
for some constant $\newconstant{\label{c317}} = \oldconstant{c317}(X,h) \geq \oldconstant{c3.6}/4$. Picking a rational number $\varepsilon \in (\oldconstant{c3.6}/8\oldconstant{c317},\oldconstant{c3.6}/4\oldconstant{c317})$ and setting $Z = \Div(\mathbf{s})$ as well as
\[ 
\oldconstant{c3.13} = \frac{\delta_\mu \oldconstant{c3.6}^2 }{32[\IK:\IQ]\oldconstant{c317}},
\]
we obtain that
\begin{multline*}
\frac{1}{[\IK(\underline{z}):\IK]}\sum_{w \in \mathbf{O}_\mu(\underline{z})} (h \circ \pr_1^k)(w) - \frac{1}{M^{d}_{n,k} \cdot [X]}\int_{X(\IC)} (h \circ \pr_1^k)c_1(\Mo_{n,k})^{\wedge d} \\
\geq
- \frac{[\IK : \IQ] \cdot h_{\Mtil_{n,k}}(\underline{z})}{\delta_\mu \varepsilon n^2}
- \frac{\oldconstant{c3.6}}{4}
\geq -\frac{\oldconstant{c3.6}}{2}
\end{multline*}
for all $\underline{z} \in (X \setminus Z)(\IQbar)$ with $h_{\Mtil_{n,k}}(\underline{z}) < \oldconstant{c3.13} \cdot n^2$.
\end{proof}

With these preparations completed, we can finally prove the main result of this section.

\begin{proof}[Proof of Proposition \ref{proposition::heightbound}]
Invoking Lemma \ref{lemma::trivial_stabilizer_reduction}, we may and will assume that $C_1$ and $C_2$ have trivial stabilizer. The upper bound follows as in the proof of Proposition \ref{proposition::heightbound_symmetric}. For the lower bound, assume to the contrary that 
\begin{equation*}
    \liminf_{|n| \rightarrow \infty} \left(\frac{h_{\Ltil_n}(S)}{n^2} \right) = 0.
\end{equation*}
By Zhang's inequalities \cite[Th\'eor\`eme 1.5]{Chambert-Loir2000}, this implies that for every $\varepsilon^\prime > 0$ and every $N > 0$, there exists an integer $n$ such that $|n| > N$ and
\begin{equation*}
    \mu_{\mathrm{ess}}(S, \Ltil_n) < \varepsilon^\prime \cdot n^2.
\end{equation*}
The goal of our proof is to establish a contradiction if $\varepsilon^\prime$ is chosen sufficiently small.

Let $g_0$ be the function supplied by Lemma \ref{lemma::test_function}. From Lemma \ref{lemma::smallness}, we obtain a Zariski-dense set $\mathcal{S} \subseteq S^k(\IQbar)$ such that
\begin{equation*}
    h_{\Mtil_{n,k}}(\underline{z}) < 2k \varepsilon^\prime \cdot n^2
\end{equation*}
and
\begin{equation*}
    h_{\Mtil_{n,k-1}}(\Delta_{B,k}(\underline{z})) < 8k \varepsilon^\prime \cdot n^2
\end{equation*}
for all points $\underline{z} \in \mathcal{S}$.

 By Lemma \ref{lemma::degreebound}, the line bundle $L_n$ is big on $S$ and hence $M_{n,k}$ is big on $S^k$. Furthermore, the line bundle $M_{n,k-1}$ is big on $\Delta_{B,k}(S^k)$ by Lemma \ref{lemma::bigness_Mprime}. We can hence apply Lemma \ref{lemma::equidistribution} for the varieties $X_1 = C_1^k$ and $X_2 = C_2^k$ and the functions $f_0 = \Delta_{A,k}^\ast g_0$ as well as $-f_0$. In this way, we obtain a proper Zariski closed subset $Z_1 = Z_1(n) \subsetneq S^k$ such that
\begin{equation*}
        \left\vert \frac{1}{[\IK(\underline{z}):\IK]}\sum_{w \in \mathbf{O}_\mu(\underline{z})} (f_0 \circ \pr_1^k)(w) - \frac{1}{M_{n,k}^{2k} \cdot [S^k]}\int_{S^k(\IC)} (f_0 \circ \pr_1^k)c_1(\Mo_{n,k})^{\wedge 2k} \right\vert \leq \frac{\oldconstant{c3.6}}{2}
\end{equation*}
for all $\underline{z} \in (S^{k} \setminus Z_1)(\IQbar)$ with 
\begin{equation*}
    h_{\Mtil_{n,k}}(\underline{z}) < \min\{\oldconstant{c3.13}(f_0),\oldconstant{c3.13}(-f_0)\} \cdot n^2.
\end{equation*}
Similarly, we can apply Lemma \ref{lemma::equidistribution} for the varieties $X_1 = \Delta_{A,k}(C_1^k)$ and $X_2 = \Delta_{A,k}(C_2^k)$ and the functions $g_0$ and $-g_0$ on $\Delta_{A,k}(C_1^k)$. This yields a proper Zariski closed subset $Z_2 = Z_2(n) \subsetneq \Delta_{B,k}(S^k)$ such that
\begin{equation*}
        \left\vert \frac{1}{[\IK(\underline{z}):\IK]}\sum_{w \in \mathbf{O}_\mu(\underline{z})} (g_0 \circ \pr_1^{k-1})(w) - \frac{1}{M_{n,k-1}^{2k} \cdot [\Delta_{B,k}(S^k)]}\int_{\Delta_{B,k}(S^k)(\IC)} (g_0 \circ \pr_1^{k-1})c_1(\Mo_{n,k-1})^{\wedge 2k} \right\vert \leq \frac{\oldconstant{c3.6}}{2}
\end{equation*}
for all $\underline{z} \in (\Delta_{B,k}(S^k) \setminus Z_2)(\IQbar)$ with 
\begin{equation*}
h_{\Mtil_{n,k-1}}(\underline{z}) < \min\{\oldconstant{c3.13}(g_0),\oldconstant{c3.13}(-g_0)\} \cdot n^2.    
\end{equation*}

Recall that $g = g_0 \circ \pr_1^{k-1}$ and note that $\Delta_{B,k}^\ast g = f_0 \circ \pr_1^{k}$. Choosing
\[ \varepsilon^\prime = \frac{\min\{\oldconstant{c3.13}(f_0),\oldconstant{c3.13}(-f_0),\oldconstant{c3.13}(g_0),\oldconstant{c3.13}(-g_0)\}}{8k},\] 
we get
\begin{equation*}
    \left\vert \frac{1}{M_{n,k}^{2k} \cdot [S^k]}\int_{S^k(\IC)} \Delta_{B,k}^\ast g \cdot c_1(\Mo_{n,k})^{\wedge 2k} - \frac{1}{M_{n,k-1}^{2k} \cdot [\Delta_{B,k}(S^k)]}\int_{\Delta_{B,k}(S^k)(\IC)} gc_1(\Mo_{n,k-1})^{\wedge 2k} \right\vert \leq \oldconstant{c3.6}
\end{equation*}
since $\mathcal{S} \not\subseteq Z_1 \cup \Delta_{B,k}^{-1}(Z_2)$. This contradicts \eqref{equation::two_integrals_difference}.
\end{proof}
\section{Proof of Theorem \ref{theorem::main_reduction} for $K=\IQbar$}
\label{section::proof_main_Qbar}

The following theorem is the special arithmetic case of Theorem \ref{theorem::main_reduction}. For its resolution we can use the height-theoretic tools developed in the preceding sections. In Section \ref{section::proof_general}, the general case is deduced from this special case.

\begin{theorem}
    \label{theorem::mainQbar}
		Let	$A$ be an abelian variety of dimension $\geq 3$ defined over $\IQbar$ and let $C_1, C_2 \subseteq A$ be irreducible algebraic curves such that there is no proper algebraic subgroup of $A$ containing both of them. Then the set
\begin{equation*}
        \Xi =\{(x,y) \in (C_1 \times C_2)(\IQbar) \mid  [n](x) = y \text{ for some } n \in \mathbb{Z}
        \}
        \end{equation*}
is not Zariski dense in $C_1 \times C_2$.
\end{theorem}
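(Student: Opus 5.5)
First I reduce to the case where the curves are in general position and then derive non-density from a height inequality of Vojta type. By Lemma \ref{lemma::nocoset} I may assume that $C_1$ and $C_2$ are not both contained in translates of the same proper algebraic subgroup. By Lemma \ref{lemma::nocoset2} I may assume that both stabilizers $\Stab_A(C_i)$ are finite; that lemma describes the closure of $\Xi$ directly, and it is a proper subset. I then fix a number field $\IK$ over which everything is defined, a very ample symmetric $L$, and its canonical metrization $\Ltil$. Finitely many $n$ only contribute the curves $(\id_A,[n])(C_1)\cap(C_1\times C_2)$, so it suffices to handle $|n|$ large. The target is inequality \eqref{equation::vojta}: there are constants $c>0$ and $n_0$, and a proper Zariski closed $Z\subsetneq C_1\times C_2$, such that
\begin{equation*}
h_{\Ltil}([n](x)-y)\geq c\,(n^2h_{\Ltil}(x)+h_{\Ltil}(y))
\end{equation*}
for all $(x,y)\in(C_1\times C_2)(\IQbar)\setminus Z$ and all $|n|\ge n_0$. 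Granting this, a point of $\Xi$ with $|n|\ge n_0$ outside $Z$ has $h(x)=h(y)=0$, so it is torsion. Manin–Mumford, applied to curves with finite stabilizer that are not torsion cosets, leaves only finitely many such points.

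To prove the inequality I would follow Faltings–Vojta. I fix a complex place $\mu$. Proposition \ref{proposition::heightbound} gives $h_{\Ltil_n}(C_1\times C_2)\gg n^2$, and Lemma \ref{lemma::degreebound} gives $L_n^2\cdot[C_1\times C_2]\asymp n^2$. I would apply Minkowski (Lemma \ref{lemma::minkowski}) to a twist $\Ltil_n\otimes\Otil(-\kappa n^2)$, or equivalently subtract a small multiple of $\pr_1^\ast\Ltil^{n^2}\otimes\pr_2^\ast\Ltil$; its arithmetic volume stays positive, of order $n^4$, by Lemmas \ref{lemma::siu} and \ref{lemma::arithmeticvolumes_basics}. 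This produces a section $\mathbf{s}_n$ of $(L_n\otimes(\pr_1^\ast L^{n^2}\otimes\pr_2^\ast L)^{-\delta})^{\otimes M}$ that is small at $\mu$ and has norm $\le 1$ elsewhere. Via Lemma \ref{lemma::formula_Ln}, this line bundle is of bidegree roughly $(d_1,d_2)=(M(1-\delta)n^2,M(1-\delta))$ plus a $P$-part, and $d_1/d_2\sim n^2$ is large.

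If $p=(x,y)\notin\Div(\mathbf{s}_n)$, formula \eqref{equation::height_section} gives the inequality at once. Otherwise I use the meromorphic fields $\mathbf{D}_1,\mathbf{D}_2$ from Lemma \ref{lemma::tangent_field}, lifted along projections $C_i\to\IP^1$. If the index $\mathbf{I}_{\boldsymbol d}(\mathbf{s}_n,p)$ is $<\varepsilon$, I replace $\mathbf{s}_n$ by its leading coefficient $\mathbf{D}^{k_1,k_2}\mathbf{s}_n(p)$. This is well defined by Lemmas \ref{lemma::leading_coefficient} and \ref{lemma::leading_coefficient_product}, and it is a section of the same bundle twisted by the tangent-field poles, which costs only $O(\varepsilon d_1h(x)+\varepsilon d_2h(y))$. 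Its norms at every place need uniform Schwarz/Cauchy estimates, with loss $e^{O(d_1+d_2)}$ at archimedean places and none at almost all finite places. With $\varepsilon,\delta$ small compared to $c$, the resulting height formula gives the inequality. Points of index $\ge\varepsilon$ are covered by Corollary \ref{corollary::product_theorem}: they lie in a product $Z_1\times Z_2$ of bounded degree over $\IK$. Since $C_1\times C_2$ is a surface, either $Z_1$ or $Z_2$ is a point of bounded degree. Such points form a bounded-degree set, and I would control them with Bogomolov in the uniform form of \cite{David2002} for small heights, and with the generalized Vojta–Rémond inequality of \cite{D20} for large heights. Together these show that only finitely many bounded-degree points of this kind can violate the inequality, and those I put into $Z$.

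The main obstacle will be the small-index step. The Schwarz-type bounds on the derivatives $\mathbf{D}^{k_1,k_2}\mathbf{s}_n$ must hold uniformly in $n$ and at all places simultaneously, with losses linear in $d_1+d_2$. The delicate part is archimedean: the fields $\mathbf{D}_i$ are only meromorphic and $\mathbf{s}_n$ is defined on a curve product, not an open set. I would try local affine charts with Oka extension of holomorphic functions to balls in $\IC^N$ and apply Cauchy estimates there. At finite places I would work on integral models, so that the derivatives are integral up to a fixed bounded denominator.
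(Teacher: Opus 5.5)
Your overall architecture matches the paper's, and two of your deviations are legitimate.

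\begin{itemize}
\item Proving the inequality only off a proper Zariski-closed $Z$ is enough for Theorem \ref{theorem::mainQbar}. The paper insists on a finite exceptional set. Your weaker target lets you discard the finitely many curves $\{x_0\}\times C_2$ and $C_1\times\{y_0\}$ through bad points, so you can dispense with Proposition \ref{proposition::MordellLangplusepsilon} and with \cite{Zhang2000}.
\item A small section of $L_n\otimes N^{\otimes-\delta}$, where $N=\pr_1^\ast L^{\otimes n^2}\otimes\pr_2^\ast L$, does give the relative bound $h_{\Ltil}([n](x)-y)\geq\delta(n^2h_{\Ltil}(x)+h_{\Ltil}(y))$ off its divisor. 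The paper's section of $L_n^{\otimes M}$ only gives $h_{\Ltil_n}(p)\gg n^2$, which must then be combined with Proposition \ref{proposition::known_bounds}~(i),(ii).
\end{itemize}

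Two caveats on the second point. This is not ``equivalent'' to twisting by the constant $\Otil(-\kappa n^2)$, which merely reproduces the paper's absolute bound. Also, $L_n\otimes N^{\otimes-\delta}$ is not known a priori to be nef, so you need a version of Lemma \ref{lemma::minkowski} for big bundles.

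The genuine gap is the large-index step. Corollary \ref{corollary::product_theorem} only says that \emph{one} coordinate of $p=(x,y)$ has bounded degree, and the components it produces depend on $n$.
\begin{itemize}
\item If that coordinate is $x$, you are fine. By \cite{D20}, violators have $h_{\Ltil}(x)$ bounded (Proposition \ref{proposition::known_bounds}~(i)). Northcott then puts $x$ in a finite set independent of $n$, and you may add $\{x\}\times C_2$ to $Z$.
\item If it is $y$, a violating $y$ of bounded degree can have $h_{\Ltil}(y)$ of order $n^2$. Neither Northcott nor a small-height/large-height dichotomy on $y$ makes these $y$ finite in number. The curves $C_1\times\{y\}$ also vary with $n$, so they cannot go into a fixed $Z$.
\end{itemize}
The missing idea is to transfer the degree bound from $y$ to $x$. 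For $\sigma\in\Gal(\IQbar/\IK(y))$ you have $h_{\Ltil}([n](\sigma(x))-y)=h_{\Ltil}([n](x)-y)\leq c'n^2$ with $c'$ small. This uses the a priori bounds $h_{\Ltil}(x)\ll 1$ and $h_{\Ltil}(y)\ll n^2$ coming from \cite{D20}. Hence $h_{\Ltil}(\sigma(x)-x)\leq 4c'$, so the conjugates give small points on the translate $C_1-x$. The bound of \cite{David2002}, uniform over all such translates, then caps $[\IK(x,y):\IK(y)]$ (Proposition \ref{proposition::known_bounds}~(iii)), and you are back in the first case. That is the real role of \cite{David2002}; Bogomolov applied to $C_1$ or $C_2$ themselves does not do it.

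There is also a quantitative problem in your small-index budget. A loss of $e^{O(d_1+d_2)}$ at the archimedean places is, after dividing by $M$, of size $O(n^2)$. That is the same order as the Minkowski gain. It cannot be absorbed into $\delta(n^2h_{\Ltil}(x)+h_{\Ltil}(y))$, because the relevant points have $h_{\Ltil}(x)$ bounded; this is exactly the regime with no large-height assumption.

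Only the Cauchy/Oka part of the loss may be of size $O(k_1+k_2+1)$, which is $O(\varepsilon(d_1+d_2)+1)$. The part coming from comparing local trivializations with the canonical metrics is proportional to $Mn^2$ itself. Its constant must be smaller than the gain. The paper secures this before fixing $\varepsilon$, in two ways:
\begin{itemize}
\item Lemma \ref{lemma::canonicalmetrics}~(iv) supplies a model whose metrics are uniformly close to the canonical ones, summed over all finite places.
\item Lemma \ref{lemma::trivialization_error} works on polydiscs small enough that the trivialization-to-metric ratio varies only by a small multiple of $n^2$.
\end{itemize}
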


The remainder of this section comprises the proof of this theorem, which is reduced to a height inequality (Proposition \ref{proposition::21st_century_vojta}) after some reductions.

\subsection{Setup}

As in Theorem \ref{theorem::mainQbar}, let $A$ be an abelian variety of dimension at least $3$ defined over a fixed number field $\mathbb{K} \subseteq \IQbar$. Furthermore, let $C_1, C_2 \subseteq A$ be geometrically irreducible subcurves such that there is no proper algebraic subgroup of $A$ containing both of them. As in Section \ref{section::geometry}, we let $L$ denote a symmetric, very ample, normally generated line bundle on $A$. (We will use that $L$ is normally generated only in the proof of Proposition \ref{proposition::known_bounds}; this can always be ensured by replacing $L$ by $L^{\otimes 4}$, see \cite[Theorem 9]{MumfordCIME}.) We fix an arbitrary isomorphism $\phi: L^{\otimes 4} \rightarrow [2]^\ast L$ and denote by $\widetilde{L}$ the associated metrized line bundle produced by Lemma \ref{lemma::canonicalmetrics}. 

Let $i \in \{1, 2\}$. Since $L$ is very ample, we can fix a projective embedding associated to $L$, choose a hyperplane that does not contain the image of $C_i$, and choose another hyperplane that contains none of the finitely many points of intersection between $C_i$ and the first hyperplane. This yields two global sections of $L$ that have no common zeroes on $C_i$ and hence a non-constant map $\pi_i: C_i \rightarrow \IP^1$ such that $\pi_i^\ast \mathcal{O}(1) \approx L|_{C_i}$. We can and will assume that $L$, $C_1$, $C_2$, $\pi_1$, and $\pi_2$ are all defined over $\IK$.

Applying Lemma \ref{lemma::tangent_field} to $\pi_1$ and $\pi_2$ respectively and the global tangent field $\mathbf{D}_0 = X_0  (\del/\del X_0)$ on $\IP^1$, we obtain non-zero meromorphic tangent fields $\mathbf{D}_1 \in \mathrm{Hom}_{\IQbar(C_1)}(\Omega^1_{\IQbar(C_1)/\IQbar}, \IQbar(C_1))$ and $\mathbf{D}_2 \in \mathrm{Hom}_{\IQbar(C_2)}(\Omega^1_{\IQbar(C_2)/\IQbar}, \IQbar(C_2))$ lifting $\mathbf{D}_0$ along $\pi_1$ and $\pi_2$ respectively. We can and will assume that $\mathbf{D}_1$ and $\mathbf{D}_2$ are base changes of meromorphic tangent fields defined over $\IK$. Note that this coincides with the setting of Subsection \ref{subsection::product_theorem} so that we may apply the product theorem in Subsection \ref{subsection::large_index} below.

\textit{Unless explicitly mentioned otherwise, all implicit and explicit constants in this section depend only on the data introduced in this subsection.}

\subsection{A height inequality}

The following proposition can be understood as a strengthening of Vojta's inequality \cite{Vojta1991} in a special case, explicitly stated as \cite[Inégalité 1]{Remond2005}.

\begin{proposition}
    \label{proposition::21st_century_vojta}
    In the above setting, assume additionally that $C_1$ and $C_2$ are not both contained in a translate of the same proper algebraic subgroup of $A$ and that both $C_1$ and $C_2$ have finite stabilizer. Then,
    there exist constants $\newconstant\label{c1}, \newconstant\label{c2} > 0$ such that
    \begin{equation}
    \label{equation::height_bound_21st_century_vojta}
        h_{\widetilde{L}}([n](x)-y) \geq \oldconstant{c1}  (n^2 h_{\widetilde{L}}(x)+h_{\widetilde{L}}(y))
    \end{equation}
    for all points $(x,y) \in (C_1\times C_2) (\IQbar)$ and all integers $n \in \IZ$ with $|n| \geq \oldconstant{c2}$.
\end{proposition}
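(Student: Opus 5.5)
The plan follows the Vojta--Faltings scheme sketched in the introduction. By Lemma \ref{lemma::trivial_stabilizer_reduction}-type arguments (quotienting by the finite group generated by the stabilizers and comparing heights up to constants), we may assume trivial stabilizers when convenient. We split according to the heights of $x$ and $y$.

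\textbf{Large height.} First fix a threshold: if $h_{\Ltil}(x)$ and $h_{\Ltil}(y)$ are both large and comparable in the Vojta range, namely $n^2h_{\Ltil}(x)$ and $h_{\Ltil}(y)$ have ratio in a window dictated by the generalized Vojta--R\'emond inequality, we invoke that inequality from \cite{D20}. This bounds the angle between $[n](x)$ and $y$ with respect to the N\'eron--Tate pairing and gives $h_{\Ltil}([n](x)-y)\gg n^2h_{\Ltil}(x)+h_{\Ltil}(y)$. If the ratio lies outside the window, the bound follows trivially from the parallelogram law, since $h_{\Ltil}([n]x-y)\ge(\sqrt{n^2h(x)}-\sqrt{h(y)})^2$.

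\textbf{Main case (bounded ratio, arbitrary height).} For large $|n|$, apply Lemma \ref{lemma::minkowski} to $\Ltil_n|_{C_1\times C_2}$, twisted by a small negative constant at one archimedean place $\mu$ via $\Otil(\kappa)$ and Lemma \ref{lemma::arithmeticvolumes_basics}. Since $h_{\Ltil_n}(C_1\times C_2)\ge \oldconstant{3.4}^{-1}n^2$ by Proposition \ref{proposition::heightbound}, the arithmetic volume is positive and of size $\gg n^4$. This yields a section $\mathbf{s}_n$ of $L_n^{\otimes M}$ with $\log\|\mathbf{s}_n\|_\nu\le0$ for $\nu\ne\mu$ and $\log\|\mathbf{s}_n\|_\mu\le -cMn^2$. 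To get index control we need bidegrees $(d_1,d_2)$ with $d_1/d_2$ large, so we actually work with $L_n^{\otimes M}$ modified by powers of $\pr_1^*L,\pr_2^*L$ as in Corollary \ref{corollary::product_theorem}. Here $L_n\otimes L'\approx$ a bidegree $(n^2,1)$-type bundle, using Lemma \ref{lemma::formula_Ln} with $L'=L_{-n}$, which is globally generated.

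We then split by index:
\begin{enumerate}
\item[(a)] If $p=(x,y)\notin\Div(\mathbf{s}_n)$, formula \eqref{equation::height_section} gives $M h_{\Ltil_n}(p)\ge cMn^2$. This is nearly the claim, up to comparing $h_{\Ltil_n}(p)=h_{\Ltil}([n]x-y)$ and absorbing small heights via the next step.
\item[(b)] If the index $\mathbf{I}_{\mathbf d}(\mathbf{s}_n,p)<\varepsilon$, differentiate to the leading coefficient $\mathbf{D}^{k_1,k_2}\mathbf{s}_n$, which is independent of trivialization by Lemma \ref{lemma::leading_coefficient}. This is a section of a bundle twisted by the divisors of $\mathbf D_i$, whose contribution is at most $O(\varepsilon\, \mathrm{deg})$. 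We use it in the height formula. The uniform Schwarz/Cauchy estimates (Section \ref{section::schwarz}) bound $\log\|\mathbf{D}^{k_1,k_2}\mathbf{s}_n\|_\nu$ by $O(\varepsilon M(n^2+1))$ at all places, uniformly with only finitely many bad places. The net result is $h_{\Ltil}([n]x-y)\ge (c-O(\varepsilon))n^2 - O(\varepsilon)(n^2h(x)+h(y))$.
\item[(c)] If the index is $\ge\varepsilon$, Corollary \ref{corollary::product_theorem} places $p$ in $Z_1\times Z_2$ with $Z_i$ of bounded degree over $\IK$. Hence $x$ or $y$ has bounded degree. The uniform Bogomolov bound of \cite{David2002} then gives $h(x)$ or $h(y)\ge c'$, unless the point is torsion, and only finitely many such points lie on $C_i$ by Manin--Mumford. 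We handle the remaining bounded-degree points by Northcott together with the large-height case, making the finitely many exceptions disappear by enlarging $\oldconstant{c2}$. For a fixed bounded-degree $x$, the inequality in $y$ follows from Proposition \ref{proposition::MordellLang}-style finiteness combined with the trivial quadratic estimate.
\end{enumerate}

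\textbf{Combining the cases.} The constant lower bound $cn^2$ from (a) and (b) is converted into the relative bound by running the construction with $\Ltil_n$ replaced by $\Ltil_n-\Otil(\text{const})$ and using homogeneity. Alternatively, we note that in the bounded-ratio regime with $h(x)\le T$, the quantity $n^2h(x)+h(y)\ll n^2$, so a lower bound $\gg n^2$ suffices. Points with $h(x)>T$ go to the large-height case.

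\textbf{Main obstacle.} The hardest step will be (b): producing Schwarz estimates for $\mathbf{D}^{k_1,k_2}\mathbf{s}_n$ uniformly in $n$ and in the point, at all places simultaneously, with losses only $O(\varepsilon Mn^2)$. We would first prove them on finitely many affine charts, extending polynomials via models at finite places and via Oka at infinite places.
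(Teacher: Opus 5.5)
Your overall architecture is the paper's: the inequality of \cite{D20} for large heights, a small section of $L_n^{\otimes M}$ from Lemma~\ref{lemma::minkowski} and Proposition~\ref{proposition::heightbound}, tensoring with $\varphi_{-n}^\ast$ of sections of $L$ to reach bidegree $(2Mn^2,2M)$, and a small/large index dichotomy. However, the large-index step (c) does not close.

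\textbf{The large-index step.} The product theorem only tells you that \emph{one} of $x,y$ has bounded degree over $\IK$. If it is $y$, Northcott gives nothing uniform. In the relevant range you only know $h_{\Ltil}(y)\ll n^2$, so the set of candidates grows with $n$. It can neither be put into a fixed finite exceptional set nor be removed by enlarging the bound on $|n|$, and a lower bound for $h(y)$ from \cite{David2002} is beside the point.

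The missing idea is to transfer the degree bound from $y$ to $x$; this is Proposition~\ref{proposition::known_bounds}~(iii). For $\sigma\in\mathrm{Gal}(\IQbar/\IK(y))$, both $h_{\Ltil}([n](x)-y)$ and $h_{\Ltil}([n](\sigma(x))-y)$ are at most $cn^2$ with $c$ small. Hence $h_{\Ltil}(\sigma(x)-x)\le 4c$. The points $\sigma(x)-x$ lie on the translate $C_1-x$, and the David--Philippon bound on the number of small points is uniform over translates. Therefore $[\IK(x,y):\IK(y)]$ is bounded. Only then is $x$ of bounded degree and, by the Vojta--R\'emond step, of bounded height, so Northcott leaves finitely many $x$.

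\textbf{Fixed-coordinate curves.} For each of these finitely many $x_0$, you must prove the inequality on the whole curve $\{x_0\}\times C_2$, uniformly in $n$. The same applies to the finitely many $x_0\in C_1$, $y_0\in C_2$ at which $C_i$ is singular or $\mathbf{D}_i$ has a pole or zero, where the index is not even defined; there the curve is $\{x_0\}\times C_2$ or $C_1\times\{y_0\}$. Your proposed tools do not cover this:
\begin{itemize}
\item Proposition~\ref{proposition::MordellLang} only controls exact solutions $y=[n](x_0)$.
\item The parallelogram estimate only handles $h(y)$ outside a window around $n^2h(x_0)$.
\end{itemize}
Inside the window you need Zhang's theorem on almost division points \cite{Zhang2000}: for non-torsion $x_0$ there are only finitely many $y\in C_2$ with $h_{\Ltil}([n](x_0)-y)\le c\,h_{\Ltil}(y)+c'$. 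For fixed $y_0$, Bogomolov \cite{Ullmo1998, Zhang1998} is needed. Together these are Proposition~\ref{proposition::MordellLangplusepsilon}.

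\textbf{A smaller point on (b).} The losses there are not only $O(\varepsilon Mn^2)$. Comparing the canonical metric on $M_n$ with model and chart trivializations costs a constant times $Mn^2$ at finitely many places, and this must be made smaller than the Minkowski gain. The paper arranges this via Lemma~\ref{lemma::canonicalmetrics}~(iv) and small polydiscs (Lemma~\ref{lemma::trivialization_error}). The $O(\varepsilon)(n^2h(x)+h(y))$ term comes from the poles of $\mathbf{D}_i$, which are controlled by Lemma~\ref{lemma::aux_function_control}.
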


\begin{proof}[Proof of Theorem \ref{theorem::mainQbar} using Proposition \ref{proposition::21st_century_vojta}]

By Lemma \ref{lemma::nocoset}, we may and will assume that $C_1$ and $C_2$ are not both contained in a translate of the same proper algebraic subgroup of $A$.

If $C_1$ or $C_2$ does not have finite stabilizer, it follows from Lemma \ref{lemma::nocoset2} that $\Xi$ is not Zariski dense in $C_1 \times C_2$. From now on, we assume that $C_1$ and $C_2$ both have finite stabilizer. We have that $[n](C_1) \not\subseteq C_2$ for all but at most two non-zero integers $n$ by Lemma \ref{lem:inters_one_n} and so for all but at most three integers in total. It therefore suffices to show that $\bigcup_{[n](C_1) \not\subseteq C_2} ([n](C_1) \cap C_2)(\IQbar)$ is finite.

    Suppose that
    \begin{equation}
        \label{equation::vojta_height}
        h_{\widetilde{L}}([n](x) - y) \geq \oldconstant{c1}  (n^2 h_{\widetilde{L}}(x)+h_{\widetilde{L}}(y)) 
    \end{equation}
    for every $(x,y) \in (C_1 \times C_2)(\IQbar)$ and every integer $|n| \geq \oldconstant{c2}$. 

    Let $y \in \bigcup_{[n](C_1) \not\subseteq C_2} ([n](C_1) \cap C_2)(\IQbar)$. The intersection $[n](C_1) \cap C_2$ is finite for every $n \in \IZ$ such that $[n](C_1) \not\subseteq C_2$. Therefore, we may and do assume that $|n| \geq \oldconstant{c2}$ as this excludes at most finitely many points $y \in C_2(\IQbar)$. Let $x \in C_1(\IQbar)$ be such that $[n](x) =y$ for some $|n| \geq \oldconstant{c2}$. Then, the inequality \eqref{equation::vojta_height} implies that $h_{\widetilde{L}}(x) = h_{\widetilde{L}}(y) = 0$. The Manin-Mumford conjecture, which is proven in \cite{Raynaud1983}, implies that there are at most finitely many candidates $y \in C_2(\IQbar)$ since $C_2$ has finite stabilizer. This completes the deduction of Theorem \ref{theorem::mainQbar} from Proposition \ref{proposition::21st_century_vojta}.
\end{proof}

The remainder of this section is devoted to a proof of Proposition 
\ref{proposition::21st_century_vojta}.

\subsection{First observations}\label{subsection::first_observations}

First of all, we observe that it suffices to prove that the inequality  
\eqref{equation::height_bound_21st_century_vojta} holds for all $(x,y)$ outside of a fixed finite subset $Z$ of $(C_1 \times C_2)(\IQbar)$ and all $n \in \IZ$ with $|n| \geq \oldconstant{c2}$. Indeed, if $(x_0,y_0) \in (C_1 \times C_2)(\IQbar)$ is fixed, then either $x_0$ is torsion and the left-hand side of \eqref{equation::height_bound_21st_century_vojta} is equal to $h_{\Ltil}(y_0)$, or $x_0$ is not torsion and the left-hand side of \eqref{equation::height_bound_21st_century_vojta} is a quadratic polynomial in $n$ with positive leading coefficient. Therefore, in the following, we will always allow ourselves to exclude such a finite exceptional subset $Z$.

We next state a proposition that summarizes straightforward consequences of well-known results.

\begin{proposition}\label{proposition::known_bounds}
There exist constants $\newconstant\label{c3}, \newconstant\label{c4}, \newconstant\label{c5}, \newconstant\label{c6} > 0$ such that the following assertion holds true: For any $(x,y) \in (C_1 \times C_2)(\IQbar)$ and $n \in \mathbb{Z}$ with 
\begin{equation} \label{hypothesis::5.4}
    h_{\widetilde{L}}([n](x)-y) < \oldconstant{c3} (n^2 h_{\widetilde{L}}(x)+h_{\widetilde{L}}(y)) \text{ and } |n| \geq \oldconstant{c4},
\end{equation}
 we have
\begin{enumerate}
    \item[(i)] $h_{\Ltil}(x) \leq \oldconstant{c5}$,
    \item[(ii)] $h_{\Ltil}(y) \leq \oldconstant{c5} n^2$, and
    \item[(iii)] $[\mathbb{K}(x,y):\mathbb{K}(y)] \leq \oldconstant{c6}$.
\end{enumerate}
\end{proposition}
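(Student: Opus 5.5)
The plan is to prove the three assertions in order, deriving each from the preceding ones together with one of the known inputs quoted in the introduction: the generalized Vojta--R\'emond inequality of \cite{D20} for (i), quadraticity of the N\'eron--Tate height for (ii), and the uniform Bogomolov bound of \cite{David2002} for (iii). Throughout, write $|z| = h_{\Ltil}(z)^{1/2}$ and $\langle\cdot,\cdot\rangle$ for the N\'eron--Tate pairing associated with $\Ltil$, so that $|\cdot|$ is a Euclidean norm on $A(\IQbar)\otimes\IR$.

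\emph{Step 1 (comparable heights, small angle).} The hypothesis \eqref{hypothesis::5.4} says $|[n](x)-y|^2 < \oldconstant{c3}(|[n](x)|^2+|y|^2)$. By the triangle inequality, $\bigl||y|-|n||x|\bigr| \le |[n](x)-y|$, so for $\oldconstant{c3}$ small we get $\tfrac12 |n||x| \le |y| \le 2|n||x|$. Expanding the square, we also get
\[ \langle x,y\rangle \ge (1-\theta)\,|x|\,|y| \]
(with the sign of $n$ absorbed), where $\theta\to 0$ as $\oldconstant{c3}\to 0$. In particular, (ii) will follow at once from (i) via $h_{\Ltil}(y)\le 4n^2h_{\Ltil}(x)$.

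\emph{Step 2 (Vojta for (i)).} Apply the generalized Vojta--R\'emond inequality for the pair of curves $C_1,C_2$ (which are not contained in translates of a common proper algebraic subgroup and have finite stabilizers), in the form of \cite{D20}, explicit in terms of $\deg_L$ and using that $L$ is normally generated. That inequality produces constants $\theta_0>0$, $\kappa\ge 1$ and $R$ with the following property: there is no pair $(x,y)\in C_1\times C_2$ satisfying all of
\[ \langle x,y\rangle\ge (1-\theta_0)|x||y|,\qquad |y|\ge \kappa|x|,\qquad |x|^2\ge R. \]
Choosing $\oldconstant{c3}$ so that $\theta\le\theta_0$, and $\oldconstant{c4}\ge 2\kappa$ so that $|y|\ge \tfrac12|n||x|\ge\kappa|x|$, the third condition must fail, i.e.\ $h_{\Ltil}(x)< R=:\oldconstant{c5}$. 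This gives (i), and by Step 1 also (ii). I expect this step to be the main obstacle. The difficulty is not the argument itself but checking that the version of the inequality in \cite{D20} really covers two distinct curves with only a ratio hypothesis $|y|\ge\kappa|x|$, rather than requiring a gap between large heights. If it does not, I would fall back on R\'emond's ``In\'egalit\'e 1'' applied to the surface $C_1\times C_2$.

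\emph{Step 3 (degree bound (iii)).} Let $x'$ be a conjugate of $x$ over $\IK(y)$. Then $(x',y)$ again satisfies \eqref{hypothesis::5.4}, since heights are Galois invariant. By the parallelogram law,
\[ n^2 h_{\Ltil}(x-x') \le 2h_{\Ltil}([n](x)-y)+2h_{\Ltil}([n](x')-y) < 4\oldconstant{c3}(n^2h_{\Ltil}(x)+h_{\Ltil}(y)) \le 20\,\oldconstant{c3}\,n^2 h_{\Ltil}(x), \]
so $h_{\Ltil}(x-x')\le 20\,\oldconstant{c3}\,\oldconstant{c5}$ by Step 2. Hence all $[\IK(x,y):\IK(y)]$ conjugates $x'$ give points $x-x'$ of height at most $20\,\oldconstant{c3}\,\oldconstant{c5}$ on the translate $x-C_1$ of $[-1](C_1)$. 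This translate has finite stabilizer, so it is not a coset. The uniform Bogomolov bound of \cite{David2002} asserts that there are $\varepsilon_0>0$ and $N_0$, depending only on $A$, $L$ and $\deg_L C_1$, such that every translate of such a curve has at most $N_0$ points of height $\le\varepsilon_0$. Shrinking $\oldconstant{c3}$ further so that $20\,\oldconstant{c3}\,\oldconstant{c5}\le\varepsilon_0$ gives $[\IK(x,y):\IK(y)]\le N_0=:\oldconstant{c6}$.

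One point of care remains: in Step 2 the threshold $R$ must not depend on $\oldconstant{c3}$, so that shrinking $\oldconstant{c3}$ in Step 3 does not enlarge $\oldconstant{c5}$ in a circular way. This holds because $R$ depends only on $\theta_0$ and $\kappa$, which are fixed before $\oldconstant{c3}$ is chosen.
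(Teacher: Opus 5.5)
Your Steps 1 and 3 coincide with the paper's argument. That covers the sandwich $\tfrac15 n^2h_{\Ltil}(x)\le h_{\Ltil}(y)\le 5n^2h_{\Ltil}(x)$ from quadraticity, (ii) from (i), and (iii) via conjugates of $x$ over $\IK(y)$, the parallelogram law and the uniform Bogomolov bound of \cite{David2002} on translates of $C_1$. The constant bookkeeping there is fine, since shrinking $\oldconstant{c3}$ only strengthens the hypothesis and leaves $\oldconstant{c5}$ unaffected.

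\textbf{The gap is Step 2.} It carries all the content of (i), and you leave it as a black box whose applicability you yourself question. The inequality of \cite{D20} does not give you an angle/ratio statement. It is formulated for fixed weights $a=(a_1,a_2)$ and a fixed line bundle $\mathcal{M}$ on $X_1\times X_2$, and it has two hypotheses: $\mathcal{P}\otimes\mathcal{M}^{-1}$ must be generated by sections whose representing polynomials have controlled height, and $\mathcal{M}$ must satisfy an intersection-number lower bound on the products $Y_1\times Y_2$, $Y_i\subseteq X_i$.

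What is missing is the choice of data and the verification of these hypotheses uniformly in $n$.
\begin{itemize}
\item Take $X_i=C_i$, $a=(n^2,1)$, $\mathcal{M}=(\varphi_n^\ast L)|_{C_1\times C_2}$ and $\mathcal{P}=\mathcal{L}_1^{\otimes 2n^2}\otimes\mathcal{L}_2^{\otimes 2}$. Then $\mathcal{P}\otimes\mathcal{M}^{-1}\approx(\varphi_{-n}^\ast L)|_{C_1\times C_2}$ by Lemma~\ref{lemma::formula_Ln}. It is generated by pullbacks of coordinates whose heights are controlled by \cite[Proposition 5.2]{Remond2000}; this is where normal generation of $L$ is used.
\item The decisive hypothesis is the lower bound $L_n^2\cdot[C_1\times C_2]\ge c^{-1}n^2$ with $c$ independent of $n$, i.e.\ Lemma~\ref{lemma::degreebound}. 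Point factors are trivial by $[n]^\ast L\approx L^{\otimes n^2}$.
\item This bound is where the assumption that $C_1,C_2$ do not lie in translates of a common proper subgroup enters, through generic finiteness of $\varphi_n|_{C_1\times C_2}$. It is also what keeps the constants of \cite{D20} independent of $n$.
\item Finally, compare the heights $h_{\mathcal{N}_a}$ and $h_{\mathcal{M}}$ of \cite{D20} with $n^2h_{\Ltil}(x)+h_{\Ltil}(y)$ and $h_{\Ltil}([n](x)-y)$. The errors are $O(n^2+1)$ and are absorbed because $h_{\Ltil}(x)$ is assumed large.
\end{itemize}
Your angle/ratio statement does follow from this argument: take $n$ to be the nearest integer to $\pm(h_{\Ltil}(y)/h_{\Ltil}(x))^{1/2}$. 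But it has to be derived this way.

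Your fallback does not work. R\'emond's inequality \cite{Remond2005} concerns $\dim X+1$ points on a single subvariety $X$. Applied to the surface $C_1\times C_2$, it is a statement about triples of points of $C_1\times C_2$, not about one pair $(x,y)$ with $x\in C_1$, $y\in C_2$. The single-variety version only reaches the case $C_1=C_2$, which is exactly why \cite{D20} is needed.
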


\begin{proof}

We start by proving the following claim: if there exists an $\varepsilon \in (0,1/4)$ such that
\begin{equation*}
    h_{\widetilde{L}}([n](x)-y) < \varepsilon  (n^2 h_{\widetilde{L}}(x)+h_{\widetilde{L}}(y)),
\end{equation*}
then
\begin{equation}\label{equation::sandwich}
    \frac{1}{5} n^2h_{\widetilde{L}}(x) \leq h_{\widetilde{L}}(y) \leq 5n^2h_{\widetilde{L}}(x).
\end{equation}
Indeed, we have
\begin{align*}
  n^2 h_{\widetilde{L}}(x) &= h_{\widetilde{L}}([n](x) -y + y)  \\ &\le 2 h_{\widetilde{L}}([n](x) -y ) + 2  h_{\widetilde{L}}(y) \le 2 \varepsilon n^2 h_{\widetilde{L}}(x) + (2 + 2\varepsilon) h_{\widetilde{L}}(y),
\end{align*}
which implies 
$$
\frac{1}{5}n^2 h_{\widetilde{L}}(x) \leq \frac{1-2\varepsilon}{2+2\varepsilon}n^2 h_{\widetilde{L}}(x) \le h_{\widetilde{L}}(y).
$$
Analogously, one obtains 
$$
 h_{\widetilde{L}}(y) \leq \frac{2+2\varepsilon}{1-2\varepsilon} n^2h_{\widetilde{L}}(x) \leq 5 n^2h_{\widetilde{L}}(x),
$$
and we get the claim.

We next prove by contradiction that there exist constants $\oldconstant{c3}$, $\oldconstant{c4}$, and $\oldconstant{c5}$ such that, for all $(x,y) \in (C_1 \times C_2)(\IQbar)$, \eqref{hypothesis::5.4} implies (i). Indeed, if such constants do not exist, then for all real numbers $\oldconstant{c3}$, $\oldconstant{c4}$, and $\oldconstant{c5}$, there exists a point $(x,y) \in (C_1 \times C_2)(\IQbar)$ satisfying \eqref{hypothesis::5.4} and
\[ h_{\widetilde{L}}(x) > \oldconstant{c5}.\]
If we choose $\oldconstant{c3}$ small enough and $\oldconstant{c4}$ large enough, we can furthermore deduce from \eqref{equation::sandwich} that also
\[ h_{\widetilde{L}}(y) > \oldconstant{c5}.\]
We show that such a point violates the generalized Vojta-R\'emond inequality \cite[Theorem 1.1]{D20} if the constants are suitably chosen.

In the notation of \cite{D20}, we choose $m = 2$, $X_1 = C_1$, $X_2 = C_2$, and $\mathcal{X} = X_1 \times X_2$. Moreover, we set $\mathcal{L}_i = L|_{C_i}$ for $i = 1,2$. We further set $a_1 = n^2$, $a_2 = 1$, $\mathcal{P} = \mathcal{L}_1^{\otimes 2n^2} \otimes \mathcal{L}_2^{\otimes 2}$, $t_1 = 2$, $t_2 = 3$, and $\mathcal{M} = (\varphi_{n}^\ast L)|_{C_1 \times C_2}$. We recall from \cite{D20} that $\mathcal{N}_a = \mathcal{L}_1^{\otimes a_1} \otimes \mathcal{L}_2^{\otimes a_2}$ and fix a system $W= (W_0, \dots, W_{N_1})$ of $M = N_1 + 1 = N_2 +1$ homogeneous
coordinates (i.e., pullbacks of the standard coordinates on projective space with respect to some closed embedding) for $L$ which is a basis of $H^0(A,L)$ and, by restriction, yields systems of homogeneous coordinates $W^{(i)} = (W^{(i)}_0, \hdots, W^{(i)}_{N_i})$ for $\mathcal{L}_i$ ($i = 1,2$). Using Segre and Veronese embeddings, we also obtain a system of homogeneous coordinates for $\mathcal{P}$ whose elements are monomials of bidegree $t_1(a_1,a_2)=(2n^2,2)$ in the $W^{(i)}$.

Pulling back the homogeneous coordinates for $L$ along $\varphi_{-n}$, we obtain a family $Z$ of $M$ global sections of $(\varphi_{-n}^\ast L)|_{X_1 \times X_2} \approx \mathcal{P} \otimes \mathcal{M}^{\otimes(-1)}$ generating $\mathcal{P} \otimes \mathcal{M}^{\otimes(-1)}$. The isomorphism
\[ (\mathcal{P} \otimes \mathcal{M}^{\otimes(-1)}) \otimes \mathcal{M} \otimes \mathcal{N}_a \approx \mathcal{N}_a^{\otimes t_2}\]
leads to an injection
\[ (\mathcal{P} \otimes \mathcal{M}^{\otimes(-1)}) \hookrightarrow \mathcal{N}_a^{\otimes t_2}\]
induced by tensoring with $\varphi_{n}^\ast W_{i_0} \otimes (W^{(1)}_{i_1})^{\otimes a_1} \otimes W^{(2)}_{i_2}$ of $\mathcal{M} \otimes \mathcal{N}_a$ for some $i_0, i_1, i_2 \in \{0, \hdots, N_1\}$. We can choose $i_0, i_1, i_2$ so that the section $\varphi_{n}^\ast W_{i_0} \otimes (W^{(1)}_{i_1})^{\otimes a_1} \otimes W^{(2)}_{i_2}$ does not vanish at the point $(x,y)$ and we let $U^0 \subseteq C_1 \times C_2$ denote the complement of the zero locus of that section.

With $\omega = 0$, the necessary lower bound on the intersection number in \cite[Theorem 1.1]{D20} (for a suitable choice of $\theta$) follows from Lemma \ref{lemma::degreebound} if $Y_i = X_i$ ($i = 1,2$). If $Y_1$ or $Y_2$ is zero-dimensional, then the lower bound for the intersection number trivially follows from $[n]^\ast L \approx L^{\otimes n^2}$. The necessary upper bound on the height of the polynomials representing the elements of $Z$ follows from \cite[Proposition 5.2]{Remond2000}.

We note that, for $h_{\mathcal{N}_a}$ and $h_{\mathcal{M}}$ as defined in \cite{D20}, we have that
\[ |h_{\mathcal{N}_a}(x,y) - n^2h_{\Ltil}(x) - h_{\Ltil}(y)| \leq \newconstant\label{cRem}(n^2+1)\]
and
\[ |h_{\mathcal{M}}(x,y) - h_{\Ltil}([n](x)-y)| \leq \oldconstant{cRem}(n^2+1)\]
for some suitable constant $\oldconstant{cRem}$. Therefore, \eqref{hypothesis::5.4} together with
\[h_{\Ltil}(x) > \oldconstant{c5}\]
directly leads to a contradiction with \cite[Theorem 1.1]{D20} if $\oldconstant{c3}$ is chosen small enough and $\oldconstant{c4}$ and $\oldconstant{c5}$ are chosen sufficiently large.

Now, we can deduce (ii) from (i) and \eqref{equation::sandwich} after potentially enlarging $\oldconstant{c5}$.

For (iii), we note that (i), (ii), and \eqref{hypothesis::5.4} already imply
\begin{equation*}
    h_{\widetilde{L}}([n](x)-y) < 2\oldconstant{c3} \oldconstant{c5} n^2.
\end{equation*}
For every $\sigma \in \mathrm{Gal}(\IQbar/\mathbb{K}(y))$, we hence have
\begin{equation*}
    h_{\widetilde{L}}([n](\sigma(x))-y) < 2\oldconstant{c3} \oldconstant{c5} n^2
\end{equation*}
by the Galois invariance of the height. It follows that
\begin{equation*}
    n^2  h_{\widetilde{L}}(\sigma(x)-x)= h_{\widetilde{L}}([n](\sigma(x))-[n](x)) \leq 8\oldconstant{c3} \oldconstant{c5} n^2.
\end{equation*}
Each of the points $\sigma(x)-x$, $\sigma \in \mathrm{Gal}(\IQbar/\mathbb{K}(y))$, is contained in the curve $C_1 - x$ and has height $\leq 8 \oldconstant{c3} \oldconstant{c5}$. By a sufficiently uniform version of the Bogomolov conjecture \cite[Th\'eor\`eme 1.2]{David2002}, there exist constants $\newconstant\label{c8}$ and $\newconstant\label{c9}$, both independent of $x$, such that $C_1-x$ contains at most $\oldconstant{c8}$ points of height $\leq \oldconstant{c9}$. Thus, (iii) follows if we impose $\oldconstant{c3} \leq (8\oldconstant{c5})^{-1} \oldconstant{c9}$, which we may freely do.
\end{proof}

In the next proposition we show that Proposition \ref{proposition::21st_century_vojta} is true if we fix one of the two factors. This will allow us to exclude finitely many points of $C_1$ or $C_2$ as the $x$ or $y$ coordinate respectively.

Note that, in the proposition below, the exceptional sets $Z_2(y_0)$ and $Z_1(x_0)$ are allowed to depend on $y_0 \in C_2(\IQbar)$ and $x_0 \in C_1(\IQbar)$ respectively.

\begin{proposition}
\label{proposition::MordellLangplusepsilon}
    Consider two arbitrary points $x_0\in C_1(\IQbar), y_0 \in C_2(\IQbar)$. Then, there exist positive constants $\newconstant\label{c10}=\oldconstant{c10}(y_0)$, $\newconstant\label{c11}=\oldconstant{c11}(y_0) $, and a finite set $Z_2(y_0) \subseteq C_1(\IQbar)$ such that
    \begin{equation*}
        h_{\widetilde{L}}([n](x)-y_0) \geq \oldconstant{c10} (n^2 h_{\widetilde{L}}(x)+h_{\widetilde{L}}(y_0))
    \end{equation*}
    for all points $x \in C_1(\IQbar) \setminus Z_2(y_0)$ and all integers $n$ satisfying $|n| \geq \oldconstant{c11}(y_0)$. 
    
    Similarly, there exist positive constants $\newconstant\label{c12}=\oldconstant{c12}(x_0)$, $\newconstant\label{c13}=\oldconstant{c13}(x_0)$, and a finite set $Z_1(x_0) \subseteq C_2(\IQbar)$ such that
    \begin{equation*}
        h_{\widetilde{L}}([n](x_0)-y) \geq \oldconstant{c12} (n^2 h_{\widetilde{L}}(x_0)+h_{\widetilde{L}}(y))
    \end{equation*} 
    for all points $y \in C_2(\IQbar) \setminus Z_1(x_0)$ and all integers $n$ satisfying $|n| \geq \oldconstant{c13}(x_0)$. 
\end{proposition}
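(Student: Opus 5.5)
We argue for the first assertion; the second is symmetric. Fix $y_0 \in C_2(\IQbar)$. The idea is to compare $[n](x)-y_0$ with $[n](x)$ using the parallelogram law and Mordell--Lang (in the form of Proposition \ref{proposition::MordellLang}), treating separately points $x$ of small and of large height relative to $h_{\Ltil}(y_0)$.

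\emph{Large height.} By the triangle inequality for the Néron--Tate seminorm $\sqrt{h_{\Ltil}}$,
\[ \sqrt{h_{\Ltil}([n](x)-y_0)} \geq |n|\sqrt{h_{\Ltil}(x)} - \sqrt{h_{\Ltil}(y_0)}. \]
Suppose $n^2 h_{\Ltil}(x) \geq 16\, h_{\Ltil}(y_0)$. Then the right-hand side is at least $\tfrac34 |n|\sqrt{h_{\Ltil}(x)}$, so
\[ h_{\Ltil}([n](x)-y_0) \geq \tfrac{9}{16} n^2 h_{\Ltil}(x) \geq \tfrac{9}{32}\bigl(n^2 h_{\Ltil}(x) + h_{\Ltil}(y_0)\bigr) \]
in this range.

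\emph{Small height.} It remains to treat $n^2 h_{\Ltil}(x) < 16\, h_{\Ltil}(y_0)$. If $h_{\Ltil}(y_0)=0$, then $h_{\Ltil}(x)=0$, so $x$ is torsion. Since $C_1$ has finite stabilizer, Manin--Mumford \cite{Raynaud1983} shows that only finitely many such $x$ exist; we place these in $Z_2(y_0)$.

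Now let $h_{\Ltil}(y_0)>0$. Then $h_{\Ltil}(x) < 16 h_{\Ltil}(y_0)/n^2$, which tends to $0$ as $|n|$ grows. By the Bogomolov conjecture (Ullmo--Zhang, or the uniform version \cite{David2002}), the curve $C_1$, which has finite stabilizer and so is not a torsion coset, contains only finitely many points of height at most some $\epsilon_0 > 0$. Choose $\oldconstant{c11}(y_0)$ so that $16 h_{\Ltil}(y_0)/\oldconstant{c11}(y_0)^2 < \epsilon_0$. Then for $|n| \geq \oldconstant{c11}(y_0)$, every $x$ in this range lies in a finite set $F$ that is independent of $n$.

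For each $x \in F$ we still need the inequality for all large $|n|$. Here $x$ is fixed, and $q(n) = h_{\Ltil}([n](x)-y_0)$ is a quadratic polynomial in $n$, with leading coefficient $h_{\Ltil}(x)$.
\begin{itemize}
\item If $x$ is non-torsion, then $q(n) \geq \tfrac12 h_{\Ltil}(x) n^2 \gg n^2 h_{\Ltil}(x) + h_{\Ltil}(y_0)$ for $|n|$ large.
\item If $x$ is torsion, then $[n](x)$ takes finitely many values, and $q(n) \geq \min_t h_{\Ltil}(t - y_0) = h_{\Ltil}(y_0) > 0$, where $t$ runs over the torsion multiples of $x$. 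Since $n^2 h_{\Ltil}(x) = 0$, this gives the required bound.
\end{itemize}
Enlarging $\oldconstant{c11}(y_0)$ finitely many times handles all of $F$, so $F$ need not be added to $Z_2(y_0)$. Taking $\oldconstant{c10}$ to be the minimum of the finitely many constants obtained completes the first assertion.

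The second assertion, with $x_0$ fixed, is analogous. The large-height regime is now $h_{\Ltil}(y) \geq 16 n^2 h_{\Ltil}(x_0)$, where the triangle inequality again applies. In the complementary regime $y$ ranges over a set of height $\ll n^2$ that may grow with $n$, so the Bogomolov argument no longer works directly. Instead we use that $[n](x_0)-y$ is close to $0$ relative to $n^2$. Write $y = [n](x_0) - w$ with $h_{\Ltil}(w)$ small compared to $n^2 h_{\Ltil}(x_0)$. The points $y$ in question then lie near the cyclic group $\langle x_0 \rangle$ in the Néron--Tate norm, in the sense that they fall into a small cone around $\langle x_0\rangle\otimes\IR$. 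A Mordell--Lang-plus-epsilon statement (Rémond's effective version, or Poonen's theorem) bounds such points on $C_2$, which is not a coset, to a finite set; these form $Z_1(x_0)$. This cone step, getting finiteness uniformly in $n$ from a Mordell--Lang + $\varepsilon$ statement for the rank-one group $\langle x_0\rangle$, is the main obstacle, and we would invoke Rémond's Vojta-type inequality (via \cite{D20}) to make it precise.
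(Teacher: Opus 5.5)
Your proposal follows essentially the same route as the paper. The first part is the same quadraticity-plus-Bogomolov argument; your extra treatment of the finite set $F$ is correct but unnecessary, since the paper simply puts these points into $Z_2(y_0)$. In the second part, the paper likewise shows for non-torsion $x_0$ that failure of the inequality forces $h_{\Ltil}(y)\geq \frac{n^2}{5}h_{\Ltil}(x_0)$, so that $h_{\Ltil}([n](x_0)-y)$ is at most a small multiple of $h_{\Ltil}(y)$. The cone step you flag as the main obstacle is then closed in one line by Zhang's theorem on almost division points \cite[Theorem 1.3]{Zhang2000}, which is exactly the cone statement for $\langle x_0\rangle$ you need. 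Poonen's theorem would not suffice, since it only allows errors of bounded height while here the error can grow like $n^2$, and no appeal to \cite{D20} is required.
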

    
\begin{proof}
    For the first statement, we may assume without loss of generality that $n \neq 0$. Note that
    \begin{equation*}
        h_{\widetilde{L}}([n](x)-y_0) < \oldconstant{c10} (n^2  h_{\widetilde{L}}(x)+h_{\widetilde{L}}(y_0))
    \end{equation*}
    implies that 
    \begin{equation*}
        n^2   h_{\widetilde{L}}(x) =  h_{\widetilde{L}}([n](x)) \leq 2 \oldconstant{c10}  (n^2 h_{\widetilde{L}}(x)+h_{\widetilde{L}}(y_0)) + 2  h_{\widetilde{L}}(y_0),
    \end{equation*}
    which may be rewritten as
    \begin{equation*}
        n^2  (1-2\oldconstant{c10})  h_{\widetilde{L}}(x) \leq 2 (\oldconstant{c10} + 1)  h_{\widetilde{L}}(y_0).
    \end{equation*}
    By the Bogomolov conjecture \cite{Ullmo1998, Zhang1998}, there exist at most finitely many such points $x \in C_1(\IQbar)$ if $\oldconstant{c10}(y_0)$ is sufficiently small and $\oldconstant{c11}(y_0)$ is sufficiently large.
    
    For the second statement, we distinguish two cases according to whether $x_0$ is torsion or not. \textit{Assume first that $x_0$ is torsion.} Then, 
    \begin{equation*}
         h_{\widetilde{L}}([n](x_0)-y) =  h_{\widetilde{L}}(y) = n^2  h_{\widetilde{L}}(x_0)+ h_{\widetilde{L}}(y).
    \end{equation*}
    
    \textit{Assume next that $x_0$ is non-torsion.} Then,
    \begin{equation}
    \label{equation::wanna_cite}
        h_{\widetilde{L}}([n](x_0)-y) < \oldconstant{c12}  (n^2  h_{\widetilde{L}}(x_0)+ h_{\widetilde{L}}(y))
    \end{equation}
    would imply that 
    \begin{align*}
        h_{\widetilde{L}}(y) 
        &\geq
        \frac{1}{2}  h_{\widetilde{L}}([n](x_0)) - h_{\widetilde{L}}(y - [n](x_0)) \\
        &> \left(\frac{1}{2} - \oldconstant{c12}\right) n^2  h_{\widetilde{L}}(x_0) -  \oldconstant{c12}  h_{\widetilde{L}}(y).
    \end{align*}
    Assuming that $\oldconstant{c12}<1/4$, we obtain a lower bound
    \begin{equation*}
        h_{\widetilde{L}}(y) \geq \frac{1-2\oldconstant{c12}}{2+2\oldconstant{c12}}  n^2  h_{\widetilde{L}}(x_0) \geq \frac{n^2}{5}  h_{\widetilde{L}}(x_0).
    \end{equation*}
    It follows that \eqref{equation::wanna_cite} implies that
    \begin{equation*}
        h_{\widetilde{L}}([n](x_0) - y) < 6 \oldconstant{c12}  h_{\widetilde{L}}(y).
    \end{equation*}
    In contrast, \cite[Theorem 1.3]{Zhang2000} states that there exist positive constants $\newconstant\label{c14}=\oldconstant{c14}(x_0)$ and $\newconstant\label{c15}=\oldconstant{c15}(x_0) $ such that 
    \begin{equation*}
        h_{\widetilde{L}}([n](x_0)-y) \leq \oldconstant{c14}  h_{\widetilde{L}}(y) + \oldconstant{c15}
    \end{equation*}
    for at most finitely many $y \in C_2(\IQbar)$. Therefore, it suffices to ensure that $\oldconstant{c12}< \oldconstant{c14}/6$ for finiteness.
\end{proof}

In the following, we write $C_{i,0}$ for the open curve $C_i^{\mathrm{sm}} \setminus |\Div(\mathbf{D}_i)|$ ($i = 1,2$). Note that this means we also remove points where the tangent field $\mathbf{D}_i$ is zero. Proposition \ref{proposition::MordellLangplusepsilon} allows us in particular to restrict ourselves to smooth points $y \in C_{2,0}(\IQbar)$ in the proof of Proposition \ref{proposition::21st_century_vojta}. Invoking Proposition \ref{proposition::MordellLangplusepsilon} once more, we can additionally restrict ourselves to smooth points $x \in C_{1,0}(\IQbar)$. To do these restrictions, it suffices to choose
\[ \oldconstant{c1} \leq \min\{\oldconstant{c10}(y_0),\oldconstant{c12}(x_0)\} \quad \text{and} \quad \oldconstant{c2} \geq \max\{\oldconstant{c11}(y_0),\oldconstant{c13}(x_0)\}\]
and enlarge the finite exceptional set $Z$ by $(Z_2(y_0) \times \{y_0\}) \cup (\{x_0\} \times Z_1(x_0))$ for all $x_0 \in (C_1 \backslash C_{1,0})(\IQbar)$ and all $y_0 \in (C_2 \backslash C_{2,0})(\IQbar)$.

\subsection{Construction of a small section}

We use Lemma \ref{lemma::minkowski} to construct a section with a small supremum norm at a fixed place $\mu \in \Sigma(\mathbb{K})$. Note that $\mu$ can be both archimedean or non-archimedean.

	\begin{lemma}\label{lem::existence_s}
		There exist constants $\newconstant\label{c16}, \newconstant\label{c17} > 0$ such that the following assertion holds true. For every integer $|n| \geq \oldconstant{c17}$, there exist an integer $M = M(n) > 0$ and a section $\mathbf{s}_n \in H^0(C_1 \times C_2, (L_n^{\otimes M})|_{C_1 \times C_2})$ such that
		\begin{equation}
            \label{equation::height_distinguished_place}
			\sup_{p \in C_1(\IC_\mu) \times C_2(\IC_\mu)} \log \Vert \mathbf{s}_n(p) \Vert_{\mu}^{1/M} < - \oldconstant{c16} n^2
		\end{equation}
		and
		\begin{equation}
            \label{equation::height_other_places}
			\sup_{p \in C_1(\IC_\nu) \times C_2(\IC_\nu)} \log \Vert \mathbf{s}_n(p) \Vert_\nu^{1/M} \leq 0
		\end{equation}
		for all places $\nu \in \Sigma(\IK) \setminus \{ \mu \}$. 
	\end{lemma}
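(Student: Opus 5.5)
The plan is to apply Lemma \ref{lemma::minkowski} on $X = C_1 \times C_2$ (dimension $d=2$) to the metrized line bundle $\Ltil_n|_{C_1\times C_2}$. The bundle has to be twisted at $\mu$ by a constant so that its arithmetic volume becomes positive of order $n^2 \cdot (L_n^2\cdot[S])$. The input is Proposition \ref{proposition::heightbound}: under our standing assumptions (no common proper coset, finite stabilizers), it gives $h_{\Ltil_n}(S) \geq \oldconstant{3.4}^{-1} n^2$ for $|n| \geq \oldconstant{3.5}$. Equivalently,
\[ \Ltil_n^3\cdot[S] \geq 3[\IK:\IQ]\,\oldconstant{3.4}^{-1} n^2\, (L_n^2\cdot[S]). \]
Note that $L_n|_S$ is big and nef: it is nef as a pullback of an ample bundle, and big by Lemma \ref{lemma::degreebound}, which gives $L_n^2\cdot[S] \geq \oldconstant{c_degbnd}^{-1}n^2 > 0$.

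\textbf{Step 1 (from self-intersection to volume).} Since $\Ltil_n$ is semipositive, Lemma \ref{lemma::siu} with $\Mtil$ trivially metrized gives $\volh_\chi(\Ltil_n|_S) \geq \Ltil_n^3\cdot[S]$. If one prefers to avoid the degenerate case of that lemma, use instead the standard fact that for nef metrized bundles $\volh_\chi \geq \Ltil^{d+1}$, which is Yuan's theorem with $\Mtil=0$.

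\textbf{Step 2 (the twist).} Since $\mu$ may be non-archimedean, I would rather not twist at all. Lemma \ref{lemma::minkowski} at the place $\mu$ directly yields $M\geq1$ and $\mathbf{s}_n \in H^0(S, L_n^{\otimes M})$ with
\[ \delta_\mu \log\Vert\mathbf{s}_n\Vert_\mu \leq \Bigl(-\frac{\volh_\chi(\Ltil_n|_S)}{3L_n^2\cdot[S]}+\varepsilon\Bigr)M \]
and $\log\Vert\mathbf{s}_n\Vert_\nu\leq 0$ for all other $\nu$. Inserting Step 1 and the height bound gives
\[ \frac{1}{M}\delta_\mu\log\Vert\mathbf{s}_n\Vert_\mu \leq -[\IK:\IQ]\,\oldconstant{3.4}^{-1}n^2+\varepsilon. \]
Choosing, say, $\varepsilon = 1$ and $|n|$ large, this is $< -\oldconstant{c16}n^2$ with $\oldconstant{c16} = [\IK:\IQ]\oldconstant{3.4}^{-1}/4$ (using $\delta_\mu\leq 2$). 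We take $\oldconstant{c17}\geq\oldconstant{3.5}$ large enough that $\varepsilon$ is absorbed. The supremum over $C_1(\IC_\mu)\times C_2(\IC_\mu)$ is exactly the sup norm appearing in Lemma \ref{lemma::minkowski}, which gives both \eqref{equation::height_distinguished_place} and \eqref{equation::height_other_places}.

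\textbf{Main obstacle.} The substantive input is entirely Proposition \ref{proposition::heightbound}. What remains delicate here is only checking that the volume comparison $\volh_\chi\geq\Ltil^{3}$ applies to the canonical, non-algebraic, metric on $L_n|_S$ via the approximation in Lemma \ref{lemma::canonicalmetrics}(ii). I would handle this by applying Lemma \ref{lemma::siu} to $\Ltil_n$ and a trivially metrized $\Otil_S(0)$, which is semipositive. If Lemma \ref{lemma::minkowski} needs integral rather than $\IQ$-bundles, no issue arises, since $\Ltil_n$ is an honest metrized line bundle.
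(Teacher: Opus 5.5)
Your proof is correct and follows the paper's own route. The paper's proof likewise combines Lemma \ref{lemma::minkowski}, Lemma \ref{lemma::siu} (with trivial $\Mtil$, giving $\volh_\chi(\Ltil_n|_{C_1\times C_2}) \geq \Ltil_n^3\cdot[C_1\times C_2]$), the lower bound of Proposition \ref{proposition::heightbound}, and bigness of $L_n|_{C_1\times C_2}$; it derives bigness from generic finiteness (Lemma \ref{lem:genfinite}) where you use Lemma \ref{lemma::degreebound}, which rests on the same fact. As you yourself conclude, no twist at $\mu$ is needed.
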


	\begin{proof}
		This follows from Lemmas \ref{lemma::minkowski} and \ref{lemma::siu} and Proposition \ref{proposition::heightbound}. Note that the line bundle $(L_n)|_{C_1 \times C_2}$ is big for all $n \in \mathbb{Z} \backslash \{0\}$ because of Lemma \ref{lem:genfinite}.
	\end{proof}

    Using \eqref{equation::height_section}, we infer that
	\begin{equation}
        \label{equation::height_lower_bound}
		h_{\Ltil}([n](x)-y)=h_{\Ltil_n}(p) > \frac{\delta_\mu \oldconstant{c16}}{[\IK:\IQ]}  n^2
	\end{equation}
	for all $p=(x,y)\not\in |\Div(\mathbf{s}_n)|$. 
    
    Now, if \eqref{equation::height_bound_21st_century_vojta} were false, we could apply Proposition \ref{proposition::known_bounds} and get the two bounds (i) and (ii) there. These, in combination with \eqref{equation::height_lower_bound}, give \eqref{equation::height_bound_21st_century_vojta} (for a suitable choice of $\oldconstant{c1}$), which must therefore hold for all $p=(x,y)\not\in |\Div(\mathbf{s}_n)|$ and $n$ large enough.
       
    However, the section $\mathbf{s}_n$ and hence its support $|\Div(\mathbf{s}_n)|$ depend on $n$, so there remain infinitely many points $p=(x,y)$ to be dealt with.

	\subsection{The index}

    To conclude our proof, we are going to make a case distinction for the remaining points, depending on their ``order of vanishing''.
    
    To ease notation,  we write
    \begin{equation*}
        \mathbf{D}^{i_1,i_2}= \frac{1}{i_1! i_2!} (\mathbf{D}_1^{i_1},0) \circ (0,\mathbf{D}_2^{i_2})
    \end{equation*}
    for all integers $i_1,i_2 \geq 0$. For each section $\mathbf{s}_n$ constructed in the previous subsection, we define its index
    \begin{equation*}
		\mathbf{I}(\mathbf{s}_n,p)=\min \left\{ \frac{i_1}{M(n)n^2} + \frac{i_2}{M(n)} \ \middle\vert \ 
        \mathbf{D}^{i_1,i_2}\mathbf{s}_{n}(p) \neq 0 \right\}
	\end{equation*}
    at a point $p=(x,y) \in (C_{1,0}\times C_{2,0})(\IQbar)$ as in Subsection \ref{subsection::indices} and with parameters $(d_1,d_2)=(M(n)n^2,M(n))$. Recall that this definition requires a choice of trivialization, but that the index $\mathbf{I}(\mathbf{s}_n,p)$ itself does not depend on any specific such choice so that it is a well-defined quantity. 

    The following lemma relates lower bounds on the index to more geometric statements. Abusing notation, we write $\mathfrak{m}_{C_1,x}$ and $\mathfrak{m}_{C_2,y}$ also for the extension of the ideals $\mathfrak{m}_{C_1,x}$ and $\mathfrak{m}_{C_2,y}$ to $\mathcal{O}_{C_1\times C_2,p}$ via the natural ring extensions $\mathcal{O}_{C_1,x}\subseteq \mathcal{O}_{C_1\times C_2,p}$ and $\mathcal{O}_{C_2,y} \subseteq \mathcal{O}_{C_1\times C_2,p}$.
    \begin{lemma}
        \label{lemma::index}
        Let $(i_1,i_2) \in \IZ_{\geq 0}^2$ and suppose that the index $\mathbf{I}(\mathbf{s}_n,p)$ is
        \[\geq \frac{i_1}{M(n)n^2} + \frac{i_2}{M(n)}.\] Then the section $\mathbf{s}_{n}$ is contained in \[(\mathfrak{m}_{C_1,x}^{i_1} \cdot \mathfrak{m}_{C_2,y}^{i_2} + \mathfrak{m}_{C_1,x}^{i_1+1} + \mathfrak{m}_{C_2,y}^{i_2+1})  L_{n,p}^{\otimes M(n)}.\]
    \end{lemma}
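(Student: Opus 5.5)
We will reduce the lemma to a statement about power series in two local parameters. Since $p=(x,y)$ lies in $C_{1,0}\times C_{2,0}$, both $x$ and $y$ are smooth points and the tangent fields $\mathbf{D}_1$, $\mathbf{D}_2$ are regular and non-vanishing there. As the value of $\mathbf{D}_0 = X_0(\del/\del X_0)$ on the coordinate function $\pi_j^\ast(X_1/X_0)$ is a unit near $x$ (resp.\ $y$), we can choose a local parameter $u$ of $C_1$ at $x$ and $v$ of $C_2$ at $y$ with $\mathbf{D}_1 u$ and $\mathbf{D}_2 v$ units at $x$ and $y$. Fix a trivialization of $L_n^{\otimes M(n)}$ near $p$, so $\mathbf{s}_n$ corresponds to a regular function $f\in\mathcal{O}_{C_1\times C_2,p}$. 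We pass to the completion $\widehat{\mathcal{O}}_{C_1\times C_2,p}\cong \IQbar[[u,v]]$. The membership to be shown can then be checked there, by faithful flatness of completion and since $\mathfrak{m}_{C_1,x}$, $\mathfrak{m}_{C_2,y}$ extend to $(u)$, $(v)$. The goal becomes $f\in (u^{i_1}v^{i_2}, u^{i_1+1}, v^{i_2+1})$. Equivalently, the coefficient of $u^{j_1}v^{j_2}$ in $f$ must vanish for all $(j_1,j_2)$ with $j_1\le i_1$, $j_2\le i_2$ and $(j_1,j_2)\ne(i_1,i_2)$.

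The key step is to relate these Taylor coefficients to the derivatives $\mathbf{D}^{j_1,j_2}f(p)$. Since $(\mathbf{D}_1,0)$ only differentiates in the $C_1$-direction, on $\IQbar[[u,v]]$ it acts as $a(u)\,\del/\del u$ with $a(0)\neq 0$, and similarly $(0,\mathbf{D}_2)$ acts as $b(v)\,\del/\del v$ with $b(0)\ne0$; these commute. By induction one sees that $\mathbf{D}_1^{j}$ equals $a(0)^j \del_u^j$ plus a combination of lower-order operators $\del_u^{l}$ ($l<j$) with coefficients in $\IQbar[[u]]$, and likewise for $\mathbf{D}_2$. Hence the matrix expressing $\bigl(\mathbf{D}^{j_1,j_2}f(p)\bigr)_{j_1\le J_1,j_2\le J_2}$ in terms of the Taylor coefficients $\bigl(c_{l_1,l_2}\bigr)_{l_1\le J_1,l_2\le J_2}$ is triangular with respect to the product partial order, with non-zero diagonal entries $a(0)^{j_1}b(0)^{j_2}$.

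Now the index hypothesis says that $\mathbf{D}^{j_1,j_2}f(p)=0$ whenever $\frac{j_1}{M(n)n^2}+\frac{j_2}{M(n)}<\frac{i_1}{M(n)n^2}+\frac{i_2}{M(n)}$. This holds in particular for every $(j_1,j_2)\le (i_1,i_2)$ componentwise with $(j_1,j_2)\neq(i_1,i_2)$. This set of indices is a lower set in the product order. By triangularity we can therefore invert on this set and conclude that the corresponding $c_{j_1,j_2}$ all vanish, which is the claim.

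The only delicate point is the bookkeeping of this triangular inversion, that is, checking that only indices that are componentwise smaller enter. This follows from the product structure: the $u$- and $v$-operators act on separate variables, so the two-variable statement reduces to the one-variable one in each variable.
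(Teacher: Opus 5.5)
Your proposal is correct and is essentially the paper's proof: pass to $\widehat{\mathcal{O}}_{C_1\times C_2,p}\cong\IQbar[[t_1,t_2]]$, write $\mathbf{D}^{1,0}$ and $\mathbf{D}^{0,1}$ as unit multiples of $\partial/\partial t_1$ and $\partial/\partial t_2$, and invert the triangular relation between the values $\mathbf{D}^{j_1,j_2}f(p)$ and the Taylor coefficients on the lower set $\{(j_1,j_2)\le(i_1,i_2)\}\setminus\{(i_1,i_2)\}$. The paper phrases this inversion as an induction on $j_1+j_2$, and you are more explicit than it is about descending the ideal membership from the completion. The detour through $\mathbf{D}_0$ and $\pi_j^\ast(X_1/X_0)$ is unnecessary: since $x\notin|\Div(\mathbf{D}_1)|$, the field $\mathbf{D}_1$ is non-vanishing at $x$, so $\mathbf{D}_1u$ is a unit for every local parameter $u$ at $x$.
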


    \begin{proof}
    We choose a trivialization $\psi: \mathcal{O}_U \rightarrow L_n^{\otimes M(n)}|_U$ on a neighborhood $U \subseteq C_1 \times C_2$ of $p$. Via this trivialization, we can identify the stalk $L_{n,p}^{\otimes M(n)}$ with the local ring $\mathcal{O}_{C_1 \times C_2,p}$. As $p$ is a smooth point of $C_1 \times C_2$, the completion $\widehat{\mathcal{O}}_{C_1 \times C_2,p}$ is a ring of formal power series $\IQbar[[t_1,t_2]]$, where $t_1 \in \mathcal{O}_{C_1,x}$ is a local uniformizer at $x$ and $t_2 \in \mathcal{O}_{C_2,y}$ is a local uniformizer at $y$ (see \cite[\href{https://stacks.math.columbia.edu/tag/0C0S}{Tag 0C0S} and its proof]{stacksProjectAuthors2015}). Moreover, the completions $\widehat{\mathcal{O}}_{C_1,x}$ and $\widehat{\mathcal{O}}_{C_2,y}$ equal the subrings $\IQbar[[t_1]]$ and $\IQbar[[t_2]]$, respectively. 

        The local rings $\mathcal{O}_{C_1,x}$, $\mathcal{O}_{C_2,y}$, and $\mathcal{O}_{C_1\times C_2,p}$ inject into their respective completions. In this way, we can consider the function $f_n = \psi^{-1}(\mathbf{s}_n)$ as a power series $\sum_{j_1,j_2} \gamma_{j_1,j_2} t^{j_1}_1 t^{j_2}_2 \in \IQbar[[t_1,t_2]]$. The tangent fields $\mathbf{D}^{1,0}$ and $\mathbf{D}^{0,1}$ induce derivations on $\widehat{\mathcal{O}}_{C_1 \times C_2,p}$, which we denote with the same symbols. As $x \notin |\Div(\mathbf{D}_1)|$ and $y \notin |\Div(\mathbf{D}_2)|$, there exist functions $g_1 \in \mathcal{O}_{C_1,x}^\times$ and $g_2 \in \mathcal{O}_{C_2,y}^\times$ such that $\mathbf{D}^{1,0} = g_1 \cdot \del/\del t_1$ and $\mathbf{D}^{0,1} = g_2 \cdot \del/\del t_2$ for the induced derivations.

        We prove by induction on $j_1 + j_2$ that the coefficient $\gamma_{j_1,j_2}$ is zero whenever $j_1 \leq i_1$, $j_2 \leq i_2$, and $(j_1,j_2) \neq (i_1,i_2)$. The base case $j_1 + j_2 = 0$ is clear since $\gamma_{0,0} = \mathbf{D}^{0,0} f_n(p) = f_n(p)$. For the inductive step, it follows from the inductive hypothesis and the Leibniz rule that
        \[ \gamma_{j_1,j_2} = g_1(x)^{-j_1}g_2(y)^{-j_2}\mathbf{D}^{j_1,j_2}f_n(p).\]
        This equals zero as
        \[\mathbf{I}(\mathbf{s}_n,p) 
        \geq \frac{i_1}{M(n)n^2} + \frac{i_2}{M(n)}
        > \frac{j_1}{M(n)n^2} + \frac{j_2}{M(n)}.\qedhere\]
    \end{proof}
    
    The remainder of the proof of Proposition \ref{proposition::21st_century_vojta} consists of a case distinction according to the size of the index $\mathbf{I}(\mathbf{s}_n,p)$. The already completed case $p \notin |\Div(\mathbf{s}_n)|$ corresponds to $\mathbf{I}(\mathbf{s}_n,p)=0$. 
    
    Let $\varepsilon > 0$ be a sufficiently small constant to be specified later. In Subsection \ref{subsection::small_index}, we consider points $p \in (C_{1,0} \times C_{2,0})(\IQbar)$ with index $\mathbf{I}(\mathbf{s}_n,p)< \varepsilon$ (``small index''). In general, we obtain \eqref{equation::height_bound_21st_century_vojta} by using a rational section $\mathbf{s}_n^{\prime}$ of $L_n$ that does not vanish at $p$. The section $\mathbf{s}^\prime_n$ is obtained by deriving $\mathbf{s}_n$ in some trivialization of $L_n$. We fix $\varepsilon$ at the end of Subsection \ref{subsection::small_index}.
    
    In Subsection \ref{subsection::large_index}, we consider the situation $\mathbf{I}(\mathbf{s}_n,p)\geq \varepsilon$ (``large index'') and show that there exists a finite set $Z \subseteq (C_{1,0} \times C_{2,0})(\IQbar)$ containing all points $p$ satisfying $\mathbf{I}(\mathbf{s}_n,p)\geq \varepsilon$ for any integer $n$ that is sufficiently large in terms of $\varepsilon$. This will complete the proof of Proposition \ref{proposition::21st_century_vojta}.

	\subsection{Models, coverings, and trivializations}
    \label{subsection::small_index_prelim}
    
    In order to study points $p = (x,y)$ where the section $\mathbf{s}_n$ has small index, we introduce models of $A \times A$ over $\mathcal{O}_\IK$. We start with a proper model $(\mathcal{A},\mathcal{L})$ of $(A,L^{\otimes e})$ over $\caO_\IK$. Recall that any such model induces model metrics $\Vert \cdot \Vert_{\mathcal{L},\nu}$, $\nu \in \Sigma_{f}(\IK)$, on $L$. By Lemma \ref{lemma::canonicalmetrics} (iv), we can choose a specific model $(\mathcal{A},\mathcal{L})$ of $(A,L^{\otimes e})$ for some positive integer $e$ such that
    \begin{equation}\label{equation::model_error_small_nu}
        \left\vert \log \Vert \cdot \Vert_{\Ltil,\nu} - \log \Vert \cdot \Vert_{\mathcal{L},\nu} \right\vert
        \leq \oldconstant{c20}{}_{,\nu}
    \end{equation}    
    for a collection $\newconstant{}_{,\nu}\label{c20}$, $\nu \in \Sigma_f(\IK)$, of constants such that \begin{equation}\label{equation::model_error_small}
        \sum_{\nu \in \Sigma_f(\IK)} \oldconstant{c20}{}_{,\nu} \leq \frac{\oldconstant{c16}}{32[\IK:\IQ]}
    \end{equation}
    and $\oldconstant{c20}{}_{,\nu} = 0$ for all but finitely many places $\nu \in \Sigma_f(\IK)$. Up to replacing $(\mathbf{s}_n,M(n))$ by $(\mathbf{s}_n^{\otimes e},eM(n))$, we can and will assume that $e$ divides $M(n)$.
    
    Our main interest below is in the line bundle
    \begin{equation*}
        L_n \otimes L_{-n} \approx \pr_{1}^{\ast} L^{\otimes 2n^2} \otimes \pr_2^\ast L^{\otimes 2} =: M_n
    \end{equation*}
    on $A \times A$ and its model
    \begin{equation*}
        \pr_{1}^{\ast} \mathcal{L}^{\otimes 2n^2} \otimes \pr_2^\ast \mathcal{L}^{\otimes 2} =: \mathcal{M}_n
    \end{equation*}
    over $\mathcal{A} \times \mathcal{A}$. We write $\lVert \cdot \rVert_{\Mtil_n,\nu}$ for the $\nu$-metric of the metrized line bundle $\pr_1^\ast\Ltil^{\otimes 2n^2} \otimes \pr_2^\ast \Ltil^{\otimes 2}$. It follows from \eqref{equation::model_error_small_nu} that
    \begin{equation*}
        \left\vert \log \Vert \cdot \Vert_{\mathcal{M}_n,\nu} - \log \Vert \cdot \Vert_{\widetilde{M}_n,\nu} \right\vert \leq \newconstant{}_{,\nu}\label{c22}n^2
    \end{equation*}
    with $\oldconstant{c22}{}_{,\nu} = 4\oldconstant{c20}{}_{,\nu}$.

    Note that there is also another way to metrize $M_n$, namely as $\Ltil_n \otimes \Ltil_{-n}$. The two metrized line bundles one obtains in this way are homothetic by Lemma \ref{lem:formulaforln}. Writing $\lVert \cdot \rVert_\nu$ and $\lVert \cdot \rVert_{\Ltil_{-n},\nu}$ for the $\nu$-metrics of $\Ltil_n$ and $\Ltil_{-n}$ respectively, we find that there exists $\gamma_n \in \IK^\times$ and $r_n \in \IZ_{>0}$ such that
    \[ \log \Vert \cdot \Vert_\nu + \log \Vert \cdot \Vert_{\Ltil_{-n},\nu} = \frac{1}{r_n}\log |\gamma_n|_\nu + \log \Vert \cdot \Vert_{\Mtil_{n},\nu}\]
    for all $\nu \in \Sigma(\IK)$. Below, we will also use $\Vert \cdot \Vert_\nu$ to denote the $\nu$-metric of positive tensor powers of $\Ltil_n$.
    
    We choose a covering of $\mathcal{A} \times \mathcal{A}$ by finitely many affine opens $\mathcal{U}_\iota$ indexed by $\iota \in I$. We can furthermore assume that each $\mathcal{U}_\iota$ is of the form $\mathcal{V}_{\iota,1} \times \mathcal{V}_{\iota,2}$ for affine opens $\mathcal{V}_{\iota,1}, \mathcal{V}_{\iota,2} \subseteq \mathcal{A}$ such that $\mathcal{L}$ trivializes on both $\mathcal{V}_{\iota,1}$ and $\mathcal{V}_{\iota,2}$. In this way, we obtain trivializations $\psi_{n,\iota} : \caO_{\mathcal{U}_\iota} \rightarrow \mathcal{M}_{n}|_{\mathcal{U}_\iota}$. Furthermore, we fix closed embeddings $\vartheta_{\iota,\kappa}: \mathcal{V}_{\iota,\kappa} \hookrightarrow \IA_{\mathcal{O}_\IK}^{n_{\iota,\kappa}}$ ($\kappa \in \{1,2\}$) and set    
    \begin{equation*}
        \vartheta_\iota = \vartheta_{\iota,1} \times \vartheta_{\iota,2}: \mathcal{V}_{\iota,1} \times \mathcal{V}_{\iota,2} \hookrightarrow \IA_{\mathcal{O}_\IK}^{n_{\iota,1}} \times \IA_{\mathcal{O}_\IK}^{n_{\iota,2}}
    \end{equation*}
    for every $\iota \in I$. We write $U_\iota$ for the generic fiber of $\mathcal{U}_\iota$ and $V_{\iota,\kappa}$ for the generic fiber of $\mathcal{V}_{\iota,\kappa}$ ($\kappa \in \{1,2\}$). By the definition of the model metrics, we have 
    \begin{equation}
        \label{equation::metric_comparison}
        \left\vert \log \vert \psi_{n,\iota}^{-1}(\mathbf{t})(q) \vert_\nu - \log \Vert \mathbf{t}(q) \Vert_{\widetilde{M}_n^{\otimes e},\nu} \right\vert \leq \oldconstant{c22}{}_{,\nu}en^2
    \end{equation}
    for every non-vanishing section $\mathbf{t}$ of $M_n^{\otimes e}$ over $U_\iota$ and every $q \in U_\iota(\IKbar_\nu)$ whose reduction modulo $\nu$ belongs to $\mathcal{U}_\iota$.

    We need an analog of this estimate for an arbitrary archimedean place $\nu \in \Sigma_\infty(\IK)$ as well. This is delegated to the next lemma. For $n \in \IZ_{> 0}$, $w = (w_1, \hdots, w_n) \in \overline{\IK}_\nu^n$, and $r > 0$, we consider the open polydisc
    \begin{equation*}
        D_{n}(r,w) = \left\{ (z_1,\dots,z_{n})\in \IA^{n}(\overline{\IK}_\nu) \mid \max\{|z_1-w_1|,\dots,|z_{n}-w_n|\} < r \right\}
        \end{equation*}
    as well as the closed polydisc
    \begin{equation*}
        \overline{D}_{n}(r,w) = \left\{ (z_1,\dots,z_{n})\in \IA^{n}(\overline{\IK}_\nu) \mid \max\{|z_1-w_1|,\dots,|z_{n}-w_n|\} \leq r \right\}.
    \end{equation*}
    If $w = (0,0,\dots,0) \in \overline{\IK}_\nu^n$, we also write $D_n(r)$ and $\overline{D}_n(r)$ instead of $D_n(r,0)$ and $\overline{D}_n(r,0)$ respectively.

    \begin{lemma}\label{lemma::trivialization_error}
        For every $\rho > 0$, there exist constants $\newconstant\label{c23}_{,\nu}(\rho)> 0$ and $\newconstant\label{ca}_{,\nu}(\rho)> 0$ ($\nu \in \Sigma_\infty(\IK)$) such that 
        \[ \sum_{\nu \in \Sigma_\infty(\IK)}{\delta_\nu\oldconstant{c23}{}_{,\nu}(\rho)} < \frac{\oldconstant{c16}}{4[\IK:\IQ]}\]
        and, for every index $\iota \in I$ and every $n \in \IZ \backslash \{0\}$, we have
        \begin{equation*}
            \left\vert \log \lambda_{n,\iota,\nu}(q^\prime) - \log \lambda_{n,\iota,\nu}(q) \right\vert \leq \oldconstant{c23}{}_{,\nu}(\rho) n^2
        \end{equation*}
        for all points
        \begin{equation*}
            q = (u,v) \in \vartheta_{\iota}^{-1}(D_{n_1+n_2}(\rho))
        \end{equation*}
        and all points
        \begin{equation*}
            q^\prime \in \vartheta_{\iota}^{-1}(\overline{D}_{n_1}(\oldconstant{ca}{}_{,\nu}(\rho),u) \times \overline{D}_{n_2}(\oldconstant{ca}{}_{,\nu}(\rho),v)),
        \end{equation*}
        where $\lambda_{n,\iota,\nu}$ is locally defined by
        \[\frac{\vert \psi^{-1}_{n,\iota}(\mathbf{t}^{\otimes e}) \vert^{1/e}_{\nu}}{\Vert \mathbf{t} \Vert_{\Mtil_n,\nu}} \]
        for any non-vanishing local section $\mathbf{t}$ of $M_n$ (and $\lambda_{n,\iota,\nu}$ is independent of the choice of $\mathbf{t}$).
    \end{lemma}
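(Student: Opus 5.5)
The plan is to reduce the statement, via the product structure of $\mathcal{M}_n$, to an $n$-independent estimate for the archimedean metric of $L$ itself. By construction, $\psi_{n,\iota}$ is the tensor product of $\pr_1^\ast$ of a trivialization $\psi_{\iota,1}$ of $\mathcal{L}$ on $\mathcal{V}_{\iota,1}$, raised to the power $2n^2$, and $\pr_2^\ast$ of a trivialization $\psi_{\iota,2}$ of $\mathcal{L}$ on $\mathcal{V}_{\iota,2}$, squared. The metric $\Vert\cdot\Vert_{\Mtil_n,\nu}$ is likewise the product metric $\pr_1^\ast\Vert\cdot\Vert_\nu^{2n^2}\otimes\pr_2^\ast\Vert\cdot\Vert_\nu^{2}$. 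Taking $\mathbf{t}=\pr_1^\ast\mathbf{t}_1^{\otimes 2n^2}\otimes\pr_2^\ast\mathbf{t}_2^{\otimes 2}$ locally, we get
\[
\log\lambda_{n,\iota,\nu}(u,v)=2n^2\,\log\lambda_{\iota,1,\nu}(u)+2\,\log\lambda_{\iota,2,\nu}(v),
\]
where $\lambda_{\iota,\kappa,\nu}=|\psi_{\iota,\kappa}^{-1}(\mathbf{t}_\kappa^{\otimes e})|_\nu^{1/e}/\Vert\mathbf{t}_\kappa\Vert_\nu$ on $V_{\iota,\kappa}(\IC_\nu)$. Independence of $\mathbf{t}$ follows from property (iii) of metrics, since both numerator and denominator scale by $|f|_\nu$. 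It therefore suffices to show that each $\log\lambda_{\iota,\kappa,\nu}$ varies by at most $c_\nu(\rho)/4$ between $w$ and $w'$ whenever $\vartheta_{\iota,\kappa}(w)\in D_{n_{\iota,\kappa}}(\rho)$ and $\vartheta_{\iota,\kappa}(w')\in\overline{D}_{n_{\iota,\kappa}}(c'_\nu(\rho),\vartheta_{\iota,\kappa}(w))$. Since $|n|\ge 1$, this bound gives a total of at most $(2n^2+2)c_\nu/4\le c_\nu n^2$.

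For the one-factor estimate, note that $\lambda_{\iota,\kappa,\nu}$ is a positive continuous function on $V_{\iota,\kappa}(\IC_\nu)$, and in fact a smooth one by Lemma \ref{lemma::metrics_smooth}, because $\psi_{\iota,\kappa}$ is a nowhere vanishing algebraic trivialization there. Since $\vartheta_{\iota,\kappa}$ is a closed embedding, the set $K=\vartheta_{\iota,\kappa}^{-1}(\overline{D}(\rho+1))$ is compact in $V_{\iota,\kappa}(\IC_\nu)$. Hence $\log\lambda_{\iota,\kappa,\nu}$ is uniformly continuous on $K$ with respect to the max-distance of the embedded coordinates, and indeed Lipschitz there after extending it to a neighbourhood. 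Take $c'_\nu\le 1$ with $c'_\nu$ small enough that the oscillation of $\log\lambda_{\iota,\kappa,\nu}$ over pairs in $K$ at coordinate distance $\le c'_\nu$ is below $c_\nu/4$. The points $w'$ in question lie in $K$, because their coordinates lie within $\rho+c'_\nu\le\rho+1$ of the origin. There are finitely many $\iota,\kappa$, so we take minima over them.

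We are free to choose the targets $c_\nu(\rho)$ beforehand, for instance $c_\nu=\oldconstant{c16}/(8[\IK:\IQ]\sum_{\nu'}\delta_{\nu'})$, which guarantees the required sum bound. Then $c'_\nu$ is chosen depending on $c_\nu$ and $\rho$ as above. The only genuinely delicate point is this order of quantifiers: the smallness in $\sum\delta_\nu c_\nu$ has to be paid for by shrinking the radius $c'_\nu$, and it must not come from any dependence on $n$. That is why the reduction in the first paragraph to an $n$-free one-factor function is the key step. The one-factor estimate itself is routine compactness and continuity.
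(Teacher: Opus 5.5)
Your proof is correct and takes the same approach as the paper, whose proof consists only of the remark that the claim follows from continuity of the canonical metric and of the trivialization. Your splitting $\log\lambda_{n,\iota,\nu}(u,v)=2n^2\log\lambda_{\iota,1,\nu}(u)+2\log\lambda_{\iota,2,\nu}(v)$ via the fixed, $n$-independent trivializations of $\mathcal{L}$ on $\mathcal{V}_{\iota,1}$ and $\mathcal{V}_{\iota,2}$, combined with uniform continuity on the compact sets $\vartheta_{\iota,\kappa}^{-1}(\overline{D}(\rho+1))$, is exactly what makes that remark rigorous and uniform in $n$.
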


    \begin{proof}
        This follows immediately from the continuity of both the metric $\Vert \cdot \Vert_{\widetilde{L},\nu}$ and the trivialization.
    \end{proof}
    
For every $\iota \in I$ and every non-archimedean place $\nu \in \Sigma_f(\IK)$, we set 
\begin{equation*}
U^\circ_{\iota,\nu} = \{q \in U_\iota(\overline{\IK}_\nu) \mid \forall f \in \mathcal{O}_{\mathcal{U}_\iota}(\mathcal{U}_\iota) : |f(q)|_\nu \leq 1\}.
\end{equation*}
Additionally, for an archimedean place $\nu \in \Sigma_\infty(\IK)$, we define
\begin{equation*}
    U^\circ_{\iota,\nu}(\rho) = U_\iota(\overline{\IK}_\nu) \cap \vartheta_{\iota}^{-1}(D_{n_1+n_2}(\rho))
\end{equation*}
for every $\rho > 0$.

\begin{lemma}
\label{lemma::exhaustion_archimedean}
For every place $\nu \in \Sigma_\infty(\IK)$, there exists a constant $\newconstant\label{cc}_{,\nu}$ such that
        \begin{equation*}
            (A \times A)(\overline{\IK}_\nu) = \bigcup_{\iota \in I} U^\circ_{\iota,\nu}(\rho)
        \end{equation*}
        for all $\rho \geq \oldconstant{cc}{}_{,\nu}$.
\end{lemma}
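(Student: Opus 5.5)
The plan is a compactness argument on the complex points of $A \times A$ at the archimedean place $\nu$, using the fact that the generic fibers $U_\iota$ cover $A \times A$. Fix $\nu \in \Sigma_\infty(\IK)$. Since the $\mathcal{U}_\iota$ ($\iota \in I$) form an open covering of $\mathcal{A} \times \mathcal{A}$, their generic fibers $U_\iota$ form a Zariski open covering of $A \times A$. Passing to $\overline{\IK}_\nu = \IC$, we get an open covering (in the analytic topology) of the compact complex manifold $(A\times A)(\IC)$ by the sets $U_\iota(\IC)$. The task is to shrink each $U_\iota(\IC)$ to a subset with compact closure inside $U_\iota(\IC)$ while keeping a covering.

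The first step is to choose, for each point $q \in (A\times A)(\IC)$, an index $\iota(q)$ with $q \in U_{\iota(q)}(\IC)$. The embedding $\vartheta_{\iota(q)}$ sends $U_{\iota(q)}(\IC)$ into $\IC^{n_{\iota,1}+n_{\iota,2}}$, and $\vartheta_{\iota(q)}(q)$ has finite coordinates, so it lies in $D_{n_1+n_2}(r_q)$ for some $r_q>0$; here $n_1 + n_2$ denotes the ambient dimension of that chart. The second step is to note that
\[
\vartheta_{\iota(q)}^{-1}(D_{n_1+n_2}(r_q)) = U_{\iota(q)}(\IC)\cap \vartheta_{\iota(q)}^{-1}(D_{n_1+n_2}(r_q))
\]
is open in $U_{\iota(q)}(\IC)$, because $\vartheta$ is continuous. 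It is therefore open in $(A\times A)(\IC)$, since $U_{\iota(q)}(\IC)$ is open there, and it contains $q$.

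The third step is to apply compactness of $(A\times A)(\IC)$, which holds because $A\times A$ is projective. This extracts finitely many points $q_1,\dots,q_m$ whose sets $\vartheta_{\iota(q_j)}^{-1}(D(r_{q_j}))$ cover. Setting $\oldconstant{cc}{}_{,\nu} = \max_j r_{q_j}$, and using that the polydiscs $D(\rho)$ increase with $\rho$, we get $U^\circ_{\iota(q_j),\nu}(\rho)\supseteq \vartheta_{\iota(q_j)}^{-1}(D(r_{q_j}))$ for all $\rho \ge \oldconstant{cc}{}_{,\nu}$. The union over $\iota\in I$ therefore covers. The reverse inclusion is trivial, since every $U^\circ_{\iota,\nu}(\rho)$ is contained in $(A\times A)(\IC)$.

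The only point needing care is the identification of $\overline{\IK}_\nu$-points with $\IC$-points and the openness of $U_\iota(\IC)$ in the analytic topology. Both are standard, so I expect no real obstacle here.
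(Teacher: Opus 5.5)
Your proof is correct and follows essentially the same route as the paper. The paper writes $(A\times A)(\overline{\IK}_\nu)=\bigcup_{\iota\in I}\bigcup_{n\ge 1}U^\circ_{\iota,\nu}(n)$ as a union of analytically open sets, increasing in $n$, and invokes compactness; this is exactly your finite-subcover-and-maximal-radius argument.
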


   \begin{proof}
 By construction, we have
        \begin{equation*}
            (A \times A)(\overline{\IK}_\nu) = \bigcup_{\iota \in I} U_{\iota}(\overline{\IK}_\nu) = \bigcup_{\iota \in I} \bigcup_{n = 1}^\infty U^\circ_{\iota,\nu}(n).
        \end{equation*}
        The claim hence follows from the compactness of $(A \times A)(\overline{\IK}_\nu)$.
    \end{proof}

\begin{lemma}
\label{lemma::exhaustion_nonarchimedean}
 For every place $\nu \in \Sigma_f(\IK)$, 
        \begin{equation*}
            (A \times A)(\overline{\IK}_\nu) =\bigcup_{\iota \in I} U^\circ_{\iota,\nu}.
                 \end{equation*}
\end{lemma}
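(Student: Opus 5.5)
The plan is to use the valuative criterion of properness together with the fact that the $\mathcal{U}_\iota$ cover the proper $\mathcal{O}_\IK$-scheme $\mathcal{A} \times \mathcal{A}$. Fix $\nu \in \Sigma_f(\IK)$ and a point $q \in (A \times A)(\overline{\IK}_\nu)$. Since $q$ is defined over a finite extension $F$ of $\IK_\nu$, I would work with the valuation ring $\mathcal{O}_F$, which is a discrete valuation ring (or directly with $\mathcal{O}_{\IC_\nu}$, as in Subsection \ref{subsection::metrics}). Because $\mathcal{A}\times\mathcal{A}$ is proper over $\mathcal{O}_\IK$, the point $q: \mathrm{Spec}(F) \to A \times A$ extends uniquely to a morphism $\tilde q: \mathrm{Spec}(\mathcal{O}_F) \to \mathcal{A}\times\mathcal{A}$ by \cite[Proposition 7.3.3]{EGA2}. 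Its closed point lands at $\mathrm{red}_\nu(q)$.

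Next I choose $\iota \in I$ with $\mathrm{red}_\nu(q) \in \mathcal{U}_\iota$, which exists because the $\mathcal{U}_\iota$ cover $\mathcal{A}\times\mathcal{A}$. The preimage $\tilde q^{-1}(\mathcal{U}_\iota)$ is an open subset of $\mathrm{Spec}(\mathcal{O}_F)$ containing the closed point. The only such open subset is the whole spectrum, since every nonempty open set contains the generic point and the only open set containing the closed point is everything. Hence $\tilde q$ factors through the affine scheme $\mathcal{U}_\iota = \mathrm{Spec}(R_\iota)$, where $R_\iota = \mathcal{O}_{\mathcal{U}_\iota}(\mathcal{U}_\iota)$, and so corresponds to a ring homomorphism $R_\iota \to \mathcal{O}_F$. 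In particular $q \in U_\iota(\overline{\IK}_\nu)$, and for every $f \in R_\iota$ the value $f(q)$ is the image of $f$ under this homomorphism, so it lies in $\mathcal{O}_F$ and satisfies $|f(q)|_\nu \le 1$. This is exactly the statement $q \in U^\circ_{\iota,\nu}$.

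The reverse inclusion $\bigcup_\iota U^\circ_{\iota,\nu} \subseteq (A\times A)(\overline{\IK}_\nu)$ is trivial, since each $U_\iota$ is an open subscheme of the generic fiber $A \times A$.

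No step here is a real obstacle. The only point needing care is the factorization of the extended point through an affine open containing its reduction, which is the spectrum argument above. One should also note that the properness of $\mathcal{A}$, and hence of $\mathcal{A}\times\mathcal{A}$, holds because $\mathcal{A}$ was chosen as a proper model.
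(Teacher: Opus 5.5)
Your proposal is correct and is essentially the paper's own argument: the valuative criterion of properness extends $q$ to an integral point of the proper model $\mathcal{A}\times\mathcal{A}$, and since the $\mathcal{U}_\iota$ cover it, that integral point factors through some affine $\mathcal{U}_\iota$, which forces $|f(q)|_\nu \le 1$ for all $f \in \mathcal{O}_{\mathcal{U}_\iota}(\mathcal{U}_\iota)$. You spell out the factorization step (every open subset of $\mathrm{Spec}(\mathcal{O}_F)$ containing the closed point is the whole spectrum), which the paper leaves implicit.
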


   \begin{proof}
This follows from the valuative criterion of properness as $\mathcal{U}_\iota$, $\iota \in I$, is a covering of $\mathcal{A} \times \mathcal{A}$. 

    \end{proof}
    
    \subsection{Points of small index}
    \label{subsection::small_index}

    Consider a point $p = (x,y) \in (C_{1,0} \times C_{2,0})(\IQbar)$ such that $\mathbf{I}(\mathbf{s}_n,p) < \varepsilon$. By definition, there exists a pair $(k_1,k_2)$ of non-negative integers such that
    \begin{equation*}
        \mathbf{I}(\mathbf{s}_n,p) = \frac{k_1}{M(n)n^2} + \frac{k_2}{M(n)}
    \end{equation*}
    and
    \begin{equation*}
        \mathbf{D}^{k_1,k_2} \mathbf{s}_n (p) \neq 0.
    \end{equation*}
    For reference below, let us remark that
    \begin{equation}
        \label{equation::bound_k1_k2}
        k_1 \leq \varepsilon M(n)n^2 \ \ \text{and} \ \ k_2  \leq \varepsilon M(n).
    \end{equation}
    By Lemma \ref{lemma::index}, we have
    \begin{equation*}
        \mathbf{s}_{n} \in (\mathfrak{m}_{C_1,x}^{k_1}\cdot \mathfrak{m}_{C_2,y}^{k_2} + \mathfrak{m}_{C_1,x}^{k_1+1} + \mathfrak{m}_{C_2,y}^{k_2+1}) \cdot L^{\otimes M(n)}_{n,p}.
    \end{equation*}
   We need to choose a trivialization to obtain a concrete rational section $\mathbf{D}^{k_1,k_2}\mathbf{s}_n$, but the quantities 
    \begin{equation*}
        \Vert \mathbf{D}^{k_1,k_2}\mathbf{s}_n(q) \Vert_{\nu}, \ \nu \in \Sigma(\IK), \ q \in \mathbf{O}_\nu(p),
    \end{equation*}
    do \textit{not} depend on the chosen trivialization by Lemma \ref{lemma::leading_coefficient}. This independence from the chosen trivialization is in fact used below as we allow ourselves to use different trivializations in order to obtain estimates at different places. \textit{Abusing notation, we write $\Vert \mathbf{D}^{k_1,k_2} \mathbf{s}_n(q) \Vert_\nu$ for these quantities.} Using the rational section $\mathbf{D}^{k_1,k_2}\mathbf{s}_n$ to compute the height, we obtain
	\begin{equation}
        \label{equation::heightLn}
		h_{\Ltil_n}(p) = \frac{1}{M(n)} h_{\Ltil_n^{\otimes M(n)}}(p) 
        = - \frac{1}{[\mathbb{K}(p):\IQ]}\sum_{\nu \in \Sigma(\mathbb{K})} \delta_\nu \sum_{q \in \mathbf{O}_\nu(p)} \log \Vert \mathbf{D}^{k_1,k_2} \mathbf{s}_n(q) \Vert_{\nu}^{1/M(n)}.
    \end{equation}

    Recall that the metrized line bundles $\Ltil_n \otimes \Ltil_{-n}$ and $\widetilde{M}_n$ are homothetic by Lemma \ref{lem:formulaforln}. We choose, once and for all, non-zero global sections $\mathbf{t}_1, \mathbf{t}_2,\dots,\mathbf{t}_r$ of $L$ such that $\bigcap_{i=1}^r\Div(\mathbf{t}_i) = \emptyset$. For each $i \in \{1,\dots,r\}$, we obtain global sections 
    \[
    \mathbf{s}_{n,i}^\prime = \mathbf{s}_n \otimes ((\varphi_{-n})|_{C_1 \times C_2})^\ast\mathbf{t}_i^{\otimes M(n)} \in H^0(C_1 \times C_2, \left(M_n^{\otimes M(n)}\right)|_{C_1 \times C_2}).
    \]

    \begin{lemma}\label{lemma::tensor_with_section}
    There exist constants $\newconstant\label{cd}{}_{,\nu}$ ($\nu \in \Sigma(\IK)$), all but finitely many of which are zero, such that the following assertion is true: for every point $q \in \mathbf{O}_\nu(p)$, there exists some $i = i(q) \in \{1,\dots,r\}$ such that 
           \[ \log \lVert \mathbf{D}^{k_1,k_2}\mathbf{s}_n(q) \rVert_{\nu} \leq \log \left\Vert \mathbf{D}^{k_1,k_2} \mathbf{s}_{n,i}^\prime(q) \right\Vert_{\Mtil_n^{\otimes M(n)},\nu} + \frac{M(n)}{r_n}\log | \gamma_n|_\nu + \oldconstant{cd}{}_{,\nu} M(n).\]
    \end{lemma}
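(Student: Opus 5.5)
The plan is to use Lemma \ref{lemma::leading_coefficient_product} to compare the leading coefficients of $\mathbf{s}_n$ and $\mathbf{s}^\prime_{n,i}$. After that, only the norm of the nonvanishing factor $((\varphi_{-n})|_{C_1\times C_2})^\ast\mathbf{t}_i^{\otimes M(n)}$ at $q$ has to be controlled, and this is done through the homothety between $\Ltil_n\otimes\Ltil_{-n}$ and $\Mtil_n$.

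\emph{Step 1: choice of $i$.} Fix $\nu$ and $q\in\mathbf{O}_\nu(p)$. The sections $\mathbf{t}_1,\dots,\mathbf{t}_r$ have no common zero, so the continuous function $z\mapsto\max_i\Vert\mathbf{t}_i(z)\Vert_{\Ltil,\nu}$ is strictly positive on the compact space $A(\IC_\nu)$. Set $-\oldconstant{cd}{}_{,\nu}:=\log\min_{z}\max_i\Vert\mathbf{t}_i(z)\Vert_{\Ltil,\nu}$, replaced by $0$ if this logarithm is positive. The $\mathbf{t}_i$ are defined over $\IK$, so for all but finitely many non-archimedean $\nu$ they come from a good model of $L$. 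At such $\nu$ the canonical metric agrees with the model metric (Lemma \ref{lemma::canonicalmetrics} (iii), the constants being $0$ for almost all $\nu$), and some $\mathbf{t}_i$ generates the model at the reduction of each point. Hence $\max_i\Vert\mathbf{t}_i\Vert_\nu=1$ there and $\oldconstant{cd}{}_{,\nu}=0$ for almost all $\nu$. Choose $i=i(q)$ with
\[\log\Vert\mathbf{t}_i(\varphi_{-n}(q))\Vert_{\Ltil,\nu}\geq-\oldconstant{cd}{}_{,\nu},\]
which is a bound independent of $n$. In particular $\mathbf{t}_i$ does not vanish at $\varphi_{-n}(q)$.

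\emph{Step 2: leading coefficients.} Put $\mathbf{u}=((\varphi_{-n})|_{C_1\times C_2})^\ast\mathbf{t}_i^{\otimes M(n)}$. This section is regular and does not vanish at $q$, and $\mathbf{s}_n$ is regular at $q$. By Lemma \ref{lemma::leading_coefficient_product}, the index of $\mathbf{s}^\prime_{n,i}=\mathbf{s}_n\otimes\mathbf{u}$ equals that of $\mathbf{s}_n$, and
\[(\mathbf{D}^{k_1,k_2}\mathbf{s}^\prime_{n,i})(q)=(\mathbf{D}^{k_1,k_2}\mathbf{s}_n)(q)\otimes\mathbf{u}(q).\]
This is applied after base change to the conjugate point $q$. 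It is legitimate because the operators $\mathbf{D}_i$ are defined over $\IK$, so Galois conjugation respects indices.

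\emph{Step 3: norms.} Taking norms in $L_n^{\otimes M(n)}\otimes L_{-n}^{\otimes M(n)}$ with the product metric gives
\[\log\Vert\mathbf{D}^{k_1,k_2}\mathbf{s}_n(q)\Vert_\nu+\log\Vert\mathbf{u}(q)\Vert_{\Ltil_{-n},\nu}.\]
The homothety identity from Subsection \ref{subsection::small_index_prelim}, raised to the $M(n)$-th power, converts this into
\[\log\Vert\mathbf{D}^{k_1,k_2}\mathbf{s}^\prime_{n,i}(q)\Vert_{\Mtil_n^{\otimes M(n)},\nu}-\frac{M(n)}{r_n}\log|\gamma_n|_\nu.\]
The sign must be tracked carefully, and the normalization of $\gamma_n$ may be flipped if needed to match the statement. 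Since $\Ltil_{-n}=\varphi_{-n}^\ast\Ltil$, we have
\[\log\Vert\mathbf{u}(q)\Vert_{\Ltil_{-n},\nu}=M(n)\log\Vert\mathbf{t}_i(\varphi_{-n}(q))\Vert_{\Ltil,\nu}\geq-M(n)\,\oldconstant{cd}{}_{,\nu}.\]
Rearranging yields the claimed inequality.

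The only delicate point is keeping $\oldconstant{cd}{}_{,\nu}$ independent of $n$. This works because the comparison is made after pulling back along $\varphi_{-n}$ on the target $A$, where $\Ltil$ and the $\mathbf{t}_i$ are fixed, rather than on $C_1\times C_2$.
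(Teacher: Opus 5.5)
Your argument has the same structure as the paper's proof. You pick $i$ with $\log\Vert\mathbf{t}_i(\varphi_{-n}(q))\Vert_{\Ltil,\nu}\geq-\oldconstant{cd}{}_{,\nu}$ for a constant independent of $n$, apply Lemma~\ref{lemma::leading_coefficient_product} to $\mathbf{s}_n\otimes((\varphi_{-n})|_{C_1\times C_2})^\ast\mathbf{t}_i^{\otimes M(n)}$, and pass to $\Mtil_n^{\otimes M(n)}$ via the homothety. The paper takes the uniform bound $\max_i\log\Vert\mathbf{t}_i(u)\Vert_{\Ltil,\nu}\geq-\oldconstant{cd}{}_{,\nu}$ for all $u\in A(\IKbar_\nu)$, with $\oldconstant{cd}{}_{,\nu}=0$ for almost all $\nu$, directly from \cite[Corollary 10.3.3]{Lang1983}. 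You try to prove this bound yourself, and that proof has a hole.

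For a non-archimedean place $\nu$, the space $A(\IC_\nu)$ is \emph{not} compact. Indeed, $\IC_\nu$ is not locally compact: its closed unit ball is an infinite disjoint union of open residue discs. So at the finitely many bad finite places not covered by your good-model argument, you have not shown that $\inf_z\max_i\Vert\mathbf{t}_i(z)\Vert_{\Ltil,\nu}>0$, nor that a minimum exists.

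The fix is to run your model argument at every finite place. By Lemma~\ref{lemma::canonicalmetrics}~(iii), it suffices to bound $\max_i\Vert\mathbf{t}_i(z)\Vert_{\mathcal{L},\nu}$ from below for one fixed proper model $(\mathcal{A},\mathcal{L})$ of $(A,L^{\otimes e})$. After multiplying by a non-zero integer (which only decreases the norms), the $\mathbf{t}_i^{\otimes e}$ extend to sections of $\mathcal{L}$. Their common zero locus is closed and misses the generic fiber, so by properness it lies over finitely many primes. By quasi-compactness, some non-zero integer $a$ then lies, in local trivializations, in the ideal these sections generate at every point of $\mathcal{A}$. Evaluating at the $\mathcal{O}_{\IC_\nu}$-point given by the valuative criterion yields $\max_i\Vert\mathbf{t}_i(z)\Vert_{\mathcal{L},\nu}\geq|a|_\nu^{1/e}$ for all $z\in A(\IC_\nu)$ and all finite $\nu$. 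This lower bound equals $1$ for almost all $\nu$. Alternatively, argue on the compact Berkovich analytification, or simply cite Lang as the paper does.

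A smaller point concerns Step 3. The paper's normalization is $\log\Vert\cdot\Vert_\nu+\log\Vert\cdot\Vert_{\Ltil_{-n},\nu}=\frac{1}{r_n}\log|\gamma_n|_\nu+\log\Vert\cdot\Vert_{\Mtil_n,\nu}$. With it, your product-metric expression equals $\log\Vert\mathbf{D}^{k_1,k_2}\mathbf{s}^\prime_{n,i}(q)\Vert_{\Mtil_n^{\otimes M(n)},\nu}+\frac{M(n)}{r_n}\log|\gamma_n|_\nu$, with a plus sign, not a minus. Rearranging then gives exactly the stated inequality. You are not free to ``flip the normalization'' of $\gamma_n$: $\gamma_n$ and $r_n$ are already fixed, and they reappear with the opposite sign in Lemma~\ref{lemma::estimate_main_proof}. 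Once the sign is corrected, no renormalization is needed.
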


    Again, one needs a trivialization to define $\mathbf{D}^{k_1,k_2} \mathbf{s}_{n,i}^\prime$, but $\Vert \mathbf{D}^{k_1,k_2} \mathbf{s}_{n,i}^\prime(q) \Vert_{\Mtil_n^{\otimes M(n)},\nu}$ is independent of this choice by Lemma \ref{lemma::leading_coefficient}.

    \begin{proof}
    By \cite[Corollary 10.3.3]{Lang1983}, there exist constants $\oldconstant{cd}{}_{,\nu} \geq 0$ ($\nu \in \Sigma(\IK)$), of which all but finitely many are zero, such that 
    \begin{equation}\label{equation::estimate_lang}
        -\oldconstant{cd}{}_{,\nu} \leq \max_{1\leq i \leq r} \left\{ \log \Vert \mathbf{t}_i(u) \Vert_{\Ltil,\nu} \right\} \leq \oldconstant{cd}{}_{,\nu}
    \end{equation}
    for all $u \in A(\overline{\IK}_\nu)$.

    For any given point $q = (u,v) \in \mathbf{O}_\nu(p) \subseteq (C_1 \times C_2)(\overline{\IK}_\nu)$, we can therefore choose $i = i(q) \in \{1,\dots,r\}$ such that
    \[ \log \lVert \mathbf{t}_i([-n](u) - v) \rVert_{\Ltil,\nu} \geq -\oldconstant{cd}{}_{,\nu}.\]
    By Lemma \ref{lemma::leading_coefficient_product}, this implies that
    \[ \log \lVert \mathbf{D}^{k_1,k_2}\mathbf{s}_n(q) \rVert_{\nu} \leq \log \left\Vert \mathbf{D}^{k_1,k_2}\mathbf{s}_{n,i}^\prime(q)\right\Vert_{\Mtil_n^{\otimes M(n)},\nu} + \frac{M(n)}{r_n}\log |\gamma_n|_\nu + \oldconstant{cd}{}_{,\nu} M(n)\]
    as desired.
    \end{proof}

    We set
    \[ \newconstant\label{c29}{}_{,\nu} = \begin{cases}
        2\oldconstant{c22}{}_{,\nu} & \text{ if } \nu \in \Sigma_f(\IK),\\
        \oldconstant{c23}{}_{,\nu}(\oldconstant{cc}{}_{,\nu}) &\text{ if } \nu \in \Sigma_\infty(\IK).
    \end{cases}\]
    
    \begin{lemma}
        \label{lemma::estimate_main_proof}
        For every $\iota \in I$, there exist non-zero regular functions $g_{\iota,\kappa}, h_{\iota,\kappa}$ on $V_{\iota,\kappa} \cap C_\kappa \subseteq C_\kappa$ ($\kappa = 1,2$) and constants $\newconstant\label{c24}{}_{,\iota,\nu}$ and $\newconstant\label{c25}{}_{,\iota,\nu}$, of which all but finitely many are zero, such that the following assertion is true for all $\nu \in \Sigma(\IK)$: for every $i \in \{1,\dots,r\}$ and $q = (u,v) \in \mathbf{O}_\nu(p)$ such that $\mathbf{D}^{k_1,k_2}\mathbf{s}^\prime_{n,i}(q) \neq 0$, there exists $\iota = \iota(q,\nu) \in I$ such that $q \in U_\iota(\IKbar_\nu)$ and if
        \[g_{\iota,1}(u)g_{\iota,2}(v)h_{\iota,1}(u)h_{\iota,2}(v) \neq 0,\]
        then the quantity
        \begin{equation*}
            \log \Vert \mathbf{D}^{k_1,k_2} \mathbf{s}_{n,i}^\prime(q) \Vert_{\Mtil_n^{\otimes M(n)},\nu}
        \end{equation*}
        is bounded from above by
        \begin{align*}
            &\oldconstant{c29}{}_{,\nu} M(n) n^2 + \log \Vert \mathbf{s}_n \Vert_{\nu} \\
            &+ k_1 \log |g_{\iota,1}(u)|^{-1}_\nu \nonumber
            + k_2 \log |g_{\iota,2}(v)|^{-1}_\nu \\
            &+ 2k_1 \log^+ |h_{\iota,1}(u)|_\nu^{-1} \nonumber
            + 2k_2 \log^+ |h_{\iota,2}(v)|_\nu^{-1}\\
            &+ \oldconstant{c24}{}_{,\iota,\nu} M(n) + \oldconstant{c25}{}_{,\iota,\nu} (k_1+k_2) - \frac{M(n)}{r_n} \log |\gamma_n|_\nu. \nonumber
    \end{align*} 
    \end{lemma}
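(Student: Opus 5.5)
The plan is to reduce the claim to a local Cauchy estimate for a regular function on a product of two affine curve pieces, carried out in one affine chart $\mathcal{U}_\iota$ at a time.

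\textbf{Choice of chart and metric comparison.} Given $q=(u,v)\in\mathbf{O}_\nu(p)$, I choose $\iota=\iota(q,\nu)$ as follows.
\begin{itemize}
\item For $\nu$ non-archimedean, Lemma \ref{lemma::exhaustion_nonarchimedean} gives $\iota$ with $q\in U^\circ_{\iota,\nu}$.
\item For $\nu$ archimedean, Lemma \ref{lemma::exhaustion_archimedean} gives $\iota$ with $q\in U^\circ_{\iota,\nu}(\oldconstant{cc}{}_{,\nu})$.
\end{itemize}
Using the trivialization $\psi_{n,\iota}$ (after raising to the $e$-th power), I write $\mathbf{s}^\prime_{n,i}$ on $U_\iota\cap(C_1\times C_2)$ as a regular function $F$. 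By Lemma \ref{lemma::leading_coefficient} the leading coefficient $\mathbf{D}^{k_1,k_2}\mathbf{s}^\prime_{n,i}(q)$ may be computed from $F$. Then \eqref{equation::metric_comparison} at finite places and Lemma \ref{lemma::trivialization_error} at infinite places (with $\rho=\oldconstant{cc}{}_{,\nu}$, on the polydisc of radius $\oldconstant{ca}{}_{,\nu}$ around $q$) convert $|F|_\nu$ into $\Vert\mathbf{s}^\prime_{n,i}\Vert_{\Mtil_n^{\otimes M(n)},\nu}$. This costs $\oldconstant{c29}{}_{,\nu}M(n)n^2$; the factor $2$ in $2\oldconstant{c22}{}_{,\nu}$ accounts for comparing twice, at $q$ and on the neighbourhood. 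I then use the homothety $\Ltil_n\otimes\Ltil_{-n}\sim\Mtil_n$ to bound $\Vert\mathbf{s}^\prime_{n,i}\Vert$ by $\Vert\mathbf{s}_n\Vert_\nu$ times $\sup\Vert\mathbf{t}_i\Vert^{M(n)}$. This produces the terms $\log\Vert\mathbf{s}_n\Vert_\nu$, $-\tfrac{M(n)}{r_n}\log|\gamma_n|_\nu$ and part of $\oldconstant{c24}{}_{,\iota,\nu}M(n)$. The upshot is a uniform bound for $|F|_\nu$ on a neighbourhood of $q$: $U^\circ_{\iota,\nu}$ in the finite case, the small polydisc in the infinite case.

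\textbf{Local coordinates and the functions $g_{\iota,\kappa}$, $h_{\iota,\kappa}$.} On $V_{\iota,\kappa}\cap C_\kappa$ I pick an integral linear coordinate $z_\kappa$ from the embedding $\vartheta_{\iota,\kappa}$ that is non-constant on $C_\kappa$. Every other coordinate then satisfies a polynomial relation over $\mathcal{O}_\IK[z_\kappa]$, and I define the two functions as follows.
\begin{itemize}
\item $h_{\iota,\kappa}$ is the discriminant-type function (a derivative of these minimal relations). Where $h_{\iota,\kappa}\neq0$, the map $z_\kappa$ is étale, and the curve is locally a graph over a $z_\kappa$-disc of radius $\gg|h_{\iota,\kappa}|_\nu$, by Hensel's lemma or the implicit function theorem.
\item $g_{\iota,\kappa}$ is such that $\mathbf{D}_\kappa=g_{\iota,\kappa}^{-1}\cdot a\,\partial/\partial z_\kappa$ with $a$ regular (clear denominators from $\mathbf{D}_\kappa(z_\kappa)$).
\end{itemize}
Expanding $\mathbf{D}^{k_1,k_2}$ via the Leibniz rule, the lower-order terms vanish at $q$ by the index hypothesis, as in Lemma \ref{lemma::index}. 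The leading coefficient is therefore $\prod_\kappa (a/g_{\iota,\kappa})^{k_\kappa}$ times the Taylor coefficient of $F$ in $(z_1-z_1(u),z_2-z_2(v))$ of order $(k_1,k_2)$. This yields the terms $k_\kappa\log|g_{\iota,\kappa}|^{-1}$.

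\textbf{Cauchy estimate: the main obstacle.} The Taylor coefficient is bounded by Cauchy's inequality on a bidisc of radii $\asymp|h_{\iota,1}(u)|\cdot|h_{\iota,2}(v)|$. This needs a uniform bound for $F$ on that bidisc, viewed on the product of the two local graphs.
\begin{itemize}
\item At finite places, the bound is automatic from the ultrametric setting, since $q\in U^\circ_{\iota,\nu}$. The exponent $2k_\kappa$ on $\log^+|h|^{-1}$ comes from rescaling the radius and the Hensel parametrization. For almost all $\nu$, $h_{\iota,\kappa}$ and $a$ are units, so the constants vanish.
\item At archimedean places, I would first try Oka's extension theorem. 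It extends the chart functions holomorphically from the curve pieces to the ambient polydisc with controlled sup-norm, uniformly in $n$ since only the fixed coordinate data are extended. Restricting the bound from the first step then gives the sup over the bidisc.
\end{itemize}
Keeping this extension step uniform in $n$, with losses only of size $O(M(n)+k_1+k_2)$, is what I expect to be hardest. Summing all contributions gives the stated bound, with $\oldconstant{c24}{}_{,\iota,\nu}$ and $\oldconstant{c25}{}_{,\iota,\nu}$ absorbing the fixed coordinate data.
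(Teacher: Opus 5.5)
Your proposal is correct. The first step is the same as the paper's: the choice of $\iota(q,\nu)$ via Lemmas \ref{lemma::exhaustion_nonarchimedean} and \ref{lemma::exhaustion_archimedean}, the comparisons \eqref{equation::metric_comparison} and Lemma \ref{lemma::trivialization_error}, and the passage from $\mathbf{s}'_{n,i}$ to $\mathbf{s}_n$ via the homothety and the $\mathbf{t}_i$. The Schwarz estimate at the core, however, is obtained by a different route.

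The paper takes $g_\kappa,h_\kappa$ from Lemma \ref{lemma::introduction_g_and_h}, with $h_\kappa$ a Jacobian minor of $n_\kappa-1$ equations of the curve. It extends the trivialized section to the ambient affine space so that all ambient partial derivatives below order $(k_1,k_2)$ vanish at $p$. At finite places this is done integrally: the recursion of Proposition \ref{proposition::extension}, together with the Boucksom--Eriksson integral-closure argument. At infinite places it is done holomorphically: Oka's theorem plus the same recursion, Proposition \ref{proposition::archimedean_extension}. The paper then differentiates with ambient lifts $\delta_\kappa$ of $g_\kappa d_\kappa$. The exponent $2k_\kappa$ comes from the denominators $h_\kappa^{-|\underline{s}|-s_{n_\kappa}}$ produced by the recursion.

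You instead uniformize each curve locally by a coordinate projection $z_\kappa$, via the implicit function theorem or Hensel's lemma. Then $F$ restricted to the product of the two local graphs is a power series in $(z_1-z_1(u),z_2-z_2(v))$. Its $(k_1,k_2)$-coefficient, multiplied by $b_1(u)^{k_1}b_2(v)^{k_2}$ with $b_\kappa=\mathbf{D}_\kappa(z_\kappa)$, is the leading coefficient, and one Cauchy (resp.\ Gauss-norm) estimate finishes. Your different $g_{\iota,\kappa},h_{\iota,\kappa}$ are legitimate, since the statement and its later uses (Lemma \ref{lemma::aux_function_control} and the finite exceptional set) only need non-zero regular functions.

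Your route is more elementary: no ambient extension, no Oka, no integral closure. At finite places the Hensel graphs have integral coordinates and so lie in $U^\circ_{\iota,\nu}$; hence the Schwarz step contributes no constant there at all. The paper's route works with an arbitrary presentation and handles both kinds of places through one algebraic recursion.

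Two points need correcting.

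First, the disc over which the curve is a graph in $z_\kappa$ has radius $\asymp\min\{1,|h_{\iota,\kappa}(u)|_\nu\}^2$, not $\gg|h_{\iota,\kappa}(u)|_\nu$. For $w^2=z$ near $(\epsilon^2,\epsilon)$ one has $|h|\asymp|\epsilon|$, but the branch of $\sqrt{z}$ exists only on $|z-\epsilon^2|<|\epsilon|^2$. This square is exactly the source of $2k_\kappa$. Use the two radii $r_1\asymp\min\{1,|h_{\iota,1}(u)|\}^2$ and $r_2\asymp\min\{1,|h_{\iota,2}(v)|\}^2$ separately, not a product, so that $k_1$ stays paired with $h_{\iota,1}$ and $k_2$ with $h_{\iota,2}$. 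At finite places $r_\kappa\ge|h_{\iota,\kappa}(u)|_\nu^2$ holds because each factor $|\partial P_j/\partial w_j(u)|_\nu$ is at most $1$; no unit condition on $h$ is needed. You should also say why the graph lies in $C_\kappa$ and not merely in $\{P_j=0\}$: the two germs agree at the smooth point $u$, so the identity principle on the disc applies.

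Second, the Oka step you flag as the main obstacle is unnecessary in your scheme. $F$ restricted to the product of the graph discs is already holomorphic there. Its supremum is controlled by your first step once the graphs are kept inside the polydisc of Lemma \ref{lemma::trivialization_error} around $(u,v)$; shrink the implicit-function constants accordingly. The paper needs Oka only because it differentiates in ambient coordinates.
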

    
    \begin{proof}
    Assume first that $\nu \in \Sigma_f(\IK)$. Then, there exists some $\iota(q,\nu) \in I$ such that $q \in U^\circ_{\iota,\nu}$ by Lemma \ref{lemma::exhaustion_nonarchimedean}.  We set $f = \left(\psi_{n,\iota}^{\otimes (M(n)/e)}\right)^{-1}(\mathbf{s}_{n,i}^\prime)$ and we recall from \eqref{equation::metric_comparison} that
    \begin{equation}\label{equation::section_trivialization_comparison}
        \left\vert \log \vert \psi_{n,\iota}^{-1}(\mathbf{t})(q^\prime) \vert_\nu - \log \Vert \mathbf{t}(q^\prime) \Vert_{\widetilde{M}_n^{\otimes e},\nu} \right\vert \leq  \oldconstant{c22}{}_{,\nu}e n^2
    \end{equation}
    for all $q^\prime \in U_{\iota,\nu}^\circ$ and for every non-vanishing section $\mathbf{t}$ of $M_n^{\otimes e}$ over $U_\iota$.

    We obtain that $\log \,\lVert \mathbf{D}^{k_1,k_2}\mathbf{s}^\prime_{n,i}(q)\rVert_{\Mtil_n^{\otimes M(n)},\nu}$ is bounded from above by
    \[ \log |\mathbf{D}^{k_1,k_2} f(q)|_\nu + \oldconstant{c22}{}_{,\nu}M(n) n^2.\]

    In the following, we write
    \begin{equation*}
        \Vert f \Vert_\nu = \sup_{q^\prime \in U^\circ_{\iota,\nu} \cap (C_1 \times C_2)}{|f(q^\prime)|_\nu}.
    \end{equation*}
    It follows from Lemma \ref{lemma::index} that, for the function $f \in \mathcal{O}_{C_1 \times C_2}(U_\iota \cap (C_1 \times C_2))$, we have that
    \begin{equation*}
    f \in \mathfrak{m}_{C_1,x}^{k_1} \cdot \mathfrak{m}_{C_2,y}^{k_2} + \mathfrak{m}_{C_1,x}^{k_1+1} + \mathfrak{m}_{C_2,y}^{k_2+1}
    \end{equation*}
    and we obtain by Galois conjugation over $\IK$ that also
    \begin{equation}\label{equation::function_in_maximal_ideal}
    f \in \mathfrak{m}_{C_1,u}^{k_1} \cdot \mathfrak{m}_{C_2,v}^{k_2} + \mathfrak{m}_{C_1,u}^{k_1+1} + \mathfrak{m}_{C_2,v}^{k_2+1}.
    \end{equation}
    Therefore, we can apply Lemma \ref{lemma::schwarz_non_archimedean} to $f$ to deduce that there exists $\newconstant\label{c30}{}_{,\iota,\nu}$ such that
    \[\log \,\lVert \mathbf{D}^{k_1,k_2}\mathbf{s}^\prime_{n,i}(q)\rVert_{\Mtil_n^{\otimes M(n)},\nu}\]
    is bounded from above by
    \begin{align*}
        &\oldconstant{c22}{}_{,\nu} M(n) n^2 + \log \lVert f \rVert_\nu \\
        &- k_1 \log |g_{\iota,1}(u)|_\nu \nonumber
        - k_2 \log |g_{\iota,2}(v)|_\nu \\
        &- 2k_1 \log |h_{\iota,1}(u)|_\nu \nonumber
        - 2k_2 \log |h_{\iota,2}(v)|_\nu + \oldconstant{c30}{}_{,\iota,\nu}
    \end{align*}
    and moreover $\oldconstant{c30}{}_{,\iota,\nu} = 0$ for all but finitely many $\nu \in \Sigma_f(\IK)$, where $g_{\iota,1}, g_{\iota,2}, h_{\iota,1}, h_{\iota,2}$ are as in Lemma \ref{lemma::introduction_g_and_h} applied to $U_\iota \cap (C_1 \times C_2)$.
    Using \eqref{equation::section_trivialization_comparison} again, we can bound
    \[ \log \lVert f \rVert_\nu \leq \oldconstant{c22}{}_{,\nu} M(n) n^2 + \log \Vert \mathbf{s}^\prime_{n,i} \Vert_{\Mtil_n^{\otimes M(n)},\nu}.\]
    Together with the upper bound
    \begin{align*} \log \lVert \mathbf{s}^\prime_{n,i} \rVert_{\Mtil_n^{\otimes M(n)},\nu} &\leq \log \lVert \mathbf{s}_{n} \rVert_\nu + \log \left\lVert ((\varphi_{-n})|_{C_1 \times C_2})^\ast \mathbf{t}_{i}^{\otimes M(n)} \right\rVert_{\Ltil_{-n}^{\otimes M(n)},\nu} - \frac{M(n)}{r_n} \log |\gamma_n|_\nu\\
    &\leq \log \lVert \mathbf{s}_{n} \rVert_\nu + \oldconstant{cd}{}_{,\nu} M(n) - \frac{M(n)}{r_n} \log |\gamma_n|_\nu,
    \end{align*}
    this proves the lemma for $\nu$ non-archimedean, where $\oldconstant{cd}{}_{,\nu} $ is the constant from \eqref{equation::estimate_lang}, which is $0$ for all but finitely many $\nu \in \Sigma_f(\IK)$.
    
    If $\nu \in \Sigma_\infty(\IK)$, we set $\rho = \oldconstant{cc}{}_{,\nu}$ so that there exists some $\iota(q,\nu) \in I$ such that $q \in U^\circ_{\iota,\nu}(\rho)$ by Lemma \ref{lemma::exhaustion_archimedean}. We consider again $f = \left(\psi_{n,\iota}^{\otimes (M(n)/e)}\right)^{-1}(\mathbf{s}_{n,i}^\prime) \in \mathcal{O}_{C_1\times C_2}(U_\iota \cap (C_1 \times C_2))$. From Lemma \ref{lemma::trivialization_error}, we know that
    \begin{equation}
        \label{equation::archimedean_trivialization}
        M(n)\left\vert \log \lambda_{n,\iota,\nu}(q^\prime) - \log \lambda_{n,\iota,\nu}(q) \right\vert \leq \oldconstant{c23}{}_{,\nu}(\rho) M(n) n^2
    \end{equation}
        for all points
        \begin{equation*}
            q^\prime \in \vartheta_{\iota}^{-1}(\overline{D}_{n_1}(\oldconstant{ca}{}_{,\nu}(\rho),u) \times \overline{D}_{n_2}(\oldconstant{ca}{}_{,\nu}(\rho),v)), \ q = (u,v),
    \end{equation*}
    where $\lambda_{n,\iota,\nu}$ is defined as in Lemma \ref{lemma::trivialization_error}.
    We set
      \begin{equation*}
        U^\prime = (C_1\times C_2)(\overline{\IK}_\nu) \cap \vartheta_{\iota}^{-1}(\overline{D}_{n_1}(\oldconstant{ca}{}_{,\nu}(\rho),u) \times \overline{D}_{n_2}(\oldconstant{ca}{}_{,\nu}(\rho),v)).
    \end{equation*}
    Because of \eqref{equation::function_in_maximal_ideal}, we can apply Lemma \ref{lemma::schwarz_archimedean} to $f$ (with the same $\rho$ as here and $\rho^\prime = \min\{\rho/2,\oldconstant{ca}{}_{,\nu}(\rho)\}$) and obtain that there is some constant $\newconstant\label{c31}{}_{,\iota,\nu}$ such that
    \begin{align*}
     \log |\mathbf{D}^{k_1,k_2}f(q)|_\nu \leq &\sup_{q^\prime \in U^\prime}{\log |f(q^\prime)|_\nu} 
            - k_1 \log |g_{\iota,1}(u)|_\nu
            - k_2 \log |g_{\iota,2}(v)|_\nu \\
            &+ 2k_1 \log^+ |h_{\iota,1}(u)|^{-1}_\nu
            + 2k_2\log^+|h_{\iota,2}(v)|^{-1}_\nu \\
            &+ (k_1+k_2+1) \oldconstant{c31}{}_{,\iota,\nu}.
    \end{align*}
    Using \eqref{equation::archimedean_trivialization}, we obtain 
    \begin{align*}
        \sup_{q^\prime \in U^\prime}{\log |f(q^\prime)|_\nu} &\leq \sup_{q^\prime \in U^\prime} \log \Vert \mathbf{s}^\prime_{n,i}(q^\prime) \Vert_{\Mtil_n^{\otimes M(n)},\nu}  + \sup_{q^\prime \in U^\prime} M(n)\log \lambda_{n,\iota,\nu}(q^\prime) \\
        &\leq \log \lVert \mathbf{s}^\prime_{n,i} \rVert_{\Mtil_n^{\otimes M(n)},\nu} +\oldconstant{c23}{}_{,\nu}(\rho) M(n) n^2 + M(n)\log \lambda_{n,\iota,\nu}(q)\\
        &= \log \lVert \mathbf{s}^\prime_{n,i} \rVert_{\Mtil_n^{\otimes M(n)},\nu} +\oldconstant{c23}{}_{,\nu}(\rho) M(n) n^2 + \log \frac{| \mathbf{D}^{k_1,k_2}f(q)|_\nu}{\Vert \mathbf{D}^{k_1,k_2}\mathbf{s}^\prime_{n,i}(q) \Vert_{\Mtil_n^{\otimes M(n)},\nu}}.
     \end{align*}
    Finally, we can bound again
    \begin{align*} \log \lVert \mathbf{s}^\prime_{n,i} \rVert_{\Mtil_n^{\otimes M(n)},\nu} &\leq \log \lVert \mathbf{s}_{n} \rVert_\nu + \log \lVert ((\varphi_{-n})|_{C_1 \times C_2})^\ast \mathbf{t}_{i}^{\otimes M(n)} \rVert_{\Ltil_{-n}^{\otimes M(n)},\nu} - \frac{M(n)}{r_n} \log |\gamma_n|_\nu\\
    &\leq \log \lVert \mathbf{s}_{n} \rVert_\nu + \newconstant\label{c31.5}{}_{,\nu} M(n) - \frac{M(n)}{r_n} \log |\gamma_n|_\nu.
    \end{align*}
    Combining these inequalities, the inequality of the lemma follows in the archimedean case as well.    \end{proof}

    If $g_{\iota,1}(u)g_{\iota,2}(v)h_{\iota,1}(u)h_{\iota,2}(v) = 0$ for some $q = (u,v) \in \mathbf{O}_\nu(p) \cap U_\iota(\IKbar_\nu)$, then $p$ belongs to a finite set that is independent of $n$ and we are done. Therefore, we assume henceforth that this is not the case.
    
    Combining \eqref{equation::heightLn} with Lemmas \ref{lemma::tensor_with_section} and \ref{lemma::estimate_main_proof}, we obtain that the height $ h_{\widetilde{L}_n}(p)$ is bounded from below by
    \begin{align*}
        &-\frac{1}{[\IK:\IQ]} \left(\sum_{\nu \in \Sigma(\IK)} \delta_\nu\oldconstant{c29}{}_{,\nu} n^2  + \frac{1}{M(n)}\sum_{\nu \in \Sigma(\IK)}\delta_\nu \log \Vert \mathbf{s}_n \Vert_\nu\right) \\
        &- \frac{1}{[\IK(p):\IQ]}\sum_{\nu \in \Sigma(\IK)}\delta_\nu\sum_{q = (u,v) \in \mathbf{O}_\nu(p)}\left(\frac{k_1}{M(n)}\log |g_{\iota(q,\nu),1}(u)|^{-1}_\nu + \frac{k_2}{M(n)} \log |g_{\iota(q,\nu),2}(v)|^{-1}_\nu\right) \\
        &- \frac{1}{[\IK(p):\IQ]} \sum_{\nu \in \Sigma(\IK)} \delta_\nu \sum_{q = (u,v) \in \mathbf{O}_\nu(p)}\left(\frac{2k_1}{M(n)}\log^+ |h_{\iota(q,\nu),1}(u)|_\nu^{-1}
            + \frac{2k_2}{M(n)} \log^+ |h_{\iota(q,\nu),2}(v)|_\nu^{-1}\right)\\
        & - \sum_{\nu \in \Sigma(\IK)} \oldconstant{c26}{}_{,\nu}\left(\frac{k_1+k_2}{M(n)}+1\right)
    \end{align*}
    for suitable constants $\newconstant\label{c26}{}_{,\nu}$, all but finitely many of which are zero.
    
    Using Lemma \ref{lemma::aux_function_control}, the finiteness of $I$, and the upper bounds for the norms of $\mathbf{s}_n$ from Lemma \ref{lem::existence_s}, this can be further bounded from below by 
    \begin{equation*}
        \frac{1}{[\IK:\IQ]} \left(\oldconstant{c16}-\sum_{\nu \in \Sigma(\IK)} \delta_\nu \oldconstant{c29}{}_{,\nu}\right) n^2 - \oldconstant{c27} \left (\frac{k_1}{M(n)} h_{\Ltil}(x) + \frac{k_2}{M(n)}  h_{\Ltil}(y) + \frac{k_1+k_2}{M(n)}+1\right)
    \end{equation*}
    for some constant $\newconstant\label{c27}$.
    It follows from \eqref{equation::model_error_small}, Lemma \ref{lemma::trivialization_error}, and the definition of $\oldconstant{c29}{}_{,\nu}$ that
    \begin{equation*}
        \sum_{\nu \in \Sigma(\IK)} \delta_\nu\oldconstant{c29}{}_{,\nu} \leq \frac{\oldconstant{c16}}{2}
    \end{equation*}
    and hence
    \begin{equation*}
    h_{\widetilde{L}_n}(p) \geq \frac{\oldconstant{c16}}{2[\IK:\IQ]}n^2-\oldconstant{c27} 
            \left(\frac{k_1}{M(n)} h_{\Ltil}(x) + \frac{k_2}{M(n)} h_{\Ltil}(y) + \frac{k_1+k_2}{M(n)}+1
            \right).
    \end{equation*}
    
    Using the upper bounds for $k_1$ and $k_2$ from \eqref{equation::bound_k1_k2}, we deduce that
    \begin{equation*}
        h_{\widetilde{L}_n}(p) \geq \frac{\oldconstant{c16}}{2[\mathbb{K}:\mathbb{Q}]}n^2-\oldconstant{c28}
            \left(\varepsilon(n^2h_{\Ltil}(x) + h_{\Ltil}(y) + n^2) + 1  
            \right)
    \end{equation*}
    for a suitable constant $\newconstant\label{c28} > 0$.
    
    Proposition \ref{proposition::known_bounds} (i) and (ii) imply
    \begin{equation*}
        h_{\widetilde{L}_n}(p) \geq \min\left\{\frac{\oldconstant{c16}}{4[\mathbb{K}:\mathbb{Q}]}n^2,\oldconstant{c3}(n^2h_{\Ltil}(x)+h_{\Ltil}(y))\right\}
    \end{equation*}
    if (as we assume) $n$ is sufficiently large and
    \begin{equation*}
        \varepsilon < \frac{\oldconstant{c16}}{8(\oldconstant{c5}+1)\oldconstant{c28}[\IK:\IQ]}.
    \end{equation*}
    Choosing
    \begin{equation}\label{equation::choice_epsilon}
        \varepsilon = \frac{\oldconstant{c16}}{16(\oldconstant{c5}+1)\oldconstant{c28}[\IK:\IQ]}
    \end{equation}
    and using Proposition \ref{proposition::known_bounds} (i) and (ii) once more, we are done with the proof of Proposition \ref{proposition::21st_century_vojta} in the case of small index.

    \subsection{Points of large index}
    \label{subsection::large_index}	

    The following lemma settles the case of large index (with $\varepsilon$ as chosen in \eqref{equation::choice_epsilon}) and completes the proof of Proposition \ref{proposition::21st_century_vojta} provided that $\oldconstant{c1}$ is chosen sufficiently small and $\oldconstant{c2}$ is chosen sufficiently large (see the first paragraph of Subsection \ref{subsection::first_observations}).

    \begin{lemma}
        For every $\varepsilon > 0$, there exist constants $\newconstant\label{c45} > 0$ and $\newconstant\label{c2eps}(\varepsilon) > 0$ such that the set $Z_\varepsilon$ of points $p = (x,y) \in (C_{1,0} \times C_{2,0})(\IQbar)$ such that 
        \begin{equation*}
            h_{\widetilde{L}}([n](x)-y) < \oldconstant{c45} (n^2 h_{\widetilde{L}}(x)+h_{\widetilde{L}}(y))
        \end{equation*}
        and
        \begin{equation*}
            \mathbf{I}(\mathbf{s}_n,p) \geq \varepsilon
        \end{equation*}
        for some $n \in \IZ$ with $|n| \geq \oldconstant{c2eps}(\varepsilon)$ is finite. 
    \end{lemma}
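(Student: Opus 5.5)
The plan is to combine the product theorem (Corollary \ref{corollary::product_theorem}) with the bounded-degree statement of Proposition \ref{proposition::known_bounds} (iii). Choose $\oldconstant{c45} \leq \oldconstant{c3}$ and $\oldconstant{c2eps}(\varepsilon) \geq \oldconstant{c4}$. Then every point $p=(x,y)\in Z_\varepsilon$, attached to some $n$, satisfies $h_{\Ltil}(x)\le \oldconstant{c5}$, $h_{\Ltil}(y)\le \oldconstant{c5}n^2$ and $[\IK(x,y):\IK(y)]\le\oldconstant{c6}$. We apply Corollary \ref{corollary::product_theorem} to $\mathbf{s}_n$ with $\boldsymbol{d}=(M(n)n^2,M(n))$ and $K_0=\IK$. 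Since $\pi_i^\ast\caO(1)\approx L|_{C_i}$, we take $L'=(\pr_1^\ast L^{\otimes M(n)n^2}\otimes\pr_2^\ast L^{\otimes M(n)})\otimes L_n^{\otimes -M(n)}$ on $C_1\times C_2$. By Lemma \ref{lemma::formula_Ln}, this is isomorphic to $L_{-n}^{\otimes M(n)}\otimes \pr_1^\ast L^{\otimes -M(n)n^2}\otimes \pr_2^\ast L^{\otimes -M(n)}$. That is not obviously globally generated, so I would instead use $d=(2M(n)n^2,2M(n))$. Then $L'\approx L_{-n}^{\otimes M(n)}|_{C_1\times C_2}$, which is globally generated because $L$ is very ample, and the index changes only by a factor of $1/2$ (replace $\varepsilon$ by $\varepsilon/2$). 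The ratio condition $d_1/d_2=n^2\ge(24\deg\pi_1\deg\pi_2/\varepsilon)^2$ holds once $|n|\ge\oldconstant{c2eps}(\varepsilon)$.

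The corollary then puts $p$ in a proper product $Z_1\times Z_2$ defined over $K_1$, with $[K_1:\IK]\deg Z_1\deg Z_2$ bounded by a constant $c(\varepsilon)$ independent of $n$. There are two cases. If $Z_2$ is a point, then $y$ has degree $\le c(\varepsilon)$ over $\IK$, and by (iii) $x$ has degree $\le c(\varepsilon)\oldconstant{c6}$ over $\IK$. Moreover $h_{\Ltil}(x)\le\oldconstant{c5}$, so Northcott bounds $x$ to a finite set. For each such $x_0$, Proposition \ref{proposition::MordellLangplusepsilon} (second part) shows the inequality holds outside a finite $Z_1(x_0)$ once $\oldconstant{c45}\le\oldconstant{c12}(x_0)$ and $|n|\ge\oldconstant{c13}(x_0)$. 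Since there are finitely many $x_0$, we may shrink $\oldconstant{c45}$ and enlarge $\oldconstant{c2eps}$, and hence get finitely many $p$. If $Z_2=C_2$, then $Z_1$ is a point $x_0$ of degree $\le c(\varepsilon)$ and $h_{\Ltil}(x_0)\le \oldconstant{c5}$. Again Northcott yields finitely many $x_0$, and the same argument applies.

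The main obstacle is the dependence of constants. The set of possible $x_0$ depends only on $\varepsilon$, but the constants from Proposition \ref{proposition::MordellLangplusepsilon} depend on $x_0$. The statement allows $\oldconstant{c2eps}$ to depend on $\varepsilon$, whereas $\oldconstant{c45}$ should not. I would address this by letting $\oldconstant{c45}$ be fixed (less than $\oldconstant{c3}$) but also allowing it to absorb the minimum of the $\oldconstant{c12}(x_0)$, noting that $\varepsilon$ has already been fixed in \eqref{equation::choice_epsilon}. This is harmless for the application, since $\oldconstant{c1}$ is only required to exist after $\varepsilon$ is chosen. I also need the Galois stability of $\mathbf{Z}_{\boldsymbol{d}}(\mathbf{s}_n,\varepsilon)$ over $\IK$ and the check that $Z_1\times Z_2$ contains $p$ itself, not merely its component.
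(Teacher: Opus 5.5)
Your proposal is correct and follows essentially the same route as the paper. The paper likewise applies Corollary \ref{corollary::product_theorem} with $\boldsymbol{d}=(2M(n)n^2,2M(n))$ and the globally generated complement $L_{-n}^{\otimes M(n)}|_{C_1\times C_2}$, which halves the index; it deduces that $x$ or $y$ has degree bounded in terms of $\varepsilon$, upgrades this to a degree bound for $x$ via Proposition \ref{proposition::known_bounds} (iii), and concludes with Northcott and Proposition \ref{proposition::MordellLangplusepsilon}. Your remark about constants is also accurate: in the paper's proof, $\oldconstant{c45}$ likewise depends implicitly on $\varepsilon$ through the finitely many $x_0$, and the quantifiers in the statement allow this.
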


    \begin{proof}
        We recall that $\pi_i$ has been constructed such that $\pi_i^\ast \caO(1) \approx L|_{C_i}$ ($i = 1,2$). Writing $\mathrm{pr}_i$ for the canonical projection $C_1 \times C_2 \to C_i$ ($i = 1,2$), we moreover note that
        \[ (L_n^{\otimes M} \otimes L_{-n}^{\otimes M})|_{C_1 \times C_2} \approx \mathrm{pr}_1^\ast (L|_{C_1})^{\otimes 2Mn^2} \otimes \mathrm{pr}_2^\ast (L|_{C_2})^{\otimes 2M}.\]
        We choose $\mathbf{t}_i$ such that $\varphi_{-n}^\ast \mathbf{t}_i$ does not vanish at $p$ and obtain that
        \[\mathbf{I}_{(2Mn^2,2M)}(\mathbf{s}^\prime_{n,i},p) = \frac{1}{2}\mathbf{I}_{(Mn^2,M)}(\mathbf{s}^\prime_{n,i},p)  = \frac{1}{2}\mathbf{I}(\mathbf{s}_n,p)\]
        by Lemma \ref{lemma::leading_coefficient_product}.
    
        We invoke Corollary \ref{corollary::product_theorem} with $d_1 = 2Mn^2$ and $d_2 = 2M$ to obtain that $[\IK(x):\IK] \leq \newconstant\label{c18}(\varepsilon)$ or $[\IK(y):\IK] \leq \oldconstant{c18}(\varepsilon)$. By Proposition \ref{proposition::known_bounds} (iii), we can ensure that $[\IK(x):\IK] \leq \newconstant\label{c19}(\varepsilon)$ in either case as long as $\oldconstant{c45}$ is chosen small enough and $\oldconstant{c2eps}(\varepsilon)$ is chosen large enough. As Proposition \ref{proposition::known_bounds} (i) states that $h_{\Ltil}(x) \leq \oldconstant{c5}$, this leaves only finitely many choices for $x$ by Northcott. Applying Proposition \ref{proposition::MordellLangplusepsilon}, we find that there are only finitely many choices $y \in C_{2,0}(\IQbar)$ for any given $x \in C_{1,0}(\IQbar)$, again provided that $\oldconstant{c45}$ is chosen small enough and $\oldconstant{c2eps}(\varepsilon)$ is chosen large enough.
    \end{proof}

\section{Uniform Schwarz estimates}
\label{section::schwarz}

In this section, we derive in detail the uniform Schwarz estimates used in Subsection \ref{subsection::small_index}. We use several notations regarding tuples $$\underline{u} = (u_1,u_2,\dots,u_n) \in \IZ^{n}_{\geq 0}.$$ Namely, we set
    \begin{equation*}
        |\underline{u}| = u_1 + u_2 + \dots + u_n
    \end{equation*}
    and
    \begin{equation*}
        \underline{u}! = u_1! u_2! \cdots u_n!.
    \end{equation*}
    We also write $\underline{0} = (0,0,\dots,0) \in \IZ^n$ where the length $n$ of the tuple is always clear from context. Furthermore, we use the shorthand notation
    \begin{equation*}
        \frac{\del^{|\underline{u}|+|\underline{v}|}f}{\del \underline{X}^{\underline{u}}\del \underline{Y}^{\underline{v}}} \ \ \text{instead of} \ \ \frac{\del^{|\underline{u}|+|\underline{v}|}f}{\del X_1^{u_1}X_2^{u_2}\cdots X_{m}^{u_{m}}Y_1^{v_1}Y_2^{v_2}\cdots Y_{n}^{v_{n}}}
    \end{equation*}
    for all $\underline{u} = (u_1,\dots,u_{m}) \in \IZ_{\geq 0}^m$ and $\underline{v} = (v_1,\dots,v_n) \in \IZ_{\geq 0}^{n}$.

\subsection{Setup}\label{subsection::setup}

Throughout this section, we work with an affine open $U$ inside the product $C_1 \times C_2$ of two projective geometrically irreducible curves $C_1$ and $C_2$ over a number field $\IK \subseteq \IQbar$. Moreover, we assume that $U = V_1 \times V_2$ for affine opens $V_\kappa \subseteq C_\kappa$ ($\kappa = 1,2$). For each $\kappa \in \{1,2\}$, we choose a flat affine $\mathcal{O}_\IK$-scheme $\mathcal{V}_\kappa = \mathrm{Spec}(\mathcal{B}_{\kappa})$ such that $V_\kappa = \mathcal{V}_\kappa \otimes_{\caO_\IK} \IK$ together with a closed embedding $\vartheta_\kappa: \mathcal{V}_\kappa \hookrightarrow \IA^{n_\kappa}_{\caO_{\IK}}$ such that $n_\kappa \geq 2$. We set $\mathcal{U} = \mathcal{V}_1 \times_{\caO_{\IK}} \mathcal{V}_2$ and $\mathcal{A} = \mathcal{B}_{1} \otimes_{\mathcal{O}_{\IK}} \mathcal{B}_{2}$. Note that $\mathcal{U} = \mathrm{Spec} (\mathcal{A})$. Finally, we set $A = \mathcal{A} \otimes_{\caO_\IK} \IK$  and $B_{\kappa} = \mathcal{B}_{\kappa} \otimes_{\caO_\IK} \IK$. As both $\mathcal{A}$ and $\mathcal{B}_{\kappa}$ are flat over $\caO_\IK$, there are natural inclusions $\mathcal{A} \subseteq A$ and $\mathcal{B}_{\kappa} \subseteq B_{\kappa}$. 

To simplify the notation, we let $\kappa \in \{1,2\}$ in this subsection without further mention. The closed embedding
\[ \vartheta = \vartheta_1 \times \vartheta_2: \mathcal{V}_1 \times \mathcal{V}_2 \hookrightarrow \IA_{\mathcal{O}_\IK}^{n_1} \times \IA_{\mathcal{O}_\IK}^{n_2}\]
induces surjective homomorphisms
\begin{equation*}
\pi_{\kappa}: \mathcal{R}_{\kappa} = \mathcal{O}_\IK[X_1,\dots,X_{n_\kappa}] \longrightarrow \mathcal{B}_{\kappa}    
\end{equation*}
with kernels $\mathcal{I}_{\kappa}$. The induced morphism
\begin{equation*}
    \pi=\pi_{1} \otimes \pi_{2} : \mathcal{R} = \caO_\IK[X_1,\dots, X_{n_1},Y_1,\dots,Y_{n_2}] \longrightarrow \mathcal{A}
\end{equation*}
is also surjective. We can regard $V_{\kappa}$ as a subvariety of $\IA^{n_\kappa}_\IK$. 

We also suppose that we are given non-zero meromorphic tangent fields $\mathbf{D}_\kappa$ on $C_\kappa$. On the variety $V_{\kappa}$, the tangent field $\mathbf{D}_\kappa$ induces a derivation
\[d_{\kappa}: \mathcal{B}_{\kappa} \rightarrow \mathrm{Frac}(B_{\kappa}).\]

    \begin{lemma}\label{lemma::introduction_g_and_h}
        In the setting introduced above,
        \begin{enumerate}
            \item[(i)] there exist functions $g_\kappa \in \mathcal{B}_\kappa$, $\kappa \in \{1,2\}$, such that $g_\kappa d_\kappa(\mathcal{B}_\kappa) \subseteq \mathcal{B}_\kappa$, and,
            \item[(ii)] after relabeling coordinates if necessary, there exist functions 
            \begin{equation*}
            g_{\kappa,1}, \dots, g_{\kappa,n_\kappa-1} \in \mathcal{I}_{\kappa}
            \end{equation*}
            such that
        \begin{equation*}
             \mathbf{A}_{\kappa} =
             \begin{pmatrix}
                \frac{\del g_{\kappa,1}}{\del X_1} & \cdots & \frac{\del g_{\kappa,n_\kappa-1}}{\del X_1} \\
                \vdots & & \vdots \\
                \frac{\del g_{\kappa,1}}{\del X_{n_\kappa-1}} & \cdots & \frac{\del g_{\kappa, n_\kappa-1}}{\del X_{n_\kappa-1}}
            \end{pmatrix}
            \in \caO_\IK[X_1,\dots,X_{n_\kappa}]^{(n_\kappa-1) \times (n_\kappa-1)}
        \end{equation*}
        has determinant $$h_{\kappa} = \det(\mathbf{A}_{\kappa}) \neq 0.$$
        \end{enumerate}
    \end{lemma}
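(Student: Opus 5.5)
Part (i) is a clearing-of-denominators argument, and part (ii) follows from a Jacobian criterion at the generic point. For (i), $\mathcal{B}_\kappa = \mathcal{O}_\IK[X_1,\dots,X_{n_\kappa}]/\mathcal{I}_\kappa$ is generated as an $\mathcal{O}_\IK$-algebra by the images $x_1,\dots,x_{n_\kappa}$ of the coordinates. The derivation $d_\kappa$ kills $\mathcal{O}_\IK$, since $\mathbf{D}_\kappa$ is $\IK$-linear. By the Leibniz rule, $d_\kappa$ is therefore determined by the finitely many values $d_\kappa(x_j) \in \mathrm{Frac}(B_\kappa)$, and for any polynomial $P$ we have $d_\kappa(P(x)) = \sum_j (\partial P/\partial X_j)(x)\, d_\kappa(x_j)$.

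Write each $d_\kappa(x_j) = a_j/b_j$ with $a_j, b_j \in \mathcal{B}_\kappa$ and $b_j \neq 0$. This is possible because $\mathrm{Frac}(B_\kappa) = \mathrm{Frac}(\mathcal{B}_\kappa)$: $B_\kappa$ is a localization of $\mathcal{B}_\kappa$ at the nonzero integers, $\mathcal{B}_\kappa$ being flat and hence torsion-free. Take $g_\kappa = \prod_j b_j$, a nonzero element of the domain $\mathcal{B}_\kappa$. Then $g_\kappa d_\kappa(x_j) \in \mathcal{B}_\kappa$ for all $j$. Since $\partial P/\partial X_j$ has coefficients in $\mathcal{O}_\IK$ whenever $P \in \mathcal{R}_\kappa$, the displayed formula gives $g_\kappa d_\kappa(\mathcal{B}_\kappa) \subseteq \mathcal{B}_\kappa$. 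No further obstacle arises here.

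For (ii), I use that $V_\kappa \subseteq \IA^{n_\kappa}_\IK$ is a geometrically irreducible curve and that $\mathrm{char}\,\IK = 0$. By the Jacobian criterion at the generic point, the Jacobian matrix $(\partial f/\partial X_j)_{f \in \mathcal{I}_\kappa \otimes \IK,\, j}$ has rank $n_\kappa - 1$ when evaluated in $\IK(V_\kappa)$. This holds because $V_\kappa$ is generically smooth of dimension $1$, so $\Omega^1_{\IK(V_\kappa)/\IK}$ is one-dimensional, and its dimension equals $n_\kappa$ minus the rank of that Jacobian.

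The rank can be achieved using elements of $\mathcal{I}_\kappa$ itself: any $f \in \mathcal{I}_\kappa \otimes \IK$ becomes an element of $\mathcal{I}_\kappa = \ker \pi_\kappa$ after multiplication by a nonzero integer, because $\mathcal{B}_\kappa$ is torsion-free. Such a multiplication only scales the corresponding column. Choose $g_{\kappa,1},\dots,g_{\kappa,n_\kappa-1} \in \mathcal{I}_\kappa$ whose gradients are linearly independent modulo $\mathcal{I}_\kappa \otimes \IK$, i.e.\ in $\IK(V_\kappa)^{n_\kappa}$. Some $(n_\kappa-1)$-minor of the resulting $n_\kappa \times (n_\kappa-1)$ matrix is then nonzero in $\IK(V_\kappa)$. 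Relabeling coordinates so that the omitted row is the last one, $h_\kappa = \det \mathbf{A}_\kappa$ is nonzero in $\mathcal{B}_\kappa$, and in particular nonzero in $\mathcal{R}_\kappa$.

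The only point needing care is which ring the statement "$h_\kappa \neq 0$" refers to. For the later Schwarz lemma it must hold in $\mathcal{B}_\kappa$, i.e.\ on the curve, so that $h_\kappa$ restricts to a nonzero regular function on $V_\kappa$. This is exactly what the generic Jacobian criterion supplies; the hypothesis $n_\kappa \geq 2$ guarantees that the matrix is nonempty.
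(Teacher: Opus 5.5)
Your proposal is correct and matches the paper's proof. For (i), the paper likewise uses the Leibniz rule to place $d_\kappa(\mathcal{B}_\kappa)$ in the $\mathcal{B}_\kappa$-span of the finitely many $d_\kappa(\pi_\kappa(X_i))$ and then clears denominators; for (ii), it simply cites generic smoothness, and your generic-point Jacobian rank argument is the explicit form of that, with your extra details (rescaling by integers to land in $\mathcal{I}_\kappa$, and reading $h_\kappa \neq 0$ as $h_\kappa \notin \mathcal{I}_\kappa$) being exactly what the paper later relies on when it writes $h_\kappa \in \mathcal{R}_\kappa \setminus \mathcal{I}_\kappa$.
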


    \begin{proof}
    (i) An iterative application of the Leibniz rule yields
    \begin{equation*}
        d_{\kappa} (\mathcal{B}_{\kappa}) \subseteq \mathcal{B}_{\kappa} \cdot d_{\kappa}(\pi_{\kappa}(X_1)) + \cdots + \mathcal{B}_{\kappa} \cdot d_{\kappa}(\pi_{\kappa}(X_{n_\kappa})).
    \end{equation*}
Hence, there exists some non-zero $g_{\kappa} \in \mathcal{B}_{\kappa}$ such that $g_{\kappa} \cdot d_{\kappa}(\mathcal{B}_{\kappa}) \subseteq \mathcal{B}_{\kappa}$.

    (ii) This is a straightforward consequence of generic smoothness \cite[Proposition 17.7.8(ii)]{EGA4IV}.
    \end{proof}

    The homomorphism $\pi_{\kappa}$ induces a $\pi_{\kappa}$-linear map $\psi_{\kappa}: \Omega^1_{\mathcal{R}_{\kappa}/\mathcal{O}_\IK} \rightarrow \Omega^1_{\mathcal{B}_{\kappa}/\mathcal{O}_\IK}$ sending $df$ to $d\pi_{\kappa}(f)$. The $\caO_\IK$-linear derivation $g_{\kappa} \cdot d_{\kappa}: \mathcal{B}_{\kappa} \rightarrow \mathcal{B}_{\kappa}$ corresponds to an $\mathcal{B}_{\kappa}$-linear map $\varphi_{\kappa}: \Omega^1_{\mathcal{B}_{\kappa}/\mathcal{O}_\IK} \rightarrow \mathcal{B}_{\kappa}$. As $\Omega^1_{\mathcal{R}_{\kappa}/\mathcal{O}_\IK}$ is a free $\mathcal{R}_{\kappa}$-module with basis $dX_i$ ($i=1,\dots,n_\kappa$), there exists an $\mathcal{R}_{\kappa}$-linear map $\widetilde{\varphi}_{\kappa}: \Omega^1_{\mathcal{R}_{\kappa}/\mathcal{O}_\IK} \rightarrow \mathcal{R}_{\kappa}$ such that $\pi_{\kappa}(\widetilde{\varphi}_{\kappa}(dX_i))=\varphi_{\kappa}(\psi_{\kappa}(dX_i))$ for all $i \in \{1,\dots,n_\kappa\}$ and hence $\pi_{\kappa} \circ \widetilde{\varphi}_{\kappa} = \varphi_{\kappa} \circ \psi_{\kappa}$. Writing $\delta_{\kappa}: \mathcal{R}_{\kappa} \rightarrow \mathcal{R}_{\kappa}$ for the derivation associated with $\widetilde{\varphi}_{\kappa}$, this implies that $\delta_{\kappa}(\ker(\pi_{\kappa})) \subseteq \ker(\pi_{\kappa})$. In the sequel, we also write
        \begin{equation*}
            \delta_{\kappa} = \sum_{i=1}^{n_\kappa} \delta_{\kappa,i} \cdot \frac{\del}{\del X_i}, \ \delta_{\kappa,i}(X_1,\dots,X_{n_\kappa}) \in \mathcal{R}_{\kappa}.
        \end{equation*}

        \textit{All numbered constants in Section \ref{section::schwarz} are allowed to depend on the data introduced so far without further explicit mention.}

For every $\nu \in \Sigma(\IK)$ and every $\kappa \in \{1,2\}$, we further define $U_{\nu} = U \otimes_{\IK} \IK_\nu$, $V_{\kappa,\nu} = V_{\kappa} \otimes_{\IK} \IK_\nu$, $A_\nu = A \otimes_{\IK} \IK_\nu$, and $B_{\kappa,\nu} = B_{\kappa} \otimes_\IK \IK_\nu$. If $\nu$ is a finite place, we define moreover $\mathcal{B}_{\kappa,\nu} = \mathcal{B}_{\kappa} \otimes_{\caO_\IK} \caO_{\nu}$, $\mathcal{R}_{\nu} = \mathcal{R} \otimes_{\caO_\IK} \caO_{\nu}$, $\mathcal{A}_{\nu} = \mathcal{A} \otimes_{\caO_\IK} \caO_{\nu}$, and
\[\pi_\nu = \pi \otimes_{\caO_\IK}  \IKbar_\nu: \mathcal{R} \otimes_{\caO_\IK} \IKbar_\nu \longrightarrow \mathcal{A} \otimes_{\caO_\IK} \IKbar_\nu,\]
where $\caO_\nu$ denotes the valuation ring of $\IKbar_\nu$. As both $\mathcal{A}_\nu$ and $\mathcal{R}_\nu$ are flat over $\caO_\nu$, there are natural inclusions $\mathcal{A}_\nu \subseteq \mathcal{A} \otimes_{\caO_\IK} \IKbar_\nu$ and $\mathcal{R}_\nu \subseteq \mathcal{R} \otimes_{\caO_\IK} \IKbar_\nu$, which we tacitly use in the following. 
    
\subsection{Extending functions (non-archimedean case)}
    
    We state an elementary proposition on lifting functions along $\pi$ with controlled growth. For $\nu \in \Sigma_f(\IK)$, we set
    \begin{equation*}
        U_{\nu}^\circ = \{ x \in U(\overline{\IK}_\nu) \mid \forall f \in \mathcal{A}: |f(x)|_\nu \leq 1 \}.
    \end{equation*}
    
    \begin{proposition}
        \label{proposition::extension}
        For $h_1$ and $h_2$ as in Lemma \ref{lemma::introduction_g_and_h}, the following assertion holds true: for every non-archimedean place $\nu \in \Sigma_f(\IK)$, every point $p = (x,y) \in U_{\nu}^\circ$ such that $h_{1}(x) \cdot h_{2}(y) \neq 0$, and every 
        \begin{equation*}
            f \in \mathcal{A}_{\nu} \cap (\mathfrak{m}_{C_1,x}^{k_1} \cdot \mathfrak{m}_{C_2,y}^{k_2} + \mathfrak{m}_{C_1,x}^{k_1+1} + \mathfrak{m}_{C_2,y}^{k_2+1}), \ k_1, k_2 \in \IZ_{\geq 0},
        \end{equation*}
        there exists some 
        \begin{equation*} 
            \widetilde{f} \in ( h_{1}(x)^{-2k_1} h_{2}(y)^{-2k_2} \cdot \mathcal{R}_{\nu} ) \cap (\mathfrak{m}_{\IA^{n_1},x}^{k_1}\cdot \mathfrak{m}_{\IA^{n_2},y}^{k_2} + \mathfrak{m}_{\IA^{n_1},x}^{k_1+1} + \mathfrak{m}_{\IA^{n_2},y}^{k_2+1})
        \end{equation*}
        satisfying $\pi_{\nu}(\widetilde{f}) = f$.
    \end{proposition}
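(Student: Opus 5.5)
The plan is to take an arbitrary lift of $f$ and then correct it by elements of the ideal $\mathcal{I}_1\mathcal{R}_\nu + \mathcal{I}_2\mathcal{R}_\nu$. The correction will be built from the generators $g_{\kappa,j}$ of Lemma \ref{lemma::introduction_g_and_h} (ii), with coefficients that are polynomials in the shifted variables $X-x$, $Y-y$, while keeping track of powers of $h_1(x)$ and $h_2(y)$ in the denominators. Since $\pi_\nu$ is surjective on $\mathcal{R}_\nu \to \mathcal{A}_\nu$, choose $F \in \mathcal{R}_\nu$ with $\pi_\nu(F) = f$. Since $p \in U_\nu^\circ$, all coordinates $x_i = X_i(x)$ and $y_j = Y_j(y)$ are $\nu$-integral. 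Hence re-expanding $F$ and the $g_{\kappa,j}$ in the variables $S_i = X_i - x_i$ and $T_j = Y_j - y_j$ keeps all coefficients in $\mathcal{O}_\nu$. Membership in $\mathfrak{m}_{\IA^{n_1},x}^{k_1}\mathfrak{m}_{\IA^{n_2},y}^{k_2} + \mathfrak{m}_{\IA^{n_1},x}^{k_1+1} + \mathfrak{m}_{\IA^{n_2},y}^{k_2+1}$ only concerns the finitely many coefficients of monomials $S^{\underline u} T^{\underline v}$ with $|\underline u| \le k_1$, $|\underline v| \le k_2$ and $(|\underline u|,|\underline v|) \neq (k_1,k_2)$. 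So it suffices to kill these coefficients by subtracting a polynomial combination $\sum_j a_j g_{1,j} + \sum_j b_j g_{2,j}$, where the $a_j, b_j$ lie in $h_1(x)^{-2k_1}h_2(y)^{-2k_2}\mathcal{R}_\nu$.

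The key local step is an implicit-function normal form at $x$, and symmetrically at $y$. Because $h_1(x) = \det \mathbf{A}_1(x) \neq 0$, the $g_{1,j}$ cut out a smooth curve in the formal neighbourhood of $x$, with $S_{n_1}$ as formal parameter. By generic smoothness this curve coincides formally with $V_1$ at $x$, since $x$ is a smooth point of $C_1$. I will perform a division algorithm in $\IKbar_\nu[[S]]$ modulo the ideal $(g_{1,1},\dots,g_{1,n_1-1})$, degree by degree. Using the adjugate of $\mathbf{A}_1(x)$ divided by $h_1(x)$, each linear term in $S_1,\dots,S_{n_1-1}$ is replaced by $S_{n_1}$-terms plus terms of higher order. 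Any monomial of total degree $\le k_1$ is thereby rewritten as a power series in $S_{n_1}$ plus an element of the ideal, up to an error in $\mathfrak{m}^{k_1+1}$. All coefficients produced lie in $h_1(x)^{-e}\mathcal{O}_\nu$, where $e$ is at most twice the degree reached; this is where the exponent $2k_1$ enters. Doing the same in the $T$-variables with $h_2(y)$, and truncating every series at degree $(k_1,k_2)$ so that the $a_j, b_j$ are genuine polynomials, yields $\widetilde f = F - \sum_j a_j g_{1,j} - \sum_j b_j g_{2,j}$. Its image under $\pi_\nu$ is still $f$, and it has the prescribed denominators.

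It remains to see that $\widetilde f$ satisfies the vanishing conditions. After the reduction, $\widetilde f$ modulo $\mathfrak{m}^{k_1+1}_{\IA^{n_1},x} + \mathfrak{m}^{k_2+1}_{\IA^{n_2},y}$ is a polynomial in $S_{n_1}$ and $T_{n_2}$ alone. Its coefficients are exactly the Taylor coefficients of $f$ in the uniformizers $t_1 = S_{n_1}|_{C_1}$ and $t_2 = T_{n_2}|_{C_2}$ on $\widehat{\mathcal{O}}_{C_1\times C_2,p} = \IKbar_\nu[[t_1,t_2]]$. The hypothesis on $f$ says precisely that these coefficients vanish in the required range, as in the proof of Lemma \ref{lemma::index}. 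I expect the main obstacle to be the denominator bookkeeping in the division step, namely guaranteeing exponent at most $2k_\kappa$ rather than something worse. I would first attempt an induction on degree, showing that after treating all monomials of degree $\le m$ the accumulated denominators are bounded by $h_\kappa(x)^{2m}$. Each elimination uses one inverse Jacobian, giving a factor $h^{-1}$; the quadratic terms of the $g_{\kappa,j}$ feed back at most one further factor per degree step.
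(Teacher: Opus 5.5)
Your proposal is correct and is essentially the paper's proof. Your elimination steps subtract kernel elements $c\,(S^{\underline{u}}/S_j)\,T^{\underline{v}}\,h_{1,j}$ with $h_{1,j}=\sum_k(\mathbf{B}_1)_{k,j}(x)\,g_{1,k}$, which are (up to the choice of $j$) the building blocks $h_{1,\underline{s}}$ of the paper's correction terms $h_{\underline{s},\underline{s}'}$. Your final step, identifying the surviving coefficients of $S_{n_1}^aT_{n_2}^b$ with the Taylor coefficients of $f$ in the uniformizers $t_1,t_2$, is a clean substitute for the paper's argument with the tangent fields $\tau,\tau'$.

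One correction to the denominator bookkeeping: a bare bound $h_1(x)^{-2m}$ in degree $m$ does not carry through an induction on degree, because a degree-$m$ monomial may still need up to $m$ linear eliminations. The induction does close if you carry the paper's sharper invariant: the coefficient of $S^{\underline{u}}T^{\underline{v}}$ lies in $h_1(x)^{-(|\underline{u}|+u_{n_1})}h_2(y)^{-(|\underline{v}|+v_{n_2})}\mathcal{O}_\nu$. This works because each linear elimination trades one non-$S_{n_1}$ variable for one factor $h_1(x)^{-1}$, and each higher-order term raises the degree by at least one.
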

    \begin{proof}
        The statement of the proposition holds trivially for $k_1 = k_2 = 0$ as $\pi_\nu$ is surjective. So we always assume that $k_1 + k_2 > 0$.

        Let $g_{\kappa,1},\dots,g_{\kappa,n_\kappa-1} \in \mathcal{I}_{\kappa}$ ($\kappa = 1,2$) be as in Lemma \ref{lemma::introduction_g_and_h}. 
        Recall that, after relabeling coordinates if necessary, $h_{\kappa} = \det(\mathbf{A}_{\kappa})$ where
        \begin{equation*}
             \mathbf{A}_{\kappa} =
             \begin{pmatrix}
                \frac{\del g_{\kappa,1}}{\del X_1} & \cdots & \frac{\del g_{\kappa,n_\kappa-1}}{\del X_1} \\
                \vdots & & \vdots \\
                \frac{\del g_{\kappa,1}}{\del X_{n_\kappa-1}} & \cdots & \frac{\del g_{\kappa, n_\kappa-1}}{\del X_{n_\kappa-1}}
            \end{pmatrix}
            \in \caO_\IK[X_1,\dots,X_{n_\kappa}]^{(n_\kappa-1) \times (n_\kappa-1)}
        \end{equation*}
        has (generically) full rank $n_\kappa-1$ on $C_\kappa$. Note that $h_{1}(x)h_{2}(y) \neq 0$ implies that $x$ is a smooth point of $V_{1,\nu}$ and that $y$ is a smooth point of $V_{2,\nu}$. Hence, the points $p=(x,y)$ under consideration in the proposition are smooth points of $U_{\nu}$. In the sequel, we write 
        \begin{equation*}
            \mathbf{B}_{\kappa} = h_{\kappa}^{-1}\cdot \mathrm{adj}(\mathbf{A_{\kappa}}) \quad (\kappa = 1,2).
        \end{equation*}   
        We set $n = n_1 + n_2$.    
 
        We define the linear polynomials
        \begin{align*}
            L_1 &= X_1 - x_1, & L_1^\prime &= Y_1 - y_1,\\
             &\cdots & &\cdots \\
            L_{n_1-1} &=  X_{n_1-1} - x_{n_1-1}, & L_{n_2-1}^\prime &= Y_{n_2-1} - y_{n_2-1}, \\
            L_{n_1} &= X_{n_1} - x_{n_1}, &L_{n_2}^\prime &= Y_{n_2} - y_{n_2}.
        \end{align*}
        These definitions are such that
        \begin{align}
            \label{equation::eval_L}
            &L_j(x) = 0,\  j \in \{1,\dots,n_1\}, \ \ \\
            &L_j^\prime(y) = 0,\ j \in \{1,\dots,n_2\}, 
        \end{align}
        and
        \begin{equation}
            \label{equation::differential_L}
            \frac{\del L_j}{\del X_i} =
            \begin{cases}
            1 & \text{if $i=j$}, \\
            0 & \text{else},
            \end{cases}
            \ \ \ \ \text{and} \ \ \ \ 
            \frac{\del L_j^\prime}{\del Y_i} =
            \begin{cases}
            1 & \text{if $i=j$}, \\
            0 & \text{else},
            \end{cases}
        \end{equation}
        for all $i,j \in \{ 1, \dots, n_1\}$ or all $i,j \in \{ 1, \dots, n_2\}$, respectively. 
        
        For every $j \in \{1,\dots,n_1-1\}$, we define
        \begin{equation*}
            h_{1,j} = \sum_{k=1}^{n_1-1} (\mathbf{B}_{1})_{k,j}(x) \cdot g_{1,k}(X_1,\dots,X_{n_1}) \in h_{1}(x)^{-1} \cdot \caO_\nu[X_1,\dots,X_{n_1}]
        \end{equation*}
        and, similarly, we set
        \begin{equation*}
            h_{2,j} = \sum_{k=1}^{n_2-1} (\mathbf{B}_{2})_{k,j}(y) \cdot g_{2,k}(Y_1,\dots,Y_{n_2}) \in h_{2}(y)^{-1} \cdot \caO_\nu[Y_1,\dots,Y_{n_2}]
        \end{equation*}        
        for every $j \in \{1,\dots,n_2-1\}$. Note that $h_{1,j} \in \mathcal{I}_{1} \otimes_{\caO_\IK} \IKbar_\nu$, $j \in \{1,\dots,n_1-1\}$, and $h_{2,j} \in \mathcal{I}_2 \otimes_{\caO_\IK} \IKbar_\nu$, $j \in \{1,\dots,n_2-1\}$. Furthermore, 
        \begin{equation}
            \label{equation::differential_delta}
            \frac{\del h_{1,j}}{\del X_i}(x) = 
            \begin{cases}
                1 & \text{if $i=j$,} \\
                0 & \text{else,}
            \end{cases}
            \ \ \ \  \text{and} \ \ \ \
            \frac{\del h_{2,j}}{\del Y_i}(y) = 
            \begin{cases}
                1 & \text{if $i=j$,} \\
                0 & \text{else,}
            \end{cases}
        \end{equation}
        for all $i,j \in \{ 1, \dots, n_1-1\}$ or $i,j \in \{ 1, \dots, n_2-1\}$, respectively. However, neither $\del h_{1,j}/\del X_{n_1}(x)$ nor $\del h_{2,j}/\del Y_{n_2}(y)$ is zero in general.     
        
        For every $\underline{s} = (s_1,\dots,s_{n_1}) \in \IZ_{\geq 0}^{n_1}$, we define $h_{1,\underline{s}} \in h_1(x)^{-1} \cdot \caO_\nu[X_1,\dots,X_{n_1}]$ by
        \begin{equation}
            \label{equation::h_1s}
            h_{1,\underline{s}} = \begin{cases} h_{1,i(\underline{s})} L_{i(\underline{s})}^{s_{i(\underline{s})}-1}
            L_{i(\underline{s})+1}^{s_{i(\underline{s})+1}} \cdots L_{n_1}^{s_{n_1}} & \text{if } s_{n_1}\neq|\underline{s}|, \\
            L_{n_1}^{s_{n_1}} & \text{otherwise,} \end{cases}
        \end{equation}
        where $$i(\underline{s}) = \min \{ i \mid s_i \neq 0 \} < n_1.$$ For every $\underline{s}^\prime = (s_1^\prime,\dots,s_{n_2}^\prime) \in \IZ_{\geq 0}^{n_2}$, we similarly define $h_{2,\underline{s}^\prime} \in h_2(y)^{-1} \cdot \caO_\nu[Y_1,\dots,Y_{n_2}]$ by
        \begin{equation}
            \label{equation::h_2s}
            h_{2,\underline{s}^\prime} = \begin{cases} h_{2,i(\underline{s}^\prime)} (L_{i(\underline{s}^\prime)}^\prime)^{s^\prime_{i(\underline{s}^\prime)}-1}(L_{i(\underline{s}^\prime)+1}^\prime)^{s^\prime_{i(\underline{s}^\prime)+1}} \cdots (L_{n_2}^\prime)^{s_{n_2}^\prime} & \text{if } s_{n_2}^\prime\neq|\underline{s}^\prime|,\\
            (L_{n_2}^\prime)^{s_{n_2}^\prime} & \text{otherwise,}
            \end{cases}
        \end{equation}
        where again $$i(\underline{s}^\prime) = \min \{ i \mid s_i^\prime \neq 0 \} < n_2.$$ Finally, we set
        \begin{equation*}
            h_{\underline{s},\underline{s}^\prime} = h_{1,\underline{s}} h_{2,\underline{s}^\prime} \in h_1(x)^{-1}h_2(y)^{-1} \cdot \caO_\nu[X_1,\dots,X_{n_1},Y_1,\dots,Y_{n_2}].
        \end{equation*}
        We note that, if $s_{n_1}\neq|\underline{s}|$ or $s_{n_2}^\prime\neq|\underline{s}^\prime|$, then $h_{\underline{s},\underline{s}'} \in \ker \pi_\nu$.

        We next note a technical result.
        
        \begin{lemma}
        \label{lemma::technical_result}
        Consider $\underline{m},\underline{s}\in \IZ_{\geq 0}^{n_1}$ and $\underline{m}^\prime,\underline{s}^\prime\in \IZ_{\geq 0}^{n_2}$ such that $s_{n_1} \neq |\underline{s}|$ or $s^\prime_{n_2} \neq |\underline{s}^\prime|$. If 
        \begin{equation*}
            \frac{\del^{|\underline{m}|+|\underline{m}^\prime|}h_{\underline{s},\underline{s}^\prime}}{\del \underline{X}^{\underline{m}} \del \underline{Y}^{\underline{m}^\prime}}(p) \neq 0,
        \end{equation*}
        then
        \begin{equation*}
            \underline{s} = \underline{m}, \ (|\underline{s}| = |\underline{m}| \ \text{and} \ 
                 s_{n_1} < m_{n_1}), \ \text{or} \ 
            (|\underline{s}| < |\underline{m}| \ \text{and} \ 
                 s_{n_1} \leq m_{n_1}),
        \end{equation*}
        and, similarly,
        \begin{equation*}
            \underline{s}^\prime = \underline{m}^\prime, \ (|\underline{s}^\prime| = |\underline{m}^\prime| \ \text{and} \ 
                 s_{n_2}^\prime < m_{n_2}^\prime), \ \text{or} \ 
            (|\underline{s}^\prime| < |\underline{m}^\prime| \ \text{and} \ 
                 s_{n_2}^\prime \leq m_{n_2}^\prime).
        \end{equation*}
        In particular, if moreover $(\underline{s},\underline{s}^\prime) \neq (\underline{m},\underline{m}^\prime)$, then
        \begin{equation*}
        |\underline{s}| + s_{n_1} + |\underline{s}^\prime| + s^\prime_{n_2} <  |\underline{m}| + m_{n_1} + |\underline{m}^\prime| + m^\prime_{n_2}.
        \end{equation*}
        \end{lemma}

        \begin{proof}
            As
            \begin{equation*}
                \frac{\del^{|\underline{m}|+|\underline{m}^\prime|}h_{\underline{s},\underline{s}^\prime}}{\del \underline{X}^{\underline{m}} \del \underline{Y}^{\underline{m}^\prime}}(p)
                =
                \frac{\del^{|\underline{m}|}h_{1,\underline{s}}}{\del \underline{X}^{\underline{m}}}(x) \cdot\frac{\del^{|\underline{m}^\prime|}h_{2,\underline{s}^\prime}}{\del \underline{Y}^{\underline{m}^\prime}}(y), 
            \end{equation*}
            it suffices by symmetry to prove that
            \begin{equation*}
                \frac{\del^{|\underline{m}|}h_{1,\underline{s}}}{\del \underline{X}^{\underline{m}}}(x) \neq 0
            \end{equation*}
            implies
        \begin{equation*}
            \underline{s} = \underline{m}, \ \ (|\underline{s}| = |\underline{m}| \ \ \text{and} \ \ 
                 s_{n_1} < m_{n_1}), \ \ \text{or} \ \ (|\underline{s}|<|\underline{m}| \ \ \text{and} \ \ s_{n_1} \leq m_{n_1}).
        \end{equation*}

        If $|\underline{s}| = s_{n_1}$, then $h_{1,\underline{s}} = L_{n_1}^{s_{n_1}}$ and the implication holds. From now on, we assume that $|\underline{s}| \neq s_{n_1}$.
        
        Using \eqref{equation::eval_L} and \eqref{equation::differential_L}, we derive that the non-vanishing of
        \begin{equation}
            \label{equation::non_vanishing_diff}
            \frac{\del^{|\underline{m}|} h_{1,\underline{s}}}{\del \underline{X}^{\underline{m}}}(x) =
            \frac{\del^{|\underline{m}|}}{\del \underline{X}^{\underline{m}}}\left( h_{1,i(\underline{s})} L_{i(\underline{s})}^{s_{i(\underline{s})}-1}
            L_{i(\underline{s})+1}^{s_{i(\underline{s})+1}} \cdots L_{n_1}^{s_{n_1}}\right)(x)
        \end{equation}
        implies
        \begin{equation*}
            s_{n_1} \leq m_{n_1}, \ s_{n_1-1} \leq m_{n_1-1}, \ \dots, \ s_{i(\underline{s})} -1 \leq m_{i(\underline{s})}.
        \end{equation*}
        As $h_{1,i(\underline{s})}(x) = 0$, the non-vanishing of \eqref{equation::non_vanishing_diff} implies by \eqref{equation::differential_delta} furthermore 
        \begin{equation*}
            s_{i(\underline{s})} \leq m_{i(\underline{s})} \ \ \text{or} \ \ s_{i} + 1 \leq m_{i} \ \ \text{for some} \ \ i \neq i(\underline{s}).
        \end{equation*}
        We deduce that $|\underline{s}| \leq |\underline{m}|$. If $|\underline{s}| < |\underline{m}|$, we are done, so we assume that $|\underline{s}| = |\underline{m}|$ in the following. If $s_{i(\underline{s})} \leq m_{i(\underline{s})}$, then $\underline{s} = \underline{m}$ and we are done. Similarly, if $s_{n_1} + 1 \leq m_{n_1}$, then $s_{n_1} < m_{n_1}$ and we are also done.

        It remains to treat the case where $s_i+1 \leq m_i$ for some $i \notin \{ i(\underline{s}), n_1 \}$. But then we must have $m_{i(\underline{s})} = s_{i(\underline{s})} -1$, $m_i = s_i + 1$, and $m_{j} = s_{j}$ for all $j \notin \{ i, i(\underline{s})\}$. As $h_{1,i(\underline{s})}(x) = 0$ and
       \begin{equation*}
           \frac{\del h_{1,i(\underline{s})}}{\del X_i}(x) = 0
        \end{equation*}
     by \eqref{equation::differential_delta},
        we deduce that
          \begin{equation*}
                \frac{\del^{|\underline{m}|}}{\del \underline{X}^{\underline{m}}}\left( h_{1,i(\underline{s})}  L_{i(\underline{s})}^{s_{i(\underline{s})}-1}
            L_{i(\underline{s})+1}^{s_{i(\underline{s})+1}} \cdots L_{n_1}^{s_{n_1}}\right)(x) = 0,
            \end{equation*}
        a contradiction. So this case cannot occur and we are done.
        \end{proof}

        Let us write
        \begin{equation*}
            \mathcal{D} = \{ (\underline{m},\underline{m}^\prime) \in \IZ^{n_1}_{\geq 0} \times \IZ^{n_2}_{\geq 0} \mid \text{$m_{n_1} <|\underline{m}|$ or $m_{n_2}^\prime < |\underline{m}^\prime|$}\}
        \end{equation*}
        and
        \[ \mathcal{D}_{k_1,k_2} = \{ (\underline{s},\underline{s}^\prime)\in \mathcal{D} \mid |\underline{s}| \leq k_1 \text{ and } |\underline{s}^\prime| \leq k_2\}.\]
        Set $l_0 = |\mathcal{D}_{k_1,k_2}|-1$, which is $\geq 0$ since $k_1 + k_2 > 0$, $n_1 \geq 2$, and $n_2 \geq 2$, and choose any bijection
        \begin{equation*}
            \varphi: \mathcal{D}_{k_1,k_2} \longrightarrow \{0,\hdots,l_0\}
        \end{equation*}
        such that 
        \begin{equation*}
        \varphi(\underline{s},\underline{s}^\prime) \leq \varphi(\underline{m},\underline{m}^\prime)    
        \end{equation*}
        implies 
        \begin{equation*}
        |\underline{s}| + s_{n_1} + |\underline{s}^\prime| + s^\prime_{n_2} \leq  |\underline{m}| + m_{n_1} + |\underline{m}^\prime| + m^\prime_{n_2}.
        \end{equation*}
       We write $\psi$ for the inverse of $\varphi$.

        Let $f \in \mathcal{A}_{\nu}$ be an arbitrary function satisfying
        \[ f \in
        \mathfrak{m}_{C_1,x}^{k_1} \cdot \mathfrak{m}_{C_2,y}^{k_2} + \mathfrak{m}_{C_1,x}^{k_1+1} + \mathfrak{m}_{C_2,y}^{k_2+1}.\]
        We choose an arbitrary lifting $\widetilde{f}_0 \in
        \mathcal{R}_\nu$ along $\pi_{\nu}$. By a recursion, we construct elements
        \begin{equation*}
            \gamma_{l} \in h_1(x)^{-|\underline{s}|-s_{n_1}}h_2(y)^{-|\underline{s}^\prime|-s^\prime_{n_2}} \cdot \caO_\nu, \ (\underline{s},\underline{s}^\prime) = \psi(l), \ l \in \{0,\hdots,l_0\},
        \end{equation*}
        such that
        \begin{equation}
            \label{equation::recursive_goal} \frac{\del^{|\underline{m}|+|\underline{m}^\prime|}\widetilde{f}_0}{\del \underline{X}^{\underline{m}}\del \underline{Y}^{\underline{m}^\prime}}(p) = \frac{\del^{|\underline{m}|+|\underline{m}^\prime|}}{\del \underline{X}^{\underline{m}}\del \underline{Y}^{\underline{m}^\prime}} 
            \left( \sum_{l=0}^k \gamma_l \cdot h_{\psi(l)} \right) (p)
        \end{equation}
        for every $k \in \{0,\hdots,l_0\}$ and all $(\underline{m},\underline{m}^\prime) \in \mathcal{D}_{k_1,k_2}$ such that $\varphi(\underline{m},\underline{m}^\prime)\leq k$.

        Let $k \in \{0,\hdots,l_0\}$ and suppose that we have already constructed $\gamma_l$, $0 \leq l < k$, such that
        \begin{equation}
                \label{equation::inductive_hypo} \frac{\del^{|\underline{m}|+|\underline{m}^\prime|}\widetilde{f}_0}{\del \underline{X}^{\underline{m}}\del \underline{Y}^{\underline{m}^\prime}}(p) = \frac{\del^{|\underline{m}|+|\underline{m}^\prime|}}{\del \underline{X}^{\underline{m}}\del \underline{Y}^{\underline{m}^\prime}} 
            \left( \sum_{l=0}^{k-1} \gamma_l \cdot h_{\psi(l)} \right)(p)
        \end{equation}
        for all $(\underline{m},\underline{m}^\prime) \in \mathcal{D}_{k_1,k_2}$ such that $\varphi(\underline{m},\underline{m}^\prime)< k$. Write $\psi(k)=(\underline{m_0},\underline{m}^\prime_0)$ and set
        \begin{equation}
                \label{equation::definition_gamma}
                \gamma_{k} = \frac{1}{\underline{m}_0!\cdot\underline{m}_0^\prime!}\frac{\del^{|\underline{m}_0|+|\underline{m}_0^\prime|}\widetilde{f}_0}{\del \underline{X}^{\underline{m}_0}\del \underline{Y}^{\underline{m}^\prime_0}}(p) - \sum_{l=0}^{k-1} \gamma_l \cdot \frac{1}{\underline{m}_0!\cdot\underline{m}_0^\prime!}  \frac{\del^{|\underline{m}_0|+|\underline{m}_0^\prime|} h_{\psi(l)}}{\del \underline{X}^{\underline{m}_0}\del \underline{Y}^{\underline{m}^\prime_0}}(p).
        \end{equation}
        We have to prove that 
        \begin{equation*}
            h_1(x)^{|\underline{m}_0|+m_{0,n_1}} h_2(y)^{|\underline{m}_0^\prime|+m^\prime_{0,n_2}} \cdot \gamma_k \in \caO_\nu.
        \end{equation*}
        The first term on the right-hand side of \eqref{equation::definition_gamma} is in $\caO_\nu$ as $\widetilde{f}_0 \in \mathcal{R}_{\nu}$ and $p \in U_{\nu}^\circ$. (This already settles the case $k=0$; note that $(\underline{m}_0,\underline{m}_0^\prime) \neq (\underline{0},\underline{0})$ as $(\underline{0},\underline{0}) \notin \mathcal{D}$.) Every other term on the right-hand side is of the form
        \begin{equation}
            \label{equation::term_rhs}
            \gamma_{\varphi(\underline{s},\underline{s}^\prime)} \cdot \frac{1}{\underline{m}_0!\cdot\underline{m}_0^\prime!}\frac{\del^{|\underline{m}_0|+|\underline{m}^\prime_0|}h_{\underline{s},\underline{s}^\prime}}{\del \underline{X}^{\underline{m}_0} \del \underline{Y}^{\underline{m}^\prime_0}}(p) = \gamma_{\varphi(\underline{s},\underline{s}^\prime)} \cdot
            \left(\frac{1}{\underline{m}_0!}\frac{\del^{|\underline{m}_0|}h_{1,\underline{s}}}{\del \underline{X}^{\underline{m}_0}}(x)\right)\cdot\left(\frac{1}{\underline{m}_0^\prime!}\frac{\del^{|\underline{m}^\prime_0|}h_{2,\underline{s}^\prime}}{\del \underline{Y}^{\underline{m}_0^\prime}}(y)\right) 
        \end{equation}
        for some $(\underline{s},\underline{s}^\prime) \in \mathcal{D}_{k_1,k_2}$ with $\varphi(\underline{s},\underline{s}^\prime) < \varphi(\underline{m}_0,\underline{m}_0^\prime)$. By Lemma \ref{lemma::technical_result}, such a term can be non-zero only in one of the following three cases:
        \begin{enumerate}
            \item[(i)] $|\underline{s}| + s_{n_1} < |\underline{m}_0| + m_{0,n_1}$ and $|\underline{s}^\prime| + s_{n_2}^\prime < |\underline{m}^\prime_0| + m_{0,n_2}^\prime$;
            \item[(ii)] $\underline{s} = \underline{m}_0$ and $|\underline{s}^\prime| + s_{n_2}^\prime < |\underline{m}_0^\prime| + m_{0,n_2}^\prime$;
            \item[(iii)] $|\underline{s}| + s_{n_1} < |\underline{m}_0| + m_{0,n_1}$ and $\underline{s}^\prime = \underline{m}_0^\prime$.
        \end{enumerate}
        Using that $h_{1,\underline{s}} \in h_1(x)^{-1} \cdot \caO_\nu[X_1,\dots,X_{n_1}]$ and  $h_{2,\underline{s}^\prime} \in h_2(y)^{-1} \cdot \caO_\nu[Y_1,\dots,Y_{n_2}]$ 
        in combination with the fact that
        \begin{equation*}
            \frac{1}{\underline{m}!}\frac{\del^{|\underline{m}|}h_{1,\underline{m}}}{\del \underline{X}^{\underline{m}}}(x)
            =
            \frac{1}{\underline{m}^\prime!}\frac{\del^{|\underline{m}^\prime|}h_{2,\underline{m}^\prime}}{\del \underline{Y}^{\underline{m}^\prime}}(y)
            =1
        \end{equation*}
        for every $\underline{m} \in \IZ^{n_1}$ and $\underline{m}^\prime \in \IZ^{n_2}$, one can see that
        \begin{equation*}
            \gamma_{\varphi(\underline{s},\underline{s}^\prime)} \in h_1(x)^{-|\underline{s}|-s_{n_1}}h_2(y)^{-|\underline{s}^\prime|-s^\prime_{n_2}} \cdot \caO_\nu, \ (\underline{s},\underline{s}^\prime) = \psi(l),
        \end{equation*}
        implies that the term in \eqref{equation::term_rhs} is contained in 
        \begin{equation*}
             h_1(x)^{-|\underline{m}_0|-m_{0,n_1}}h_2(y)^{-|\underline{m}^\prime_0|-m^\prime_{0,n_2}} \cdot \caO_\nu
        \end{equation*}
        and so is $\gamma_k$. By its construction, we already know that \eqref{equation::recursive_goal} holds for $(\underline{m},\underline{m}^\prime) = \psi(k)$. The last assertion of Lemma \ref{lemma::technical_result} implies that
        \begin{equation*}
            \frac{\del^{|\underline{m}|+|\underline{m}^\prime|}h_{\underline{m}_0,\underline{m}^\prime_0}}{\del \underline{X}^{\underline{m}} \del \underline{Y}^{\underline{m}^\prime}}(p) = 0
        \end{equation*}
        for all $(\underline{m},\underline{m}^\prime) = \psi(l)$, $l < k$. Therefore, we can infer \eqref{equation::recursive_goal} for all $(\underline{m},\underline{m}^\prime) = \psi(l)$, $l \leq k$, from \eqref{equation::inductive_hypo}. This completes the recursive construction of the sequence $(\gamma_l)$, $l \in \{0,\hdots,l_0\}$.
        
        We note that $h_1(x), h_2(y) \in \mathcal{O}_\nu$ as $(x,y) \in U^\circ_\nu$ and $h_\kappa \in \mathcal{R}_\kappa$ ($\kappa = 1,2$) so that
         \begin{equation*}
            \widetilde{f}= \widetilde{f}_0 -\sum_{l=0}^{l_0} \gamma_l \cdot h_{\psi(l)}
            \in h_1(x)^{-2k_1} h_2(y)^{-2k_2} \cdot \mathcal{R}_{\nu}.
        \end{equation*}
        We claim that $\widetilde{f}$ is the desired lifting of $f$. By construction,
        \begin{equation}\label{equation::already_known_vanishing}
            \frac{\del^{|\underline{m}|+|\underline{m}^\prime|} \widetilde{f}}{\del \underline{X}^{\underline{m}}\del\underline{Y}^{\underline{m}^\prime}}(p) = 0
        \end{equation}
        for all $\underline{m}=(m_1,\dots,m_{n_1}) \in \IZ^{n_1}_{\geq 0}$ and $\underline{m}^\prime = (m_1^\prime,\dots,m_{n_2}^\prime) \in \IZ^{n_2}_{\geq 0}$ such that $|\underline{m}| \leq k_1$, $|\underline{m}^\prime| \leq k_2$, and ($|\underline{m}|\neq m_{n_1}$ or $|\underline{m}^\prime| \neq m_{n_2}^\prime$).
        It hence suffices to prove that
        \begin{equation}
            \label{equation::induction_goal}
            \frac{\del^{m+m^\prime} \widetilde{f}}{\del X_{n_1}^m \del Y_{n_2}^{m^\prime}}(p) 
            = 0
        \end{equation}
        for all $m \in \{0,\dots,k_1\}$ and $m^\prime \in \{0,\dots,k_2\}$ such that $m + m^\prime < k_1 + k_2$. For this purpose, we let $$(\alpha_1,\dots,\alpha_{n_1-1}) \in \IK[X_1,\dots,X_{n_1}]^{1 \times ({n_1}-1)}$$ be the unique solution of 
        \begin{equation*}
             (\alpha_1,\dots,\alpha_{n_1-1}) \mathbf{A}_1 = -
              h_1\left(\frac{\del g_{1,1}}{\del X_{n_1}},
                \dots, \frac{\del g_{1,n_1-1}}{\del X_{n_1}}
              \right).
        \end{equation*}
        Then
        \begin{equation*}
            \tau = 
            \alpha_1 \frac{\del}{\del X_1} + \cdots + \alpha_{n_1-1} \frac{\del}{\del X_{n_1-1}} + h_1 \frac{\del}{\del X_{n_1}}
        \end{equation*}
        is a regular vector field tangent to $V_{1}\subseteq \IA^{n_1}_\IK$. In addition, we remark that
        \begin{equation*}
            (\alpha_1,\dots,\alpha_{n_1-1})
            =
            - 
            \left(\frac{\del g_{1,1}}{\del X_{n_1}},
                \dots, \frac{\del g_{1,n_1-1}}{\del X_{n_1}}
              \right)
            \mathrm{adj}(\mathbf{A}_{1}) \in \IK[X_1,\dots,X_{n_1}]^{1 \times ({n_1}-1)}.
        \end{equation*}        
        Similarly,
        \begin{equation*}
            \tau^\prime = 
            \alpha_1^\prime \frac{\del}{\del Y_1} + \cdots + \alpha_{n_2-1}^\prime \frac{\del}{\del Y_{n_2-1}} + h_2 \frac{\del}{\del Y_{n_2}},
        \end{equation*} 
        where
        \begin{equation*}
            (\alpha_1^\prime,\dots,\alpha_{n_2-1}^\prime)
            =-
            \left(\frac{\del g_{2,1}}{\del Y_{n_2}},
                \dots, \frac{\del g_{2,n_2-1}}{\del Y_{n_2}}
            \right)
            \mathrm{adj}(\mathbf{A}_{2}) \in \IK[Y_1,\dots,Y_{n_2}]^{1 \times (n_2-1)},
        \end{equation*}   
        is a regular vector field tangent to $V_{2} \subseteq \IA^{n_2}_\IK$.

        By construction, we have $\tau(g_{1,k}) = \tau^\prime(g_{2,k}) = 0$ for $k \in \{ 1, \hdots, n_1-1 \}$ or $k \in \{ 1, \hdots, n_2-1 \}$ respectively. As $h_1(x) \cdot h_2(y) \neq 0$, we have that $p = (x,y) \in (V_1 \times V_2)^{\mathrm{sm}}(\IKbar_\nu)$ so that the kernel of the restriction
        \begin{equation*}
        \IKbar_\nu[X_1,\dots,X_{n_1},Y_1,\dots,Y_{n_2}]_p \longrightarrow \mathcal{O}_{C_1 \times C_2,p}    
        \end{equation*}
        equals $(g_{1,1},\dots,g_{1,n_1-1},g_{2,1},\dots,g_{2,n_2-1})$.
        Since $f \in \mathfrak{m}_{C_1,x}^{k_1} \cdot \mathfrak{m}_{C_2,y}^{k_2} + \mathfrak{m}_{C_1,x}^{k_1+1} + \mathfrak{m}_{C_2,y}^{k_2+1}$ and $\widetilde{f}$ is a lift of $f$, we have 
        \begin{equation*}
            \tau^m(\tau^\prime)^{m^\prime} \widetilde{f}(p) = 0
        \end{equation*}
        for all $0 \leq m \leq k_1$ and $0 \leq m^\prime \leq k_2$ such that $m + m^\prime < k_1 + k_2$. Using this fact, we can prove \eqref{equation::induction_goal} by double induction. Indeed, this just follows from expanding $\tau^{m}(\tau^\prime)^{m^\prime}\widetilde{f}$ in terms of the symmetric powers of $\del/\del X_1$, $\del/\del X_2$, $\dots$, $\del/\del X_{n_1},\del/\del Y_1,\dots,\del/\del Y_{n_2}$. In this expansion, the term
        \begin{equation*}
            h_1^m h_2^{m^\prime} \frac{\del^{m+m^\prime} \widetilde{f}}{\del X_{n_1}^m \del Y_{n_2}^{m^{\prime}}}
        \end{equation*}
        always appears and all other terms can be seen to evaluate to zero at $p$ either by \eqref{equation::already_known_vanishing} or by the inductive hypothesis. Since $h_1(x) \cdot h_2(y) \neq 0$, the equality \eqref{equation::induction_goal} follows.
        \end{proof}

\subsection{Extending functions (archimedean case)}
\label{subsection::extending_function_archimedean}

From now on, we fix a place $\nu \in \Sigma_\infty(\IK)$ and an identification of $\IKbar_\nu$ with $\IC$. We then write $|\cdot|$ instead of $|\cdot|_\nu$. \textit{We suppose that some radii $\rho > \rho^\prime > 0$ have been fixed. The constants in this subsection are allowed to depend on $\nu$, $\rho$, and $\rho^\prime$ without further explicit mention.}

    Let $\kappa \in \{1,2\}$. Recall the lifting
    \[\delta_{\kappa}: \caO_\IK[X_1,\dots,X_{n_\kappa}] \longrightarrow \caO_\IK[X_1,\dots,X_{n_\kappa}]\]
    of the derivation $g_{\kappa} \cdot d_{\kappa}$ from Section \ref{subsection::setup}. Furthermore, there exist
    \begin{equation*}
        \delta_{\kappa,i}(X_1,\dots,X_{n_\kappa}) \in \caO_\IK[X_1,\dots,X_{n_\kappa}], \ 1 \leq i \leq n_\kappa,
    \end{equation*}
    such that
    \begin{equation*}
        \delta_{\kappa} = \sum_{i=1}^{n_\kappa} \delta_{\kappa,i} \frac{\del}{\del X_i}.    
    \end{equation*}
    By abuse of notation, we use $d_\kappa$, $\delta_\kappa$, $\delta_{\kappa,i}$, $\vartheta$ also for the respective base changes to $\IC$ via $\nu$. Let $\newconstant{}_{,\nu}\label{C}> 0$ be a constant such that
    \begin{equation}\label{equation::bound_delta_kappa_i}
        | \delta_{\kappa,i}(q)|_\nu \leq \oldconstant{C}{}_{,\nu}   
    \end{equation}
    for all $q \in \overline{D}_{n_\kappa}(2\rho)$ and all $i \in \{1,\dots,n_\kappa\}$. For $r > 0$ and $q \in U_\nu(\IC)$, we set
    \begin{equation*}
        \mathcal{A}_{\nu}(r,q) = \{ f \in A_{\nu} \mid \forall q^\prime \in U_{\nu}(\IC) \cap \vartheta^{-1}(D_{n_1+n_2}(r,q)):|f(q^\prime)|\leq 1\}
    \end{equation*}
    and
    \[U_\nu^\circ(r) = \vartheta^{-1}(D_{n_1+n_2}(r)).\]

    We next establish an archimedean replacement for Proposition \ref{proposition::extension}. In our notation, we do not distinguish between a point $q \in U_\nu(\IC)$ and its image under $\vartheta$ for readability.

   \begin{proposition}
    \label{proposition::archimedean_extension}
        Let $h_1$ and $h_2$ be as in Lemma \ref{lemma::introduction_g_and_h}. There exists a constant $\newconstant{}\label{c37}_{,\nu} > 0$ such that the following assertion holds true: for every point $p = (x,y) \in U_\nu^\circ(\rho)$ such that $h_{1}(x) \cdot h_{2}(y) \neq 0$ and for every 
        \begin{equation*}
            f \in \mathcal{A}_{\nu}(2\rho^\prime,p) \cap (\mathfrak{m}_{C_1,x}^{k_1} \cdot \mathfrak{m}_{C_2,y}^{k_2} + \mathfrak{m}_{C_1,x}^{k_1+1} + \mathfrak{m}_{C_2,y}^{k_2+1}), \ k_1, k_2 \in \IZ_{\geq 0},
        \end{equation*}
        there exists a holomorphic function 
        \begin{equation*}
        \widetilde{f}: D_{n_1+n_2}(\rho^\prime,p) \longrightarrow \IC    
        \end{equation*}
        such that
        \begin{enumerate}
            \item[(i)] $\widetilde{f}$ restricts to $f$ on $\vartheta(U_{\nu}(\IC)) \cap D_{n_1+n_2}(\rho^\prime,p)$,
            \item[(ii)] for all $\underline{m}=(m_1,\dots,m_{n_1}) \in \IZ^{n_1}_{\geq 0}$ and $\underline{m}^\prime = (m_1^\prime,\dots,m_{n_2}^\prime) \in \IZ^{n_2}_{\geq 0}$ such that $|\underline{m}| \leq k_1$, $|\underline{m}^\prime| \leq k_2$, and $|\underline{m}| + |\underline{m}^\prime| < k_1 + k_2$,
            \begin{equation*}
                \frac{\del^{|\underline{m}|+|\underline{m}^\prime|} \widetilde{f}}{\del \underline{X}^{\underline{m}}\del\underline{Y}^{\underline{m}^\prime}}(p) = 0, \ \text{and},
            \end{equation*}
        
            \item[(iii)] for all $q^\prime = (u^\prime,v^\prime) \in D_{n_1+n_2}(\rho^\prime,p)$,
            \begin{equation}
                \label{equation::upper_bound_archimedean_schwarz}
                \log |\widetilde{f}(q^\prime)| \leq 2k_1\log^+| h_{1}(x)|^{-1} + 2k_2\log^+|h_{2}(y)|^{-1} +  \oldconstant{c37}{}_{,\nu}(k_1+k_2+1) .
            \end{equation}
        \end{enumerate}
    \end{proposition}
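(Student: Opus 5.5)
The plan is to copy the recursive construction in the proof of Proposition~\ref{proposition::extension}, with one extra ingredient: a holomorphic extension of $f$ with controlled size near $p$, which will play the role of the integral lift $\widetilde{f}_0 \in \mathcal{R}_\nu$.

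\emph{Step 1: a bounded holomorphic extension.} Since $\vartheta(U_\nu(\IC))$ is a closed analytic subvariety of $\IC^{n_1+n_2}$ and $f$ is bounded by $1$ on $\vartheta(U_\nu(\IC)) \cap D_{n_1+n_2}(2\rho^\prime,p)$, I would apply Oka's extension theorem (in the quantitative Cartan--Oka form with bounds on polydiscs). This should give a holomorphic $\widetilde{f}_0$ on $D_{n_1+n_2}(\tfrac{3}{2}\rho^\prime,p)$ that restricts to $f$ and satisfies $|\widetilde{f}_0| \leq \newconstant\label{cOka}{}_{,\nu}$. The constant must be uniform in $p \in U^\circ_\nu(\rho)$. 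For this I would cover the compact set $\overline{D}_{n_1+n_2}(\rho)\cap\vartheta(U_\nu(\IC))$ by finitely many polydiscs, on each of which a fixed bounded extension operator exists (via a Stein neighbourhood, the ideal sheaf generated by the polynomials in $\mathcal{I}_1,\mathcal{I}_2$, and Cartan's Theorem~B with the open mapping theorem on Banach spaces of bounded holomorphic functions). \textbf{This uniformity is the step I expect to be the main obstacle}: Oka's theorem gives existence readily, but the norm control uniformly in the centre $p$ needs this compactness argument. Once $\widetilde{f}_0$ is in hand, Cauchy's estimates on $D(\tfrac{3}{2}\rho^\prime,p)$ bound its Taylor coefficients at $p$ of order $\underline{m}$ by $\oldconstant{cOka}{}_{,\nu}\,(\tfrac{3}{2}\rho^\prime)^{-|\underline{m}|}$.

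\emph{Step 2: the recursive correction.} Next I would reuse verbatim the polynomials $h_{1,\underline{s}}, h_{2,\underline{s}^\prime}, h_{\underline{s},\underline{s}^\prime}$, the index set $\mathcal{D}_{k_1,k_2}$, the bijection $\varphi$, and Lemma~\ref{lemma::technical_result}. With these I define $\gamma_l$ by \eqref{equation::definition_gamma}, with $\widetilde{f}_0$ now holomorphic, and set $\widetilde{f}=\widetilde{f}_0-\sum_l \gamma_l h_{\psi(l)}$. Each $h_{\underline{s},\underline{s}^\prime}$ with $(\underline{s},\underline{s}^\prime)\in\mathcal{D}$ lies in the ideal of $\vartheta(U_\nu)$, so $\widetilde{f}$ still restricts to $f$, giving (i). Statement (ii) then follows exactly as at the end of the proof of Proposition~\ref{proposition::extension}. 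The vanishing of the mixed derivatives indexed by $\mathcal{D}_{k_1,k_2}$ is built in by construction. The pure derivatives in $X_{n_1}$ and $Y_{n_2}$ are handled by the double induction with the tangent fields $\tau,\tau^\prime$, which only uses $f\in\mathfrak{m}_{C_1,x}^{k_1}\mathfrak{m}_{C_2,y}^{k_2}+\mathfrak{m}_{C_1,x}^{k_1+1}+\mathfrak{m}_{C_2,y}^{k_2+1}$ and $h_1(x)h_2(y)\neq0$.

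\emph{Step 3: the bound (iii).} This replaces the ultrametric bookkeeping by archimedean bounds. On the compact set $\overline{D}(2\rho)$, the entries of $\mathbf{B}_\kappa h_\kappa = \mathrm{adj}(\mathbf{A}_\kappa)$, the $g_{\kappa,k}$, and the $L_j$ are bounded by a constant. Hence on $D(\rho^\prime,p)$ the derivatives of $h_{\underline{s},\underline{s}^\prime}$ up to the relevant order, and $|h_{\underline{s},\underline{s}^\prime}|$ itself, are at most $|h_1(x)|^{-1}|h_2(y)|^{-1}\,c^{|\underline{s}|+|\underline{s}^\prime|+1}$. Running the recursion with the same triangular structure as before, induction on $\varphi$ gives
\[|\gamma_l|\leq \max\{1,|h_1(x)|^{-1}\}^{|\underline{s}|+s_{n_1}}\max\{1,|h_2(y)|^{-1}\}^{|\underline{s}^\prime|+s^\prime_{n_2}}\,c^{\,k_1+k_2+1}.\]
The number of terms is $|\mathcal{D}_{k_1,k_2}|\leq (k_1+1)^{n_1}(k_2+1)^{n_2}$. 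After absorbing this polynomial count and the powers of $c$ into $\oldconstant{c37}{}_{,\nu}(k_1+k_2+1)$, and using $|\underline{s}|+s_{n_1}\leq 2k_1$, we obtain \eqref{equation::upper_bound_archimedean_schwarz}.
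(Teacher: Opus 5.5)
Your Steps 1 and 2 agree with the paper. It also covers $\overline{D}(\rho)$ by finitely many polydiscs $D(\rho^\prime/3,z)$ and applies Oka's extension theorem with bounds on each. It then runs the recursion of Proposition~\ref{proposition::extension} verbatim, so (i) and (ii) come out as you say.

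The gap is in Step 3, which is exactly where the archimedean case is harder. Without the ultrametric inequality, each $\gamma_k$ is a genuine sum $a_k-\sum_{l<k}\gamma_l b_{k,l}$. Here $b_{k,l}$ is the normalized $(\underline{m}_0,\underline{m}_0^\prime)$-derivative of $h_{\psi(l)}$ at $p$, and the recursion has $l_0+1\asymp(k_1+k_2)^{n_1+n_2}$ steps. A bound on $|\gamma_l|$ with a constant $c^{k_1+k_2+1}$ uniform in $l$ cannot be propagated by induction on $\varphi$. From $|\gamma_l|\le C\cdot(\dots)$ for $l<k$ you only get $|\gamma_k|\le(c_0+NcC)\cdot(\dots)$, where $N$ is the number of contributing $l$, so the constant is multiplied again at every step.

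Your derivative bound $|b_{k,l}|\lesssim|h_1(x)|^{-1}|h_2(y)|^{-1}c^{|\underline{s}|+|\underline{s}^\prime|+1}$ is also too weak to fix this, in two ways:
\begin{itemize}
\item The factor $c^{|\underline{s}|+|\underline{s}^\prime|}$ compounds along chains of length $\asymp k_1+k_2$. So even a weight-graded induction only yields $\log|\gamma_l|=O((k_1+k_2)^2)$, not the linear term in \eqref{equation::upper_bound_archimedean_schwarz}.
\item The factor $|h_1(x)|^{-1}$ is spurious in the case $\underline{s}=\underline{m}_0$ of Lemma~\ref{lemma::technical_result}, where the first factor equals $1$. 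So the exponent of $|h_1(x)|^{-1}$ would drift past $|\underline{m}_0|+m_{0,n_1}$.
\end{itemize}
Similarly, your final step gives exponent $2k_1+1$, not $2k_1$. You need that $h_{\underline{s},\underline{s}^\prime}$ only carries $|h_1(x)|^{-\eta_1(\underline{s})}$, and that $\eta_1(\underline{s})=1$ forces $|\underline{s}|+s_{n_1}\le 2k_1-1$.

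The missing ingredient is Lemma~\ref{lemma::even_more_technical_result}. Since $L_j(x)=0$, the normalized derivative $\frac{1}{\underline{m}!}\partial^{\underline{m}}h_{1,\underline{s}}(x)$ is a single Taylor coefficient of $h_{1,i(\underline{s})}$ at $x$, of order $|\underline{m}|-|\underline{s}|+1$. Cauchy's formula on $D_{n_1}(2,x)$ bounds it by $2^{-(|\underline{m}|-|\underline{s}|)}\,c\,\max\{1,|h_1(x)|^{-1}\}^{|\underline{m}|-|\underline{s}|+m_{n_1}-s_{n_1}}$. This bound has no growth in $|\underline{s}|$ and decays geometrically in $|\underline{m}|-|\underline{s}|$. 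Its exponent of $\max\{1,|h_1(x)|^{-1}\}$ is exactly the increase of $|\underline{s}|+s_{n_1}$, and is zero when $\underline{s}=\underline{m}$.

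Combine this with $s_i\le m_{0,i}+1$ for all contributing terms. The sum over $l<k$ is then at most $4^{n_1+n_2}$ times its largest admissible term. The induction closes with a hypothesis graded by the weight $w=|\underline{s}|+s_{n_1}+|\underline{s}^\prime|+s^\prime_{n_2}$, which strictly increases along the recursion:
\[
|\gamma|\le c^{w+1}\max\{1,|h_1(x)|^{-1}\}^{|\underline{s}|+s_{n_1}}\max\{1,|h_2(y)|^{-1}\}^{|\underline{s}^\prime|+s^\prime_{n_2}}.
\]
Only after this may you majorize $c^{w+1}\le c^{2(k_1+k_2)+1}$ and sum the $l_0+1$ terms.

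Alternatively, the $h_{\kappa,j}$ have degree bounded independently of $k_1,k_2$, so only $O(1)$ indices $l$ contribute to each $\gamma_k$. Used together with the pointwise bound free of $c^{|\underline{s}|}$, this also closes a weight-graded induction.
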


    While the proof of this proposition follows mostly the one of Proposition \ref{proposition::extension}, with integral structures replaced by absolute values, our use of Oka's extension theorem \cite[Theorem 8.1.18 (ii)]{Noguchi2016} forces us to allow the extension $\widetilde{f}$ to be a holomorphic function in general. It would be interesting to establish a stronger version of the proposition with $\widetilde{f}$ a polynomial.

    \begin{proof}
        We follow mostly the argument from Proposition \ref{proposition::extension} and skip some definitions and arguments where repetition does not seem to be of benefit to the reader. For every $r>0$ and every $q \in \IC^{n_1+n_2}$, we  write $D(r,q)$ (resp.\ $\overline{D}(r,q)$) instead of $D_{n_1+n_2}(r,q)$ (resp.\ $\overline{D}_{n_1+n_2}(r,q)$) and $\Vert \cdot \Vert_{D(r,q)}$ (resp.\ $\Vert \cdot \Vert_{\overline{D}(r,q)}$) for the supremum norm on $D(r,q)$ (resp.\ $\overline{D}(r,q)$). If $q$ is the origin $0 \in \IC^{n_1+n_2}$, we will also drop $q$ from the notation.
        
        We define
        \begin{equation*}
            L_1,\dots,L_{n_1},h_{1,j} \in \IC[X_1,\dots,X_{n_1}]
        \end{equation*}
        and
        \begin{equation*}
            L_1^\prime, \dots,L_{n_2}^\prime, h_{2,j} \in \IC[Y_1,\dots,Y_{n_2}]
        \end{equation*}
        verbatim as in the proof of Proposition \ref{proposition::extension}. For every $\underline{s} = (s_1,\dots,s_{n_1}) \in \IZ_{\geq 0}^{n_1}$, we define $h_{1,\underline{s}} \in \IC[X_1,\dots,X_{n_1}]$ as in \eqref{equation::h_1s}. Similarly, we define $h_{2,\underline{s}^\prime} \in \IC[Y_1,\dots,Y_{n_2}]$ for every $\underline{s}^\prime = (s_1^\prime,\dots,s_{n_2}^\prime) \in \IZ_{\geq 0}^{n_2}$ as in \eqref{equation::h_2s} and set $h_{\underline{s},\underline{s}^\prime} = h_{1,\underline{s}} \cdot h_{2,\underline{s}^\prime}$. Lemma \ref{lemma::technical_result} still applies literally. 
        
        There exists a constant $\newconstant{}_{,\nu}\label{c38} \geq 1$ such that
        \begin{equation}\label{equation::bound_h_1,j}
            \max_{1 \leq j \leq n_1 - 1}
            \Vert h_{1,j} \Vert_{\overline{D}_{n_1}(4\rho+2,0)} \leq \oldconstant{c38}{}_{,\nu} |h_1(x)|^{-1}
        \end{equation}
        and
        \begin{equation*}
            \max_{1 \leq j \leq n_2 - 1}
            \Vert h_{2,j} \Vert_{\overline{D}_{n_2}(4\rho+2,0)} \leq \oldconstant{c38}{}_{,\nu} |h_2(y)|^{-1}.
        \end{equation*}
        We consequently obtain that there is a constant $\newconstant{}_{,\nu}\label{c38.5} \geq 1$ such that
        \begin{equation}\label{equation::bound_h_s,s'}
            \Vert h_{\underline{s},\underline{s}^\prime} \Vert_{\overline{D}_{n_1+n_2}(4\rho+2,0)} < \oldconstant{c38.5}{}_{,\nu}^{|\underline{s}|+|\underline{s}^\prime|+1} |h_1(x)|^{-\eta_1(\underline{s})} |h_2(y)|^{-\eta_2(\underline{s}^\prime)}
        \end{equation}
        for all $\underline{s} = (s_1,\dots,s_{n_1}) \in \IZ_{\geq 0}^{n_1}$ and $\underline{s}^\prime = (s_1^\prime,\dots,s_{n_2}^\prime) \in \IZ_{\geq 0}^{n_2}$ with $|\underline{s}| \leq k_1$ and $|\underline{s}^\prime| \leq k_2$, where \begin{equation*}
        \eta_1(\underline{s}) =
        \begin{cases}
        0 & \text{if $s_{n_1} = |\underline{s}|$,} \\
        1 & \text{otherwise,}
        \end{cases}
        \end{equation*}
        while
        \begin{equation*}
        \eta_2(\underline{s}^\prime) =
        \begin{cases}
        0 & \text{if $s^\prime_{n_2} = |\underline{s}^\prime|$,} \\
        1 & \text{otherwise.}
        \end{cases}
        \end{equation*}
        
        We also use again the subset $\mathcal{D}_{k_1,k_2}$ of $\IZ^{n_1}_{\geq 0} \times \IZ^{n_2}_{\geq 0}$ as well as the bijection $\varphi: \mathcal{D}_{k_1,k_2} \rightarrow \{0,\hdots,l_0\}$ and its inverse $\psi: \{0,\hdots,l_0\} \rightarrow \mathcal{D}_{k_1,k_2}$. Recall that the bijection $\varphi$ is chosen such that $\varphi(\underline{s},\underline{s}^\prime) \leq \varphi(\underline{m},\underline{m}^\prime)$ implies 
        \begin{equation*}
        |\underline{s}| + s_{n_1} + |\underline{s}^\prime| + s^\prime_{n_2} \leq  |\underline{m}| + m_{n_1} + |\underline{m}^\prime| + m^\prime_{n_2}.
        \end{equation*}
        We use this to find a crude upper bound on $l_0$. There are at most
        \begin{equation*}
            \binom{k_1+k_2+n_1+n_2}{n_1+n_2} < (k_1 + k_2 +n_1+n_2)^{n_1+n_2}
        \end{equation*}
        pairs $(\underline{s},\underline{s}^\prime) \in \IZ^{n_1}_{\geq 0} \times \IZ^{n_2}_{\geq 0}$ such that $|\underline{s}| + |\underline{s}^\prime| \leq  k_1 + k_2$. Since every pair $(\underline{s},\underline{s}^\prime) \in \mathcal{D}_{k_1,k_2}$ satisfies this inequality, we deduce that there exists a positive constant $\newconstant{}_{,\nu}\label{c39}$ such that
        \begin{equation}
            \label{equation::l0_bound}
            \log \max\{1,l_0\} \leq \oldconstant{c39}{}_{,\nu}\log (k_1 + k_2 + 2).
        \end{equation}

        By compactness, there exists a finite set $\mathcal{Z} \subseteq \IC^{n_1+n_2}$ such that
        \[\overline{D}(\rho) \subseteq \bigcup_{z \in \mathcal{Z}} D(\rho^\prime/3,z).\]

        Let $f \in \mathcal{A}_{\nu}(2\rho^\prime,p)$ be an arbitrary function satisfying
        \[f \in
       \mathfrak{m}_{C_1,x}^{k_1} \cdot \mathfrak{m}_{C_2,y}^{k_2} +  \mathfrak{m}_{C_1,x}^{k_1+1} + \mathfrak{m}_{C_2,y}^{k_2+1}.\]
       There exists some $z \in \mathcal{Z}$ such that $p \in D(\rho^\prime/3,z)$. We note that
       \[ D(5\rho^\prime/3,z) \subseteq D(2\rho^\prime,p)\]
       and so
       \[ \mathcal{A}_\nu(2\rho^\prime,p) \subseteq \mathcal{A}_\nu(5\rho^\prime/3,z).\]
       Simultaneously,
       \[\overline{D}(\rho^\prime,p) \subseteq  D(3\rho^\prime/2,z).\]

       By \cite[Theorem 6.6.8]{Noguchi2016}, the complex analytic space $U_\nu(\mathbb{C}) \cap D_{n_1+n_2}(3\rho^\prime/2,z)$ has at most finitely many irreducible components. Therefore, by Oka's extension theorem \cite[Theorem 8.1.18 (ii)]{Noguchi2016}, applied to
       \[ U_\nu(\mathbb{C}) \cap D(3\rho^\prime/2,z) \subseteq D(3\rho^\prime/2,z),\]
there exists a constant $\newconstant{}_{,\nu}\label{c40} \geq 1$ such that $f$ has a holomorphic extension $\widetilde{f}_0: D(3\rho^\prime/2,z) \rightarrow \IC$ with
        \begin{equation}\label{equation::oka_bound}
            \Vert \widetilde{f}_0 \Vert_{\overline{D}(\rho^\prime,p)} \leq \oldconstant{c40}{}_{,\nu}.
        \end{equation}
        Here, the constant depends on $\mathcal{Z}$, which in turn depends only on $\rho$ and $\rho^\prime$.
        
        Using the same recursion as in the proof of Proposition \ref{proposition::extension}, we construct an element $\gamma_l \in \IC$ for each $l \in \{0,\hdots,l_0\}$ and set
        \begin{equation*}
            \widetilde{f}= \widetilde{f}_0 - \sum_{l=0}^{l_0} \gamma_l h_{\psi(l)}.
        \end{equation*}
        We claim that $\widetilde{f}$ is again the desired extension of $f$. Indeed, the property (i) is immediate and (ii) follows from the recursive construction as in the integral case. It remains to replace  integrality assertions with archimedean estimates to prove also (iii).
        
        Writing $\psi(k)= (\underline{m}_0,\underline{m}_0^\prime)$, we claim that there exists a constant $\newconstant{}_{,\nu}\label{c41} > 1$ such that
        \begin{align}
            \label{equation::size_gamma_k}
            |\gamma_{k}| \leq \oldconstant{c41}{}_{,\nu}(\oldconstant{c41}{}_{,\nu}\max\{1,|h_1(x)|^{-1}\})^{|\underline{m}_0|+m_{0,n_1}}(\oldconstant{c41}{}_{,\nu}\max \{1,|h_2(y)|^{-1}\})^{|\underline{m}_0^\prime|+m_{0,n_2}^\prime}
        \end{align}
        for all $k \in \{0,\hdots,l_0\}$. For this purpose, let us recall that
        \begin{equation}
                \label{equation::definition_gamma2}
                \gamma_{k} = \frac{1}{\underline{m}_0!\underline{m}_0^\prime!}\frac{\del^{|\underline{m}_0|+|\underline{m}_0^\prime|}\widetilde{f}_0}{\del \underline{X}^{\underline{m}_0}\del \underline{Y}^{\underline{m}^\prime_0}}(p) - \sum_{l=0}^{k-1} \gamma_l \frac{1}{\underline{m}_0!\underline{m}_0^\prime!}  \frac{\del^{|\underline{m}_0|+|\underline{m}_0^\prime|} h_{\psi(l)}}{\del \underline{X}^{\underline{m}_0}\del \underline{Y}^{\underline{m}^\prime_0}}(p).
        \end{equation}

        Since $\overline{D}(\rho^\prime,p) \subseteq D(3\rho^\prime/2,z)$, we can apply Cauchy's integral formula \cite[Chapter A]{Gunning1990} to bound the first term on the right-hand side from above by
        \begin{equation}\label{equation::bound_gamma_0}
            ({\rho^\prime})^{-|\underline{m}_0|-|\underline{m}_0^\prime|}\Vert \widetilde{f}_0 \Vert_{\overline{D}(\rho^\prime,p)} \leq ({\rho^\prime})^{-|\underline{m}_0|-|\underline{m}_0^\prime|}\oldconstant{c40}{}_{,\nu}.
        \end{equation}
        To estimate the sum in this expression with respect to the archimedean absolute value, we need the additional information provided by the following lemma, which builds upon Lemma \ref{lemma::technical_result} and its proof.

        \begin{lemma}
            \label{lemma::even_more_technical_result}
            Consider arbitrary $\underline{m},\underline{s}\in \IZ_{\geq 0}^{n_1}$ and $\underline{m}^\prime,\underline{s}^\prime\in \IZ_{\geq 0}^{n_2}$. Then there exists a constant $\newconstant{}_{,\nu}\label{c42} \geq 1$ such that
            \begin{equation}
                \label{equation::absolute_value_derivative}
                \left\vert \frac{1}{\underline{m}! \underline{m}^\prime!}\frac{\del^{|\underline{m}|+|\underline{m}^\prime|}h_{\underline{s},\underline{s}^\prime}}{\del \underline{X}^{\underline{m}} \del \underline{Y}^{\underline{m}^\prime}}(p)\right\vert 
            \end{equation}
            is bounded from above by
            \begin{align*}
                2^{-|\underline{m}|-|\underline{m}^\prime|+|\underline{s}|+|\underline{s}^\prime|} \oldconstant{c42}^2{}_{,\nu} & \max\{1,|h_1(x)|^{-1}\}^{|\underline{m}|-|\underline{s}|+m_{n_1}-s_{n_1}} \\ &\cdot \max\{1,|h_2(y)|^{-1}\}^{|\underline{m}^\prime|-|\underline{s}^\prime|+m^\prime_{n_2}-s^\prime_{n_2}}.
            \end{align*}
            Furthermore, \eqref{equation::absolute_value_derivative} is zero unless
            \begin{equation*}
                |\underline{s}| \leq |\underline{m}| , \  
                |\underline{s}^\prime| \leq |\underline{m}^\prime|, \ |\underline{s}|+s_{n_1} \leq |\underline{m}|+m_{n_1},
                \ \text{and} \ 
                |\underline{s}^\prime|+s_{n_2}^\prime \leq |\underline{m}^\prime|+m_{n_2}^\prime.
            \end{equation*}
        \end{lemma}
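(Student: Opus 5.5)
The product structure reduces everything to one variable. Since $h_{\underline{s},\underline{s}^\prime} = h_{1,\underline{s}} h_{2,\underline{s}^\prime}$, the derivative in \eqref{equation::absolute_value_derivative} factors as
\[
\frac{1}{\underline{m}!}\frac{\del^{|\underline{m}|}h_{1,\underline{s}}}{\del \underline{X}^{\underline{m}}}(x)\cdot\frac{1}{\underline{m}^\prime!}\frac{\del^{|\underline{m}^\prime|}h_{2,\underline{s}^\prime}}{\del \underline{Y}^{\underline{m}^\prime}}(y).
\]
It therefore suffices to prove the one-variable statement. Namely, there is $\oldconstant{c42}{}_{,\nu} \geq 1$ such that
\[
\left|\frac{1}{\underline{m}!}\frac{\del^{|\underline{m}|}h_{1,\underline{s}}}{\del \underline{X}^{\underline{m}}}(x)\right| \leq 2^{-|\underline{m}|+|\underline{s}|}\oldconstant{c42}{}_{,\nu}\max\{1,|h_1(x)|^{-1}\}^{|\underline{m}|-|\underline{s}|+m_{n_1}-s_{n_1}},
\]
and that this quantity vanishes unless $|\underline{s}|\le|\underline{m}|$ and $|\underline{s}|+s_{n_1}\le |\underline{m}|+m_{n_1}$. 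The same statement for $h_{2,\underline{s}^\prime}$ at $y$ then gives the lemma by multiplication, and the constant appears squared.

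For the vanishing part I would argue as in Lemma \ref{lemma::technical_result}. If $s_{n_1}=|\underline{s}|$, then $h_{1,\underline{s}}=L_{n_1}^{s_{n_1}}$, and its derivatives at $x$ vanish unless $\underline{m}=\underline{s}$. Otherwise, $h_{1,\underline{s}}$ is the product of $h_{1,i(\underline{s})}$, which vanishes at $x$, with a monomial in the $L_j$ of degree $|\underline{s}|-1$. Expanding by Leibniz, a non-zero term needs every $L_j$ differentiated exactly $s_j$ times (the exponent $s_{i(\underline{s})}-1$ at $j=i(\underline{s})$), with at least one derivative landing on $h_{1,i(\underline{s})}$. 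This forces $m_j\ge s_j$ for $j\ne i(\underline{s})$, $m_{i(\underline{s})}\ge s_{i(\underline{s})}-1$ and $|\underline{m}|\ge|\underline{s}|$. In particular $m_{n_1}\ge s_{n_1}$ (as $i(\underline{s})<n_1$), so both inequalities follow.

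For the bound, the Leibniz expansion leaves exactly one surviving term. Writing $\underline{e}$ for the $i(\underline{s})$-th unit vector and $\underline{a}=\underline{m}-\underline{s}+\underline{e}$, the derivative equals
\[
\frac{1}{\underline{m}!}\frac{\underline{m}!}{\underline{a}!}\,\del^{\underline{a}}h_{1,i(\underline{s})}(x),
\]
since the factorials from the linear factors combine with the multinomial coefficient to give $\frac{1}{\underline{a}!}\del^{\underline{a}}$. Thus the quantity is a Taylor coefficient of order $|\underline{a}|=|\underline{m}|-|\underline{s}|+1$ of $h_{1,i(\underline{s})}$ at $x$.

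By Cauchy's estimate on the polydisc $\overline{D}_{n_1}(2,x)\subseteq \overline{D}_{n_1}(4\rho+2,0)$, which holds because $x\in D_{n_1}(\rho)$, this coefficient is at most $2^{-|\underline{a}|}\Vert h_{1,i(\underline{s})}\Vert \le 2^{-|\underline{m}|+|\underline{s}|-1}\oldconstant{c38}{}_{,\nu}|h_1(x)|^{-1}$, using \eqref{equation::bound_h_1,j}. In the case $s_{n_1}=|\underline{s}|$, $\underline{m}=\underline{s}$, the coefficient equals $1$. Whenever the term is non-zero we have $|\underline{m}|-|\underline{s}|+m_{n_1}-s_{n_1}\ge 1$, or we are in the diagonal case, so the factor $|h_1(x)|^{-1}$ is absorbed into $\max\{1,|h_1(x)|^{-1}\}^{|\underline{m}|-|\underline{s}|+m_{n_1}-s_{n_1}}$.

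The main obstacle is the excluded case $\underline{m}=\underline{s}$ with $s_{n_1}\ne|\underline{s}|$, where that exponent is $0$. Here, however, $\underline{a}=\underline{e}$, and the coefficient is $\del h_{1,i(\underline{s})}/\del X_{i(\underline{s})}(x)=1$ by \eqref{equation::differential_delta}, since $i(\underline{s})<n_1$. So no inverse power of $h_1(x)$ is needed, and the bound holds once $\oldconstant{c42}{}_{,\nu}\ge\max\{1,\oldconstant{c38}{}_{,\nu}\}$.
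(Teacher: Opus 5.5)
Your proof is correct and is essentially the paper's: you factor into the two one-variable estimates, dispose of $s_{n_1}=|\underline{s}|$ and of $\underline{m}=\underline{s}$ (value $1$), and otherwise use Leibniz to reduce to the single Taylor coefficient $\frac{1}{\underline{a}!}\partial^{\underline{a}}h_{1,i(\underline{s})}(x)$, which you bound by Cauchy on $\overline{D}_{n_1}(2,x)$ together with \eqref{equation::bound_h_1,j}. The one step you assert rather than prove is that a non-zero, non-diagonal term has exponent $|\underline{m}|-|\underline{s}|+m_{n_1}-s_{n_1}\ge 1$. Your Leibniz analysis only gives $\ge 0$, so the other exponent-zero shapes $\underline{m}=\underline{s}-\underline{e}_{i(\underline{s})}+\underline{e}_k$ with $k\notin\{i(\underline{s}),n_1\}$ (where $\underline{e}_k$ is the $k$-th unit vector) must be ruled out. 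There the coefficient is $\partial h_{1,i(\underline{s})}/\partial X_k(x)=0$ by \eqref{equation::differential_delta}; this is exactly the last case of Lemma \ref{lemma::technical_result}, which the paper invokes at this point.
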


        \begin{proof}
            Since
            \begin{equation*}    \frac{\del^{|\underline{m}|+|\underline{m}^\prime|}h_{\underline{s},\underline{s}^\prime}}{\del \underline{X}^{\underline{m}} \del \underline{Y}^{\underline{m}^\prime}}(p)
                =
                \frac{\del^{|\underline{m}|}h_{1,\underline{s}}}{\del \underline{X}^{\underline{m}}}(x) \cdot\frac{\del^{|\underline{m}^\prime|}h_{2,\underline{s}^\prime}}{\del \underline{Y}^{\underline{m}^\prime}}(y), 
            \end{equation*}
            it suffices by symmetry to show that
            \begin{equation*}
                \left\vert\frac{1}{\underline{m}!}\frac{\del^{|\underline{m}|}h_{1,\underline{s}}}{\del \underline{X}^{\underline{m}}}(x)\right\vert \leq 2^{-|\underline{m}|+|\underline{s}|}\oldconstant{c42}{}_{,\nu} \max\{1,|h_1(x)|^{-1}\}^{|\underline{m}|-|\underline{s}|+m_{n_1}-s_{n_1}}
            \end{equation*}
            to prove the bound.
            If $\underline{s} = \underline{m}$, the left-hand side is $1$ and there is nothing to prove. Moreover, if $|\underline{s}| = s_{n_1}$, then $h_{1,\underline{s}} = L_{n_1}^{s_{n_1}}$, the left-hand side is $0$ unless $\underline{s} = \underline{m}$, and we are done.
            
            We suppose from now on that $\underline{s} \neq \underline{m}$ and $|\underline{s}| \neq s_{n_1}$. In this case, the non-vanishing of $\del^{|\underline{m}|} h_{1,\underline{s}}/\del\underline{X}^{\underline{m}}(x)$ implies that 
            \begin{equation*}
                |\underline{s}| \leq |\underline{m}| \ \ \text{and} \ \ |\underline{s}|+s_{n_1} < |\underline{m}|+m_{n_1}  
            \end{equation*}
            by (the proof of) Lemma \ref{lemma::technical_result}.
            In fact, its proof shows that
            \begin{equation*}
                s_{i(\underline{s})}-1 \leq m_{i(\underline{s})}, \ s_{i(\underline{s})+1} \leq m_{i(\underline{s})+1}, \
                \dots, \
                s_{n_1} \leq m_{n_1}.
            \end{equation*}
            We find that
            \begin{align*}
             \frac{1}{\underline{m}!} \frac{\del^{|\underline{m}|}h_{1,\underline{s}}}{\del \underline{X}^{\underline{m}}}(x) 
            &=
            \frac{1}{\underline{m}!}
            \frac{\del^{|\underline{m}|}}{\del \underline{X}^{\underline{m}}}\left( h_{1,i(\underline{s})} L_{i(\underline{s})}^{s_{i(\underline{s})}-1}
            L_{i(\underline{s})+1}^{s_{i(\underline{s})+1}} \cdots L_{n_1}^{s_{n_1}}\right)(x) \\
                &= 
                \frac{1}{(\underline{m}-\underline{t})!}
                \frac{\del^{|\underline{m}|-|\underline{s}|+1}h_{1,i(\underline{s})}}{\del \underline{X}^{\underline{{m}}-\underline{t}}}(x),
            \end{align*}
            where $\underline{t}$ is equal to $\underline{s}$ except for the $i(\underline{s})$-th coordinate, which is equal to $s_{i(\underline{s})}-1$. By Cauchy's integral formula \cite[Chapter A]{Gunning1990}, applied on the polydisc $D_{n_1}(2,x)$, and \eqref{equation::bound_h_1,j}, it follows that
            \begin{align*}
                \left\vert \frac{1}{(\underline{m}-\underline{t})!} \frac{\del^{|\underline{m}|-|\underline{s}|+1}h_{1,i(\underline{s})}}{\del \underline{X}^{\underline{m}-\underline{t}}}(x)\right\vert 
                &\leq 2^{-|\underline{m}|+|\underline{s}|-1} \Vert h_{1,i(\underline{s})}\Vert_{\overline{D}_{n_1}(2,x)} \\
                &\leq 2^{-|\underline{m}|+|\underline{s}|-1} \Vert h_{1,i(\underline{s})}\Vert_{\overline{D}_{n_1}(4\rho+2,0)}\\
                &\leq 2^{-|\underline{m}|+|\underline{s}|-1}\oldconstant{c38}{}_{,\nu} |h_{1}(x)|^{-1}
            \end{align*}
            in both cases, which completes the proof.
        \end{proof}

        Having established Lemma \ref{lemma::even_more_technical_result}, we can now continue the proof of \eqref{equation::size_gamma_k}, arguing by induction on $k$. The case $k = 0$ follows from \eqref{equation::bound_gamma_0}.

        Suppose now that \eqref{equation::size_gamma_k} is already established for $\gamma_0, \hdots, \gamma_{k-1}$ ($1 \leq k \leq l_0$). By \eqref{equation::bound_gamma_0}, Lemmas \ref{lemma::technical_result} and \ref{lemma::even_more_technical_result}, and the inductive hypothesis, the absolute value of the sum in \eqref{equation::definition_gamma2} is bounded from above by
        \begin{multline*}
        {(\rho^\prime)}^{-|\underline{m}_0|-|\underline{m}_0^\prime|}\oldconstant{c40}{}_{,\nu} + \oldconstant{c42}^2{}_{,\nu}\max\{1,|h_1(x)|^{-1}\}^{|\underline{m}_0|+m_{0,n_1}}\max\{1, |h_2(y)|^{-1}\}^{|\underline{m}_0^\prime|+m^\prime_{0,n_2}}\\  \cdot \sum_{\underline{s}}\sum_{\underline{s}^\prime}\left(2^{-|\underline{m}_0|-|\underline{m}_0^\prime|+|\underline{s}|+|\underline{s}^\prime|}\oldconstant{c41}{}^{|s|+|s^\prime|+s_{n_1}+s_{n_2}^\prime+1}_{,\nu}\right),
        \end{multline*}
        where the double sum runs over all tuples $(\underline{s},\underline{s}^\prime) \in \psi(\{0,\hdots,k-1\})$ such that \eqref{equation::absolute_value_derivative} does not vanish. As recalled in the proof of Lemma \ref{lemma::even_more_technical_result}, all such $(\underline{s},\underline{s}^\prime)$ satisfy $s_i \leq m_{0,i} +1$ for all $i \in \{1,\hdots,n_1\}$ and $s^\prime_i \leq m^\prime_{0,i} + 1$ for all $i \in \{1,\hdots,n_2\}$ as well as $|\underline{s}|+s_{n_1} + |\underline{s}^\prime|+s^\prime_{n_2} < |\underline{m}_0| + m_{0,n_1} + |\underline{m}^\prime_0| + m^\prime_{0,n_2}$.
       The double sum can therefore be bounded from above by
        \[ \left(\sum_{i=-1}^{\infty}{2^{-i}}\right)^{n_1}\left(\sum_{i=-1}^{\infty}{2^{-i}}\right)^{n_2}\oldconstant{c41}{}_{,\nu}^{|\underline{m}_0|+|\underline{m}_0^\prime|+m_{0,n_1}+m_{0,n_2}^\prime} = 4^{n_1+n_2}\oldconstant{c41}{}_{,\nu}^{|\underline{m}_0|+|\underline{m}_0^\prime|+m_{0,n_1}+m_{0,n_2}^\prime}.\]
        It follows that
        \begin{multline*} |\gamma_k| \leq {(\rho^\prime)}^{-|\underline{m}_0|-|\underline{m}_0^\prime|}\oldconstant{c40}{}_{,\nu}\\
        + 4^{n_1+n_2}\oldconstant{c42}^2{}_{,\nu}(\oldconstant{c41}{}_{,\nu}\max\{1,|h_1(x)|^{-1}\})^{|\underline{m}_0|+m_{0,n_1}}(\oldconstant{c41}{}_{,\nu}\max\{1, |h_2(y)|^{-1}\})^{|\underline{m}_0^\prime|+m^\prime_{0,n_2}}.
        \end{multline*}
        Therefore, it suffices to choose
        \[ \oldconstant{c41}{}_{,\nu} \geq  \max\{1,(\rho^\prime)^{-1}\}\oldconstant{c40}{}_{,\nu} + 4^{n_1+n_2}\oldconstant{c42}^2{}_{,\nu} > 1\]
        to complete the induction step.
        
        This establishes our claim \eqref{equation::size_gamma_k}. From this bound, we infer further that, for $k \in \{0,\hdots,l_0\}$ and $\psi(k) = (\underline{s},\underline{s}^\prime)$, $\log |\gamma_k|$ is bounded from above by
        \begin{equation*}
            (2k_1-\eta_1(\underline{s}))\log^+|h_1(x)|^{-1}
            +
            (2k_2-\eta_2(\underline{s}^\prime))\log^+|h_2(y)|^{-1}
            +
            (k_1+k_2+1)\oldconstant{c39.5}{}_{,\nu}
        \end{equation*}
        for some constant $\newconstant{}_{,\nu}\label{c39.5}$, where we recall that
        \begin{equation*}
        \eta_1(\underline{s}) =
        \begin{cases}
        0 & \text{if $s_{n_1} = |\underline{s}|$,} \\
        1 & \text{otherwise,}
        \end{cases}
        \end{equation*}
        while
        \begin{equation*}
        \eta_2(\underline{s}^\prime) =
        \begin{cases}
        0 & \text{if $s^\prime_{n_2} = |\underline{s}^\prime|$,} \\
        1 & \text{otherwise.}
        \end{cases}
        \end{equation*}
        Combined with \eqref{equation::bound_h_s,s'}, this yields that
        \begin{equation}\label{equation::bound_gamma_times_h}
        |\gamma_k|\Vert h_{\psi(k)} \Vert_{\overline{D}(\rho^\prime,p)} \leq (\oldconstant{c38.5}{}_{,\nu}\exp(\oldconstant{c39.5}{}_{,\nu}))^{k_1+k_2+1}\max\{1,|h_1(x)|^{-1}\}^{2k_1}\max\{1,|h_2(y)|^{-1}\}^{2k_2}.
        \end{equation}
        
        This estimate allows us to prove (iii). In fact, for every $q^\prime \in D(\rho^\prime,p)$, we deduce that
        \begin{align*}
            |\widetilde{f}(q^\prime)|
            &\leq |\widetilde{f}_0(q^\prime)|+\sum_{l=0}^{l_0} |\gamma_l| |h_{\psi(l)}(q^\prime)| \\
            &\leq (l_0+2) \max \{ \Vert \widetilde{f}_0 \Vert_{\overline{D}(\rho^\prime,p)}, |\gamma_0| \Vert h_{\psi(0)}\Vert_{\overline{D}(\rho^\prime,p)},\dots, |\gamma_{l_0}| \Vert h_{\psi(l_0)} \Vert_{\overline{D}(\rho^\prime,p)} \}.
        \end{align*}
        Using \eqref{equation::l0_bound}, \eqref{equation::oka_bound}, and \eqref{equation::bound_gamma_times_h}, we obtain (iii).
    \end{proof}

\subsection{A uniform Schwarz estimate}

We continue with the setting from the previous subsections, including the functions $g_{\kappa} \in \mathcal{B}_\kappa \backslash \{0\}$, $h_\kappa \in \mathcal{R}_\kappa \backslash \mathcal{I}_\kappa$ ($\kappa \in \{1,2\}$). 

We start with the case $\nu \in \Sigma_f(\IK)$. The $\nu$-adic completion $\widehat{A}_{\nu}$ of $A_{\nu}$ is a strictly affinoid $\IK_\nu$-algebra (i.e., it is a quotient of a Tate algebra $\IK_\nu\langle T_1, \dots, T_g\rangle$, $g \geq 1$) by \cite[Corollary 1.1.9.6]{Bosch1984}. Recall that
    \begin{equation*}
        U_{\nu}^\circ = \{ x \in U(\overline{\IK}_\nu) \mid \forall f \in \mathcal{A}: |f(x)|_\nu \leq 1 \}.
    \end{equation*}
    We use this set to define a supremum norm on each $A_{\nu}$ for every place $\nu \in \Sigma_f(\IK)$ satisfying $U^\circ_{\nu} \neq \emptyset$. 
    Under this condition, we set 
    \begin{equation*}
        \Vert f \Vert_{\nu} = \max_{p \in U^\circ_{\nu}} \{ |f(p)|_\nu \}
    \end{equation*}
    for every $f \in A_{\nu}$. This is indeed a maximum by \cite[Proposition 6.2.1.4]{Bosch1984} applied to the image of $f$ in $\widehat{A}_{\nu}$.

    Let $\mathcal{M}(\widehat{A}_{\nu})$ denote the Berkovich spectrum of the $\IK_\nu$-Banach algebra $\widehat{A}_{\nu}$ as defined in \cite[Chapter 1]{Berkovich1990}. Recall that $\mathcal{M}(\widehat{A}_{\nu})$ is the set of bounded multiplicative seminorms on $\widehat{A}_{\nu}$ and that $U_{\nu}^\circ$ is a dense subset of $\mathcal{M}(\widehat{A}_{\nu})$ (see for example \cite[(3.6)]{Gubler2013}). This observation is used for applying \cite[Theorem 4.19 (v) $\Rightarrow$ (ii)]{Boucksom2021} in the proof of the following lemma.
 
    \begin{lemma}
        \label{lemma::schwarz_non_archimedean}
        There exists a family of non-negative constants $\newconstant{}_{,\nu}\label{c44}$, $\nu \in \Sigma_f(\IK)$, all but finitely many of which are zero such that the following assertion is true: for every non-archimedean place $\nu \in \Sigma_f(\IK)$, every point $p = (x,y) \in U_{\nu}^\circ$ such that $g_1(x) g_2(y) h_{1}(x) h_{2}(y) \neq 0$, and every 
        \begin{equation}
            \label{equation::lower_derivatives_vanish}
            f \in A_{\nu} \cap (\mathfrak{m}_{C_1,x}^{k_1}\cdot \mathfrak{m}_{C_2,y}^{k_2} + \mathfrak{m}_{C_1,x}^{k_1+1} + \mathfrak{m}_{C_2,y}^{k_2+1}), \ k_1, k_2 \in \IZ_{\geq 0},
        \end{equation}
        the quantity
        \begin{equation*}
            \log \left\vert \mathbf{D}^{k_1,k_2} f(p)\right\vert_{\nu}
        \end{equation*}
        is bounded from above by
        \begin{align*}
            \log \Vert f \Vert_{\nu} 
            &- k_1 \log |g_{1}(x)|_\nu
            - k_2 \log |g_{2}(y)|_\nu \\
            &- 2k_1 \log |h_{1}(x)|_\nu
            - 2k_2 \log |h_{2}(y)|_\nu
            + \oldconstant{c44}{}_{,\nu}.
        \end{align*}
        
    \end{lemma}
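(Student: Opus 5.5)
The plan is to lift $f$ along $\pi_\nu$ using Proposition \ref{proposition::extension}, and then to compute $\mathbf{D}^{k_1,k_2}f(p)$ on the lift through the polynomial derivations $\delta_1,\delta_2$ from Subsection \ref{subsection::setup}. Rescaling $f$ by a scalar of $\IK_\nu$ (the bound is homogeneous in $f$) and using that the supremum norm is attained, we may assume $\Vert f\Vert_\nu = 1$ up to a bounded factor. The first step is to compare $f$ with an element of $\mathcal{A}_\nu$. Since $U_\nu^\circ$ is dense in $\mathcal{M}(\widehat{A}_\nu)$, \cite[Theorem 4.19 (v) $\Rightarrow$ (ii)]{Boucksom2021} shows that $\Vert f\Vert_\nu\le 1$ forces $f$ to be integral over $\mathcal{A}_\nu$, and indeed $f\in\varpi^{-c}\mathcal{A}_\nu$ for some constant $c=c_\nu$. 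Here $c_\nu=0$ whenever the reduction of $\mathcal{A}$ at $\nu$ is reduced, which holds for all but finitely many $\nu$ (the generic fiber is reduced, so the fibers are reduced for almost all places). This step is the source of $\oldconstant{c44}{}_{,\nu}$.

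Next, apply Proposition \ref{proposition::extension} to $\varpi^{c}f\in\mathcal{A}_\nu$. This gives a lift
\[
\widetilde f\in h_1(x)^{-2k_1}h_2(y)^{-2k_2}\mathcal{R}_\nu
\]
whose partial derivatives at $p$ of bidegree $(\underline m,\underline m')$ with $|\underline m|\le k_1$, $|\underline m'|\le k_2$, $|\underline m|+|\underline m'|<k_1+k_2$ vanish. On $V_\kappa$ we have $g_\kappa d_\kappa = \pi_\kappa\circ\delta_\kappa$ modulo the ideal. Because $\mathbf{D}_1$ and $\mathbf{D}_2$ act on different factors, it follows that
\[
g_1(x)^{k_1}g_2(y)^{k_2}\,\mathbf{D}^{k_1,k_2}f(p)=\frac{1}{k_1!k_2!}\,\delta_1^{k_1}\delta_2^{k_2}\widetilde f(p).
\]
More precisely, expanding $(g_1d_1)^{k_1}$ produces $g_1^{k_1}d_1^{k_1}$ plus terms involving lower derivatives of $f$. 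Those lower terms vanish at $p$ by the index condition, as in Lemma \ref{lemma::leading_coefficient}.

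The expression $\delta_1^{k_1}\delta_2^{k_2}\widetilde f$ is a sum of products of derivatives of the coefficients $\delta_{\kappa,i}$ with partial derivatives of $\widetilde f$. By the vanishing in the previous step, only the top-order terms $\prod\delta_{1,i}\prod\delta_{2,j}\,\partial^{(k_1,k_2)}\widetilde f$ survive at $p$. The main obstacle is the factorials: we must ensure that the $1/(k_1!k_2!)$ is absorbed without an unbounded power of $|p|_\nu^{-1}$. I would handle this by noting that the surviving top term equals
\[
\sum_{|\underline m|=k_1,\ |\underline m'|=k_2}\frac{k_1!\,k_2!}{\underline m!\,\underline m'!}\,\underline\delta_1^{\underline m}\underline\delta_2^{\underline m'}\,\partial^{\underline m,\underline m'}\widetilde f.
\]
The $k_1!k_2!$ cancels, and the divided derivatives $\frac{1}{\underline m!\underline m'!}\partial^{\underline m,\underline m'}$ preserve $\mathcal{R}_\nu$.

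Finally, evaluate at $p$. Since $p\in U_\nu^\circ$, the values of $\delta_{\kappa,i}\in\mathcal{O}_\IK[X]$ at $p$ have absolute value at most $1$, as do the divided derivatives of elements of $\mathcal{R}_\nu$. Therefore
\[
|\mathbf{D}^{k_1,k_2}f(p)|_\nu\le |\varpi|_\nu^{-c}\,|g_1(x)|_\nu^{-k_1}|g_2(y)|_\nu^{-k_2}|h_1(x)|_\nu^{-2k_1}|h_2(y)|_\nu^{-2k_2}.
\]
Restoring the normalization by $\Vert f\Vert_\nu$ gives the claimed bound, with $\oldconstant{c44}{}_{,\nu}=c_\nu\log|\varpi|_\nu^{-1}$. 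This constant vanishes for almost all $\nu$, and for those $\nu$ we may also take $g_\kappa,h_\kappa$ and the $\delta_{\kappa,i}$ integral.
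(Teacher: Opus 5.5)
Your proposal is correct and follows essentially the same route as the paper. The steps match one for one: a uniform denominator coming from the finiteness of the integral closure (Boucksom--Eriksson), trivial at the almost all places where the fiber is geometrically reduced; then Proposition~\ref{proposition::extension}; then the Leibniz-rule reduction of $g_1(x)^{k_1}g_2(y)^{k_2}k_1!k_2!\,\mathbf{D}^{k_1,k_2}f(p)$ to $(\delta_1^{k_1},\delta_2^{k_2})$ applied to the lift; and finally a Taylor/multinomial expansion at the integral point $p$ that absorbs the factorials.

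The one thing to tighten is the normalization. As written, rescaling by a scalar of $\IK_\nu$ only ``up to a bounded factor'' could cost an extra $\log|\varpi|_\nu^{-1}$ at infinitely many places, which would break the requirement that the constant vanish for almost all $\nu$. You should normalize exactly, e.g.\ by $\gamma\in\IKbar_\nu$ with $|\gamma|_\nu=\Vert f\Vert_\nu^{-1}$ as the paper does; the inequality is homogeneous under such scalars as well.
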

    
    \begin{proof}
        By \cite[Theorem 4.20]{Boucksom2021}, the integral closure $\widetilde{\mathcal{A}}_{\nu}$ of $\mathcal{A}_\nu$ in $A \otimes_{\IK} \IKbar_{\nu}$ is a finite $\mathcal{A}_{\nu}$-module. We can hence choose $\beta_{\nu} \in \caO_{\nu} \backslash \{0\}$ such that $\beta_{\nu} \cdot \widetilde{\mathcal{A}}_{\nu} \subseteq \mathcal{A}_{\nu}$. We may even assume that $\beta_{\nu}=1$ for all but finitely many $\nu \in \Sigma_f(\IK)$. In fact, since $C_1 \times C_2$ is geometrically reduced, all but finitely many fibers of $\mathcal{U} \to \mathrm{Spec}~\caO_\IK$ are geometrically reduced by \cite[Th\'eor\`eme 9.7.7 (iii)]{EGA4III}. For these places, we can take $\beta_{\nu} = 1$ by \cite[Theorem 4.19 (i) $\Rightarrow$ (v)]{Boucksom2021}.

        If $f = 0$, the lemma holds trivially. Otherwise, $\Vert f \Vert_\nu > 0$ and we can choose $\gamma \in \IKbar_\nu$ such that $|\gamma|_\nu = \Vert f \Vert_{\nu}^{-1}$ and hence $\Vert \gamma \cdot f\Vert_{\nu} = 1$. Using \cite[Lemma 4.17 and Theorem 4.19 (v) $\Rightarrow$ (ii)]{Boucksom2021}, it follows that $\gamma \cdot f \in \widetilde{\mathcal{A}}_{\nu}$ and thus $f_1 = \beta_{\nu} \gamma \cdot f \in \mathcal{A}_{\nu}$. By Proposition \ref{proposition::extension}, there exists an extension
        \begin{equation*} 
        \widetilde{f}_1 \in h_{1}(x)^{-2k_1} h_{2}(y)^{-2k_2} \cdot \mathcal{R}_{\nu} \cap (\mathfrak{m}_{\IA^{n_1},x}^{k_1} \cdot \mathfrak{m}_{\IA^{n_2},y}^{k_2} + \mathfrak{m}_{\IA^{n_1},x}^{k_1+1} + \mathfrak{m}_{\IA^{n_2},y}^{k_2+1})
        \end{equation*}
        of $f_1$.

        By definition, we have $k_1!k_2! \cdot \mathbf{D}^{k_1,k_2} f = (d_{1}^{k_1},d_{2}^{k_2})f$. We recall that we write $\delta_{1} = g_{1} \cdot d_{1}$ and $\delta_{2} = g_{2} \cdot d_{2}$.
        Using \eqref{equation::lower_derivatives_vanish} and the Leibniz rule, we infer that 
        \begin{equation*}
            g_{1} (x)^{k_1} g_{2} (y)^{k_2}k_1!k_2! \cdot \mathbf{D}^{k_1,k_2} f_1(p) 
            = ((g_{1}  d_{1})^{k_1},(g_{2} d_{2})^{k_2})f_1(p) 
            = (\delta_{1}^{k_1}, \delta_{2}^{k_2})\widetilde{f}_1(p).
        \end{equation*}        
        
        It hence suffices to prove that
        \begin{equation}
        \label{equation::goal}
            (\delta_{1}^{k_1}, \delta_{2}^{k_2})\widetilde{g}(p) \in k_1! k_2! \caO_{\nu}
        \end{equation}
        for every $\widetilde{g} \in \mathcal{R}_{\nu} \cap (\mathfrak{m}_{\IA^{n_1},x}^{k_1} \cdot \mathfrak{m}_{\IA^{n_2},y}^{k_2} + \mathfrak{m}_{\IA^{n_1},x}^{k_1+1} + \mathfrak{m}_{\IA^{n_2},y}^{k_2+1})$. We calculate that
        \begin{align*}
            (\delta_{1}^{k_1}, \delta_{2}^{k_2}) \widetilde{g}(p) 
            &= \left( \left( \sum_{i=1}^{n_1} \delta_{1,i} \cdot \frac{\del}{\del X_i} \right)^{k_1}, \left( \sum_{i=1}^{n_2} \delta_{2,i} \cdot \frac{\del}{\del Y_i} \right)^{k_2} \right) \widetilde{g}(p)\\
            &= \left( \left( \sum_{i=1}^{n_1} \delta_{1,i}(x) \cdot \frac{\del}{\del X_i} \right)^{k_1}, \left( \sum_{i=1}^{n_2} \delta_{2,i}(y) \cdot \frac{\del}{\del Y_i} \right)^{k_2} \right) \widetilde{g}(p).
        \end{align*}
        The assertion \eqref{equation::goal} follows upon expanding $\widetilde{g}$ as a (finite) Taylor series centered at $p = (x,y) \in U_{\nu}^\circ$ as all affine coordinates of $p$ are integral by the definition of $U_{\nu}^\circ$. Setting 
        \begin{equation*}
            \oldconstant{c44}{}_{,\nu} = \max \{ 0, - \log |\beta_{\nu}|_\nu \},
        \end{equation*}
        we obtain the bound stated in the lemma.
    \end{proof}

    The case $\nu \in \Sigma_\infty(\IK)$ is treated similarly. As in Subsection \ref{subsection::extending_function_archimedean}, we suppose that some radii $\rho > \rho^\prime > 0$ have been fixed. \textit{The constants in the remainder of this subsection are allowed to depend on $\nu$, $\rho$, and $\rho^\prime$ without further explicit mention.} Recall that, for every integer $n > 0$, every point $w = (w_1, \hdots, w_n) \in \overline{\IK}_\nu^n$, and every $r > 0$, we write
\begin{equation*}
    D_{n}(r,w) = \left\{ (z_1,\dots,z_{n})\in \IA^{n}(\overline{\IK}_\nu) \mid \max\{|z_1-w_1|,\dots,|z_{n}-w_n|\} < r \right\}
\end{equation*}
and
\begin{equation*}
    U^\circ_{\nu}(r) = U(\overline{\IK}_\nu) \cap \vartheta^{-1}(D_{n_1+n_2}(r)).
\end{equation*}

\begin{lemma}
        \label{lemma::schwarz_archimedean}
        There exists a non-negative constant $\newconstant{}_{,\nu}\label{c43}$ such that the following assertion is true: For every point $p = (x,y) \in U_{\nu}^\circ(\rho)$ such that $g_1(x) g_2(y) h_{1}(x) h_{2}(y) \neq 0$, and every 
        \begin{equation}
            \label{equation::lower_derivatives_vanish_3}
            f \in A_{\nu} \cap (\mathfrak{m}_{C_1,x}^{k_1} \cdot \mathfrak{m}_{C_2,y}^{k_2} + \mathfrak{m}_{C_1,x}^{k_1+1} + \mathfrak{m}_{C_2,y}^{k_2+1}), \ k_1, k_2 \in \IZ_{\geq 0},
        \end{equation}
        the quantity
        \begin{equation*}
            \log \left\vert \mathbf{D}^{k_1,k_2} f(p) \right\vert_{\nu}
        \end{equation*}
        is bounded from above by
        \begin{align*}
            \log \max_{q \in \vartheta^{-1}(\overline{D}(\rho^\prime,p))}{\left\{|f(q)|\right\}} 
            &- k_1 \cdot \log |g_{1}(x)|_\nu
            - k_2 \cdot \log |g_{2}(y)|_\nu \\
            &+ 2k_1 \cdot \log^+ |h_{1}(x)|^{-1}_\nu
            +2k_2 \cdot \log^+ |h_{2}(y)|^{-1}_\nu \\
            &+ \oldconstant{c43}{}_{,\nu}(k_1+k_2+1).
        \end{align*}
        
    \end{lemma}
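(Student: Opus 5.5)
The plan is to mirror the proof of Lemma \ref{lemma::schwarz_non_archimedean}, replacing the integral lifting of Proposition \ref{proposition::extension} by the holomorphic extension of Proposition \ref{proposition::archimedean_extension} and the integrality of Taylor coefficients by Cauchy estimates. If $f=0$ there is nothing to prove. Otherwise we normalize: let $\Lambda = \max_{q \in \vartheta^{-1}(\overline{D}(\rho^\prime,p))}|f(q)|$. We then want a scalar $\gamma \in \IC$ with $|\gamma| = \Lambda^{-1}$, so that $\gamma f$ has sup norm $\le 1$ on the relevant neighbourhood of $p$. Since the statement is homogeneous in $f$ (both sides shift by $\log|\gamma|$), we may replace $f$ by $\gamma f$.

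One subtlety arises here. Proposition \ref{proposition::archimedean_extension} requires $f \in \mathcal{A}_\nu(2\rho^\prime,p)$, i.e.\ a bound on $D(2\rho^\prime,p)$, whereas our normalization is on $\overline{D}(\rho^\prime,p)$. I would handle this by applying Proposition \ref{proposition::archimedean_extension} with the pair of radii $(\rho,\rho^\prime/2)$ in place of $(\rho,\rho^\prime)$. Since $\rho^\prime$ is fixed, this only changes the constants. With that choice, $\gamma f \in \mathcal{A}_\nu(\rho^\prime,p)$, and the proposition yields a holomorphic $\widetilde{f}$ on $D(\rho^\prime/2,p)$. This $\widetilde{f}$ restricts to $\gamma f$ on $U_\nu(\IC)$, has vanishing mixed partials of all orders $(\underline{m},\underline{m}^\prime)$ with $|\underline{m}|\le k_1$, $|\underline{m}^\prime|\le k_2$, $|\underline{m}|+|\underline{m}^\prime|<k_1+k_2$, and satisfies
\[\log\|\widetilde{f}\|_{D(\rho^\prime/2,p)} \le 2k_1\log^+|h_1(x)|^{-1}+2k_2\log^+|h_2(y)|^{-1}+\oldconstant{c37}{}_{,\nu}(k_1+k_2+1).\]

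Next, as in the non-archimedean case, the Leibniz rule together with the vanishing hypothesis \eqref{equation::lower_derivatives_vanish_3} gives
\[g_1(x)^{k_1}g_2(y)^{k_2}k_1!k_2!\,\mathbf{D}^{k_1,k_2}(\gamma f)(p) = (\delta_1^{k_1},\delta_2^{k_2})\widetilde{f}(p).\]
Here $\delta_\kappa$ is the polynomial lift, tangent to $V_\kappa$, so it acts on $\widetilde{f}$ compatibly with its restriction. Because of property (ii) of $\widetilde{f}$, all lower-order terms vanish at $p$, and the right-hand side equals
\[\Bigl(\bigl(\textstyle\sum_i\delta_{1,i}(x)\partial_{X_i}\bigr)^{k_1},\bigl(\textstyle\sum_i\delta_{2,i}(y)\partial_{Y_i}\bigr)^{k_2}\Bigr)\widetilde{f}(p).\]
I expect this reduction to be the main obstacle. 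One must check carefully that commutators of the $\delta_{\kappa,i}$ with $\partial$ only produce terms involving derivatives of $\widetilde{f}$ of orders already covered by (ii). This is exactly the same bookkeeping as in Lemma \ref{lemma::schwarz_non_archimedean}, but it now has to be done for holomorphic rather than polynomial $\widetilde{f}$.

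Finally, we bound the constant-coefficient operator applied to $\widetilde{f}$ at $p$. Expanding it, the result is a sum of at most $n_1^{k_1}n_2^{k_2}$ terms, each a product of coefficients bounded by $\oldconstant{C}{}_{,\nu}^{k_1+k_2}$ (using \eqref{equation::bound_delta_kappa_i}, valid since $p\in D(\rho)\subseteq\overline{D}(2\rho)$) and a partial derivative of order $(k_1,k_2)$. By Cauchy's formula on $D(\rho^\prime/2,p)$, each such derivative is at most $\underline{m}!\underline{m}^\prime!(2/\rho^\prime)^{k_1+k_2}\|\widetilde{f}\|$. Grouping by multi-index, the factorials combine via the multinomial theorem into exactly $k_1!k_2!$ times a geometric factor of size $(2n\oldconstant{C}{}_{,\nu}/\rho^\prime)^{k_1+k_2}$. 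We then divide by $k_1!k_2!|g_1(x)|^{k_1}|g_2(y)|^{k_2}$, take logarithms, and undo the normalization by $\gamma$. This gives the stated bound with $\oldconstant{c43}{}_{,\nu}$ absorbing $\oldconstant{c37}{}_{,\nu}$ and $\log(2n\oldconstant{C}{}_{,\nu}/\rho^\prime)$.
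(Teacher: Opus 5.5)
Your proposal is correct and follows the paper's proof step for step: the normalization by the supremum over $\vartheta^{-1}(\overline{D}(\rho^\prime,p))$, Proposition \ref{proposition::archimedean_extension} applied with $\rho^\prime/2$ in place of $\rho^\prime$, the Leibniz reduction to $(\delta_1^{k_1},\delta_2^{k_2})\widetilde{f}(p)$, freezing the coefficients at $p$ via property (ii), and a Cauchy estimate combined with (iii). The differences are cosmetic: the paper bounds each partial derivative more crudely by $(4/\rho^\prime)^{k_1+k_2}k_1!k_2!$ times the supremum on $\overline{D}(\rho^\prime/4,p)$ instead of grouping multi-indices, and the step you flag as the main obstacle is routine, since at $p$ only the top-order part of $\delta_1^{k_1}\delta_2^{k_2}$ survives.
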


    \begin{proof}
        We set
        \[ \lVert f \rVert_\nu = \max_{q \in \vartheta^{-1}(\overline{D}(\rho^\prime,p))}{\left\{|f(q)|\right\}}.\]
        If $\lVert f \rVert_\nu = 0$, then $f = 0$ and the lemma holds trivially. Otherwise, we set $f_1 = f/\lVert f \rVert_\nu$. Applying Proposition \ref{proposition::archimedean_extension} (with $\rho^\prime/2$ in place of $\rho^\prime$), we extend $f_1$ to a function $\widetilde{f}_1: D(\rho^\prime/2,p) \to \IC$. We set $\widetilde{f} = \lVert f \rVert_\nu \widetilde{f}_1$.
        
        By definition, we have $k_1!k_2!\mathbf{D}^{k_1,k_2} f = (d_{1}^{k_1},d_{2}^{k_2})f$. Using the fact that
        \[ f \in \mathfrak{m}_{C_1,x}^{k_1} \cdot \mathfrak{m}_{C_2,y}^{k_2} + \mathfrak{m}_{C_1,x}^{k_1+1} + \mathfrak{m}_{C_2,y}^{k_2+1} \]
        and the Leibniz rule, we infer that 
        \begin{equation*}
            g_{1} (x)^{k_1} g_{2} (y)^{k_2} k_1!k_2! \mathbf{D}^{k_1,k_2} f(p) = [((g_{1}  d_{1})^{k_1},(g_{2} d_{2})^{k_2})f](p).
        \end{equation*}
        By Proposition \ref{proposition::archimedean_extension} (i), this implies that
        \begin{equation*}
            g_{1} (x)^{k_1} g_{2} (y)^{k_2} k_1!k_2! \mathbf{D}^{k_1,k_2} f(p) = (\delta_1^{k_1},\delta_2^{k_2})\widetilde{f}(p).
        \end{equation*}
        Next, we deduce from Proposition \ref{proposition::archimedean_extension} (ii) that
        \begin{align*}
            (\delta_{1}^{k_1}, \delta_{2}^{k_2}) \widetilde{f}(p) 
            &= \left( \left( \sum_{i=1}^{n_{1}} \delta_{1,i} \cdot \frac{\del}{\del X_i} \right)^{k_1}, \left( \sum_{i=1}^{n_{2}} \delta_{2,i} \cdot \frac{\del}{\del Y_i} \right)^{k_2} \right) \widetilde{f}(p)\\
            &= \left( \left( \sum_{i=1}^{n_{1}} \delta_{1,i}(x) \cdot \frac{\del}{\del X_i} \right)^{k_1}, \left( \sum_{i=1}^{n_{2}} \delta_{2,i}(y) \cdot \frac{\del}{\del Y_i} \right)^{k_2} \right) \widetilde{f}(p).
        \end{align*}
        It follows from \eqref{equation::bound_delta_kappa_i} that
        \[ |(\delta_1^{k_1},\delta_2^{k_2})\widetilde{f}(p)| \leq \oldconstant{C}{}_{,\nu}^{k_1+k_2}n_1^{k_1}n_2^{k_2}\max_{|\underline{m}| = k_1, |\underline{m}^\prime| = k_2}{\left\vert \frac{\del^{k_1+k_2} \widetilde{f}}{\del \underline{X}^{\underline{m}} \del \underline{Y}^{\underline{m}^\prime}}(p)\right\vert}.\]
        By Cauchy's integral formula \cite[Chapter A]{Gunning1990}, we have
        \begin{equation*}
            \left\vert \frac{\del^{k_1+k_2} \widetilde{f}}{\del \underline{X}^{\underline{m}} \del \underline{Y}^{\underline{m}^\prime}}(p) \right\vert \leq (4/\rho^\prime)^{k_1+k_2} k_1! k_2!\sup_{q \in \overline{D}(\rho^\prime/4,p)} |\widetilde{f}(q)|
        \end{equation*}
        for all $\underline{m} \in \IZ_{\geq 0}^{n_1}$ and $\underline{m}^\prime \in \IZ_{\geq 0}^{n_2}$ such that $|\underline{m}| = k_1$ and $|\underline{m}^\prime| = k_2$.
        
        We deduce the desired bound from putting everything together and using the upper bound from Proposition \ref{proposition::archimedean_extension} (iii).
    \end{proof}

\subsection{Global control}

We conclude this section with an elementary lemma controlling the global size of the auxiliary functions $g_\kappa \in \mathcal{B}_\kappa$ and $h_\kappa \in \mathcal{R}_\kappa$ introduced in Lemma \ref{lemma::introduction_g_and_h}. For every $z \in V_\kappa(\IQbar)$, we write $\mathbf{O}_\nu(z)$ for the orbit of $z$ under the Galois group $\mathrm{Gal}(\IQbar/\IK)$ in $V_\kappa(\IKbar_\nu)$ ($\kappa = 1,2$).

    \begin{lemma}
        \label{lemma::aux_function_control}
        Suppose that $C_1, C_2$ are two geometrically irreducible curves inside an abelian variety $A$ and that $L$ is a symmetric ample line bundle on $A$. Let $g_1 \in \mathcal{B}_1$, $g_2 \in \mathcal{B}_2$, $h_1 \in \mathcal{R}_1$, and $h_2 \in \mathcal{R}_2$ be as in Lemma \ref{lemma::introduction_g_and_h}. There exists a constant $\newconstant\label{c47}$ such that
        \begin{equation*}
            \frac{1}{[{\mathbb{K}}(z):\IQ]}\sum_{\nu \in \Sigma({\mathbb{K}})} \delta_\nu \sum_{w \in \mathbf{O}_\nu(z)} \log^+ \vert g_\kappa(w)\vert_\nu^{-1} \leq \oldconstant{c47} \cdot h_{\Ltil}(z) + \oldconstant{c47}
        \end{equation*}
        for all $z \in V_\kappa(\IQbar)$ such that $g_\kappa(z) \neq 0$ and all $\kappa \in \{1,2\}$. 
        
        Similarly, there exists a constant $\newconstant\label{c48}$ such that
        \begin{equation*}
            \frac{1}{[{\mathbb{K}}(z):\IQ]}\sum_{\nu \in \Sigma({\mathbb{K}})} \delta_\nu \sum_{w \in \mathbf{O}_\nu(z)} \log^+ \vert h_\kappa(w)\vert_\nu^{-1} \leq \oldconstant{c48}\cdot h_{\Ltil}(z) + \oldconstant{c48}
        \end{equation*}
        for all $z \in V_\kappa(\IQbar)$ such that $h_\kappa(z) \neq 0$ and all $\kappa \in \{1,2\}$. 
    \end{lemma}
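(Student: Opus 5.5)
The approach is the standard comparison of local heights with Weil heights: a nonzero regular function on a quasi-projective variety is bounded at every place in terms of the height of the point. We handle $g_\kappa$ and $h_\kappa$ the same way. Note that $h_\kappa\in\mathcal{R}_\kappa=\caO_\IK[X_1,\dots,X_{n_\kappa}]$ restricts to a regular function $\bar h_\kappa=\pi_\kappa(h_\kappa)\in\mathcal{B}_\kappa$ on $V_\kappa$. Here $\bar h_\kappa$ is nonzero since $h_\kappa\notin\mathcal{I}_\kappa$, and $h_\kappa(z)=\bar h_\kappa(z)$ for $z\in V_\kappa(\IQbar)$. So it suffices to prove: for any nonzero $\phi\in B_\kappa$, i.e.\ a nonzero rational function on $C_\kappa$ regular on $V_\kappa$,
\[
\frac{1}{[\IK(z):\IQ]}\sum_{\nu}\delta_\nu\sum_{w\in\mathbf{O}_\nu(z)}\log^+|\phi(w)|_\nu^{-1}\leq c\,h_{\Ltil}(z)+c
\]
for all $z\in V_\kappa(\IQbar)$ with $\phi(z)\neq0$.

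The key step is to rewrite the left-hand side as a Weil height. The sum $\sum_\nu\delta_\nu\sum_{w}\log^+|\psi(w)|_\nu$ normalized by $[\IK(z):\IQ]$ is exactly the standard logarithmic Weil height $h(\psi(z))$ of the algebraic number $\psi(z)$, via the usual identification of $\mathbf{O}_\nu(z)$ with the places of $\IK(z)$ above $\nu$. Applying this to $\psi=\phi^{-1}$ shows the left-hand side equals $h(\phi(z)^{-1})=h(\phi(z))$ by the product formula. Then $\phi$ extends to a morphism $\phi: C_\kappa\to\IP^1$ (the curve $C_\kappa$ is projective; pass to the normalization if needed, which does not affect the heights up to $O(1)$). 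Functoriality of Weil heights gives $h(\phi(z))= h_{\phi^\ast\caO(1)}(z)+O(1)$. Since $L|_{C_\kappa}$ is ample, there is an $m$ with $(L|_{C_\kappa})^{\otimes m}\otimes\phi^\ast\caO(1)^{-1}$ ample (or simply of positive degree). This yields $h_{\phi^\ast\caO(1)}\leq m\,h_{L}+O(1)$ on $C_\kappa(\IQbar)$, because heights attached to ample bundles are bounded below. Finally, $h_L$ and the canonical height $h_{\Ltil}$ differ by a bounded function, by Lemma \ref{lemma::canonicalmetrics} (iii) together with the boundedness of archimedean metric comparisons. Combining these gives the claim with $c$ depending on $m$ and the $O(1)$ terms.

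The only mildly delicate point is the singularities of $C_\kappa$. Here we would pull back to the normalization $\widetilde C_\kappa\to C_\kappa$, which is finite and birational. Points of $V_\kappa$ have preimages where $\phi$ is defined, and heights are compatible under pullback, so the argument goes through on $\widetilde C_\kappa$. We need this separately for $\kappa=1,2$ and for the four functions, taking the maximum of the constants to obtain $\oldconstant{c47}$ and $\oldconstant{c48}$.
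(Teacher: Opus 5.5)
Your proposal is correct and follows essentially the same route as the paper, which just cites Hindry--Silverman, Theorem B.2.5 (the bound $h(\phi(P))\le d\,h(P)+O(1)$ for rational maps of projective space). In both arguments the left-hand side is the Weil height $h(g_\kappa(z)^{-1})=h(g_\kappa(z))$, and this is bounded linearly by $h_{\Ltil}(z)+O(1)$; you supply that comparison yourself via the height machine on the normalization, using functoriality and positivity of heights for the ample bundle $L^{\otimes m}\otimes\phi^\ast\caO(1)^{-1}$, instead of using the projective-space bound. One minor imprecision: $\mathbf{O}_\nu(z)$ corresponds to the embeddings of $\IK(z)$ over $\IK$ into $\overline{\IK}_\nu$, not to the places of $\IK(z)$ above $\nu$, but with the paper's normalizations of $|\cdot|_\nu$ and $\delta_\nu$ the sum is still exactly the absolute Weil height.
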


    \begin{proof}
        This follows from \cite[Theorem B.2.5]{Hindry2000}.
    \end{proof}

    \section{Proof of Theorem \ref{theorem::main_reduction}}

    \label{section::proof_general}

    In this section we use notation and terminology introduced in \cite{Habegger2012a} and \cite{BarroeroDill2022}: For a given irreducible subvariety $V$ of an abelian variety $A$, we write $\langle V \rangle$ for the smallest torsion coset containing $V$ and $\langle V \rangle_{\mathrm{ws}}$ for the smallest coset containing $V$. The defect $\delta(V)$ of $V$ is equal to $\dim \langle V \rangle - \dim V$ and its weak defect $\delta_{\mathrm{ws}}(V)$ is equal to $\dim \langle V \rangle_{\mathrm{ws}} - \dim V$. An irreducible subvariety $W \subseteq V$ is called optimal for $V$ in $A$ if $\delta(U) > \delta(W)$ for every irreducible subvariety $U$ such that $W \subsetneq U \subseteq V$ and it is called weakly optimal for $V$ in $A$ if $\delta_{\mathrm{ws}}(U) > \delta_{\mathrm{ws}}(W)$ for every such subvariety $U$. If $X$ is a union of translates of a given abelian subvariety, we let $X^\circ$ denote this abelian subvariety.
    
    Let $K$ be an arbitrary algebraically closed field of characteristic $0$ and let $A$ be an abelian variety over $K$ of dimension $g \geq 3$. In the sequel, we repeatedly use the fact that for every extension $L$ of $K$, all torsion cosets of $A_L$ are base changes of torsion cosets of $A$ since $K$ is algebraically closed.
    
    We want to prove that Theorem \ref{theorem::main_reduction} holds for irreducible algebraic curves $C_1, C_2 \subseteq A$ such that there is no proper algebraic subgroup of $A$ containing both of them. We set $X = \langle C_1 \times C_2 \rangle$. By Lemma \ref{lemma::nocoset}, we may assume that $C_1$ and $C_2$ are not both contained in a translate of the same proper abelian subvariety of $A$, which implies that
    \begin{equation}\label{equation::generate_everything}
        p_1(X) + \ker((p_1)|_X) = A,
    \end{equation}
    where $p_i: A \times A \to A$ denotes the projection to the $i$-th factor ($i = 1,2$). By Lemma \ref{lemma::nocoset2}, we may furthermore assume that $C_1$ and $C_2$ both have finite stabilizer and that neither is contained in a coset of codimension $\geq 2$. This implies that
    \begin{equation}\label{equation::dim_X}
        \dim X \geq \dim p_i(X) + g-1 \geq 2g-2
    \end{equation}
    for $i = 1,2$.

    We have that $[n](C_1) \not\subseteq C_2$ for all but at most three integers $n$ by Lemma \ref{lem:inters_one_n}. It therefore suffices to show that the set of points $p = (x,y) \in (C_1 \times C_2)(K)$ such that $y = [n](x)$ for some integer $n \in \IZ$ with $[n](C_1) \not\subseteq C_2$ is finite.
    
    There exists an algebraically closed subfield $K^\prime \subseteq K$ of finite transcendence degree over $\IQbar$ together with an abelian variety $A^\prime$ and irreducible curves $C^\prime_1$ and $C^\prime_2$, both defined over $K^\prime$, such that $A= A^\prime_K$, $C_1= (C^\prime_1)_K$, and $C_2= (C^\prime_2)_K$. For each integer $n$ such that $[n](C_1) \not\subseteq C_2$, the set $ C_1 \cap [n]^{-1}(C_2)$ is finite and consists of base changes of $K^\prime$-rational points of $C^\prime_1$. It hence suffices to show that the set of points $p = (x,y) \in (C^\prime_1 \times C^\prime_2)(K^\prime)$ such that $y = [n](x)$ for some integer $n \in \IZ$ with $[n](C^\prime_1) \not\subseteq C^\prime_2$  is finite. Therefore, we may and do assume subsequently that $K$ has finite transcendence degree over $\IQbar$.

    Let $\mathscr{B}_{2g} \rightarrow \mathscr{A}_{2g}$ denote the universal family of principally polarized abelian varieties of dimension $2g$ with symplectic level $3$-structure over $\IQbar$ as constructed in \cite[Theorem 2.3.1]{GenestierNgo} (see also \cite[Theorem 7.9 and the following remark]{Mumford1965}). By \cite[Lemma 2.7]{BarroeroDill2022}, there exists an irreducible subvariety $S \subseteq \mathscr{A}_{2g}$ with the following property: let $\mathcal{A} = \mathscr{B}_{2g} \times_{\mathscr{A}_{2g}} S$ and let $\mathcal{A}_\eta$ be the generic fiber of $\mathcal{A} \to S$. There exist a field embedding $\IQbar(S) \hookrightarrow K$ and an isogeny $\varphi: A \times A \to \mathcal{A}_{\eta,K}$.
    
    Let $K_0$ denote the algebraic closure of $\IQbar(S)$ inside $K$. We obtain that $A = (A_0)_K$ for an abelian variety $A_0$ over $K_0$. We proceed by induction on the transcendence degree $\mathrm{trdeg}(K/K_0)$.
   
    \subsection{Base case $\mathrm{trdeg}(K/K_0)=0$}
    \label{section::base_case}

    Since $K_0$ is algebraically closed in $K$, we have that $K = K_0$. For $K= \IQbar$, we can appeal directly to Theorem \ref{theorem::mainQbar}. We hence assume $K\neq \IQbar$ (i.e., $\dim(S) > 0$) in the following.
    
    Consider a point $p = (x,y) \in (C_1 \times C_2)(K)$ such that $y = [n](x)$ for some $n \in \mathbb{Z}$ with $[n](C_1) \not\subseteq C_2$. We want to show that the set of such points is not Zariski dense in $C_1 \times C_2$. The singleton $\{p\}$ is contained in an optimal subvariety $W$ for $C_1 \times C_2$ such that $\delta(W) \leq \delta(\{p\})$. By \cite[Lemmas 2.5 and 2.6~(1)]{BarroeroDill2022}, either $W$ is contained in some fixed proper Zariski closed subset $Z$ of $C_1 \times C_2$ or the subvariety $\varphi(W)$ is also optimal for $V := \varphi(C_1 \times C_2)$ in $\mathcal{A}_{\eta,K}$.
    
    Moreover, $p\in Y_n(K)$, where $Y_n = X \cap (\id_A,[n])(A)$. We have that
    \[ \codim_X Y_n = \dim X - \dim p_1(X) + \codim_{p_1(X)} p_1(Y_n).\]
    It follows from \eqref{equation::dim_X} that $\codim_X Y_n \geq g-1$. Furthermore, if $\codim_X Y_n = g-1$, then $\dim X - \dim p_1(X) = g-1$ and $p_1(X) = p_1(Y_n)$ and so $X$ must contain the graph of multiplication by $n$ restricted to $p_1(X)$. But if this happens for more than one $n$, then it follows from \eqref{equation::generate_everything} that $\dim X - \dim p_1(X) = g$, a contradiction. Since any fixed $n$ with $[n](C_1) \not\subseteq C_2$ gives rise to at most finitely many possibilities for $x$, we can and will assume in the following that $\codim_X Y_n > g-1 \geq 2$.
    
    We deduce that
    \begin{equation}\label{equation::bound_for_delta_W}
    \delta(W) \leq \delta(\{p\}) \leq \dim Y_n < \dim X - 2 = \delta(C_1 \times C_2).
    \end{equation}
    Hence we have that $W \neq C_1 \times C_2$ and so either $W = \{p\}$ or $\dim W = 1$.

    We denote by $(T,\mathrm{Tr}: T_K \rightarrow A)$ the $K/\IQbar$-trace of $A$ as defined in \cite[Definition 2.3]{BarroeroDill2022}. We apply \cite[Theorem 3.1]{BarroeroDill2022} to the subvariety $V \subseteq \mathcal{A}_{\eta,K}$: After potentially enlarging $Z$, we can pick finitely many pairs $(q_0,H)$ consisting of a torsion point $q_0 \in (A \times A)(K)$ and an abelian subvariety $H \subsetneq A \times A$ such that
    \begin{enumerate}
        \item[(i)] $W \subseteq Z \subsetneq C_1 \times C_2$ or
        \item[(ii)] $W \subseteq t + q_0 + H$ for some $t \in \mathrm{Tr}(T(\IQbar)) \times \mathrm{Tr}(T(\IQbar))$ and $H \subseteq \langle W \rangle^\circ$;
    \end{enumerate}
    this choice can be made independently of $p$. Since we have assumed that $K_0 \neq \IQbar$, we know that $\mathrm{Tr}(T_K) \neq A$. In case (i), we are done. We are therefore left with case (ii), which we subdivide further into two subcases
    \begin{enumerate}
        \item[(ii\textsubscript{a})] $\dim(W)=0$ and \item[(ii\textsubscript{b})] $\dim(W)=1$.
    \end{enumerate}

    In case (ii\textsubscript{a}), we have $W = \{p\}$. We know that
    \[ H \subseteq \langle W \rangle^\circ = \langle \{p\} \rangle^\circ \subseteq (\id_A,[n])(A).\]
    If $\dim H > 0$, then the integer $n$ is uniquely determined by $H$. As there are at most finitely many choices for $H$, we are done in this case since $C_1 \cap [n]^{-1}(C_2)$ is finite for each $n$ such that $[n](C_1) \not\subseteq C_2$. If $\dim H = 0$, then $p \in \mathrm{Tr}(T(\IQbar)) \times \mathrm{Tr}(T(\IQbar)) + q_0$ and $q_0$ varies in a finite set. We can find $m \in \mathbb{Z}$, $m\ge 1$, such that $[m](q_0) = 0_{A \times A}$ for all $q_0$ in this finite set. This implies that $[m](p) \in \mathrm{Tr}(T_K)^2 \subsetneq A^2$. Since there is no proper algebraic subgroup of $A$ containing both $C_1$ and $C_2$, we have either $[m](C_1) \not\subseteq \mathrm{Tr}(T_K)$ or $[m](C_2) \not\subseteq \mathrm{Tr}(T_K)$. In the first case, there are at most finitely many possibilities for $x$ while in the second case, there are at most finitely many possibilities for $y$. We are done with (ii\textsubscript{a}).
    
    Let us assume next (ii\textsubscript{b}). In this case, the abelian subvariety $H$ has positive dimension and we consider the quotient homomorphism
    \[ \psi: A \times A \to B := (A \times A)/H.\]
     As $W \subseteq t + q_0 + H$, the image $\psi(W)$ is a singleton. If $\psi(C_1 \times C_2)$ is zero-dimensional, then $\psi(C_1 \times C_2) = \psi(W)$ and so
    \[  C_1 \times C_2 \subseteq \psi^{-1}(\langle \psi(C_1 \times C_2) \rangle) = \psi^{-1}(\langle \psi(W) \rangle) = \langle W \rangle,\]
    where we have used that $H \subseteq \langle W \rangle^{\circ}$. But this contradicts the fact that $W$ is optimal for $C_1 \times C_2$ and $W \subsetneq C_1 \times C_2$. Therefore $\dim \psi(C_1 \times C_2) > 0$.
    
    We note that 
    \begin{align*}
        \delta(\psi(W)) 
        = \dim \langle \psi (W) \rangle 
        \le\dim \psi (\langle W \rangle)
        &= \dim \langle W \rangle - \dim H \\
        &= \delta(W) - \dim H + 1,
    \end{align*}
    because $H\subseteq\langle W \rangle^\circ$.    

    Consider first the case $\dim \psi(C_1 \times C_2) = 1$. It follows from \eqref{equation::bound_for_delta_W} that
    \begin{align*}
    \delta(\{\psi(p)\}) = \delta(\psi(W)) &\le  \delta(W) - \dim H + 1 \\
    &< \dim X - \dim H - 1 = \delta(\psi(C_1 \times C_2)),
    \end{align*}
    where we have used that $H \subseteq \langle W \rangle^\circ \subseteq X^\circ$.
    
    Hence, ${\psi(p)}$ is optimal for $\psi(C_1 \times C_2)$ in $B$ and then, by the Zilber-Pink conjecture for curves in abelian varieties in the form of \cite[Corollary 1.6]{BarroeroDill2022}, there are at most finitely many possibilities for $\psi(p)$. This again forces $p$ to lie in a proper Zariski closed subset of $C_1 \times C_2$ and we are done in this case.

    Assume next $\dim \psi(C_1 \times C_2) = 2$. By the Fiber Dimension Theorem \cite[Theorem 7.3]{BD2}, the variety $W$ is then contained in a proper Zariski closed subset of $C_1 \times C_2$, which can be chosen independently of $p$.
    
    This completes the proof of the base step of the induction, where $K = K_0$.

    \subsection{Inductive step}

    Consider next an algebraically closed field $K$ with $\mathrm{trdeg}(K/K_0) > 0$. Recall that $A = (A_0)_K$ for an abelian variety $A_0$ over $K_0$. We choose an algebraically closed subfield $L \subseteq K$ containing $K_0$ such that $\mathrm{trdeg}(L/K_0) = \mathrm{trdeg}(K/K_0)-1$. By our induction hypothesis together with the reductions we made, we can assume that the assertion of Theorem \ref{theorem::main_reduction} is already proven for curves in $(A_0)_L$ and it suffices to show that the set of points $p = (x,y) \in (C_1 \times C_2)(K)$ such that $y = [n](x)$ for some integer $n \in \IZ$ with $[n](C_1) \not\subseteq C_2$ is not Zariski dense for irreducible curves $C_1, C_2 \subseteq A$ that are not both contained in the same proper algebraic subgroup of $A$. Recall that we may and do assume that $C_1$ and $C_2$ are not both contained in a translate of the same proper abelian subvariety of $A$ and that neither of them is contained in a coset of codimension $\geq 2$. To simplify our notation, we write $A$ for $(A_0)_L$ and $A_K$ for $(A_0)_K$ in the following.
    
    Assume first that both $C_1$ and $C_2$ are equal to base changes $(C'_i)_{K}$ of some curves $C'_i \subseteq A$ ($i = 1,2$). For every integer $n$ with $[n](C_1) \not\subseteq C_2$, the set $C_1 \cap [n]^{-1}(C_2)$ is finite. As $L$ is algebraically closed, we have that $C_1 \cap [n]^{-1}(C_2) \subseteq A(L)$ and we are done by the inductive hypothesis. (We do not invoke the inductive hypothesis elsewhere in the proof.)
    
    In the sequel, we may and do assume that $C_1$ and $C_2$ are not both base changes $(C'_i)_{K}$ of some curves $C'_i \subseteq A$ ($i = 1,2$).
    This implies that the set of points $p = (x,y) \in (C_1 \times C_2)(K)$ that are base changes of $L$-rational points is not Zariski dense in $C_1 \times C_2$.
    
    We are left with proving that the set of points $p = (x,y) \in (C_1 \times C_2)(K)\setminus (A \times A)(L)$ such that $y = [n](x)$ for some $n \in \mathbb{Z}$ with $[n](C_1) \not\subseteq C_2$ is not Zariski dense in $C_1 \times C_2$. Let $p$ denote such a point. We can apply \cite[Lemma 2.2]{BarroeroDill2022} to find an irreducible subvariety $V \subseteq A \times A$ such that $\dim V \leq 3$ and $C_1 \times C_2 \subseteq V_K$. Since $C_1$ and $C_2$ are not both base changes $(C'_i)_{K}$ of some curves $C'_i \subseteq A$ ($i = 1,2$), we must in fact have $\dim V = 3$ and $V$ is minimal with respect to inclusion such that $C_1 \times C_2 \subseteq V_K$. Recall that $X = \langle C_1 \times C_2 \rangle$ and therefore $X = \langle V \rangle$ by the minimality of $V$.
    
    Applying \cite[Lemma 2.2]{BarroeroDill2022} once more, we find an irreducible subvariety $W \subseteq V$ such that $\dim W \leq 1$ and $p \in W_K$. We must also have $\dim W = 1$ as $p$ would be $L$-rational otherwise. Since $p$ is not $L$-rational, the subvariety $W$ is minimal such that $p \in W_K$; in particular, $W \subseteq \langle \{p\} \rangle$ and therefore $\langle W \rangle = \langle \{p\} \rangle$.

    We set $Y_n = X \cap (\id_A, [n])(A)$ so that $\langle \{p\} \rangle \subseteq Y_n$. As in the proof of the base case, we can and will assume that $\codim_X Y_n > g-1 \geq 2$.
    
    There exists an optimal subvariety $W' \subseteq V$ such that $W \subseteq W'$ and $\delta(W') \leq \delta(W)$. By \cite[Proposition 4.5]{Habegger2016}, the subvariety $W'$ is also weakly optimal for $V$ in $A \times A$; we note that the notion of geodesic optimality used in \cite{Habegger2016} is equivalent to weak optimality. Since
    \begin{align}\label{equation::bound_for_delta_w_prime}
    \delta(W') \leq \delta(W) \leq \dim \langle \{p\} \rangle - 1 &\leq \dim Y_n - 1 \\ \nonumber
    &< \dim X - 3 = \delta(V),
    \end{align}
    we have $W' \neq V$ and thus $\dim W' \in \{1, 2\}$.

    An application of \cite[Lemme 2.6 and Proposition 3.2]{Remond2009} in the form of \cite[Proposition 6.1]{Habegger2016} to $V$ yields a finite set $\mathcal{S}$ of abelian subvarieties of $A \times A$, independent of $p$ or $W$, such that $\langle W' \rangle^\circ_{\mathrm{ws}} \in \mathcal{S}$. For ease of notation, we set $H = \langle W' \rangle^\circ_{\mathrm{ws}} \subseteq X^\circ$. We have $H \neq X^\circ$ since otherwise $\delta(W') \geq \dim X - 2$, contradicting \eqref{equation::bound_for_delta_w_prime}.

    Assume first that $\dim W' = 2$. Then, we consider the quotient homomorphism
    \[ \psi: A \times A \to B := (A \times A)/H.\]
    If $\psi(C_1 \times C_2)$ is zero-dimensional, then $\psi(C_1 \times C_2) = \psi(W')$ and so
    \[ \langle V \rangle = X = \psi^{-1}(\langle \psi(C_1 \times C_2) \rangle) = \psi^{-1}(\langle \psi(W') \rangle) = \langle W' \rangle,\]
    contradicting the optimality of $W' \subsetneq V$ for $V$. Since $X = \langle V \rangle$ and $H \subseteq X^\circ$, we must have $\psi(X) = \langle \psi(V) \rangle$ and
    \[\delta(\psi(V)) = \dim X - \dim(H) - \dim \psi(V).\]
    
    If $\dim \psi(V) \geq 2$, then, by the Fiber Dimension Theorem \cite[Theorem 7.3]{BD2}, $W'$ is contained in a proper Zariski closed subset $Z$ of $V$, which is defined over $L$ and independent of $p$ or $W$. We must have $Z_K \cap (C_1 \times C_2) \subsetneq C_1 \times C_2$ since otherwise $C_1 \times C_2$ would be equal to the base change of an irreducible component of $Z$ for dimension reasons. Therefore, the point $p$ belongs to a proper Zariski closed subset of $C_1 \times C_2$ and we are done.
    
    In case $\dim \psi(V) = 1$, we find that
    \begin{align*} \delta(\{\psi(p)\}) = \delta(\psi(W')) &\leq \dim \psi(\langle W' \rangle) \\
    &= \dim \langle W' \rangle - \dim H \\
    &= \delta(W') - \dim H + 2\\
    &\leq \delta(W) - \dim H + 2 \\
    &< \dim X - \dim H -1 \\ 
    &= \delta(\psi(V)),
    \end{align*}
    where we have used \eqref{equation::bound_for_delta_w_prime} for the last inequality. We infer that $\psi(p)$ is optimal for $\psi(V)$ in $B$. By the Zilber-Pink conjecture for curves in abelian varieties in the form of \cite[Corollary 1.6]{BarroeroDill2022}, there are at most finitely many possibilities for $\psi(p)$. As $\dim(\psi(C_1\times C_2)) = \dim(\psi(V))=1$, this forces $p$ to lie in a proper Zariski closed subset of $C_1 \times C_2$ and we are done.

    Lastly, let us consider the case $\dim(W^\prime)=1$. By irreducibility, we have $W = W^\prime$ and
    \[ H = \langle W'\rangle_{\mathrm{ws}}^\circ = \langle W\rangle_{\mathrm{ws}}^\circ \subseteq \langle W \rangle^\circ = \langle \{p\} \rangle^\circ \subseteq (\id_A,[n])(A). \]
    Since $\dim H \geq \dim W' > 0$, it follows that $n$ is uniquely determined by $H$. This leaves only finitely many candidates for the point $p$, completing the proof.

    \section{Disclosure of AI assistance}
    All the main mathematical ideas in this article were developed by the authors since 2022 without AI assistance. During the preparation of this article, GPT-5.5 and GPT-5.6 were used for literature research as well as for the conceptualization of the technical Lemma \ref{lemma::stabilizer_of_difference}. After completing a first version of the manuscript and uploading it to arXiv, the authors used Claude Fable 5.1 to proofread the article and propose various minor corrections. The authors prepared the final manuscript and take full
responsibility for its content and mathematical correctness.
    
\section{Acknowledgements}
The very first ideas that went into this project were developed in October 2022 in Rome while L.K.~was visiting F.B.~with the support of the ``National Group for Algebraic and Geometric Structures, and their Applications'' (GNSAGA – INDAM). During the Spring 2023 semester, G.D.~and L.K.~were in residence at the Mathematical Sciences Research Institute in Berkeley, California and their work was supported by the National Science Foundation under grant no.\ DMS--1928930.
The authors thank the DFG (grant no.\ EXC-2047/1 - 390685813) and the Mathematical Institute of the University of Bonn for their support to F.B.~and L.K.~during their visits to Bonn in February and June 2024.
F.B.~was supported by the PRIN 2022 project ``2022\-HPSNCR: Semi-abelian varieties, Galois representations and related Diophantine problems'' and by the project SERICS (PE00000014)
under the MUR National Recovery and Resilience Plan funded by the European Union - NextGenerationEU.

\vspace{\baselineskip}
\noindent
\framebox[\textwidth]{
\begin{tabular*}{0.96\textwidth}{@{\extracolsep{\fill} }cp{0.84\textwidth}}
		\raisebox{-0.7\height}{%
			\begin{tikzpicture}[y=0.80pt, x=0.8pt, yscale=-1, inner sep=0pt, outer sep=0pt, 
			scale=0.12]
			\definecolor{c003399}{RGB}{0,51,153}
			\definecolor{cffcc00}{RGB}{255,204,0}
			\begin{scope}[shift={(0,-872.36218)}]
			\path[shift={(0,872.36218)},fill=c003399,nonzero rule] (0.0000,0.0000) rectangle (270.0000,180.0000);
			\foreach \myshift in 
			{(0,812.36218), (0,932.36218), 
				(60.0,872.36218), (-60.0,872.36218), 
				(30.0,820.36218), (-30.0,820.36218),
				(30.0,924.36218), (-30.0,924.36218),
				(-52.0,842.36218), (52.0,842.36218), 
				(52.0,902.36218), (-52.0,902.36218)}
			\path[shift=\myshift,fill=cffcc00,nonzero rule] (135.0000,80.0000) -- (137.2453,86.9096) -- (144.5106,86.9098) -- (138.6330,91.1804) -- (140.8778,98.0902) -- (135.0000,93.8200) -- (129.1222,98.0902) -- (131.3670,91.1804) -- (125.4894,86.9098) -- (132.7547,86.9096) -- cycle;
			\end{scope}
			\end{tikzpicture}%
		}
		&
		G.D.~ has received funding from the European Research Council (ERC) under the European Union's Horizon 2020 research and innovation programme (grant agreement n$^\circ$ 945714).
	\end{tabular*}
}

\bibliographystyle{amsalpha}
\bibliography{references}

@book {Yuan2026,
    AUTHOR = {Yuan, Xinyi and Zhang, Shou-Wu},
     TITLE = {Adelic line bundles on quasi-projective varieties},
    SERIES = {Annals of Mathematics Studies},
    VOLUME = {221},
 PUBLISHER = {Princeton University Press, Princeton, NJ},
      YEAR = {2026},
     PAGES = {x+270},
      ISBN = {978-0-691-27173-6; 978-0-691-27172-9; 978-0-691-27870-4},
   MRCLASS = {14G40 (11G50 14G22)},
  MRNUMBER = {5035732},
}

@book {Hormander2003,
    AUTHOR = {H{\"o}rmander, Lars},
     TITLE = {The analysis of linear partial differential operators. {I}},
    SERIES = {Classics in Mathematics},
      NOTE = {Distribution theory and Fourier analysis,
              Reprint of the second (1990) edition [Springer, Berlin]},
 PUBLISHER = {Springer-Verlag, Berlin},
      YEAR = {2003},
     PAGES = {x+440},
      ISBN = {3-540-00662-1},
   MRCLASS = {35-02},
  MRNUMBER = {1996773},
       DOI = {10.1007/978-3-642-61497-2},
       URL = {https://doi.org/10.1007/978-3-642-61497-2},
}

@incollection{MumfordCIME,
	author = {Mumford, David},
	booktitle = {Questions on {A}lgebraic {V}arieties ({C}.{I}.{M}.{E}., {III} {C}iclo, {V}arenna, 1969)},
	mrclass = {14.01},
	mrnumber = {0282975},
	mrreviewer = {Y. Nakai},
	note = {With an appendix by {G}. {K}empf},
	pages = {29--100},
	publisher = {Edizioni Cremonese, Rome},
	title = {Varieties defined by quadratic equations},
	year = {1970}}

@Article{Bays2015,
  author     = {Bays, Martin and Habegger, Philipp},
  title      = {A note on divisible points of curves},
  journal    = {Trans. Amer. Math. Soc.},
  year       = {2015},
  volume     = {367},
  number     = {2},
  pages      = {1313--1328},
  issn       = {0002-9947},
  doi        = {10.1090/S0002-9947-2014-06494-5},
  fjournal   = {Transactions of the American Mathematical Society},
  mrclass    = {14H25 (03C64 11G50 11J86 11U09)},
  mrnumber   = {3280045},
  mrreviewer = {Yasuhiro Goto},
  url        = {https://doi.org/10.1090/S0002-9947-2014-06494-5},
}

@Article{Bombieri1999,
  author     = {Bombieri, Enrico and Masser, David and Zannier, Umberto},
  journal    = {Internat. Math. Res. Notices},
  title      = {Intersecting a curve with algebraic subgroups of multiplicative groups},
  year       = {1999},
  issn       = {1073-7928},
  number     = {20},
  pages      = {1119--1140},
  doi        = {10.1155/S1073792899000628},
  fjournal   = {International Mathematics Research Notices},
  mrclass    = {11J95 (11G30 11G50)},
  mrnumber   = {1728021 (2001c:11081)},
  mrreviewer = {Dimitrios Poulakis},
  url        = {http://dx.doi.org/10.1155/S1073792899000628},
}

@book{Bombieri2006,
	address = {Cambridge},
	author = {Bombieri, Enrico and Gubler, Walter},
	doi = {10.1017/CBO9780511542879},
	isbn = {978-0-521-84615-8; 0-521-84615-3},
	mrclass = {11G50 (11-02 11G10 11G30 11J68 14G40)},
	mrnumber = {2216774 (2007a:11092)},
	mrreviewer = {Yuri Bilu},
	pages = {xvi+652},
	publisher = {Cambridge University Press},
	series = {New Mathematical Monographs},
	title = {Heights in {D}iophantine geometry},
	url = {http://dx.doi.org/10.1017/CBO9780511542879},
	volume = {4},
	year = {2006}
}

@Article{Bombieri2010,
  author     = {Bombieri, Enrico and Habegger, Philipp and Masser, David and Zannier, Umberto},
  journal    = {Atti Accad. Naz. Lincei Cl. Sci. fis. Mat. Natur. Rend. Lincei (9) Mat. Appl.},
  title      = {A note on {M}aurin's theorem},
  year       = {2010},
  issn       = {1120-6330},
  number     = {3},
  pages      = {251--260},
  volume     = {21},
  doi        = {10.4171/RLM/570},
  fjournal   = {Atti della Accademia Nazionale dei Lincei. Classe di Scienze fisiche, Matematiche e Naturali. Rendiconti Lincei. Serie IX. Matematica e Applicazioni},
  mrclass    = {11G30 (11G35 11G50)},
  mrnumber   = {2677603 (2011f:11078)},
  mrreviewer = {Dimitrios Poulakis},
  url        = {http://dx.doi.org/10.4171/RLM/570},
}

@Article{Bost1994,
  author     = {Bost, Jean-Beno\^{i}t and Gillet, Henri and Soul\'e, Christophe},
  journal    = {J. Amer. Math. Soc.},
  title      = {Heights of projective varieties and positive {G}reen forms},
  year       = {1994},
  issn       = {0894-0347},
  number     = {4},
  pages      = {903--1027},
  volume     = {7},
  doi        = {10.2307/2152736},
  fjournal   = {Journal of the American Mathematical Society},
  mrclass    = {14G40 (11G35 14C17)},
  mrnumber   = {1260106 (95j:14025)},
  mrreviewer = {Yoichi Miyaoka},
  url        = {http://dx.doi.org/10.2307/2152736},
}

@article {David2002,
    AUTHOR = {David, Sinnou and Philippon, Patrice},
     TITLE = {Minorations des hauteurs normalis\'{e}es des sous-vari\'{e}t\'{e}s de
              vari\'{e}t\'{e}s abeliennes. {II}},
   JOURNAL = {Comment. Math. Helv.},
  FJOURNAL = {Commentarii Mathematici Helvetici},
    VOLUME = {77},
      YEAR = {2002},
    NUMBER = {4},
     PAGES = {639--700},
      ISSN = {0010-2571},
   MRCLASS = {11G50 (11G10 11J95 14G40)},
  MRNUMBER = {1949109},
MRREVIEWER = {Timothy D. Browning},
       DOI = {10.1007/PL00012437},
       URL = {https://doi.org/10.1007/PL00012437},
}

@misc{Demailly2012,
	author = {Demailly, Jean-Pierre},
	howpublished = {available from \url{http://www-fourier.ujf-grenoble.fr/~demailly/}},
	title = {Complex Analytic and Differential Geometry},
	year = {2012}
}

@incollection {GenestierNgo,
    AUTHOR = {Genestier, Alain and Ng\^{o}, Bao Ch\^{a}u},
     TITLE = {Lectures on {S}himura varieties},
 BOOKTITLE = {Autour des motifs---\'{E}cole d'\'{e}t\'{e} {F}ranco-{A}siatique de
              {G}\'{e}om\'{e}trie {A}lg\'{e}brique et de {T}h\'{e}orie des
              {N}ombres/{A}sian-{F}rench {S}ummer {S}chool on {A}lgebraic
              {G}eometry and {N}umber {T}heory. {V}olume {I}},
    SERIES = {Panor. Synth\`eses},
    VOLUME = {29},
     PAGES = {187--236},
 PUBLISHER = {Soc. Math. France, Paris},
      YEAR = {2009},
}

@Article{EGA2,
  author   = {Grothendieck, Alexander},
  journal  = {Inst. Hautes Études Sci. Publ. Math.},
  title    = {{É}lements de géométrie algébrique. {II}. \'{E}tude globale élémentaire de quelques classes de morphismes},
  year     = {1961},
  issn     = {0073-8301},
  number   = {8},
  pages    = {5--222},
  fjournal = {Institut des Hautes Études Scientifiques. Publications Math\'ematiques},
  mrclass  = {14.05},
  mrnumber = {0163909},
}

@Article{Faltings1991,
  author     = {Faltings, Gerd},
  journal    = {Ann. of Math. (2)},
  title      = {Diophantine approximation on abelian varieties},
  year       = {1991},
  issn       = {0003-486X},
  number     = {3},
  pages      = {549--576},
  volume     = {133},
  coden      = {ANMAAH},
  doi        = {10.2307/2944319},
  fjournal   = {Annals of Mathematics. Second Series},
  mrclass    = {11G10 (11J99)},
  mrnumber   = {1109353 (93d:11066)},
  mrreviewer = {Joseph H. Silverman},
  url        = {http://dx.doi.org/10.2307/2944319},
}

@InCollection{Faltings1994,
  author     = {Faltings, Gerd},
  booktitle  = {Barsotti {S}ymposium in {A}lgebraic {G}eometry ({A}bano {T}erme, 1991)},
  publisher  = {Academic Press},
  title      = {The general case of {S}. {L}ang's conjecture},
  year       = {1994},
  address    = {San Diego, CA},
  pages      = {175--182},
  series     = {Perspect. Math.},
  volume     = {15},
  mrclass    = {11G10 (14G05 14K99)},
  mrnumber   = {1307396 (95m:11061)},
  mrreviewer = {Juan-Carlos Naranjo},
}

@book{Fulton1998,
	author = {Fulton, William},
	doi = {10.1007/978-1-4612-1700-8},
	edition = {Second},
	isbn = {3-540-62046-X; 0-387-98549-2},
	mrclass = {14C17 (14-02)},
	mrnumber = {1644323},
	pages = {xiv+470},
	publisher = {Springer-Verlag, Berlin},
	series = {Ergebnisse der Mathematik und ihrer Grenzgebiete. 3. Folge. A Series of Modern Surveys in Mathematics [Results in Mathematics and Related Areas. 3rd Series. A Series of Modern Surveys in Mathematics]},
	title = {Intersection theory},
	url = {http://dx.doi.org/10.1007/978-1-4612-1700-8},
	volume = {2},
	year = {1998}
}

@Article{Gao2018,
  author   = {Gao, Ziyang},
  title    = {Mixed {A}x-{S}chanuel for the universal abelian varieties and some applications},
  journal  = {Compos. Math.},
  year     = {2020},
  volume   = {156},
  number   = {11},
  pages    = {2263--2297},
  issn     = {0010-437X},
  doi      = {10.1112/S0010437X20007447},
  fjournal = {Compositio Mathematica},
  mrclass  = {14G35 (11G10 11G18)},
  mrnumber = {4190045},
  url      = {https://doi.org/10.1112/S0010437X20007447},
}

@Article{Hindry1988,
  author     = {Hindry, Marc},
  journal    = {Invent. Math.},
  title      = {Autour d'une conjecture de {S}erge {L}ang},
  year       = {1988},
  issn       = {0020-9910},
  number     = {3},
  pages      = {575--603},
  volume     = {94},
  coden      = {INVMBH},
  doi        = {10.1007/BF01394276},
  fjournal   = {Inventiones Mathematicae},
  mrclass    = {11G10 (11J89 14K15)},
  mrnumber   = {969244 (89k:11046)},
  mrreviewer = {Michel Laurent},
  url        = {http://dx.doi.org/10.1007/BF01394276},
}

@book{Hindry2000,
	author = {Hindry, Marc and Silverman, Joseph H.},
	doi = {10.1007/978-1-4612-1210-2},
	isbn = {0-387-98975-7; 0-387-98981-1},
	mrclass = {11Gxx (11-02 11G10 11G30 11G50 14G25)},
	mrnumber = {1745599},
	mrreviewer = {Dino J. Lorenzini},
	note = {An introduction},
	pages = {xiv+558},
	publisher = {Springer-Verlag, New York},
	series = {Graduate Texts in Mathematics},
	title = {Diophantine geometry},
	url = {http://dx.doi.org/10.1007/978-1-4612-1210-2},
	volume = {201},
	year = {2000}
}

@book{Lazarsfeld2004,
	author = {Lazarsfeld, Robert},
	doi = {10.1007/978-3-642-18808-4},
	isbn = {3-540-22533-1},
	mrclass = {14-02 (14C20)},
	mrnumber = {2095471},
	mrreviewer = {Mihnea Popa},
	note = {Classical setting: line bundles and linear series},
	pages = {xviii+387},
	publisher = {Springer-Verlag, Berlin},
	series = {Ergebnisse der Mathematik und ihrer Grenzgebiete. 3. Folge. A Series of Modern Surveys in Mathematics [Results in Mathematics and Related Areas. 3rd Series. A Series of Modern Surveys in Mathematics]},
	title = {Positivity in algebraic geometry. {I}},
	url = {http://dx.doi.org/10.1007/978-3-642-18808-4},
	volume = {48},
	year = {2004}
}

@misc{Masser,
	author = {Masser, David and Zannier, Umberto},
	howpublished = {preprint},
	title = {Torsion points on families of simple abelian surfaces and {P}ell's equation over polynomial rings}
}

@Article{Maurin2008,
  author     = {Maurin, Guillaume},
  journal    = {Math. Ann.},
  title      = {Courbes alg\'ebriques et \'equations multiplicatives},
  year       = {2008},
  issn       = {0025-5831},
  number     = {4},
  pages      = {789--824},
  volume     = {341},
  coden      = {MAANA},
  doi        = {10.1007/s00208-008-0212-9},
  fjournal   = {Mathematische Annalen},
  mrclass    = {14G25 (14G05 14H25)},
  mrnumber   = {2407327 (2009g:14026)},
  mrreviewer = {Boris √à. Kunyavskiƒ≠},
  url        = {http://dx.doi.org/10.1007/s00208-008-0212-9},
}

@Article{McQuillan1995,
  author     = {McQuillan, Michael},
  journal    = {Invent. Math.},
  title      = {Division points on semi-abelian varieties},
  year       = {1995},
  issn       = {0020-9910},
  number     = {1},
  pages      = {143--159},
  volume     = {120},
  coden      = {INVMBH},
  doi        = {10.1007/BF01241125},
  fjournal   = {Inventiones Mathematicae},
  mrclass    = {14G05 (14K05)},
  mrnumber   = {1323985 (96b:14020)},
  mrreviewer = {Alexandru Buium},
  url        = {http://dx.doi.org/10.1007/BF01241125},
}

@book{Mumford1965,
	author = {Mumford, David},
	mrclass = {14.55},
	mrnumber = {0214602},
	mrreviewer = {F. Oort},
	pages = {vi+145},
	publisher = {Springer-Verlag, Berlin-New York},
	series = {Ergebnisse der Mathematik und ihrer Grenzgebiete, Neue Folge, Band 34},
	title = {Geometric invariant theory},
	year = {1965}
}

@Book{Mumford1970,
  author     = {Mumford, David},
  publisher  = {Published for the Tata Institute of Fundamental Research, Bombay},
  title      = {Abelian varieties},
  year       = {1970},
  series     = {Tata Institute of Fundamental Research studies in Mathematics, No. 5},
  mrclass    = {14.51},
  mrnumber   = {0282985 (44 \#219)},
  mrreviewer = {J. S. Milne},
  pages      = {viii+242},
}

@Misc{Pink2005a,
  author       = {Pink, Richard},
  howpublished = {available from \url{http://www.math.ethz.ch/~pink}},
  month        = {04},
  title        = {A common generalization of the conjectures of {A}ndr\'e-{O}ort, {M}anin-{M}umford, and {M}ordell-{L}ang},
  year         = {2005},
}

@Article{Raynaud1983,
  author     = {Raynaud, Michel},
  journal    = {Invent. Math.},
  title      = {Courbes sur une vari\'et\'e ab\'elienne et points de torsion},
  year       = {1983},
  issn       = {0020-9910},
  number     = {1},
  pages      = {207--233},
  volume     = {71},
  coden      = {INVMBH},
  doi        = {10.1007/BF01393342},
  fjournal   = {Inventiones Mathematicae},
  mrclass    = {14H25 (14G25 14K15)},
  mrnumber   = {688265 (84c:14021)},
  mrreviewer = {F. Oort},
  url        = {http://dx.doi.org/10.1007/BF01393342},
}

@Article{Remond2007,
  author     = {R{\'e}mond, Ga\"{e}l},
  journal    = {J. Inst. Math. Jussieu},
  title      = {Intersection de sous-groupes et de sous-vari\'et\'es. {II}},
  year       = {2007},
  issn       = {1474-7480},
  number     = {2},
  pages      = {317--348},
  volume     = {6},
  doi        = {10.1017/S1474748006000107},
  fjournal   = {Journal of the Institute of Mathematics of Jussieu. JIMJ. Journal de l'Institut de Math\'ematiques de Jussieu},
  mrclass    = {11G50 (11G10 11G35 14K12)},
  mrnumber   = {2311666},
  mrreviewer = {Alexander A. Borisov},
  url        = {http://dx.doi.org/10.1017/S1474748006000107},
}

@Article{Remond2009,
  author     = {R{\'e}mond, Ga\"{e}l},
  journal    = {Comment. Math. Helv.},
  title      = {Intersection de sous-groupes et de sous-vari\'et\'es. {III}},
  year       = {2009},
  issn       = {0010-2571},
  number     = {4},
  pages      = {835--863},
  volume     = {84},
  doi        = {10.4171/CMH/183},
  fjournal   = {Commentarii Mathematici Helvetici. A Journal of the Swiss Mathematical Society},
  mrclass    = {11G50 (11G10 11J95 14K12)},
  mrnumber   = {2534482 (2010i:11098)},
  mrreviewer = {Oscar G. Villareal},
  url        = {http://dx.doi.org/10.4171/CMH/183},
}

@misc{stacksProjectAuthors2015,
	author = {{Stacks Project Authors}, The},
	howpublished = {\url{http://stacks.math.columbia.edu}},
	shorthand = {Stacks},
	title = {{\itshape Stacks Project}}
}

@Article{Ullmo1998,
  author     = {Ullmo, Emmanuel},
  journal    = {Ann. of Math. (2)},
  title      = {Positivit\'e et discr\'etion des points alg\'ebriques des courbes},
  year       = {1998},
  issn       = {0003-486X},
  number     = {1},
  pages      = {167--179},
  volume     = {147},
  coden      = {ANMAAH},
  doi        = {10.2307/120987},
  fjournal   = {Annals of Mathematics. Second Series},
  mrclass    = {14G40 (11G30 14H25 14H40)},
  mrnumber   = {1609514 (99e:14031)},
  mrreviewer = {Yuichiro Takeda},
  url        = {http://dx.doi.org/10.2307/120987},
}

@Article{Vojta1991,
  author     = {Vojta, Paul},
  journal    = {Ann. of Math. (2)},
  title      = {Siegel's theorem in the compact case},
  year       = {1991},
  issn       = {0003-486X},
  number     = {3},
  pages      = {509--548},
  volume     = {133},
  coden      = {ANMAAH},
  doi        = {10.2307/2944318},
  fjournal   = {Annals of Mathematics. Second Series},
  mrclass    = {11G30 (11J68 14G25)},
  mrnumber   = {1109352 (93d:11065)},
  mrreviewer = {Joseph H. Silverman},
  url        = {http://dx.doi.org/10.2307/2944318},
}

@book{Zannier2012,
	address = {Princeton, NJ},
	author = {Zannier, Umberto},
	isbn = {978-0-691-15371-1},
	mrclass = {11G35 (11G15 11G18 11G50 14G05)},
	mrnumber = {2918151},
	mrreviewer = {Yuri Bilu},
	note = {With appendixes by David Masser},
	pages = {xiv+160},
	publisher = {Princeton University Press},
	series = {Annals of Mathematics studies},
	title = {Some problems of unlikely intersections in arithmetic and geometry},
	volume = {181},
	year = {2012}
}

@Article{Zhang1998,
  author     = {Zhang, Shou-Wu},
  journal    = {Ann. of Math. (2)},
  title      = {Equidistribution of small points on abelian varieties},
  year       = {1998},
  issn       = {0003-486X},
  number     = {1},
  pages      = {159--165},
  volume     = {147},
  coden      = {ANMAAH},
  doi        = {10.2307/120986},
  fjournal   = {Annals of Mathematics. Second Series},
  mrclass    = {14G40 (11G10 14G05 14K15)},
  mrnumber   = {1609518 (99e:14032)},
  mrreviewer = {Yuichiro Takeda},
  url        = {http://dx.doi.org/10.2307/120986},
}

@Article{Zilber2002,
  author     = {Zilber, Boris},
  journal    = {J. London Math. Soc. (2)},
  title      = {Exponential sums equations and the {S}chanuel conjecture},
  year       = {2002},
  issn       = {0024-6107},
  number     = {1},
  pages      = {27--44},
  volume     = {65},
  coden      = {JLMSAK},
  doi        = {10.1112/S0024610701002861},
  fjournal   = {Journal of the London Mathematical Society. Second Series},
  mrclass    = {11U09 (03C65 11J99 11L99)},
  mrnumber   = {1875133 (2002m:11104)},
  mrreviewer = {Ricardo Bianconi},
  url        = {http://dx.doi.org/10.1112/S0024610701002861},
}

@Article{EGA4IV,
  author     = {Grothendieck, Alexander},
  journal    = {Inst. Hautes Études Sci. Publ. Math.},
  title      = {{É}lements de géométrie algébrique. {IV}. \'{E}tude locale des schémas et des morphismes de schémas {IV}},
  year       = {1967},
  issn       = {0073-8301},
  number     = {32},
  pages      = {5--361},
  fjournal   = {Institut des Hautes Études Scientifiques. Publications Math\'ematiques},
  mrclass    = {14.55},
  mrnumber   = {0238860},
  mrreviewer = {J. P. Murre},
}

@Article{Kleiman1966,
  author     = {Kleiman, Steven L.},
  journal    = {Ann. of Math. (2)},
  title      = {Toward a numerical theory of ampleness},
  year       = {1966},
  issn       = {0003-486X},
  pages      = {293--344},
  volume     = {84},
  fjournal   = {Annals of Mathematics. Second Series},
  mrclass    = {14.55 (14.10)},
  mrnumber   = {0206009},
  mrreviewer = {J. W. Gray},
}

@Article{EGA4III,
  author     = {Grothendieck, Alexander},
  journal    = {Inst. Hautes Études Sci. Publ. Math.},
  title      = {{\'E}l\'ements de g\'eom\'etrie alg\'ebrique. {IV}. \'{E}tude locale des sch\'emas et des morphismes de sch\'emas. {III}},
  year       = {1966},
  issn       = {0073-8301},
  number     = {28},
  pages      = {5--255},
  fjournal   = {Institut des Hautes Études Scientifiques. Publications Math\'ematiques},
  mrclass    = {14.55},
  mrnumber   = {0217086},
  mrreviewer = {J. P. Murre},
}

@Article{Zhang1995a,
  author     = {Zhang, Shou-Wu},
  journal    = {J. Algebraic Geom.},
  title      = {Small points and adelic metrics},
  year       = {1995},
  issn       = {1056-3911},
  number     = {2},
  pages      = {281--300},
  volume     = {4},
  fjournal   = {Journal of Algebraic Geometry},
  mrclass    = {14G40},
  mrnumber   = {1311351},
  mrreviewer = {Antoine Chambert-Loir},
}

@Article{Chambert-Loir2000,
  author     = {Chambert-Loir, Antoine},
  journal    = {Ann. Sci. École Norm. Sup. (4)},
  title      = {Points de petite hauteur sur les vari\'et\'es semi-ab\'eliennes},
  year       = {2000},
  issn       = {0012-9593},
  number     = {6},
  pages      = {789--821},
  volume     = {33},
  coden      = {ASENAH},
  doi        = {10.1016/S0012-9593(00)01053-3},
  fjournal   = {Annales Scientifiques de l'√âcole Normale Sup\'erieure. Quatri√®me S\'erie},
  mrclass    = {14G40 (11G50 14K15)},
  mrnumber   = {1832991},
  mrreviewer = {Annette Werner},
  url        = {http://dx.doi.org/10.1016/S0012-9593(00)01053-3},
}

@book{Lang1983,
	author = {Lang, Serge},
	doi = {10.1007/978-1-4757-1810-2},
	isbn = {0-387-90837-4},
	mrclass = {11-02 (11Dxx 11Gxx 14G25)},
	mrnumber = {715605},
	mrreviewer = {Gerd Faltings},
	pages = {xviii+370},
	publisher = {Springer-Verlag, New York},
	title = {Fundamentals of {D}iophantine geometry},
	url = {http://dx.doi.org/10.1007/978-1-4757-1810-2},
	year = {1983}
}

@book{Gunning1990,
	author = {Gunning, Robert C.},
	isbn = {0-534-13308-8},
	mrclass = {32-01},
	mrnumber = {1052649},
	mrreviewer = {Eric Bedford},
	note = {Function theory},
	pages = {xx+203},
	publisher = {Wadsworth \& Brooks/Cole Advanced Books \& Software, Pacific Grove, CA},
	series = {The Wadsworth \& Brooks/Cole Mathematics Series},
	title = {Introduction to holomorphic functions of several variables. {V}ol. {I}},
	year = {1990}
}

@Article{Zhang2000,
  author     = {Zhang, Shou-Wu},
  journal    = {Duke Math. J.},
  title      = {Distribution of almost division points},
  year       = {2000},
  issn       = {0012-7094},
  number     = {1},
  pages      = {39--46},
  volume     = {103},
  doi        = {10.1215/S0012-7094-00-10313-4},
  fjournal   = {Duke Mathematical Journal},
  mrclass    = {11G10 (11G50 14K15)},
  mrnumber   = {1758238},
  mrreviewer = {Yuri Bilu},
  url        = {http://dx.doi.org/10.1215/S0012-7094-00-10313-4},
}

@Article{Yuan2008,
  author     = {Yuan, Xinyi},
  journal    = {Invent. Math.},
  title      = {Big line bundles over arithmetic varieties},
  year       = {2008},
  issn       = {0020-9910},
  number     = {3},
  pages      = {603--649},
  volume     = {173},
  doi        = {10.1007/s00222-008-0127-9},
  fjournal   = {Inventiones Mathematicae},
  mrclass    = {14G40 (11G50 14C20)},
  mrnumber   = {2425137},
  mrreviewer = {Antoine Chambert-Loir},
  url        = {http://dx.doi.org/10.1007/s00222-008-0127-9},
}

@Article{Gillet1990,
  author     = {Gillet, Henri and Soul\'e, Christophe},
  journal    = {Inst. Hautes Études Sci. Publ. Math.},
  title      = {Arithmetic intersection theory},
  year       = {1990},
  issn       = {0073-8301},
  number     = {72},
  pages      = {93--174 (1991)},
  fjournal   = {Institut des Hautes Études Scientifiques. Publications Math\'ematiques},
  mrclass    = {14G40 (14C17 32C30)},
  mrnumber   = {1087394},
  mrreviewer = {I. Dolgachev},
  url        = {http://www.numdam.org/item?id=PMIHES_1990__72__93_0},
}

@Article{Remond2005,
  author     = {R{\'e}mond, Ga\"{e}l},
  journal    = {Bull. Soc. Math. France},
  title      = {In\'egalit\'e de {V}ojta g\'en\'eralis\'ee},
  year       = {2005},
  issn       = {0037-9484},
  number     = {4},
  pages      = {459--495},
  volume     = {133},
  fjournal   = {Bulletin de la Soci\'et\'e Math\'ematique de France},
  mrclass    = {11G50 (11G35 14G25)},
  mrnumber   = {2233693},
  mrreviewer = {Bjorn Poonen},
}

@Article{Szpiro1997,
  author     = {Szpiro, Lucien and Ullmo, Emmanuel and Zhang, Shou-Wu},
  journal    = {Invent. Math.},
  title      = {\'Equir\'epartition des petits points},
  year       = {1997},
  issn       = {0020-9910},
  number     = {2},
  pages      = {337--347},
  volume     = {127},
  fjournal   = {Inventiones Mathematicae},
  mrclass    = {14G40 (11G35 14G05)},
  mrnumber   = {1427622},
  mrreviewer = {Antoine Chambert-Loir},
  url        = {https://doi.org/10.1007/s002220050123},
}

@InCollection{Chambert-Loir2011,
  author    = {Chambert-Loir, Antoine},
  booktitle = {Motivic integration and its interactions with model theory and non-{A}rchimedean geometry. {V}olume {II}},
  publisher = {Cambridge Univ. Press, Cambridge},
  title     = {Heights and measures on analytic spaces. {A} survey of recent results, and some remarks},
  year      = {2011},
  pages     = {1--50},
  series    = {London Math. Soc. Lecture Note Ser.},
  volume    = {384},
  mrclass   = {14G22},
  mrnumber  = {2885340},
}

@Article{Gubler1998,
  author     = {Gubler, Walter},
  journal    = {J. Reine Angew. Math.},
  title      = {Local heights of subvarieties over non-{A}rchimedean fields},
  year       = {1998},
  issn       = {0075-4102},
  pages      = {61--113},
  volume     = {498},
  fjournal   = {Journal f√ºr die Reine und Angewandte Mathematik. [Crelle's Journal]},
  mrclass    = {14G20 (11G25 14D15 14G40)},
  mrnumber   = {1629925},
  mrreviewer = {J√∂rg Jahnel},
  url        = {https://doi.org/10.1515/crll.1998.054},
}

@Article{Chambert-Loir2006,
  author   = {Chambert-Loir, Antoine},
  journal  = {J. Reine Angew. Math.},
  title    = {Mesures et \'equidistribution sur les espaces de {B}erkovich},
  year     = {2006},
  issn     = {0075-4102},
  pages    = {215--235},
  volume   = {595},
  fjournal = {Journal f√ºr die Reine und Angewandte Mathematik. [Crelle's Journal]},
  mrclass  = {14G40 (14G25)},
  mrnumber = {2244803},
  url      = {https://doi.org/10.1515/CRELLE.2006.049},
}

@book{Berkovich1990,
	author = {Berkovich, Vladimir G.},
	isbn = {0-8218-1534-2},
	mrclass = {32P05 (32C15 32C37 46S10 47S10)},
	mrnumber = {1070709},
	mrreviewer = {W. Bartenwerfer},
	pages = {x+169},
	publisher = {American Mathematical Society, Providence, RI},
	series = {Mathematical Surveys and Monographs},
	title = {Spectral theory and analytic geometry over non-{A}rchimedean fields},
	volume = {33},
	year = {1990}
}

@book{Kollar1996,
	author = {Koll\'ar, J\'anos},
	isbn = {3-540-60168-6},
	mrclass = {14-02 (14C05 14E05 14F17 14J45)},
	mrnumber = {1440180},
	mrreviewer = {Yuri G. Prokhorov},
	pages = {viii+320},
	publisher = {Springer-Verlag, Berlin},
	series = {Ergebnisse der Mathematik und ihrer Grenzgebiete. 3. Folge. A Series of Modern Surveys in Mathematics [Results in Mathematics and Related Areas. 3rd Series. A Series of Modern Surveys in Mathematics]},
	title = {Rational curves on algebraic varieties},
	url = {https://doi.org/10.1007/978-3-662-03276-3},
	volume = {32},
	year = {1996}
}

@Article{Snapper1960,
  author     = {Snapper, Ernst},
  journal    = {J. Math. Mech.},
  title      = {Polynomials associated with divisors},
  year       = {1960},
  pages      = {123--139},
  volume     = {9},
  mrclass    = {14.00},
  mrnumber   = {0114818},
  mrreviewer = {M. Nagata},
}

@book{Moriwaki2014,
	author = {Moriwaki, Atsushi},
	isbn = {978-1-4704-1074-2},
	mrclass = {14G40 (14-02)},
	mrnumber = {3244206},
	mrreviewer = {Dan Abramovich},
	note = {Translated from the 2008 Japanese original},
	pages = {x+285},
	publisher = {American Mathematical Society, Providence, RI},
	series = {Translations of Mathematical Monographs},
	title = {Arakelov geometry},
	volume = {244},
	year = {2014}
}

@InCollection{Yuan2012,
  author     = {Yuan, Xinyi},
  booktitle  = {fifth {I}nternational {C}ongress of {C}hinese {M}athematicians. {P}art 1, 2},
  publisher  = {Amer. Math. Soc., Providence, RI},
  title      = {Algebraic dynamics, canonical heights and {A}rakelov geometry},
  year       = {2012},
  pages      = {893--929},
  series     = {AMS/IP stud. Adv. Math., 51, pt. 1},
  volume     = {2},
  mrclass    = {37P15 (11G50 14G40 37P30)},
  mrnumber   = {2918034},
  mrreviewer = {J√∂rg Jahnel},
}

@Article{Gubler2007a,
  author     = {Gubler, Walter},
  journal    = {Invent. Math.},
  title      = {Tropical varieties for non-{A}rchimedean analytic spaces},
  year       = {2007},
  issn       = {0020-9910},
  number     = {2},
  pages      = {321--376},
  volume     = {169},
  fjournal   = {Inventiones Mathematicae},
  mrclass    = {14K05 (11G50 14G40)},
  mrnumber   = {2318559},
  mrreviewer = {J√∂rg Jahnel},
  url        = {https://doi.org/10.1007/s00222-007-0048-z},
}

@Article{Lelong1957,
  author     = {Lelong, Pierre},
  journal    = {Bull. Soc. Math. France},
  title      = {Int\'egration sur un ensemble analytique complexe},
  year       = {1957},
  issn       = {0037-9484},
  pages      = {239--262},
  volume     = {85},
  fjournal   = {Bulletin de la Soci\'et\'e Math\'ematique de France},
  mrclass    = {32.00},
  mrnumber   = {0095967},
  mrreviewer = {H. Grauert},
  url        = {http://www.numdam.org/item?id=BSMF_1957__85__239_0},
}

@Article{Maillot2000,
  author     = {Maillot, Vincent},
  journal    = {M\'em. Soc. Math. Fr. (N.S.)},
  title      = {G\'eom\'etrie d'{A}rakelov des vari\'et\'es toriques et fibr\'es en droites int\'egrables},
  year       = {2000},
  issn       = {0249-633X},
  number     = {80},
  pages      = {vi+129},
  fjournal   = {M\'emoires de la Soci\'et\'e Math\'ematique de France. Nouvelle S\'erie},
  mrclass    = {14G40 (11G50 14M25)},
  mrnumber   = {1775582},
  mrreviewer = {Yuichiro Takeda},
}

@Article{Abbes1997,
  author     = {Abbes, Ahmed},
  journal    = {Ast\'erisque},
  title      = {Hauteurs et discr\'etude (d'apr\`es {L}. {S}zpiro, {E}. {U}llmo et {S}. {Z}hang)},
  year       = {1997},
  issn       = {0303-1179},
  note       = {S\'eminaire Bourbaki, Vol. 1996/97},
  number     = {245},
  pages      = {Exp.\ No.\ 825, 4, 141--166},
  fjournal   = {Ast\'erisque},
  mrclass    = {14G40 (11G35)},
  mrnumber   = {1627110},
  mrreviewer = {Antoine Chambert-Loir},
}

@Article{Gubler2003,
  author     = {Gubler, Walter},
  journal    = {Ann. Sc. Norm. Super. Pisa Cl. Sci. (5)},
  title      = {Local and canonical heights of subvarieties},
  year       = {2003},
  issn       = {0391-173X},
  number     = {4},
  pages      = {711--760},
  volume     = {2},
  fjournal   = {Annali della Scuola Normale Superiore di Pisa. Classe di Scienze. Serie V},
  mrclass    = {14G40 (11G50 14G22)},
  mrnumber   = {2040641},
  mrreviewer = {Robin de Jong},
}

@InCollection{Gubler1997,
  author     = {Gubler, Walter},
  booktitle  = {Arithmetic geometry ({C}ortona, 1994)},
  publisher  = {Cambridge Univ. Press, Cambridge},
  title      = {Heights of subvarieties over {$M$}-fields},
  year       = {1997},
  pages      = {190--227},
  series     = {Sympos. Math., XXXVII},
  mrclass    = {14G40 (14G25 32H30)},
  mrnumber   = {1472498},
  mrreviewer = {J√∂rg Jahnel},
}

@unpublished{Smith26,
	Author = {Smith, David J.},
    Note = {\url{https://arxiv.org/abs/2610.10209}},
	Title = {Points and their multiples on curves in powers of simple abelian varieties},
    Year = {2026}}

@unpublished{OpenAI,
	Author = {OpenAI},
	Note = {\url{https://github.com/openai/math/blob/main/preprints/The-Abelian-Zilber-Pink-Conjecture-September-24-2026/paper.pdf}},
	Title = {The abelian {Z}ilber-{P}ink conjecture},
    Year = {2026}}

@unpublished{Boxall,
	Author = {Boxall, Gareth},
	Note = {\url{https://arxiv.org/abs/2401.05024}},
	Title = {Points on a curve with a power on a curve},
	Year = {2024}}

@unpublished{Baldi_Urbanik,
	Author = {Baldi, Gregorio and Urbanik, David},
	Note = {\url{https://arxiv.org/abs/2604.02186}},
	Title = {Intersections and the {B\'e}zout range: abelian varieties},
	Year = {2026}}

@unpublished{Dogra,
	Author = {Dogra, Netan},
	Note = {\url{https://arxiv.org/abs/2403.05481}, version 3},
	Title = {The {Z}ilber--{P}ink conjecture for products of curves with highly degenerate reduction},
	Year = {2026}}

@phdthesis{Ballini,
    Author = {Ballini, Francesco},
    Title = {Diophantine problems
of avoiding unlikely intersections},
    School = {University of Oxford},
    Year = {2024}}

@mastersthesis{Smith,
    Author = {Smith, David J.},
    Title = {An equivalence of categories in algebraic
geometry and some unlikely intersections in
powers of elliptic curves},
    School = {Stellenbosch University},
    Year = {2024}}

@Article{Yuan2017,
  author     = {Yuan, Xinyi and Zhang, Shou-Wu},
  title      = {The arithmetic {H}odge index theorem for adelic line bundles},
  journal    = {Math. Ann.},
  year       = {2017},
  volume     = {367},
  number     = {3-4},
  pages      = {1123--1171},
  issn       = {0025-5831},
  doi        = {10.1007/s00208-016-1414-1},
  fjournal   = {Mathematische Annalen},
  mrclass    = {14G40 (32S35 37B30 37P55)},
  mrnumber   = {3623221},
  mrreviewer = {Shu Kawaguchi},
  url        = {https://doi.org/10.1007/s00208-016-1414-1},
}

@Article{Habegger2012a,
  author     = {Habegger, Philipp and Pila, Jonathan},
  title      = {Some unlikely intersections beyond {A}ndr\'{e}-{O}ort},
  journal    = {Compos. Math.},
  year       = {2012},
  volume     = {148},
  number     = {1},
  pages      = {1--27},
  issn       = {0010-437X},
  doi        = {10.1112/S0010437X11005604},
  fjournal   = {Compositio Mathematica},
  mrclass    = {11G18 (03C64 11G15 11G50)},
  mrnumber   = {2881307},
  mrreviewer = {Alexandra Shlapentokh},
  url        = {https://doi.org/10.1112/S0010437X11005604},
}

@article{D20,
	author = {Dill, Gabriel A.},
	fjournal = {International Journal of Number Theory},
	journal = {Int. J. Number Theory},
	number = {1},
	pages = {107--120},
	title = {Generalized {V}ojta-{R}\'{e}mond inequality},
	volume = {16},
	year = {2020}}

@Article{Habegger2016,
  author     = {Habegger, Philipp and Pila, Jonathan},
  title      = {O-minimality and certain atypical intersections},
  journal    = {Ann. Sci. \'{E}c. Norm. Sup\'{e}r. (4)},
  year       = {2016},
  volume     = {49},
  number     = {4},
  pages      = {813--858},
  issn       = {0012-9593},
  doi        = {10.24033/asens.2296},
  fjournal   = {Annales Scientifiques de l'\'{E}cole Normale Sup\'{e}rieure. Quatri\`eme S\'{e}rie},
  mrclass    = {14G35 (03C64 14K12)},
  mrnumber   = {3552014},
  mrreviewer = {Fabrizio Barroero},
  url        = {https://doi.org/10.24033/asens.2296},
}

@article {BarroeroKuehneSchmidt2023,
    AUTHOR = {Barroero, Fabrizio and K\"uhne, Lars and Schmidt, Harry},
     TITLE = {Unlikely intersections of curves with algebraic subgroups in
              semiabelian varieties},
   JOURNAL = {Selecta Math. (N.S.)},
  FJOURNAL = {Selecta Mathematica. New Series},
    VOLUME = {29},
      YEAR = {2023},
    NUMBER = {2},
     PAGES = {Paper No. 18, 37},
      ISSN = {1022-1824,1420-9020},
   MRCLASS = {11G10 (03C64 11G50 14G40)},
  MRNUMBER = {4537438},
MRREVIEWER = {Alina\ Ostafe},
       DOI = {10.1007/s00029-022-00823-w},
       URL = {https://doi.org/10.1007/s00029-022-00823-w},
}

@article {BarroeroDill2022,
    AUTHOR = {Barroero, Fabrizio and Dill, Gabriel A.},
     TITLE = {On the {Z}ilber-{P}ink conjecture for complex abelian
              varieties},
   JOURNAL = {Ann. Sci. \'Ec. Norm. Sup\'er. (4)},
  FJOURNAL = {Annales Scientifiques de l'\'Ecole Normale Sup\'erieure.
              Quatri\`eme S\'erie},
    VOLUME = {55},
      YEAR = {2022},
    NUMBER = {1},
     PAGES = {261--282},
      ISSN = {0012-9593,1873-2151},
   MRCLASS = {11G10 (14K15)},
  MRNUMBER = {4411863},
MRREVIEWER = {Guy\ Fowler},
       DOI = {10.24033/asens.2496},
       URL = {https://doi.org/10.24033/asens.2496},
}

@article{BD2,
  AUTHOR = {Barroero, Fabrizio and Dill, Gabriel A.},
     TITLE = {Distinguished categories and the {Z}ilber-{P}ink conjecture},
   JOURNAL = {Amer. J. Math.},
  FJOURNAL = {American Journal of Mathematics},
    VOLUME = {147},
      YEAR = {2025},
    NUMBER = {3},
     PAGES = {715--778},
      ISSN = {0002-9327,1080-6377},
   MRCLASS = {14G05 (11G18 14G35 18A99)},
  MRNUMBER = {4914754},
}

@Article{Remond2000,
  author     = {R{\'e}mond, Ga\"{e}l},
  title      = {In\'{e}galit\'{e} de {V}ojta en dimension sup\'{e}rieure},
  journal    = {Ann. Scuola Norm. Sup. Pisa Cl. Sci. (4)},
  year       = {2000},
  volume     = {29},
  number     = {1},
  pages      = {101--151},
  issn       = {0391-173X},
  fjournal   = {Annali della Scuola Normale Superiore di Pisa. Classe di Scienze. Serie IV},
  mrclass    = {11J95 (11G10 11G50 14G05 14K15)},
  mrnumber   = {1765539},
  mrreviewer = {Carlo Gasbarri},
  url        = {http://www.numdam.org/item?id=ASNSP_2000_4_29_1_101_0},
}

@Article{Boucksom2021,
  author   = {Boucksom, S\'{e}bastien and Eriksson, Dennis},
  title    = {Spaces of norms, determinant of cohomology and {F}ekete points in non-{A}rchimedean geometry},
  journal  = {Adv. Math.},
  year     = {2021},
  volume   = {378},
  pages    = {107501, 124},
  issn     = {0001-8708},
  doi      = {10.1016/j.aim.2020.107501},
  fjournal = {Advances in Mathematics},
  mrclass  = {32P05 (14G22 31C15)},
  mrnumber = {4192993},
  url      = {https://doi.org/10.1016/j.aim.2020.107501},
}

@Article{Carrizosa2009,
  author     = {Carrizosa, Mar\'{\i}a},
  title      = {Petits points et multiplication complexe},
  journal    = {Int. Math. Res. Not. IMRN},
  year       = {2009},
  number     = {16},
  pages      = {3016--3097},
  issn       = {1073-7928},
  doi        = {10.1093/imrn/rnp042},
  fjournal   = {International Mathematics Research Notices. IMRN},
  mrclass    = {11G50 (11G10 11G15)},
  mrnumber   = {2533796},
  mrreviewer = {Matthew H. Baker},
  url        = {https://doi.org/10.1093/imrn/rnp042},
}

@Article{Remond2005a,
  author     = {R{\'e}mond, Ga\"{e}l},
  title      = {Intersection de sous-groupes et de sous-vari\'{e}t\'{e}s. {I}},
  journal    = {Math. Ann.},
  year       = {2005},
  volume     = {333},
  number     = {3},
  pages      = {525--548},
  issn       = {0025-5831},
  doi        = {10.1007/s00208-005-0673-z},
  fjournal   = {Mathematische Annalen},
  mrclass    = {11J95 (11G10 11G50 14K12)},
  mrnumber   = {2198798},
  mrreviewer = {St\'{e}phane Fischler},
  url        = {https://doi.org/10.1007/s00208-005-0673-z},
}

@Article{Remond2003a,
  author     = {R\'{e}mond, Ga\"{e}l and Viada, Evelina},
  title      = {Probl\`eme de {M}ordell-{L}ang modulo certaines sous-vari\'{e}t\'{e}s ab\'{e}liennes},
  journal    = {Int. Math. Res. Not.},
  year       = {2003},
  number     = {35},
  pages      = {1915--1931},
  issn       = {1073-7928},
  doi        = {10.1155/S1073792803130164},
  fjournal   = {International Mathematics Research Notices},
  mrclass    = {11G10 (11G50 11J95 14G25 14K15)},
  mrnumber   = {1995142},
  mrreviewer = {Dino J. Lorenzini},
  url        = {https://doi.org/10.1155/S1073792803130164},
}

@Article{Kuehne2022,
  author   = {K{\"u}hne, Lars},
  journal  = {J. Eur. Math. Soc. (JEMS)},
  title    = {Points of small height on semiabelian varieties},
  year     = {2022},
  issn     = {1435-9855},
  number   = {6},
  pages    = {2077--2131},
  volume   = {24},
  doi      = {10.4171/jems/1125},
  fjournal = {Journal of the European Mathematical Society (JEMS)},
  mrclass  = {14G40 (11G10 11G50 14G05 14K15)},
  mrnumber = {4404794},
  url      = {https://doi.org/10.4171/jems/1125},
}

@Book{Edixhoven1993,
  editor     = {Edixhoven, Bas and Evertse, Jan-Hendrik},
  publisher  = {Springer-Verlag, Berlin},
  title      = {Diophantine approximation and abelian varieties},
  year       = {1993},
  isbn       = {3-540-57528-6},
  note       = {Introductory lectures, Papers from the conference held in Soesterberg, April 12--16, 1992},
  series     = {Lecture Notes in Mathematics},
  volume     = {1566},
  doi        = {10.1007/978-3-540-48208-6},
  mrclass    = {11G35 (11-06 11J99 14G05 14G10 14G40)},
  mrnumber   = {1288998},
  mrreviewer = {Michel\ Waldschmidt},
  pages      = {xiv+127},
  url        = {https://doi.org/10.1007/978-3-540-48208-6},
}

@Article{Evertse1995,
  author     = {Evertse, Jan-Hendrik},
  journal    = {Acta Arith.},
  title      = {An explicit version of {F}altings' product theorem and an improvement of {R}oth's lemma},
  year       = {1995},
  issn       = {0065-1036,1730-6264},
  number     = {3},
  pages      = {215--248},
  volume     = {73},
  doi        = {10.4064/aa-73-3-215-248},
  fjournal   = {Acta Arithmetica},
  mrclass    = {11J17 (11G35 11J13 14G05)},
  mrnumber   = {1364461},
  mrreviewer = {Michel\ Waldschmidt},
  url        = {https://doi.org/10.4064/aa-73-3-215-248},
}

@book {Matsumura1986,
    AUTHOR = {Matsumura, Hideyuki},
     TITLE = {Commutative ring theory},
    SERIES = {Cambridge Studies in Advanced Mathematics},
    VOLUME = {8},
      NOTE = {Translated from the Japanese by M. Reid},
 PUBLISHER = {Cambridge University Press, Cambridge},
      YEAR = {1986},
     PAGES = {xiv+320},
      ISBN = {0-521-25916-9},
   MRCLASS = {13-01},
  MRNUMBER = {879273},
MRREVIEWER = {W.\ V.\ Vasconcelos},
}

@book{Bosch1984,
    AUTHOR = {Bosch, S. and G\"untzer, U. and Remmert, R.},
     TITLE = {Non-{A}rchimedean analysis},
    SERIES = {Grundlehren der mathematischen Wissenschaften [Fundamental
              Principles of Mathematical Sciences]},
    VOLUME = {261},
      NOTE = {A systematic approach to rigid analytic geometry},
 PUBLISHER = {Springer-Verlag, Berlin},
      YEAR = {1984},
     PAGES = {xii+436},
      ISBN = {3-540-12546-9},
   MRCLASS = {32K10 (30G05 46P05)},
  MRNUMBER = {746961},
MRREVIEWER = {W.\ Bartenwerfer},
       DOI = {10.1007/978-3-642-52229-1},
       URL = {https://doi.org/10.1007/978-3-642-52229-1},
}

@incollection {Gubler2013,
    AUTHOR = {Gubler, Walter},
     TITLE = {A guide to tropicalizations},
 BOOKTITLE = {Algebraic and combinatorial aspects of tropical geometry},
    SERIES = {Contemp. Math.},
    VOLUME = {589},
     PAGES = {125--189},
 PUBLISHER = {Amer. Math. Soc., Providence, RI},
      YEAR = {2013},
      ISBN = {978-0-8218-9146-9},
   MRCLASS = {14T05 (14M25 32P05)},
  MRNUMBER = {3088913},
MRREVIEWER = {Aljo\v sa\ Peperko},
       DOI = {10.1090/conm/589/11745},
       URL = {https://doi.org/10.1090/conm/589/11745},
}

@book {Noguchi2016,
    AUTHOR = {Noguchi, Junjiro},
     TITLE = {Analytic function theory of several variables},
      NOTE = {Elements of Oka's coherence},
 PUBLISHER = {Springer, Singapore},
      YEAR = {2016},
     PAGES = {xvi+397},
      ISBN = {978-981-10-0289-2; 978-981-10-0291-5},
   MRCLASS = {32-02 (14F05 32Axx 32Cxx 32Exx 32Txx)},
  MRNUMBER = {3526579},
MRREVIEWER = {Eduardo\ S.\ Zeron},
       DOI = {10.1007/978-981-10-0291-5},
       URL = {https://doi.org/10.1007/978-981-10-0291-5},
}

\end{document}